     \newcommand{\PARENS}[1]{\left(#1\right)}
          \newcommand{\ccases}[1]{\begin{cases}#1\end{cases}}
\newcommand{\coloneq}{:=}
\newcommand{\ee}{\boldsymbol{e}}
\newcommand{\n}{\boldsymbol{n}}
\newcommand{\x}{\boldsymbol{x}}
\newcommand{\y}{\boldsymbol{y}}
\newcommand{\z}{\boldsymbol{z}}
\newcommand{\q}{{\boldsymbol{\alpha}}}
\newcommand{\kk}{\boldsymbol{k}}
\newcommand{\ele}{\boldsymbol{\ell}}
\renewcommand{\d}{\operatorname{d}}
\newcommand{\Exp}[1]{\operatorname{e}^{#1}}
\newcommand{\diag}{\operatorname{diag}}
\newcommand{\Ds}{\mathscr D}
\newcommand{\Z}{\mathbb{Z}}
\newcommand{\R}{\mathbb{R}}
\newcommand{\C}{\mathbb{C}}
\newcommand{\I}{\mathbb{I}}
\newcommand{\T}{\mathbb{T}}
\newtheorem{pro}{Proposition}[section]
\newtheorem{lemma}{Lemma}[section]
\newtheorem{definition}{Definition}[section]
\newtheorem{theorem}{Theorem}[section]
\newtheorem{cor}{Corollary}[section]
\numberwithin{equation}{section}
\begin{document}

	\title[Multivariate linear spectral transformations and multispectral Toda]{Linear spectral   transformations   for\\ multivariate orthogonal  polynomials and\\ multispectral Toda hierarchies}
	\author{Gerardo Ariznabarreta}
	\address{Departamento de Física Teórica II (Métodos Matemáticos de la Física), Universidad Complutense de Madrid, Ciudad Universitaria, Plaza de Ciencias nº 1, 28040-Madrid, Spain}
	\email{gariznab@ucm.es}
	\thanks{GA thanks economical support from the Universidad Complutense de Madrid  Program ``Ayudas para Becas y Contratos Complutenses Predoctorales en España 2011"}
	\author{Manuel Mañas}
	\email{manuel.manas@ucm.es}
	\thanks{MM thanks economical support from the Spanish ``Ministerio de Economía y Competitividad" research project MTM2012-36732-C03-01,  \emph{Ortogonalidad y aproximación; teoría y aplicaciones}; and GA and MM thanks economical support from the Spanish ``Ministerio de Economía y Competitividad" research project MTM2015-65888-C4-3,  \emph{Ortogonalidad, teoría de la aproximación y aplicaciones en física matemática}}
	\keywords{Darboux transformations, multivariate orthogonal polynomials, linear functional, Borel--Gauss factorization, quasi-determinants,  Geronimus transformation, linear spectral transformation, quasi-tau matrix, multispectral Toda hierarchy, non-standard multivariate biorthogonality, generalized KP equations, bilinear equations}
	\subjclass{14J70,15A23,33C45,37K10,37L60,42C05,46L55}
	\begin{abstract}
Linear spectral transformations of orthogonal polynomials in the real line, and  in particular Geronimus  transformations,  are extended to orthogonal polynomials depending on several real variables. Multivariate Christoffel--Geronimus--Uvarov  formul{\ae} for the perturbed orthogonal polynomials and their quasi-tau matrices  are found for each perturbation of the original linear functional. These  expressions are given  in terms of quasi-determinants of bordered truncated block matrices and the 1D Christoffel--Geronimus--Uvarov formul{\ae} in terms of quotient of determinants of combinations of the original orthogonal polynomials and their Cauchy transforms, are recovered.  A new multispectral Toda hierarchy of nonlinear partial differential equations,   for which the multivariate orthogonal polynomials are reductions, is proposed. This new integrable hierachy is associated with non-standard multivariate biorthogonality. Wave and Baker functions, linear equations, Lax and Zakharov--Shabat equations, KP type equations,  appropriate reductions,  Darboux/linear spectral transformations, and  bilinear equations involving linear spectral transformations are presented. Finally, the paper includes an Appendix devoted  to multivariate Uvarov transformations. Particular attention is paid to 0D-Uvarov perturbations and  also to the 1D-Uvarov perturbations, which require of the theory of Fredholm integral equations.
	\end{abstract}
	\maketitle
	
	\tableofcontents

\section{Introduction}

The aim of this paper is twofold,  in the first place we discuss an extension of the linear spectral transformation given in  \cite{Zhedanov1997Rational} for  orthogonal polynomials in the real line (OPRL) to several real variables; i. e., to complex  multivariate orthogonal polynomials in real variables (MVOPR). Secondly, to generalize the Toda hierarchy introduced in \cite{ariznabarreta2014multivariate} in the context of MVOPR, to a more general case,
that we have named multispectral Toda hierarchy. For this new integrable hierarchy, which has the MVOPR as a particular reduction, we find the multivariate  linear spectral transformations.

\subsection{Historical background and state of the art}
Elwin Christoffel, when discussing  Gaussian quadrature rules  in  \cite{Christoffel1858uber}, found explicit formul{\ae} relating   sequences of orthogonal polynomials corresponding to two measures  $\d x$ and $p(x) \d x$, with $p(x)=(x-q_1)\cdots(x-q_N)$. The so called Christoffel formula  is a classical result which can be found in a number of orthogonal polynomials textbooks, see for example \cite{Szego1939Orthogonal,Chihara1978introduction,Gautschi2004Orthogonal}.

Within a   linear functional approach to the theory of orthogonal polynomials, see \cite{Maroni1985espaces,Maroni1988calcul,Maroni1991theorie} and  \cite{Petronilho2004topological,Petronilho2006linear},   given a linear functional $u\in(\R[x])'$
its canonical or elementary Christoffel transformation is a new moment functional given by  $\hat{u}=(x-a)u$, $ a\in\mathbb{R}$,  \cite{Chihara1978introduction,Yoon2002Darboux,Bueno2004darboux}.
Its right inverse is called the Geronimus transformation, i.e., the elementary or canonical Geronimus transformation is a new moment linear functional $\check{u}$  such that $(x-a)\check{u}= u$. In this case  we can write $\check u=(x-a)^{-1}u+\xi\delta(x-a)$, where $\xi\in\R$ is a free parameter and  $\delta(x)$ is the Dirac functional supported at the point $x=a$ \cite{Geronimus1940polynomials,Maroni1990Surlasuite}. Multiple Geronimus transformations \cite{Derevyagin2014Multiple}  appear when one studies  general inner products $\langle \cdot,\cdot\rangle$ such that the multiplication by a polynomial operator $h$ is symmetric and satisfies  $\langle h(x)p(x), q(x)\rangle= \int p(x) q(x) \d\mu(x)$ for  a nontrivial probability measure  $\mu$.

In  \cite{Uvarov1969connection} Vasily Uvarov  considered the multiplication of the measure by a rational function with prescribed zeros and poles, and got determinantal formul\ae{} ---in terms of the original orthogonal polynomials and its Cauchy transformations--- for the perturbed polynomials.  That is, he worked  out  in  \S 1 the linear spectral transformation without masses. Moreover, he also introduced  in \S 2 the so called canonical Uvarov transformation  the moment linear functional $u$ is transformed into $\check u= u + \xi \delta(x-a)$ with $\xi\in\R$, and presented a determinantal formul\ae{} ---in terms of  kernel polynomials--- for several masses of this type, $ \check u= u + \xi_1 \delta(x-a_1)+\cdots+\xi_N\delta(x-a_N)$.

 The Stieljes  function
$F(x):=\sum_{n=0}^{\infty}\frac{\langle u,x^n\rangle}{x^{n+1}}$ of a linear functional  $u\in(\R[x])'$
is relevant in the theory of orthogonal  polynomials for several reasons, is in particular  remarkable  its close relation with  Padé approximation theory, see \cite{Brezinskin1980Pade,Karlberg1990Pade}. Alexei Zhedanov studied in \cite{Zhedanov1997Rational} the following  rational spectral transformations  of the Stieltjes function
\begin{align*}
F(x)\mapsto \tilde F(x) = \frac{A(x)F(x) + B(x)}{C(x)F(x) + D(x)},
\end{align*}
as a natural extension of the bove mentioned three canonical transformations. Here $A(x), B(x), C(x)$ and $D(x)$ are polynomials  such that
$\tilde F(x)=\sum_{n=0}^{\infty}\frac{\langle \tilde  u,x^n\rangle}{x^{n+1}}$ is a new Stieljes function.
Linear spectral transformations correspond to the  choice $c(x)=0$, of which particular cases are the canonical Christoffel transformations $\tilde F(x) = (x-a)F(x)-F_0 $ and the canonical Geronimus transformation of $\tilde F(x) = \frac{F(x)+\tilde F_0}{x-a}$. Every linear spectral transformation of a  moment functional is given as a composition of Christoffel and Geronimus transformations \cite{Zhedanov1997Rational}.

These transformations are refered generically as Darboux transformations, a name coined in the context of integrable systems in \cite{Matveev1978Differential}.  Gaston Darboux, when studying the  Sturm--Liouville theory in  \cite{Darboux1882proposition},  explicitly   treated these transformations, which he obtained by a simplification of a geometrical transformation founded previously by Théodore Moutard \cite{Moutard1878reciprocal}.
In the OPRL framework, such a factorization of Jacobi matrices has been studied in \cite{Bueno2004darboux,Yoon2002Darboux}, and also played a key role in the study of bispectrality  \cite{Grunbaum1996Orthogonal,Grunbaum2011Darboux}.
In the differential geometry context, see \cite{Eisenhart1943Transformations},  the Christoffel, Geronimus, Uvarov and linear spectral transformations are related to geometrical transformations like  the Laplace, Lévy, adjoint Lévy and the fundamental Jonas transformations.

Regarding orthogonal polynomials in several variables we refer the reader to the excellent monographs \cite{Dunkl2014Orthogonal,Xu1994multivariate}. Milch \cite{Milch1968growth} and Karlin and McGregor  \cite{Karlin1975growth} considered multivariate Hahn and Krawtchouk polynomials in relation with  growth birth and death processes. Since 1975 substantial developments have been achieved, let us mention the spectral properties of these multivariate Hahn and Krawtchouk polynomials, see \cite{Geronimo2006Factorization}.
 Orthogonal polynomials and cubature formul{\ae} on the unit ball, the standard simplex, and the unit sphere were studied in \cite{Xu2001Orthogonal} finding  a strong connections between both themes. The common zeros of multivariate orthogonal polynomials were discussed in \cite{Xu1994Common} where relations with  higher dimensional quadrature problems were found. A description of orthogonal polynomials  on the bicircle and polycircle and their relation to bounded analytic functions on the polydisk is given in \cite{Knese2008Polynomials}, here a Christoffel--Darboux like formula, related in this  bivariate case with  stable polynomials, and Bernstein--Szeg\H{o} measures are used, allowing for a new proof of Ando  theorem in operator theory. Bivariate orthogonal polynomials linked to a moment functional satisfying the two-variable  Pearson type differential equation and an extension of some of the characterizations of the classical orthogonal polynomials in one variable was discussed in \cite{Fernandez2005Weak}; in the paper \cite{Fernandez2010Krall}  an analysis of a bilinear form obtained by adding a Dirac mass to a positive definite moment functional  in several variables is given.

Darboux transformations for multivariate orthogonal polynomials were first studied in \cite{ariznabarreta2014multivariate,ariznabarreta2015darboux} in the context of a Toda hierarchy. These transformations are the multidimensional extensions of the Christoffel transformations. In \cite{ariznabarreta2015darboux} we presented for the first time a multivariate extension of the classical 1D Christoffel formula, in terms of quasi-determinants \cite{Gelfand1991Determinants,Gelfand1995Noncommutative,Olver2006Multivariate}, and poised sets \cite{Olver2006Multivariate,ariznabarreta2015darboux}. Also in this general multidimensional framework we have studied in \cite{ariznabarreta2015multivariate} multivariate Laurent polynomials orthogonal with respect to a measure supported in the unit torus, finding in this case the corresponding Christoffel formula.
In \cite{Alfaro2014linearly} linear relations between two families of multivariate orthogonal polynomials were studied. Despite that \cite{Alfaro2014linearly} does not deal with  Geronimus formul{\ae}, it deals with linear connections among two families of orthogonal polynomials, a first step towards a connection formul{\ae} for the multivariate  Geronimus transformation.

 Sato \cite{Sato1981Soliton,Sato1983Soliton} and Date, Jimbo, Kashiwara and Miwa  \cite{Date1981operator,Date1982transformation,Date1983transformation}
 introduced geometrical tools, like the infinite-dimensional Grasmannian and infinite dimensional Lie groups an Lie algebras, which have becomed essential,
 in the description of integrable hierarchies.  We also mention \cite{Mulase1984Complete}, were the  factorization problems, dressing procedure, and linear systems where shown to be  the keys for
integrability. Multicomponent versions of the integrable Toda equations \cite{Ueno1984TodaI,Ueno1984TodaII,Ueno1984TodaIII} played a prominent role in the connection with orthogonal polynomials and differential geometry. In \cite{Bergvelt1988tau,Bergvelt1995partitions,Kac2003KP,manas2000dressing,manas2000dressing2} multicomponent versions of the KP hierachy were analyzed,  while in \cite{manas2009multicomponent,manas2009multicomponent1} we can find a  study of  the multi-component Toda lattice hierarchy, block Hankel/Toeplitz reductions, discrete flows, additional symmetries and dispersionless limits.
In \cite{Adler2008Moment,alvarez2011multiple}  the relation of the multicomponent KP--Toda  with mixed multiple orthogonal polynomials was discussed.

Adler and  van Moerbeke  showed  the prominent role played by the  Gauss--Borel factorization problem for  understanding  the strong bonds between orthogonal polynomials and integrable systems. In particular, their studies on the 2D Toda hierarchy --what they called the discrete KP hierarchy--  neatly established  the deep connection among standard orthogonality of polynomials and integrability of nonlinear equations of Toda type, see \cite{Adler1997group,Adler1998Toda,Adler1999generalized,Adler1999vertex,Adler2001Integrals} and also \cite{Felipe2001Algebraic}.
Let us also mention that multicomponent Toda  systems or non-Abelian versions of Toda equations with matrix orthogonal polynomials was studied, for example,  in \cite{Miranian2005Matrix,alvarez2010multicomponent} (on the real line) and in \cite{Miranian2009Matrix,alvarez2013orthogonal} (on the unit circle).

The approach to  linear spectral  transformations and Toda hierarchies used  in this paper, which is based on the Gauss--Borel factorization problem, has been used before in different contexts. We have  connected integrable systems  with orthogonal polynomials of diverse types:
\begin{enumerate}
	\item  As already mentioned, mixed multiple orthogonal polynomials and multicomponent Toda was analyzed in \cite{alvarez2011multiple}.
	\item Matrix orthogonal Laurent polynomials on the circle and CMV orderings were considered \cite{ariznabarreta2014matrix}
	\item The Christoffel transformation has been recently discussed for matrix orthogonal polynomials in the real line \cite{alvarez2015Christoffel}.
\end{enumerate}

\subsection{Results and layout of the paper}

First, we complete this introduction with some background material from \cite{ariznabarreta2014multivariate}. Then,  in \S  \ref{S:2} we discuss  the   Geronimus type transformation for multivariate orthogonal polynomials. We introduce  the resolvents and find the connection formul{\ae}. The multivariate extension of the Geronimus determinantal formula depends on the introduction of a semi-infinite matrix $R$, that for the 1D case is encoded in the Cauchy transforms of the OPRL, the second kind functions. However, no such connection exists in this more general scenario, and the multivariate Cauchy transform of the MVOPR does not provide the necessary aid for  finding  the  multivariate  formula for Geronimus transformations (aid which is provided by  the semi-infinite matrix $R$). Then, we end the section by discussing the 1D reduction and recovering the Geronimus results  \cite{Geronimus1940polynomials}. A similar approach can be found in \S \ref{S:3}  for the linear spectral  for which we present a multivariate quasi-determinantal Christoffel--Geronimus--Uvarov formula \cite{Zhedanov1997Rational}, and we give a brief discussion of the existence of poised sets.

In \cite{ariznabarreta2014multivariate}  we  considered semi-infinite matrices having the adequate symmetries, that we call multi-Hankel, so that a multivariate  moment functional or moment semi-infinite matrix appeared. In section \ref{S:4} we are ready to abandon this more comfortable  MVOPR situation and explore different scenarios by  assuming that  $G$ could be arbitrary, as far it is Gaussian factorizable. We are dealing with perturbations of non-standard multivariate biorthogonality. We first give the general setting for this integrable hierarchy,  that we have named  multi-spectral Toda lattice hierarchy,  finding the corresponding Lax and Zakharov--Shabat equations and the role played by the Baker and adjoint Baker functions. Some reductions, like the multi-Hankel that leads to dynamic MVOPR, and extensions of it are presented. We also consider the action of the discussed multivariate linear spectral transformations and find the Christoffel--Geronimus--Uvarov formula in this broader scenario. To end the paper, we find  generalized bilinear equations that involve  linear spectral transformations.

We have also included an appendix to discuss multivariate Uvarov transformations.  For the 0D-Uvarov transformation,  which can be considered an immediate extension of the results of Uvarov \cite{Uvarov1969connection},  connection formulas are found. The general situation is discussed in terms of jets, we then particularize to mass perturbation, which for the OPRL case appears in \cite{Uvarov1969connection} and in the multivariate case in \cite{Teresa0}, and to a dipole perturbation.  The more appealing 1D-Uvarov perturbation is also discussed, and a connection formula is  given in terms of a solution of an integral Fredholm equation.

\subsection{Preliminary material}\label{MV}
Following \cite{ariznabarreta2015darboux},  a brief account of complex multivariate  orthogonal polynomials  in a  $D$-dimensional real space (MVOPR) is given.  Cholesky factorization of a semi-infinite moment matrix will be keystone to built such objects.  Consider $D$ independent real variables $\x=\left(x_1,x_2,\dots,x_D \right)^\top\in \Omega\subseteq\mathbb{R}^D$, and the corresponding ring of complex multivariate polynomials $\C[\x]\equiv \C[x_1,\dots,x_D]$.
Given a multi-index $\q=(\alpha_1,\dots,\alpha_D)^\top \in\Z_+^{D}$ of non-negative integers  write $\x^{\q}=x_1^{\alpha_1}\cdots x_D^{\alpha_D}$ and say that the  length of $\q$ is $|\q|\coloneq  \sum_{a=1}^{D} \alpha_a$. This length induces a total ordering of monomials:  $\x^{\q}<\x^{\q'}\Leftrightarrow|\q|<|\q'|$. For each non-negative integer $k\in\Z_+$  introduce the set
\begin{align*}
[k]\coloneq \{\q\in \Z_+^{D}: |\q|=k\},
\end{align*}
built up with those vectors  in the lattice $\Z_+^D$ with a given length $k$.
The graded  lexicographic order  for $\q_1,\q_2\in [k]$ is
\begin{align*}
\q_1>\q_2 \Leftrightarrow \exists p\in \Z_+ \text{ with } p<D \text{ such that } \alpha_{1,1}=\alpha_{2,1},\dots,\alpha_{1,p}=\alpha_{2,p} \text{ and } \alpha_{1,p+1}<\alpha_{2,p+1},
\end{align*}
and if $\q^{(k)}\in[k]$ and $\q^{(l)}\in[l]$, with $k<l$ then $\q^{(k)}<\q^{(l)}$.
Given the set of integer vectors of length $k$  use the  lexicographic order and write
\begin{align*}
[k]=\big\{\q_1^{(k)},\q_2^{(k)},\dots,\q^{(k)}_{|[k]|}\big\} \text{ with } \q_a^{(k)}>\q_{a+1}^{(k)}.
\end{align*}
Here $|[k]|$ is the cardinality of the set $[k]$, i.e., the number of elements in the set.
This is the dimension of the linear space of homogenous multivariate polynomials of total degree $k$.
 Either counting weak compositions or multisets one obtains the multi-choose number,  $|[k]|= \big(\!\binom{D}{k} \!\big) = {D+k-1 \choose k} $.
The dimension of the linear space $\C_k[x_1,\dots,x_D]$ of multivariate polynomials of degree less or equal  to $k$ is
\begin{align*}
N_{k}=1+|[2]|+\dots+|[k]|=\binom{D+k}{D}.
\end{align*}

The  vector of monomials
\begin{align*}
\chi&\coloneq \PARENS{\begin{matrix}\chi_{[0]} \\ \chi_{[1]} \\ \vdots \\ \chi_{[k]} \\ \vdots \end{matrix}}
& \mbox{where} & &
\chi_{[k]}&\coloneq  \PARENS{\begin{matrix} \x^{\q_1} \\  \x^{\q_2} \\\vdots \\ \x^{\q_{|[k]|}} \end{matrix}},&
\chi^*&\coloneq  \Big(\prod_{a=1}^D x_a^{-1}\Big)\chi(x_1^{-1},\dots,x_D^{-1}).
\end{align*}
will be useful.
Observe that for $k=1$ we have that the vectors $\q^{(1)}_a=\ee_a$ for $a\in\{1,\dots,D\}$ form  the canonical basis of $\R^D$, and for any $\q_j\in[k]$ we have $\q_j=\sum_{a=1}^D \alpha_{j}^a\ee_a$ .
For the sake of simplicity unless needed we will drop off the super-index and write $\q_j$ instead of $\q^{(k)}_j$, as it is understood that $|\q_j|=k$.

The dual space of the symmetric tensor powers is isomorphic to the set of symmetric multilinear functionals on $\C^D$, $\big(\text{Sym}^k(\C^D)\big)^*\cong S((\C^D)^k,\C)$.
Hence,
homogeneous polynomials of a given total degree  can be identified with symmetric tensor powers.
Each multi-index $\q\in[k]$ can be thought as a weak $D$-composition of $k$ (or weak composition in  $D$ parts), $k=\alpha_{1}+\dots+\alpha_{D}$.
Notice that these weak compositions may be considered as multisets and that, given a linear basis $\{\ee_a\}_{a=1}^D$ of $\C^D$ one has the linear basis $\{\ee_{a_1}\odot\cdots\odot \ee_{a_k}\}_{\substack{1\leq a_1\leq\cdots\leq a_k\leq D\\ k\in\Z_+}}$ for the symmetric power $\operatorname{S}^k(\C^D)$, where the multisets $1\leq a_1\leq\cdots\leq a_k\leq D$ have been used. In particular, the vectors of this basis $\ee_{a_1}^{\odot M(a_1)}\odot\cdots\odot \ee_{a_p}^{\odot M(a_p)}$, or better its duals $(\ee_{a_1}^*)^{\odot M(a_1)}\odot\cdots\odot (\ee_{a_p}^*)^{\odot M(a_p)}$ are in bijection with monomials of the form $x_{a_1}^{M(a_1)}\cdots x_{a_p} ^{M(a_p)}$.
The  lexicographic order can be applied  to $\big(\C^D\big)^{\odot k}\cong \C^{|[k]|}$,  then a linear basis of $\operatorname{S}^k(\C^D)$ is the ordered set $B_c=\{\ee^{\q_1},\dots,\ee^{\q_{|[k]|}}\}$ with $\ee^{\q_j}\coloneq \ee_1^{\odot \alpha_{j}^1}\odot\dots\odot \ee_{D}^{\odot \alpha_{j}^D}$ so that
$\chi_{[k]}(\x)=\sum_{i=1}^{|[k]|}\x^{\q_j}\ee^{\q_j}$.  For more information see \cite{Comon2008Symmetric,Federer1969Geometric,Vinberg2003algebra}.

Consider semi-infinite matrices $A$ with a block or partitioned structure induced by the graded reversed lexicographic order
\begin{align*}
A&=\PARENS{\begin{matrix}
	A_{[0],[0]} & A_{[0],[1]} &  \cdots  \\
	A_{[1],[0]} & A_{[1],[1]} &  \cdots \\
	\vdots                &                 \vdots         &  \\
	\end{matrix}}, &
A_{[k],[\ell]}&=\PARENS{\begin{matrix}
	A_{\q^{(k)}_1,\q^{(\ell)}_1} &   \dots & A_{\q^{(k)}_1,\q^{(\ell)}_{|[l]|} }\\
	\vdots & & \vdots\\
	A_{\q^{(k)}_{|[k]|},\q^{(\ell)}_1} &  \dots & A_{\q^{(k)}_{|[k]|},\q^{(\ell)}_{|[l]|} }
	\end{matrix}} \in\C^{|[k]|\times |[l]|}.
\end{align*}
Use the notation $0_{[k],[\ell]}\in\C^{|[k]|\times|[l]|}$ for the rectangular zero matrix, $0_{[k]}\in\C^{|[k]|}$ for the zero vector, and $\I_{[k]}\in\C^{|[k]|\times|[k]|}$ for the identity matrix. For the sake of simplicity  just write $0$ or $\I$ for the zero or identity matrices, and assume that the sizes of these matrices are the ones indicated by their position in the partitioned matrix.

The vector space of complex  multivariate polynomials $\C_k[\x]$ in $D$ real variables of degree less or equal to $k$ with the norm
$	\Big\| \sum_{|\q|\leq k} P_\q\x^\q\Big\|_n\coloneq \sum_{|\q|\leq k} |P_\q|$,
gives a nesting of Banach spaces $\C_n[\x]\subset \C_{n+1}[\x]$ whose inductive limit gives a topology to the space $\C[\x]$.
The elements of the algebraic dual $u\in(\C[\x])^*$, which are called linear functionals,
are linear maps  $u:\C[\x]\to\C$; the notation $P(\x)\overset{u}{\mapsto}\langle u,P(\x)\rangle$ will be used. Two polynomials $P(\x),Q(\x)\in\C[\x]$ are said orthogonal with respect to $u$ if $\langle u,{P(\x)}Q(\x)\rangle =0$.
The topological dual $(\C[\x])'$ has the dual weak topology characterized by the semi-norms $\big\{\|\cdot\|_P\big\}_{P(\x)\in\C[\x]}$, $\|u\|_P\coloneq|\langle u,P(\x)\rangle|$. This family of seminorms is equivalent to the family of seminorms given by $\|u\|^{(k)}\coloneq \sup_{|\q|=k}|\langle u, \x^\q\rangle|$. Moreover, the topological dual $(\C[\x])'$ is a Fréchet space and $(\C[\x])'=(\C[\x])^*$ and every linear functional is continuous.
Linear functionals can be multiplied by polynomials $\langle Qu,P(\x)\rangle
\coloneq \langle u, Q(\x)P(\x)\rangle$, $\forall P(\x)\in\C[\x]$. It can be also shown, that in this case the space of generalized functions $(\mathbb C[\x])'$ coincide with the space of formal series
$\mathbb C[\![\x]\!]$.  For more information regarding  linear functional's approach to orthogonal polynomials see
\cite{Maroni1985espaces,Maroni1988calcul} and
\cite{Petronilho2006linear,Petronilho2004topological}.

However,  we  need to deal with generalized functions with a  support and the linear functionals we have discussed so far are not suitable for that.
We proceed to discuss several possibilities to overcome this problem.
The space of distributions is a space of  generalized functions when the fundamental functions space is the complex valued smooth functions of compact support $\mathcal D:=C_0^\infty(\mathbb R^D)$, the space of test functions, see \cite{Schwartz,gelfand-distribu1,gelfand-distribu2}. Now,  there is a clear meaning for the set of zeroes of a  distribution $u\in\mathcal D'$, $u$ is zero in a domain $\Omega\subset \mathbb R^D$ if for any fundamental function $f(\x)$ with support in $\Omega$ we have $\langle u, f\rangle =0$. The complement, which is closed, is the support $\operatorname{supp} u$ of the distribution $u$.   Distributions of compact support, $u\in\mathcal E'$, are  generalized functions with  fundamental functions space is the topological space of complex valued smooth  functions $\mathcal E=C^\infty(\mathbb R^D)$. Thus, as $\mathbb C[\x]\subsetneq \mathcal E$ we have $\mathcal E'\subsetneq (\mathbb C[\x])'\cap \mathcal D'$. These  distributions of compact support is a first example of an appropriate framework for the consideration of polynomials and supports simultaneously. More general setting appears within the space of tempered distributions $\mathcal S'$  --which are distributions, $\mathcal S'\subsetneq\mathcal D'$--. Now, the fundamental functions space is given by the Schwartz space $\mathcal S$ of complex valued fast decreasing functions, see \cite{Schwartz,gelfand-distribu1,gelfand-distribu2}.  Then, we can consider the space of fundamental functions of smooth functions of slow growth $\mathcal O_M\subset \mathcal E$,  whose elements are smooth functions having all its derivatives  bounded by a polynomial of certain degree.  As  $\C [\x],\mathcal S\subsetneq \mathcal O_M$,  for the corresponding set of generalized functions  we find that  $\mathcal O_M'\subset (\mathbb C[\x])'\cap \mathcal S'$.  Thus, these distributions give a second suitable framework. Finally, for a third suitable framework we need to introduce bounded distributions. Let us consider as  space of fundamental functions, the linear space $\mathcal B$ of bounded smooth functions, i.e.,  with all its derivatives in $L^\infty(\R^D)$,  being the corresponding space of generalized functions $\mathcal B'$ the bounded distributions (not to be confused with compact support). Notice that, as $\mathcal D\subsetneq \mathcal B$ we have that bounded distributions are distributions $\mathcal B'\subsetneq \mathcal D'$. Then, we consider the space of fast decreasing distributions $\mathcal O_c'$ given by those distributions $u\in\mathcal D'$ such that for each positive integer $k$, we have
$\big(\sqrt{1+(x_1)^2+\cdots+(x_D)^2}\big)^ku\in\mathcal B'$ is a bounded distribution.
Any polynomial $P(\x)\in\C[\x]$, with $\deg P=k$, can be written as 
\begin{align*}
P(\x)&=\Big(\sqrt{1+(x_1)^2+\cdots+(x_D)^2}\Big)^k F(\x), & F(\x)&=\frac{P(\x)}{\big(\sqrt{1+(x_1)^2+\cdots+(x_D)^2)}\big)^k}\in\mathcal B.
\end{align*} 
Therefore, given a fast decreasing distribution $u\in\mathcal O_c'$ we may consider 
\begin{align*}
\langle u,P(\x)\rangle =\left\langle\Big(\sqrt{1+(x_1)^2+\cdots+(x_D)^2}\Big)^ku, F(\x)\right\rangle
\end{align*}
which makes sense as $\big(\sqrt{1+(x_1)^2+\cdots+(x_D)^2}\big)^ku\in\mathcal B', F(\x)\in\mathcal B$.  Thus,  $\mathcal O'_c\subset  (\C[\x])'\cap \mathcal D'$.
Moreover  it can be proven that $\mathcal O_M'\subsetneq \mathcal O_c'$, see  
\cite{Maroni1985espaces}.
 Summarizing this discussion, we have found three  generalized function spaces suitable for the discussion of polynomials and supports simultaneously:
\begin{align*}
\mathcal E'\subset \mathcal O_M'\subset \mathcal O_c' \subset \big((\C[\x])'\cap \mathcal D'\big).
\end{align*}

\begin{definition}\label{moment}
	Associated with the linear functional $u\in(\C[x])'$ define the following moment matrix
	\begin{align*}
	G&\coloneq \langle u, \chi(\x)\big(\chi(\x)\big)^\top\rangle.
	\end{align*}
In block form can be written as
	\begin{align*}
	G=  \PARENS{\begin{matrix}
		G_{[0],[0]} & G_{[0],[1]} &  \dots \\
		G_{[1],[0]} & G_{[1],[1]} &  \dots \\
		\vdots                &   \vdots              &
		\end{matrix}}.
	\end{align*}
	Truncated  moment matrices are given by
	\begin{align*}
	G^{[l]}&\coloneq
	\PARENS{\begin{matrix}
		G_{[0],[0]} &  \cdots & G_{[0],[l-1]} \\
		\vdots                        &   & \vdots \\
		G_{[l-1],[0]}  &  \cdots & G_{[l-1],[l-1]}
		\end{matrix}}.
	\end{align*}
\end{definition}
Notice that from the above definition we know that 
\begin{pro}
	The moment matrix is a symmetric  matrix, $G=G^\top$.
\end{pro}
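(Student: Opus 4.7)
The plan is to unpack the definition of $G$ entrywise and exploit commutativity of monomial multiplication. The entries of $G$ indexed by multi-indices $\q^{(k)}_a\in[k]$ and $\q^{(\ell)}_b\in[\ell]$ are obtained by applying $u$ to the corresponding entry of the outer product $\chi(\x)\chi(\x)^\top$, namely
\begin{align*}
G_{\q^{(k)}_a,\q^{(\ell)}_b}=\langle u,\x^{\q^{(k)}_a}\x^{\q^{(\ell)}_b}\rangle.
\end{align*}
Since $\C[\x]$ is a commutative ring, $\x^{\q^{(k)}_a}\x^{\q^{(\ell)}_b}=\x^{\q^{(\ell)}_b}\x^{\q^{(k)}_a}$, and therefore $G_{\q^{(k)}_a,\q^{(\ell)}_b}=G_{\q^{(\ell)}_b,\q^{(k)}_a}$. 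Passing from scalar entries to blocks, this says $G_{[k],[\ell]}=(G_{[\ell],[k]})^\top$, which is exactly the statement $G=G^\top$ for the block-partitioned matrix.

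Equivalently, and slightly more conceptually, one can argue at the level of matrix-valued functions: the map $\x\mapsto\chi(\x)\chi(\x)^\top$ is pointwise symmetric because any outer product $vv^\top$ of a vector with itself is symmetric; the linear functional $u$ acts entrywise and hence commutes with transposition, so
\begin{align*}
G^\top=\langle u,\chi(\x)\chi(\x)^\top\rangle^\top=\langle u,(\chi(\x)\chi(\x)^\top)^\top\rangle=\langle u,\chi(\x)\chi(\x)^\top\rangle=G.
\end{align*}

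There is no real obstacle: the statement is a direct consequence of the definition of $G$ together with commutativity of polynomial multiplication, and the only mild point of bookkeeping is making sure that the block structure is compatible with the transposition, i.e., that the $([k],[\ell])$ block transposes to the $([\ell],[k])$ block, which follows immediately from the entrywise computation above.
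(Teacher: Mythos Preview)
Your proof is correct and takes essentially the same approach as the paper, which treats the statement as immediate from the definition (the paper simply says ``from the above definition we know that'' and states the proposition without further argument). Your entrywise computation and the outer-product observation are exactly the right way to make this explicit.
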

This result  implies that a Gauss--Borel factorization of it, in terms of lower unitriangular
and upper triangular matrices, is a Cholesky factorization.

In terms of quasi-determinants, see \cite{Gelfand2005,Olver2006Multivariate}, we have
\begin{pro}\label{qd1}
	If the last quasi-determinants $ \Theta_*(G^{[k+1]})$, $k\in\{0,1,\dots\}$, of the truncated moment matrices are invertible
	the Cholesky factorization
		\begin{align}\label{cholesky}
		G&=S^{-1} H S^{-\top},
		\end{align}
with
		\begin{align*}
		S^{-1}&=\PARENS{\begin{matrix}
			\I    &             0                &  0                      &  \cdots            \\
			(S^{-1})_{[1],[0]}        & \I&     0                      &   \cdots         \\
			(S^{-1})_{[2],[0]}        & (S^{-1})_{[2],[1]} & \I&      \\
			\vdots                       &        \vdots                  &                               &\ddots
			\end{matrix}}, &
		H&=\PARENS{\begin{matrix}
			H_{[0]}           &   0         &     0         \\
			0                 & H_{[1]} &   0             &    \cdots       \\
			0                  &    0            & H_{[2]} &                        \\
			\vdots   &   \vdots &              &     \ddots       \\
			\end{matrix}},
		\end{align*}
	 and Hermitian quasi-tau matrices $H_{[k]}=(H_{[k]})^\top$, can be performed. Moreover,  the rectangular blocks can be expressed in terms of  last quasi-determinants of truncations of the moment matrix
	\begin{align*}
	H_{[k]}&=\Theta_*(G^{[k+1]}), &
	(S^{-1})_{[k],[l]}&=\Theta_*(G^{[l+1]}_k)\Theta_*(G^{[l+1]})^{-1}.
	\end{align*}
\end{pro}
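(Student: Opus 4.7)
The plan is to proceed by induction on the truncation size $n$, building the factorization of $G^{[n]}$ one block row at a time, and to use the symmetry $G=G^\top$ (the preceding proposition) to force the right factor to equal the transpose of the left. Uniqueness of a block $LDL^\top$ decomposition of a symmetric matrix with unit lower block-triangular $L$ and invertible block-diagonal $D$ is standard and would be recorded first, so only existence needs work.

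For existence, suppose $G^{[k]}=(S^{[k]})^{-1}H^{[k]}(S^{[k]})^{-\top}$ with $H^{[k]}=\diag(H_{[0]},\dots,H_{[k-1]})$ invertible, and write
\[
G^{[k+1]}=\begin{pmatrix} G^{[k]} & v_k\\ v_k^\top & G_{[k],[k]}\end{pmatrix},\qquad v_k=\bigl(G_{[0],[k]},\dots,G_{[k-1],[k]}\bigr)^\top.
\]
Denoting the new row of $S^{-1}$ by $w_k=\bigl((S^{-1})_{[k],[0]},\dots,(S^{-1})_{[k],[k-1]}\bigr)$, the factorization forces $v_k^\top=w_k H^{[k]}(S^{[k]})^{-\top}$ and $H_{[k]}=G_{[k],[k]}-w_k H^{[k]}w_k^\top$. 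The first equation has the unique solution $w_k=v_k^\top (G^{[k]})^{-1}(S^{[k]})^{-1}$, and substitution into the second gives the Schur complement identity
\[
H_{[k]}=G_{[k],[k]}-v_k^\top (G^{[k]})^{-1} v_k=\Theta_*(G^{[k+1]}),
\]
which is invertible by hypothesis and symmetric because $G=G^\top$. This closes the induction and identifies the diagonal blocks with the prescribed quasi-determinants.

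To recover the off-diagonal formula, one solves $w_k H^{[k]}(S^{[k]})^{-\top}=v_k^\top$ block by block. Let $G^{[l+1]}_k$ denote the bordered matrix obtained from $G^{[l+1]}$ by replacing its last block row by $(G_{[k],[0]},\dots,G_{[k],[l]})$. The same Schur-complement computation applied to this bordered matrix yields
\[
\Theta_*(G^{[l+1]}_k)=G_{[k],[l]}-(G_{[k],[0]},\dots,G_{[k],[l-1]})(G^{[l]})^{-1}v_l=(S^{-1})_{[k],[l]}H_{[l]},
\]
and inverting $H_{[l]}=\Theta_*(G^{[l+1]})$ produces the claimed expression. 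The main obstacle is bookkeeping: setting up the bordered-matrix notation $G^{[l+1]}_k$ so that its last quasi-determinant matches exactly the entry produced by block Gaussian elimination, and checking that every intermediate Schur complement is invertible purely from the stated hypothesis that each $\Theta_*(G^{[k+1]})$ is invertible. Once that accounting is clear, the rest reduces to the standard block $LDL^\top$ algorithm combined with the Schur-complement interpretation of the last quasi-determinant.
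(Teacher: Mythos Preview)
Your argument is correct: the inductive block $LDL^\top$ construction together with the Schur-complement identification $H_{[k]}=\Theta_*(G^{[k+1]})$ is exactly the standard route, and your bordered-matrix computation $\Theta_*(G^{[l+1]}_k)=(S^{-1})_{[k],[l]}H_{[l]}$ checks out once one writes the $[l]$-th row of $S$ as $(-v_l^\top(G^{[l]})^{-1},\I)$ and uses $G=G^\top$. The paper itself does not give a proof of this proposition at all; it is listed in the preliminary section as background imported from earlier work (\cite{ariznabarreta2015darboux}), so there is nothing to compare against beyond noting that your write-up supplies the missing details in the expected way.
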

\begin{definition}
	The monic MVOPR associated to the linear functional $u$  are
	\begin{align}\label{eq:polynomials}
	P(\x)&=S\chi(\x) =\PARENS{\begin{matrix}
		P_{[0]}(\x)\\
		P_{[1]}(\x)\\
		\vdots
		\end{matrix}}, & P_{[k]}(\x)&=\sum_{\ell=0}^k S_{[k],[l]} \chi_{[l]}(\x) =\PARENS{\begin{matrix}
		P_{\q^{(k)}_1}(\x)\\
		\vdots\\
		P_{\q^{(k)}_{|[k]|}}(\x)
		\end{matrix}},&
	P_{\q^{(k)}_i}&=\sum_{l=0}^k\sum_{j=1}^{|[l]|} S_{\q^{(k)}_i,\q^{(l)}_j} \x^{\q^{(l)}_j}.
	\end{align}
\end{definition}

Observe that $P_{[k]}(\x)=\chi_{[k]}(\x)+\beta_{[k]}\chi_{[k-1]}(\x)+\cdots$ is a vector constructed with the polynomials $P_{\q_i}(\x)$ of degree  $k$, each of which has only one monomial of degree $k$; i. e., we can write $P_{\q_i}(\x)=\x^{\q_i}+Q_{\q_i}(\x)$, with $\deg Q_{\q_i}<k$. Here $\beta$ is th semi-infinite matrix with all its elements being zero but for its first subdiagonal   $\beta=\operatorname{subdiag}_1(\beta_{[1]},\beta_{[2]},\dots)$ with coefficients given by $\beta_{[k]}\coloneq S_{[k],[k-1]}$.
\begin{pro}[Orthogonality relations]
	The MVOPR  satisfy
		\begin{align*}
		\big\langle u, P_{[k]}(\x)\big(P_{[l]}(\x)\big)^{\top}\big\rangle&=\delta_{k,l}H_{[k]}.
		\end{align*}
		which implies
	\begin{align}
	\big\langle u, P_{[k]}(x)\big(P_{[l]}(\x)\big)^{\top}\big\rangle&=\big\langle u, P_{[k]}(\x)\big(\chi_{[l]}(\x)\big)^{\top}\big\rangle=0, &
	l&=0,1,\dots,k-1,\label{orth}\\
	\big\langle u, P_{[k]}(\x) \big(P_{[k]}(\x)\big)^{\top}\big\rangle&=	\big\langle u, P_{[k]}(\x)\big(\chi_{[k]}(\x)\big)^{\top}\big\rangle=H_{[k]}.\label{H}
	\end{align}
\end{pro}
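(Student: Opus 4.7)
The plan is to reduce the statement to the block content of the Cholesky factorization \eqref{cholesky}. Since $S$ has entries independent of $\x$, the linear functional $u$ acts entrywise and commutes with left/right multiplication by such constant matrices. Using the definition $P(\x)=S\chi(\x)$ from \eqref{eq:polynomials}, I would first compute
\begin{align*}
\bigl\langle u,\, P(\x)\,(P(\x))^{\top}\bigr\rangle
=\bigl\langle u,\, S\chi(\x)(\chi(\x))^{\top}S^{\top}\bigr\rangle
= S\,\bigl\langle u,\, \chi(\x)(\chi(\x))^{\top}\bigr\rangle\, S^{\top}
= S G S^{\top}.
\end{align*}
Then, substituting the Cholesky factorization $G=S^{-1}HS^{-\top}$ from Proposition \ref{qd1}, the right-hand side collapses to $H$. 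Reading off the $([k],[l])$ block, and invoking the block-diagonal structure of $H$ with diagonal blocks $H_{[k]}$, one obtains precisely $\langle u, P_{[k]}(\x)(P_{[l]}(\x))^{\top}\rangle=\delta_{k,l}H_{[k]}$, which is the main identity.

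For the two consequences involving $\chi_{[l]}$, the idea is to expand $\chi$ in the $P$-basis by inverting the first relation in \eqref{eq:polynomials}: since $S$ is lower unitriangular, so is $S^{-1}$, whence
\begin{align*}
\chi_{[l]}(\x)=\sum_{m=0}^{l}(S^{-1})_{[l],[m]}\,P_{[m]}(\x),
\end{align*}
with $(S^{-1})_{[l],[l]}=\I$. Inserting this expansion and using the already established block-diagonal identity yields
\begin{align*}
\bigl\langle u,\, P_{[k]}(\x)(\chi_{[l]}(\x))^{\top}\bigr\rangle
=\sum_{m=0}^{l}\bigl\langle u,\, P_{[k]}(\x)(P_{[m]}(\x))^{\top}\bigr\rangle\bigl((S^{-1})_{[l],[m]}\bigr)^{\top}
=\sum_{m=0}^{l}\delta_{k,m}H_{[k]}\bigl((S^{-1})_{[l],[m]}\bigr)^{\top}.
\end{align*}
For $l<k$ the index $m=k$ never appears in the sum, so the expression vanishes, giving \eqref{orth}. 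For $l=k$ only the term $m=k$ survives, and since $(S^{-1})_{[k],[k]}=\I$ this produces $H_{[k]}$, which is \eqref{H}.

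I do not anticipate any serious obstacle: the proof is essentially a bookkeeping computation once one is comfortable with the blockwise action of the linear functional on matrices with $\x$-independent coefficients and with the lower unitriangular nature of both $S$ and $S^{-1}$. The only mild point worth being careful about is justifying the pullback $\langle u, A\chi(\x)(\chi(\x))^{\top}B\rangle=A\langle u,\chi(\x)(\chi(\x))^{\top}\rangle B$ for constant block matrices $A,B$, which is a direct consequence of the componentwise definition of $\langle u,\cdot\rangle$ on matrix-valued expressions and its linearity on scalar entries.
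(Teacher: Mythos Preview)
Your proof is correct. The paper states this proposition as preliminary material without supplying a proof (it is taken from the references in \S\ref{MV}), so there is no paper proof to compare against; your argument via $SGS^{\top}=H$ and the lower-unitriangular expansion $\chi_{[l]}=\sum_{m\le l}(S^{-1})_{[l],[m]}P_{[m]}$ is the standard one. A minor shortcut for \eqref{orth}--\eqref{H} is to compute directly $\bigl\langle u, P(\x)(\chi(\x))^{\top}\bigr\rangle=SG=HS^{-\top}$, which is block upper triangular with diagonal blocks $H_{[k]}$, giving both identities at once without expanding $\chi_{[l]}$ in the $P$-basis.
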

Therefore,  the following orthogonality conditions
\begin{align*}
	\langle u,  P_{\q^{(k)}_i} (\x) P_{\q^{(l)}_j}(\x)\rangle&=	\langle u, P_{\q^{(k)}_i}(\x) \x^{\q^{(l)}_j}\rangle=0,
\end{align*}
are fulfilled for $l\in\{0,1,\dots,k-1\}$, $i\in\{1,\dots,|[k]|\}$ and $j\in\{1,\dots,|[l]|\}$,
with the normalization conditions
\begin{align*}
	\langle u, P_{\q_i} (\x) P_{\q_j}(\x)\rangle&=	\langle u, P_{\q_i}(\x)  \x^{\q_j}\rangle=H_{\q_i,\q_j}, &  i,j&\in\{1,\dots,|[k]|\}.
\end{align*}

\begin{definition}\label{def: shift_jacobi}
	The spectral matrices are  given by
	\begin{align*}
	\Lambda_a&=\PARENS{\begin{matrix}
		0   & (\Lambda_a)_{[0],[1]} & 0 & 0 &\cdots\\
		0        & 0 &(\Lambda_a)_{[1],[2]} & 0 &\cdots\\
		0                &  0    &          0       & (\Lambda_a)_{[2],[3]}  &  \\
		0                &  0    &          0             &               0         &\ddots  \\
		\vdots        &  \vdots    &  \vdots         &\vdots
		\end{matrix}},& a&\in\{1,\dots,D\},
	\end{align*}
	where the entries in the first block superdiagonal are
	\begin{align*}
	(\Lambda_a)_{\q^{(k)}_i,\q^{(k+1)}_j}&=\delta_{\q^{(k)}_i+\ee_a,\q^{(k+1)}_j},&
	a&\in\{1,\dots, D\}, &
	i&\in\{1,\dots,|[k]|\},&
	j&\in\{1,\dots,|[k+1]|\},
	\end{align*}
	and the associated vector
	\begin{align*}
	 \boldsymbol \Lambda&\coloneq (\Lambda_1,\dots,\Lambda_D)^\top.
	 \end{align*}
	 Finally, we introduce the Jacobi matrices
	 \begin{align}
	 \label{eq:jacobi}
J_a\coloneq &S\Lambda_a S^{-1}, & a&\in\{1,\dots,D\},
	 \end{align}
	 and the Jacobi  vector
	 \begin{align*}
	 \boldsymbol J=(J_1,\dots,J_D)^\top.
	 \end{align*}
\end{definition}
\begin{pro}\label{pro:Lambda}
	\begin{enumerate}
		\item  The spectral matrices commute among them
		\begin{align*}
		\Lambda_a\Lambda_b&=\Lambda_b\Lambda_a,& a,b&\in\{1,\dots,D\}.
		\end{align*}
		\item The spectral  properties
		\begin{align}\label{eigen}
		\Lambda_a\chi(\x)&= x_a \chi(\x),& a&\in\{1,\dots,D\}
		\end{align}
		hold.
		\item The moment matrix $G$ satisfies
		\begin{align}\label{eq:symmetry}
		\Lambda_a G&= G \big(\Lambda_a\big)^\top,& a&\in\{1,\dots,D\}.
		\end{align}
		\item The Jacobi matrices $J_a$ are block tridiagonal and satisfy
		\begin{align*}
		J_aH=&HJ_a^\top, & a\in\{1,\dots,D\}.
		\end{align*}
	\end{enumerate}
\end{pro}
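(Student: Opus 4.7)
The proposition bundles four compatible facts about the shift operators $\Lambda_a$, the monomial vector $\chi$, and the moment/Jacobi matrices. I would prove them in the stated order, since each successive item feeds on the previous ones.

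For item (i), the plan is purely combinatorial: from the defining matrix elements $(\Lambda_a)_{\q^{(k)}_i,\q^{(k+1)}_j}=\delta_{\q^{(k)}_i+\ee_a,\q^{(k+1)}_j}$ the block entry of $\Lambda_a\Lambda_b$ at position $(\q^{(k)}_i,\q^{(k+2)}_j)$ collapses, after summing over the intermediate index $\q^{(k+1)}_m$, to $\delta_{\q^{(k)}_i+\ee_a+\ee_b,\q^{(k+2)}_j}$, which is manifestly symmetric under $a\leftrightarrow b$. Item (ii) is immediate from $x_a\x^{\q}=\x^{\q+\ee_a}$: reading this componentwise in $\chi$ reproduces the action of $\Lambda_a$.

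Item (iii) is the key algebraic lemma and follows by the standard ``dressing'' argument applied to $G=\langle u,\chi\chi^{\top}\rangle$. Using (ii) on both sides,
\begin{align*}
\Lambda_a G&=\langle u,(\Lambda_a\chi(\x))\chi(\x)^{\top}\rangle=\langle u, x_a\chi(\x)\chi(\x)^{\top}\rangle,\\
G\Lambda_a^{\top}&=\langle u,\chi(\x)(\Lambda_a\chi(\x))^{\top}\rangle=\langle u, x_a\chi(\x)\chi(\x)^{\top}\rangle,
\end{align*}
so the two expressions coincide.

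For item (iv) I would substitute the Cholesky factorization $G=S^{-1}HS^{-\top}$ into (iii) and conjugate: $\Lambda_a S^{-1}HS^{-\top}=S^{-1}HS^{-\top}\Lambda_a^{\top}$ rearranges to $J_aH=H J_a^{\top}$, using the definition $J_a=S\Lambda_aS^{-1}$ and the block-diagonality of $H$. It remains to show $J_a$ is block tridiagonal. The ``easy'' direction of this is that $\Lambda_a$ is strictly one block above the diagonal and $S,S^{-1}$ are block lower unitriangular, so a short index computation gives that $(J_a)_{[k],[\ell]}=0$ for $\ell>k+1$, i.e.\ $J_a$ is block upper Hessenberg with only one superdiagonal block. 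The main obstacle is the other direction: ruling out entries arbitrarily far below the diagonal. For this I would exploit $J_aH=H J_a^{\top}$: since $H$ is block diagonal and invertible, this relation yields $J_a^{\top}=H^{-1}J_aH$, so $J_a^{\top}$ inherits the Hessenberg structure of $J_a$, forcing $(J_a)_{[k],[\ell]}=0$ whenever $k>\ell+1$ as well. Combined with the upper-Hessenberg bound, only the three block diagonals $\ell\in\{k-1,k,k+1\}$ survive, giving block tridiagonality.
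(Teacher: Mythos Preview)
The paper states this proposition without proof, so there is nothing to compare against directly. Your argument is correct and follows the natural route: items (i) and (ii) are direct from the definition of the shift matrices, item (iii) is the ``pull the shift inside the functional'' computation using (ii), and item (iv) combines the Cholesky factorization with (iii) to get $J_aH=HJ_a^\top$, then uses the Hessenberg-plus-symmetry trick to force block tridiagonality. This is exactly the standard proof one would expect; it is essentially the same argument the authors give in their earlier paper \cite{ariznabarreta2014multivariate}, to which they refer for this preliminary material.
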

\begin{definition}\label{definition:CD}
	The Christoffel--Darboux kernel is
	\begin{align*}
	K_n(\x,\y):=\sum_{m=0}^n \big(P_{[m]}(\x)\big)^\top (H_{[m]})^{-1}P_{[m]}(\y)
	\end{align*}
	In terms of the Christoffel--Darboux kernel and a linear functional $u\in\mathcal O_M'$ we define the operator acting on $\mathcal O_M$ as follows
	\begin{align*}
	S_n(f)(\x):=\left\langle
	u, f(\y)K_n(\y,\x) 
	\right\rangle.
	\end{align*}
\end{definition}
\begin{pro}
	\begin{enumerate}
		\item 	If $P(\x)=\sum_{j\geq 0} c_{[j]} P_{[j]}(\x)\in\C[\x]\subset \mathcal O_M$ is an arbitrary multivariate polynomial of degree $n$, we have
		\begin{align}\label{eq:projection}
		S_n(P)(\x)
		&=\sum_{m=0}^n c_{[m]}P_{[m]}(\x).
		\end{align} 
		\item For any vector $\n\in\C^D$,  the following Christoffel--Darboux formula is fulfilled
		\begin{multline*}
		\big(\n\cdot (\x-\y)\big) K_n(\x,\y)\\=\big(P_{[n+1]}(\x)\big)^\dagger \Big( (\n\cdot\boldsymbol{\Lambda })_{[n],[n+1]}\Big)^\top (H_{[n]})^{-1}P_{[n]}(\y)-\big(P_{[n]}(\x)\big)^\dagger (H_{[n]})^{-1}(\n\cdot\boldsymbol{\Lambda })_{[n],[n+1]}P_{[n+1]}(\y).
		\end{multline*}
	\end{enumerate}
\end{pro}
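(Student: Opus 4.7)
My strategy is to insert the expansion $P(\y)=\sum_{j\ge 0}c_{[j]}P_{[j]}(\y)$ directly into $S_n(P)(\x)=\langle u,P(\y)K_n(\y,\x)\rangle$, exchange the \emph{finite} sum defining $K_n$ with the linear functional, and invoke the orthogonality relations \eqref{orth}--\eqref{H}, which give $\langle u,P_{[j]}(\y)\,(P_{[m]}(\y))^{\top}\rangle=\delta_{j,m}H_{[m]}$. The double sum over $(j,m)$ then collapses to its diagonal, the factor $H_{[m]}(H_{[m]})^{-1}$ is the identity, and since the kernel truncates at $m=n$ only the coefficients $c_{[0]},\ldots,c_{[n]}$ survive. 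This yields \eqref{eq:projection} immediately; no subtlety arises because all objects lie in $\mathcal O_M$ and the relevant sum is finite.

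\textbf{Plan for part (2).} I would follow the classical telescoping proof of Christoffel--Darboux, adapted to the block-tridiagonal setting provided by Proposition~\ref{pro:Lambda}. Since $P=S\chi$ and $\Lambda_a\chi=x_a\chi$ by \eqref{eigen}, the Jacobi matrix $J_a=S\Lambda_aS^{-1}$ implements coordinate multiplication,
\begin{align*}
x_aP_{[m]}(\x)=(J_a)_{[m],[m-1]}P_{[m-1]}(\x)+(J_a)_{[m],[m]}P_{[m]}(\x)+(J_a)_{[m],[m+1]}P_{[m+1]}(\x),
\end{align*}
and analogously in $\y$. Plugging both expressions into $(\n\cdot(\x-\y))K_n(\x,\y)$ produces six sums, indexed by a block-shift $\epsilon\in\{-1,0,+1\}$ on each side.

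Then I exploit the symmetry $J_aH=HJ_a^{\top}$ of Proposition~\ref{pro:Lambda}, which I rewrite equivalently as
\begin{align*}
(J_a)_{[m],[m]}^{\top}(H_{[m]})^{-1}=(H_{[m]})^{-1}(J_a)_{[m],[m]},\qquad (J_a)_{[m],[m+1]}^{\top}(H_{[m]})^{-1}=(H_{[m+1]})^{-1}(J_a)_{[m+1],[m]}.
\end{align*}
The first identity kills the diagonal ($\epsilon=0$) contributions. After relabeling summation indices, the second identifies the $\epsilon=-1$ piece of $(\n\cdot\x)K_n$ with the $\epsilon=+1$ piece of $(\n\cdot\y)K_n$ on the overlap $m=0,\dots,n-1$, so everything cancels except two boundary terms at $m=n$, involving $(J_a)_{[n],[n+1]}$ and $(J_a)_{[n+1],[n]}$. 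Because $S$ and $S^{-1}$ are block lower unitriangular, one reads off $(J_a)_{[n],[n+1]}=(\Lambda_a)_{[n],[n+1]}$; a final application of the same symmetry rewrites $(H_{[n+1]})^{-1}(J_a)_{[n+1],[n]}$ as $(\Lambda_a)_{[n],[n+1]}^{\top}(H_{[n]})^{-1}$, and the two surviving terms assemble into the stated formula.

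\textbf{Main obstacle.} The routine risk is the bookkeeping: six sums with shifted indices and two distinct specializations of $J_aH=HJ_a^{\top}$ to apply in the right places. The conceptually crucial step is the boundary substitution $(J_a)_{[n],[n+1]}=(\Lambda_a)_{[n],[n+1]}$, which converts a ``would-be'' formula in $\boldsymbol J$ into the stated formula in $\boldsymbol\Lambda$; this must be justified from the unitriangularity of $S$ and is the only place where the block-$LU$ structure enters essentially.
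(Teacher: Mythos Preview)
The paper does not actually supply a proof of this proposition: it is stated in the preliminary material of \S\ref{MV} with a reference to \cite{ariznabarreta2014multivariate}, and the next line begins \S\ref{S:2}. Your plan is correct and is the standard argument one would expect; in particular, the telescoping via the block three-term recurrence together with $J_aH=HJ_a^{\top}$, and the identification $(J_a)_{[n],[n+1]}=(\Lambda_a)_{[n],[n+1]}$ from the lower unitriangularity of $S$, are exactly the right ingredients.
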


\section{Geronimus  transformations}\label{S:2}
In this section a Geronimus transformation for MVOPR is discussed, if we  understand the Christoffel transformation as the perturbation by the multiplication by a polynomial, its right inverse, the Geronimus transformation, might be thought as the perturbation obtained by dividing by a polynomial. We also need a discrete part concentrated at the zeroes of the  polynomial denominator, now an algebraic hypersuface.

\subsection{Geronimus transformations in the multivariate scenario}
Given a polynomial $\mathcal Q_2(\x)\in\C[\x]$ we may consider its principal ideal
\begin{align*}
(\mathcal Q_2)\coloneq\big\{\mathcal Q_2(\x)P(\x): P(\x)\in\C[\x]\big\}.
\end{align*}
This ideal is closely related to the algebraic hypersurface in $\C^D$ of its zero set
\begin{align*}
  Z(\mathcal Q_2):=\{\x\in \C^D: P(\x)=0\}.
\end{align*}
The kernel of a linear functional $v\in\big(\R[\x]\big)'$ is defined by
\begin{align*}
\operatorname{Ker}(v)\coloneq\big\{P(\x)\in\C[\x]: \langle v,P(\x)\rangle=0 \big\}.
\end{align*}

We know that $\C[\x]$ acts on $(\C[\x])'$ by left multiplication, but for the transformations we are dealing with we also need the notion of division by polynomials.
\begin{definition}
	Given fastly decreasing generalized function $u\in\mathcal O_c'$  and a polynomial $\mathcal Q_2(\x)\in\C[\x]$, such that $Z(\mathcal Q_2)\cap \operatorname{supp}(u)=\varnothing$,
 the set of  all the linear functionals  $\check u\in\big(\C[\x]\big)'$ such that
	\begin{align}\label{eq:lf_geronimus}
	\mathcal Q_2 \check u&= u,
	\end{align}
	is called 	its   Geronimus transformation. 
\end{definition}

Notice that there is not a unique linear functional $\check u\in(\C[\x])'$ satisfying such a requirement. Indeed,  suppose that  a solution  is found and denote it by  $\frac{u}{\mathcal Q_2}$, then all possible perturbations $\check u$ verifying \eqref{eq:lf_geronimus} will have the form
\begin{align}\label{eq:Geronimus}
\check u=\frac{u}{\mathcal Q_2}+v,
\end{align}
where the linear functional $v\in(\C[\x])'$ is such that $(\mathcal Q_2)\subseteq \operatorname{Ker}(v)$; i.e.,
\begin{align*}
\mathcal Q_2 v=0.
\end{align*}
For example, given a positive Borel measure $\d\mu(\x)$ and the associated linear functional
\begin{align*}
\langle u, P(\x)\rangle=\int P(\x)\d\mu(\x),
\end{align*}
we can choose $\frac{u}{\mathcal Q_2}\in(\C[\x])'$ as the following linear functional
\begin{align*}
\Big\langle\frac{u}{\mathcal Q_2},P(\x)\Big\rangle=\int P(\x)\frac{\d\mu(\x)}{\mathcal Q_2(\x)},
\end{align*}
which makes sense if $Z(\mathcal Q_2)\cap \operatorname{supp}(\d\mu)=\varnothing$.
Any multivariate polynomial has a unique, up to constants, factorization in terms of prime polynomials
\begin{align*}
\mathcal Q_2 (\x)= \prod_{i=1}^{N}(\mathcal Q_{2,i}(\x))^{d_i},
\end{align*}
where $\mathcal Q_{2,i}$ are prime polynomials for $i\in\{1,\dots D\}$ and the multiplicities $\{d_1,\dots,d_N\}$ are positive integers such hat $m_2=\deg \mathcal Q_2=d_1\deg\mathcal Q_{2,1}+\dots+d_2\deg\mathcal Q_{2,N}$.
Let us consider for each prime factor $\mathcal Q_{2,i}$, $i\in\{1,\dots,N\}$  a set of measures $\big\{\d\xi_{i,\q}\big\}_{\substack{\q\in\Z_+^D\\
|\q|<d_i}}$ with $\operatorname{supp}\big(\d\xi_{i,\q} \big)\subseteq Z(\mathcal Q_{2,i})$. Then, a  linear functional $v$  of the form
\begin{align}\label{eq:v_general}
\langle v, P(\x)\rangle=\sum_{i=1}^N\sum_{\substack{\q\in\Z_+^D\\|\q|<d_i}}\int_{Z(\mathcal Q_{2,i})}\frac{\partial^\q P}{\partial \x^\q}(\x) \d\xi_{i,\q},
\end{align}
is such that $(\mathcal Q_2)\subseteq \operatorname{Ker}(v)$.

In the $D=1$ context, where up to constants
 $\mathcal Q_2(x)=(x-q_1)^{d_1}\cdots(x-q_{N})^{d_N}$, with different roots $\{q_,\dots,q_N\}$, and multiplicitities $\{d_1,\dots,d_N\}$ such that
$d_1+\dots+d_N=m_2$, the most general form of $v$ is, in terms of the Dirac linear functional $\delta$ and its derivatives, given by
\begin{align}\label{eq:nu}
v&=\sum_{i=1}^{N}\sum_{j=0}^{d_i-1}\zeta_i^{(j)}\delta^{(j)}(x-q_i), & \zeta_i^{(j)}&\in\R.
\end{align}
Observe that for multiplicities greater than 1 we have linear functionals of higher order and therefore not linked to measures, which are linear functionals of order zero.

From hereon we assume that both linear functionals $u$ and $\check u$ give rise to well defined families of MVOPR, equivalently that all their moment matrix block minors are nonzero $\det G^{[k]}\neq 0$, $\det \check G^{[k]}\neq 0$, $\forall k\in\{1,2,\dots\}$.
\begin{pro}\label{pro:fac_TG}
The moment matrices $\check G$  and $G$, of the perturbed linear functional $\check u$ and unperturbed  linear functional $u$, respectively, satisfy
	\begin{align}\label{eq:G-Geronimus}
\mathcal Q_2(\boldsymbol \Lambda)	\check G= \check G \mathcal Q_2(\boldsymbol \Lambda^\top)=G.
	\end{align}
\end{pro}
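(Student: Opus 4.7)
The plan is to prove both equalities separately, each a short computation grounded in two facts already available in the excerpt: the spectral property $\Lambda_a\chi(\x)=x_a\chi(\x)$ from Proposition \ref{pro:Lambda}\,(2), and the symmetry $\Lambda_a G = G\Lambda_a^\top$ from Proposition \ref{pro:Lambda}\,(3), which is nothing but a consequence of $G=G^\top$ together with $\chi\chi^\top$ being symmetric. The defining relation \eqref{eq:lf_geronimus} for a Geronimus transformation, namely $\mathcal Q_2\check u = u$, is the only ingredient relating $u$ and $\check u$.

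First I would establish the left equality $\mathcal Q_2(\boldsymbol\Lambda)\check G = G$. Starting from the definition of $G$ and using $\mathcal Q_2\check u=u$, one has
\begin{align*}
G = \langle u,\chi(\x)\chi(\x)^\top\rangle = \langle \mathcal Q_2 \check u,\chi(\x)\chi(\x)^\top\rangle = \langle \check u,\mathcal Q_2(\x)\chi(\x)\chi(\x)^\top\rangle.
\end{align*}
Now, since each $\Lambda_a$ satisfies $\Lambda_a\chi(\x)=x_a\chi(\x)$, the commuting family $\{\Lambda_a\}$ gives $\mathcal Q_2(\boldsymbol\Lambda)\chi(\x)=\mathcal Q_2(\x)\chi(\x)$ by polynomial substitution; pulling the (constant, $\x$-independent) matrix $\mathcal Q_2(\boldsymbol\Lambda)$ out of the functional yields
\begin{align*}
G = \mathcal Q_2(\boldsymbol\Lambda)\langle \check u,\chi(\x)\chi(\x)^\top\rangle = \mathcal Q_2(\boldsymbol\Lambda)\check G.
\end{align*}

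For the right equality $\mathcal Q_2(\boldsymbol\Lambda)\check G = \check G\mathcal Q_2(\boldsymbol\Lambda^\top)$, I would simply observe that $\check G$, being itself the moment matrix of a linear functional on $\C[\x]$, enjoys the same symmetry as $G$: for every $a\in\{1,\dots,D\}$ one has $\Lambda_a\check G=\check G\Lambda_a^\top$, by Proposition \ref{pro:Lambda}\,(3) applied to $\check u$. Since the $\Lambda_a$ commute among themselves, this symmetry propagates to any polynomial in $\boldsymbol\Lambda$, giving $\mathcal Q_2(\boldsymbol\Lambda)\check G=\check G\mathcal Q_2(\boldsymbol\Lambda^\top)$.

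There is essentially no obstacle here: both identities follow formally from the eigenvalue-type action of $\boldsymbol\Lambda$ on the monomial vector $\chi(\x)$ and from the symmetry of moment matrices. The only mild subtlety is the legitimacy of moving $\mathcal Q_2(\boldsymbol\Lambda)$ past the functional $\langle \check u,\cdot\rangle$, which is justified entry-wise (each matrix entry is a finite linear combination of moments $\langle\check u,\x^{\boldsymbol\alpha}\rangle$), and the tacit assumption that $\check u$ is well-defined and has moments of all orders, which is granted from hereon in the excerpt.
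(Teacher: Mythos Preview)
Your proof is correct and follows essentially the same approach as the paper: both use the spectral property $\mathcal Q_2(\boldsymbol\Lambda)\chi(\x)=\mathcal Q_2(\x)\chi(\x)$ together with the defining relation $\mathcal Q_2\check u=u$ to obtain $\mathcal Q_2(\boldsymbol\Lambda)\check G=G$. You additionally spell out the middle equality $\mathcal Q_2(\boldsymbol\Lambda)\check G=\check G\,\mathcal Q_2(\boldsymbol\Lambda^\top)$ via the symmetry of $\check G$, which the paper leaves implicit.
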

\begin{proof}
	It is a direct consequence of the spectral property $\mathcal Q_2(\boldsymbol{\Lambda})\chi(\x)=\mathcal Q_2(\x)\chi(\x)$, that is deduced from \eqref{eigen}. Indeed,
	\begin{align*}
	\mathcal Q_2(\boldsymbol \Lambda)\big\langle \check u,\chi(\x)\big(\chi(\x)\big)^\top\big\rangle&=
	\big\langle \check u, \mathcal Q_2(\x)\chi(\x)\big(\chi(\x)\big)^\top\big\rangle\\
	&=	\big\langle \mathcal Q_2\check u, \chi(\x)\big(\chi(\x)\big)^\top\big\rangle \\
	&=\langle  u,\chi(\x)\big(\chi(\x)\big)^\top\rangle&
	\text{  use \eqref{eq:lf_geronimus}.}
		\end{align*}
\end{proof}
Let us notice that for a given semi-infinite matrix $G$ there is not a a unique $\check G$ satisfying \eqref{eq:G-Geronimus}. In fact, observe that given any generalized function $v$ of the form \eqref{eq:v_general} and any semi-infinite block vector $\zeta=(\zeta_0,\zeta_1,\dots)^\top$, $\zeta_i\in\R$, we have
\begin{align*}
\mathcal Q_2(\boldsymbol \Lambda)\big\langle v, \chi(\x)\zeta^\top\big\rangle=0.
\end{align*}
and if $\check G$ satisfies \eqref{eq:G-Geronimus} so does $\check G+ \big\langle v, \chi(\x)\zeta^\top\big\rangle$.
\subsection{Resolvents and connection formul{\ae}}
\begin{definition}
The resolvent  matrices are
	\begin{align*}
	\omega_1\coloneq&\check SS^{-1}, & 	(\omega_2)^\top&\coloneq S\mathcal Q_2(\boldsymbol\Lambda)(\check S)^{-1},
	\end{align*}
	given in terms of the lower unitriangular block semi-infinite matrices $S$ and $\check S$ of the Cholesky factorizations of the moment matrices $G=S^{-1}H(S^{-1})^\top$ and  $\check G=(\check S)^{-1}(\check H)(\check S^{-1})^\top$, respectively.
\end{definition}

\begin{pro}
	We have that
	\begin{align}\label{eq:M-omega}
	\check H\omega_2=\omega_1H.
	\end{align}
\end{pro}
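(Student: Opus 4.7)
The plan is to start from the key spectral identity
\[
\mathcal Q_2(\boldsymbol\Lambda)\,\check G = G
\]
established in Proposition \ref{pro:fac_TG}, insert the two Cholesky factorizations $G = S^{-1} H S^{-\top}$ and $\check G = \check S^{-1}\check H\,\check S^{-\top}$ on both sides, and then rearrange so that the resolvents $\omega_1$ and $\omega_2$ appear explicitly.

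First I would substitute to obtain
\[
\mathcal Q_2(\boldsymbol\Lambda)\,\check S^{-1}\check H\,\check S^{-\top} = S^{-1} H S^{-\top}.
\]
Then I would left-multiply by $S$ and right-multiply by $\check S^{\top}$, so that $\check S^{-\top}\check S^{\top} = \I$ on the left-hand side and $S^{-\top}\check S^{\top}$ remains on the right. This yields
\[
\bigl(S\,\mathcal Q_2(\boldsymbol\Lambda)\,\check S^{-1}\bigr)\check H \;=\; H\bigl(S^{-\top}\check S^{\top}\bigr).
\]
By the very definition of the resolvents, the left parenthesis is $\omega_2^{\top}$ and the right one is $(\check S S^{-1})^{\top} = \omega_1^{\top}$, so the identity reads $\omega_2^{\top}\check H = H\,\omega_1^{\top}$.

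To finish, I would transpose and invoke the symmetry of the quasi-tau matrices stated in Proposition \ref{qd1}: since $H$ and $\check H$ are block diagonal with symmetric blocks $H_{[k]}=H_{[k]}^{\top}$ and $\check H_{[k]}=\check H_{[k]}^{\top}$, we have $H^{\top}=H$ and $\check H^{\top}=\check H$, and taking the transpose of $\omega_2^{\top}\check H = H\omega_1^{\top}$ immediately gives $\check H\,\omega_2 = \omega_1 H$, as required.

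I do not expect any serious obstacle here: the argument is essentially one line once the Cholesky decompositions are substituted into the intertwining relation, and the only non-trivial ingredient is the symmetry of $H$ and $\check H$, which is already in hand. The only care needed is to keep the orders of $S$, $\check S$, and $\mathcal Q_2(\boldsymbol\Lambda)$ straight (in particular, that $\omega_2$ is defined through its transpose and that the factor $\mathcal Q_2(\boldsymbol\Lambda)$ sits on the $S$-side, not on the $\check S$-side).
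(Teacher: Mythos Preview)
Your proposal is correct and is exactly the approach the paper takes: substitute the Cholesky factorizations into the intertwining relation of Proposition~\ref{pro:fac_TG} and clean up. The paper's own proof is a one-liner that simply cites these two ingredients, so you have in fact supplied more detail than the original. A tiny streamlining: if you start instead from the equivalent form $\check G\,\mathcal Q_2(\boldsymbol\Lambda^\top)=G$ and multiply on the left by $\check S$ and on the right by $S^{\top}$, you obtain $\check H\omega_2=\omega_1 H$ directly, without having to transpose and invoke the symmetry of $H$ and $\check H$.
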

\begin{proof}
	It follows from the Cholesky factorization of $G$ and $\check G$ and from \eqref{pro:fac_TG}.
\end{proof}
We now decompose the perturbing multidimensional polynomial $\mathcal Q_2$ in its homogeneous parts $\mathcal Q_2(\x)=\sum_{n=0}^{m_2}\mathcal Q_2^{(n)}(\x)$ where $\mathcal Q_2^{(n)}(\x)$ are homogeneous polynomials of degree $n$, i.e.,
$\mathcal Q_2^{(n)}(s\x)=s^n\mathcal Q_2^{(n)}(\x)$, for all $s\in\R$.
\begin{pro}\label{pro:superdiagonals}
	In terms of block subdiagonals the adjoint resolvent $\omega_1$ can be expressed as follows
\begin{align*}
  \omega_1=&\underbracket{\check H\mathcal Q_2^{(m_2)}(\boldsymbol{\Lambda}^\top)H^{-1}}_{\text{$m_2$-th subdiagonal}}\\&+
  \underbracket{\check H \Big(\mathcal Q_2^{(m_2-1)}(\boldsymbol{\Lambda}^\top)+\mathcal Q_2^{(m_2)}(\boldsymbol{\Lambda}^\top)\beta^\top-\hat \beta^\top \mathcal Q_2^{(m_2)}\big(\boldsymbol{\Lambda}^\top\big)\Big)H^{-1}
  }_{\text{$(m_2-1)$-th subdiagonal}}\\&\shortvdotswithin{+}&+
  \underbracket{\,\I\,}_{\text{diagonal}}
\end{align*}
\end{pro}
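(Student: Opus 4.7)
The plan is to read off the block subdiagonal structure of $\omega_1$ directly from the intertwiner $\omega_1 H=\check H\,\omega_2$ of \eqref{eq:M-omega}, combined with the explicit factorization of $\omega_2$. Transposing the defining relation $\omega_2^{\top}=S\,\mathcal Q_2(\boldsymbol\Lambda)\,\check S^{-1}$ and invoking the commutativity of the shift matrices $\Lambda_1,\ldots,\Lambda_D$ from Proposition \ref{pro:Lambda}, which yields $\bigl(\mathcal Q_2(\boldsymbol\Lambda)\bigr)^{\top}=\mathcal Q_2(\boldsymbol\Lambda^{\top})$, one obtains
\[
\omega_1 \;=\; \check H\,\check S^{-\top}\,\mathcal Q_2(\boldsymbol\Lambda^{\top})\,S^{\top}\,H^{-1}.
\]
The diagonal part $\omega_1\big|_{\text{diag}}=\I$ is immediate from the alternative expression $\omega_1=\check S\,S^{-1}$ as a product of block lower unitriangular matrices.

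Next I would decompose each of the three middle factors by block-shift degree. Write $S^{\top}=\I+\beta^{\top}+\cdots$ and $\check S^{-\top}=\I-\hat\beta^{\top}+\cdots$, both upper unitriangular (the dots denoting second block superdiagonal and higher, and the minus sign coming from the Neumann expansion $\check S^{-1}=\I-\hat\beta+\cdots$), together with the homogeneous decomposition $\mathcal Q_2(\boldsymbol\Lambda^{\top})=\sum_{n=0}^{m_2}\mathcal Q_2^{(n)}(\boldsymbol\Lambda^{\top})$, the $n$-th summand being supported on the $n$-th block subdiagonal because each $\Lambda_a^{\top}$ is itself a first-subdiagonal shift. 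A triple picked from these three expansions contributes to the $n$-th block subdiagonal of $\omega_2$ precisely when the superdiagonal degrees pulled from $\check S^{-\top}$ and $S^{\top}$, together with the subdiagonal degree pulled from $\mathcal Q_2(\boldsymbol\Lambda^{\top})$, balance out to a net downward shift of $n$. Since only $\mathcal Q_2(\boldsymbol\Lambda^{\top})$ can shift down and does so by at most $m_2$, this argument also confirms that $\omega_1$ has block lower bandwidth $m_2$, matching the structural form of the proposition.

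For $n=m_2$ the unique admissible triple is $(\I,\,\mathcal Q_2^{(m_2)}(\boldsymbol\Lambda^{\top}),\,\I)$, and conjugating by $\check H$ on the left and $H^{-1}$ on the right yields the claimed top subdiagonal $\check H\,\mathcal Q_2^{(m_2)}(\boldsymbol\Lambda^{\top})\,H^{-1}$. For $n=m_2-1$ there are exactly three admissible triples, namely $(\I,\mathcal Q_2^{(m_2-1)}(\boldsymbol\Lambda^{\top}),\I)$, $(-\hat\beta^{\top},\mathcal Q_2^{(m_2)}(\boldsymbol\Lambda^{\top}),\I)$ and $(\I,\mathcal Q_2^{(m_2)}(\boldsymbol\Lambda^{\top}),\beta^{\top})$, whose sum conjugated by $\check H$ and $H^{-1}$ reproduces the announced second formula. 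The intermediate subdiagonals hidden by the vertical dots admit the same type of expansion, with combinatorially richer sums. The main effort is purely bookkeeping: enumerating all admissible triples with their correct signs at each subdiagonal depth; the only genuine analytical input beyond this is \eqref{eq:M-omega} together with the spectral property \eqref{eigen} of $\boldsymbol\Lambda$.
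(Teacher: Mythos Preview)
Your argument is correct and is essentially the same as the paper's: the paper simply notes that $\omega_1=\check S S^{-1}$ is block lower unitriangular, that $(\omega_2)^{\top}=S\,\mathcal Q_2(\boldsymbol\Lambda)\,\check S^{-1}$ has at most $m_2$ block superdiagonals, and then invokes \eqref{eq:M-omega}. You have made the final step explicit by writing $\omega_1=\check H\,\check S^{-\top}\mathcal Q_2(\boldsymbol\Lambda^{\top})S^{\top}H^{-1}$ and reading off the subdiagonals term by term, which is exactly the bookkeeping hidden behind the paper's one-line ``the result follows from \eqref{eq:M-omega}''.
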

\begin{proof}
The  resolvent $\omega_1$ is a block lower unitriangular semi-infinite matrix and the adjoint resolvent $(\omega_2)^\top$ has all its superdiagonals but for the first $m$ equal to zero. The result follows from \eqref{eq:M-omega}.
\end{proof}
Incidentally, and not essential for further developments in this paper, we have the following two Propositions regarding Jacobi matrices
\begin{pro}\label{pro:jacobi-LU}
The following $UL$ and $LU$ factorizations
	\begin{align*}
\mathcal Q_2(\boldsymbol J)=&(\omega_2)^\top\omega_1,  &
\mathcal Q_2(\boldsymbol{\check{ J}})=&\omega_1 (\omega_2)^\top ,
\end{align*}
hold.
\end{pro}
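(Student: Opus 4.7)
The plan is to unfold the two resolvent definitions and observe that both identities collapse to a conjugation identity for the matrix polynomial $\mathcal Q_2(\boldsymbol{\Lambda})$ by $S$ and by $\check S$, respectively. Concretely, using $\omega_1=\check S S^{-1}$ and $(\omega_2)^\top = S\mathcal Q_2(\boldsymbol{\Lambda})\check S^{-1}$, I would compute directly
\begin{align*}
(\omega_2)^\top \omega_1 &= S\mathcal Q_2(\boldsymbol{\Lambda})\check S^{-1}\cdot \check S S^{-1}=S\mathcal Q_2(\boldsymbol{\Lambda})S^{-1},\\
\omega_1 (\omega_2)^\top &= \check S S^{-1}\cdot S\mathcal Q_2(\boldsymbol{\Lambda})\check S^{-1}=\check S\mathcal Q_2(\boldsymbol{\Lambda})\check S^{-1},
\end{align*}
so that the problem reduces to identifying $S\mathcal Q_2(\boldsymbol{\Lambda})S^{-1}$ with $\mathcal Q_2(\boldsymbol J)$ and $\check S\mathcal Q_2(\boldsymbol{\Lambda})\check S^{-1}$ with $\mathcal Q_2(\boldsymbol{\check J})$.

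For that identification I would invoke Proposition~\ref{pro:Lambda}(1), which guarantees that the spectral matrices $\Lambda_1,\dots,\Lambda_D$ pairwise commute, so the matrix polynomial $\mathcal Q_2(\boldsymbol{\Lambda})=\sum_{\q}c_\q\Lambda_1^{\alpha_1}\cdots\Lambda_D^{\alpha_D}$ is unambiguously defined. Conjugation by the invertible matrix $S$ (respectively $\check S$) is an algebra homomorphism on the commutative subalgebra generated by the $\Lambda_a$, hence
\begin{align*}
S\mathcal Q_2(\boldsymbol{\Lambda})S^{-1}&=\mathcal Q_2\bigl(S\Lambda_1 S^{-1},\dots,S\Lambda_D S^{-1}\bigr)=\mathcal Q_2(\boldsymbol J),\\
\check S\mathcal Q_2(\boldsymbol{\Lambda})\check S^{-1}&=\mathcal Q_2\bigl(\check S\Lambda_1\check S^{-1},\dots,\check S\Lambda_D \check S^{-1}\bigr)=\mathcal Q_2(\boldsymbol{\check J}),
\end{align*}
by definition \eqref{eq:jacobi} of the Jacobi matrices. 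Since $J_aJ_b=S\Lambda_a\Lambda_bS^{-1}=S\Lambda_b\Lambda_aS^{-1}=J_bJ_a$ (and analogously for $\check J$), these substitutions are themselves unambiguous.

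There is essentially no obstacle here beyond bookkeeping: the statement follows from a single line of matrix algebra together with the commutation of the spectral matrices. The only point worth stressing in the write-up is the validity of the functional calculus for the commuting families $\{\Lambda_a\}$, $\{J_a\}$ and $\{\check J_a\}$, so that writing $\mathcal Q_2(\boldsymbol J)$ and $\mathcal Q_2(\boldsymbol{\check J})$ as $UL$ and $LU$ products, respectively, is meaningful and consistent.
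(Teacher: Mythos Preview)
Your proof is correct and essentially equivalent to the paper's: the paper invokes Proposition~\ref{pro:fac_TG} together with the two Cholesky factorizations to write $\mathcal Q_2(\boldsymbol \Lambda)(\check S)^{-1}\check H(\check S^{-1})^\top = S^{-1} H (S^{-1})^\top$ and then rearranges, whereas you go straight from the resolvent definitions to the conjugation identities $S\mathcal Q_2(\boldsymbol\Lambda)S^{-1}=\mathcal Q_2(\boldsymbol J)$ and $\check S\mathcal Q_2(\boldsymbol\Lambda)\check S^{-1}=\mathcal Q_2(\boldsymbol{\check J})$. Your route is slightly more direct since it never needs the moment-matrix relation, but the underlying algebra is the same.
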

\begin{proof}
Both  follow from Proposition \ref{pro:fac_TG} and the Cholesky factorization which imply
\begin{align*}
\mathcal Q_2(\boldsymbol \Lambda)(\check S)^{-1}\check H(\check S^{-1})^\top = S^{-1} H (S^{-1})^\top,
\end{align*}
and a proper cleaning does the job.
\end{proof}
From the first equation in the previous Proposition we get
\begin{pro}
	The block truncations $(\mathcal Q_2 (\boldsymbol{ \check J}))^{[k]}$ admit a $LU$ factorization
	\begin{align*}
(\mathcal Q_2 (\boldsymbol{ \check J}))^{[k]}=\omega_1^{[k]}(\omega_2^{[k]})^\top
	\end{align*}
	in terms of the corresponding truncations of resolvents.
\end{pro}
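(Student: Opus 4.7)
The plan is to deduce the truncated factorization from the semi-infinite factorization $\mathcal Q_2(\boldsymbol{\check J}) = \omega_1 (\omega_2)^\top$ established in the preceding proposition. The only thing requiring justification is that truncation commutes with this particular product, i.e.\ that $\bigl(\omega_1 (\omega_2)^\top\bigr)^{[k]} = \omega_1^{[k]} (\omega_2^{[k]})^\top$.

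First I would record the two triangularity facts already noted in the proof of Proposition~\ref{pro:jacobi-LU}: the resolvent $\omega_1 = \check S S^{-1}$ is block lower unitriangular (product of two block lower unitriangular matrices), while $(\omega_2)^\top = S\,\mathcal Q_2(\boldsymbol{\Lambda})(\check S)^{-1}$ is block upper triangular with nonvanishing blocks only in the first $m_2$ block superdiagonals, since $\mathcal Q_2(\boldsymbol\Lambda)$ has that banded upper structure and left/right multiplication by $S$, $(\check S)^{-1}$ only shifts weight strictly downward.

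Next, I would invoke the standard block-matrix lemma: if $A$ is block lower triangular and $B$ is block upper triangular, then for each $k$ the block $(i,j)$ entry of $AB$ with $i,j<k$ equals $\sum_{\ell\leq \min(i,j)}A_{[i],[\ell]}B_{[\ell],[j]}$, and in particular $\ell<k$, so the sum is unaffected by truncating $A$ and $B$ beyond index $k$. Hence $(AB)^{[k]} = A^{[k]} B^{[k]}$. Applied to $A=\omega_1$, $B=(\omega_2)^\top$, this yields the desired identity. I would also remark that $\omega_1^{[k]}$ inherits block lower unitriangularity and $(\omega_2^{[k]})^\top$ inherits block upper triangularity, so the equality genuinely is a block $LU$ factorization of the truncation $(\mathcal Q_2(\boldsymbol{\check J}))^{[k]}$, not just any factorization.

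There is no real obstacle here; the content of the statement is almost entirely the compatibility of truncation with a lower$\times$upper product, and that is a one-line index-chase once the triangular shapes of $\omega_1$ and $(\omega_2)^\top$ are in place.
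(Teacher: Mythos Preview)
Your proof is correct and follows the same approach as the paper, which simply invokes the second factorization in Proposition~\ref{pro:jacobi-LU} and leaves the compatibility of truncation with a lower$\times$upper product implicit. You have made that step explicit, which is a welcome addition; in the paper the entire argument is the one-line remark preceding the statement.
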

\begin{pro}\label{pro:regularity-truncation-jacobi}
	We have
	\begin{align*}
	\det ((\mathcal Q_2 (\boldsymbol{ \check J}))^{[k]})=\prod_{l=0}^{k-1}\frac{\det H_{[l]}}{\det \check H_{[l]}}
	\end{align*}
and therefore $(\mathcal Q_2 (\boldsymbol{ \check J}))^{[k]}$ is a regular matrix.
\end{pro}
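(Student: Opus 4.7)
The plan is to exploit the $LU$ factorization $(\mathcal Q_2(\boldsymbol{\check J}))^{[k]} = \omega_1^{[k]} (\omega_2^{[k]})^\top$ supplied by the previous proposition and compute each factor's determinant separately. First, since $\omega_1 = \check S S^{-1}$ is a product of two block lower unitriangular matrices, $\omega_1$ is itself block lower unitriangular, and so is the principal block truncation $\omega_1^{[k]}$; therefore $\det\omega_1^{[k]}=1$.

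For the second factor, I would extract the diagonal blocks of $\omega_2$ from the identity $\check H\omega_2=\omega_1 H$ of equation \eqref{eq:M-omega}. Because $\check H$ and $H$ are block diagonal and $\omega_1$ is unitriangular with identity blocks on the diagonal, reading off the $[l],[l]$ block gives
\begin{align*}
(\omega_2)_{[l],[l]} = \check H_{[l]}^{-1}H_{[l]}.
\end{align*}
Moreover, the same relation shows $\omega_2=\check H^{-1}\omega_1 H$ is block lower triangular (as a product of block lower triangular matrices), so $(\omega_2^{[k]})^\top$ is block upper triangular and its determinant factors as the product of determinants of the diagonal blocks:
\begin{align*}
\det\bigl((\omega_2^{[k]})^\top\bigr)=\prod_{l=0}^{k-1}\det(\omega_2)_{[l],[l]}=\prod_{l=0}^{k-1}\frac{\det H_{[l]}}{\det\check H_{[l]}}.
\end{align*}

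Combining these two computations via multiplicativity of the determinant yields the claimed formula. The regularity statement is then immediate: under the standing assumption that both $u$ and $\check u$ give rise to well-defined MVOPR --- equivalently, that all truncated moment matrices $G^{[k]}$ and $\check G^{[k]}$ are non-singular --- the Cholesky factorization of Proposition \ref{qd1} forces every quasi-tau block $H_{[l]}$ and $\check H_{[l]}$ to be invertible, so each factor in the product is a nonzero scalar and $(\mathcal Q_2(\boldsymbol{\check J}))^{[k]}$ is invertible.

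There is no serious obstacle here; the only subtlety is verifying that $\omega_2$ is block lower triangular (so that the triangular-determinant formula applies to $(\omega_2^{[k]})^\top$), and this is what makes the relation $\check H\omega_2=\omega_1 H$ the pivotal input --- it simultaneously gives the triangularity and pins down the diagonal blocks. All other steps are routine consequences of the already-established factorization structure.
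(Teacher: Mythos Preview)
Your proof is correct and follows essentially the same approach as the paper: use the truncated $LU$ factorization $(\mathcal Q_2(\boldsymbol{\check J}))^{[k]}=\omega_1^{[k]}(\omega_2^{[k]})^\top$, observe that $\omega_1$ is block lower unitriangular, and read off the diagonal blocks of $\omega_2$ from the relation $\check H\omega_2=\omega_1 H$. The paper's proof merely cites Propositions~\ref{pro:jacobi-LU} and~\ref{pro:superdiagonals} without spelling out the determinant computation, so your version is in fact a more explicit rendering of the intended argument.
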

\begin{proof}
To prove this result just use Propositions \ref{pro:jacobi-LU} and \ref{pro:superdiagonals} and the assumption that
the minors of the moment matrix and the perturbed moment matrix are not zero.
\end{proof}

The next connection relations will be relevant for the finding of the Gerominus formul\ae{}

\begin{pro}[Connection formul\ae]
	The followings relations are fulfilled
	\begin{align}
\notag	(\omega_2)^\top\check P(\x)&=\mathcal Q_2(\x)P(\x),\\
\label{omega-P}	\omega_1 P(\x)&=\check P(\x).
	\end{align}
\end{pro}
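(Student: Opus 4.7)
The plan is to unwind the definitions of the resolvents $\omega_1$ and $(\omega_2)^\top$ together with the representation $P(\x)=S\chi(\x)$ and $\check P(\x)=\check S\chi(\x)$ of the MVOPR, and to invoke the spectral property of $\boldsymbol\Lambda$ recorded in \eqref{eigen}. No new machinery is needed; the two identities are essentially formal consequences of the definitions.

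For the second identity, I would simply compute
\begin{align*}
\omega_1 P(\x)=\check S S^{-1}\big(S\chi(\x)\big)=\check S\chi(\x)=\check P(\x),
\end{align*}
which follows from the definition $\omega_1\coloneq \check S S^{-1}$ and the definitions of $P$ and $\check P$, with no hypothesis used beyond the fact that the Cholesky factors $S,\check S$ exist.

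For the first identity, I would start from the definition $(\omega_2)^\top\coloneq S\,\mathcal Q_2(\boldsymbol\Lambda)\,(\check S)^{-1}$ and then apply the spectral property $\mathcal Q_2(\boldsymbol\Lambda)\chi(\x)=\mathcal Q_2(\x)\chi(\x)$, which is an immediate consequence of \eqref{eigen} extended polynomially to $\mathcal Q_2$. Namely,
\begin{align*}
(\omega_2)^\top\check P(\x)=S\,\mathcal Q_2(\boldsymbol\Lambda)\,(\check S)^{-1}\check S\,\chi(\x)=S\,\mathcal Q_2(\boldsymbol\Lambda)\chi(\x)=\mathcal Q_2(\x)\,S\chi(\x)=\mathcal Q_2(\x)P(\x),
\end{align*}
where the scalar $\mathcal Q_2(\x)$ commutes with the semi-infinite matrix $S$.

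There is no genuine obstacle here: once the resolvents are correctly defined in terms of the Cholesky factors $S,\check S$, both connection formul\ae{} are one-line algebraic consequences. The only subtlety worth flagging is that $\mathcal Q_2(\boldsymbol\Lambda)$ is a well-defined semi-infinite matrix (it acts as a block-banded operator since each $\Lambda_a$ has only a finite number of nonzero block superdiagonals and the $\Lambda_a$ commute, by Proposition \ref{pro:Lambda}), so the expression $S\,\mathcal Q_2(\boldsymbol\Lambda)\,(\check S)^{-1}$ is meaningful; once that is granted the calculations above are purely formal.
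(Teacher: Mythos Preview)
Your proof is correct and matches the paper's approach: the paper states these connection formul\ae{} without proof (and for the analogous statement in \S\ref{S:3} simply says ``It follows from \eqref{eq:polynomials} and Definition \ref{def:resolvents_linear}''), which is precisely the unwinding of definitions you carry out. There is nothing to add.
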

\subsection{The multivariate Geronimus formula}

To  extend to multidimensions the Geronimus determinantal expressions for the Geronimus transformations  \cite{Geronimus1940polynomials} we need a new object. In the 1D case it is enough to use the Cauchy transforms of the OPRL, so closely related to the Stieljes functions. However, in this multivariate scenario we have not been able to use the corresponding multivariate Cauchy transforms, see \cite{ariznabarreta2014multivariate}, precisely because of complications motivated by the multidimensionality. Instead, we have been able to use an alternative path by introducing a semi-infinite matrix $R$ that in the 1D case, using a partial fraction expansion, can be expressed  in terms of the mentioned Cauchy transforms and Geronimus type combinations. This new element is essential in the finding of a  new  multivariate Geronimus quasi-determinantal formula.
\begin{definition}\label{def:r}
We introduce the semi-infinite block matrices
\begin{align*}
R&\coloneq \big\langle\check u, P(\x)\big(\chi(\x)\big)^\top\big\rangle.
\end{align*}
\end{definition}

\begin{pro}
The formula
	\begin{align*}
	R&=\rho+\theta, &
	\rho&\coloneq
	\bigg\langle u,\frac{P(\x)\big(\chi(\x)\big)^\top}{\mathcal Q_2(\x)} \bigg\rangle, &
	\theta&\coloneq \big\langle v, P(\x)\big(\chi(\x)\big)^\top\big\rangle,
	\end{align*}
	holds.
\end{pro}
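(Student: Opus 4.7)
The plan is to unpack the definition of $R$ with the explicit decomposition $\check u = \frac{u}{\mathcal Q_2} + v$ provided by \eqref{eq:Geronimus} and then split the matrix-valued pairing by linearity of the functional action. The identification of $\theta$ is immediate from the definitions, so essentially the entire content is to interpret the remaining term $\langle\tfrac{u}{\mathcal Q_2}, P(\x)(\chi(\x))^\top\rangle$ and show it equals $\rho$.

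First I would invoke Definition \ref{def:r} to write $R=\langle\check u, P(\x)(\chi(\x))^\top\rangle$ componentwise, substitute $\check u=\tfrac{u}{\mathcal Q_2}+v$ from \eqref{eq:Geronimus}, and use linearity of the generalized function pairing to get
\begin{align*}
R=\bigl\langle \tfrac{u}{\mathcal Q_2},\, P(\x)(\chi(\x))^\top\bigr\rangle+\bigl\langle v,\, P(\x)(\chi(\x))^\top\bigr\rangle.
\end{align*}
The second summand matches the definition of $\theta$ on the nose. For the first, I would appeal to the defining property $\mathcal Q_2\cdot\tfrac{u}{\mathcal Q_2}=u$: applied to a test object $F(\x)$ for which $F(\x)/\mathcal Q_2(\x)$ is an admissible test input for $u$, this gives $\langle\tfrac{u}{\mathcal Q_2},F\rangle=\langle u, F/\mathcal Q_2\rangle$. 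Applying this entrywise to the matrix $P(\x)(\chi(\x))^\top$ produces exactly $\rho$ as defined, and $R=\rho+\theta$ follows.

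The one step that deserves care rather than mere bookkeeping is the justification that the rational expressions $P_{\q_i}(\x)\,\x^{\q_j}/\mathcal Q_2(\x)$ appearing as entries of $P(\x)(\chi(\x))^\top/\mathcal Q_2(\x)$ lie in a test-function class on which $u$ can legitimately be evaluated. This is where the standing hypothesis $Z(\mathcal Q_2)\cap\operatorname{supp}(u)=\varnothing$ and the ambient choice of $u$ in one of the spaces $\mathcal E'\subset\mathcal O_M'\subset\mathcal O_c'$ discussed in \S\ref{MV} enter: on a neighbourhood of $\operatorname{supp}(u)$ the reciprocal $1/\mathcal Q_2(\x)$ is smooth, so multiplication by $1/\mathcal Q_2$ maps polynomials into $\mathcal O_M$ (respectively $\mathcal B$), which pairs with $\mathcal O_M'$ (respectively $\mathcal O_c'$). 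With that identification in hand, the equality $\langle\tfrac{u}{\mathcal Q_2},F\rangle=\langle u, F/\mathcal Q_2\rangle$ is valid entry by entry, and no further manipulation is needed. I do not foresee any genuine obstacle beyond this regularity check; the remainder is a direct reading of the definitions.
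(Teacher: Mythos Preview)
Your proposal is correct and follows exactly the paper's approach: the paper's proof is the single sentence ``Just write $\check u=\frac{u}{\mathcal Q_2}+v$, with $(\mathcal Q_2)\subseteq\operatorname{Ker} v$,'' and you have unpacked precisely that splitting. Your added paragraph on why $P_{\q_i}(\x)\,\x^{\q_j}/\mathcal Q_2(\x)$ is an admissible test object for $u$ is a helpful elaboration beyond what the paper records, but the underlying idea is identical.
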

\begin{proof}
	Just write $\check u=\frac{u}{\mathcal Q_2}+v$, with $(\mathcal Q_2)\subseteq\operatorname{Ker} v$.
\end{proof}
\begin{pro}
	If the linear functional $u$ is of order zero with an  associated Borel measure $\d\mu(\x)$ we can write
\begin{align*}
\rho=\int P(\x)(\chi(\x))^\top\frac{\d\mu(\x)}{\mathcal Q_2(\x)},
\end{align*}
and if  $\mathcal Q_2(\x)=(\mathcal Q_{2,1}(\x))^{d_1}\cdots (\mathcal Q_{2,N}(\x))^{d_N}$ is a prime factorization, and $v$ is taken as in \eqref{eq:v_general} we can write
\begin{align*}
\theta=\sum_{i=1}^N\sum_{\substack{\q\in\Z_+^D\\|\q|<d_i}}\int_{Z(\mathcal Q_{2,i})}\frac{\partial^\q \big(P(\x)(\chi(\x))^\top\big)}{\partial \x^\q} \d\xi_{i,\q}(\x).
\end{align*}
\end{pro}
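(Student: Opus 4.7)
The plan is to unpack the statement of the previous proposition using the specific representations of $u$ as a Borel measure and of $v$ as in \eqref{eq:v_general}. Essentially the two identities to be established are direct consequences of the definitions of $\rho$ and $\theta$, once the actions of $u$ and $v$ on matrix-valued test functions are interpreted entrywise.

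For the first identity I would proceed as follows. By the previous proposition, $\rho=\big\langle u,\frac{P(\x)(\chi(\x))^\top}{\mathcal Q_2(\x)}\big\rangle$, with the action understood entrywise on the matrix $P(\x)(\chi(\x))^\top$. Each entry is a polynomial, and by the hypothesis on $\mathcal Q_2$ (namely $Z(\mathcal Q_2)\cap\operatorname{supp}(u)=\varnothing$, recalled in the definition of the Geronimus transformation), the rational function $\frac{P_{\q_i}(\x)\x^{\q_j}}{\mathcal Q_2(\x)}$ is smooth on $\operatorname{supp}(\d\mu)$. Since $u$ is of order zero with Borel representation $\d\mu$, applying $u$ entrywise to this matrix-valued test function produces the claimed integral $\int P(\x)(\chi(\x))^\top \frac{\d\mu(\x)}{\mathcal Q_2(\x)}$.

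For the second identity, I would again invoke the previous proposition to write $\theta=\big\langle v, P(\x)(\chi(\x))^\top\big\rangle$, and then substitute the prescribed form \eqref{eq:v_general} of $v$. Because the entries of $P(\x)(\chi(\x))^\top$ are polynomials, applying each $\frac{\partial^\q}{\partial \x^\q}$ is well defined, and integrating against the measures $\d\xi_{i,\q}$ supported on $Z(\mathcal Q_{2,i})$ poses no issue. Summing the contributions over $i$ and over the multi-indices $\q$ with $|\q|<d_i$ gives exactly the stated expression for $\theta$.

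There is no genuine obstacle; the proof is a direct unpacking of definitions. The only point that might deserve a line of commentary is the consistency of the decomposition $\check u=\frac{u}{\mathcal Q_2}+v$ with $(\mathcal Q_2)\subseteq\operatorname{Ker}(v)$: for $v$ of the form \eqref{eq:v_general}, this containment is automatic because for any $R(\x)\in\C[\x]$ and any $\q$ with $|\q|<d_i$, the prime factor $\mathcal Q_{2,i}$ still divides $\frac{\partial^\q(\mathcal Q_2 R)}{\partial \x^\q}$, so the latter vanishes on $Z(\mathcal Q_{2,i})$. This confirms that the pairing with $v$ annihilates the ideal $(\mathcal Q_2)$ and hence that the above integral representation is legitimate.
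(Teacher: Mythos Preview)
Your proposal is correct and is precisely the natural unpacking of the definitions of $\rho$ and $\theta$ from the preceding proposition together with the concrete representations of $u$ and $v$; the paper in fact states this proposition without proof, treating it as an immediate consequence of those definitions. Your additional remark verifying $(\mathcal Q_2)\subseteq\operatorname{Ker}(v)$ for $v$ as in \eqref{eq:v_general} is a nice consistency check but is not needed for the statement itself.
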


\begin{pro}\label{pro:r-alpha}
	The following relations
	\begin{align*}
	(\omega_1R)_{[k],[l]}&=0, & l&\in\{0,1,\dots,k-1\},\\
	(\omega_1R)_{[k],[k]}&=\check H_{[k]},
\end{align*}
	hold true.
\end{pro}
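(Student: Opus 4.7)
The plan is to compute $\omega_1 R$ directly, reducing it to the orthogonality relations satisfied by the perturbed MVOPR $\check P(\x)$ with respect to the perturbed functional $\check u$.

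First I would recall from Definition \ref{def:r} that
\begin{align*}
R = \big\langle \check u, P(\x)\big(\chi(\x)\big)^\top\big\rangle.
\end{align*}
Since $\omega_1$ is a constant (independent of $\x$) semi-infinite matrix, it commutes with the action of $\check u$, so I can pull it inside the bracket:
\begin{align*}
\omega_1 R = \big\langle \check u, \omega_1 P(\x)\big(\chi(\x)\big)^\top\big\rangle.
\end{align*}
At this point I would invoke the connection formula \eqref{omega-P}, which asserts $\omega_1 P(\x) = \check P(\x)$. Substituting yields
\begin{align*}
\omega_1 R = \big\langle \check u, \check P(\x)\big(\chi(\x)\big)^\top\big\rangle.
\end{align*}

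The final step is to read off the required block entries. Because $\check P(\x) = \check S \chi(\x)$ is the sequence of monic MVOPR produced by the Cholesky factorization of $\check G$, the orthogonality relations \eqref{orth} and \eqref{H}, applied with respect to the perturbed functional $\check u$, give exactly
\begin{align*}
\big\langle \check u, \check P_{[k]}(\x)\big(\chi_{[l]}(\x)\big)^\top\big\rangle &= 0, \quad l \in \{0,1,\dots,k-1\},\\
\big\langle \check u, \check P_{[k]}(\x)\big(\chi_{[k]}(\x)\big)^\top\big\rangle &= \check H_{[k]},
\end{align*}
which are the two claimed identities for the block entries of $\omega_1 R$.

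There is really no serious obstacle here: the argument is a one-line composition of the connection formula \eqref{omega-P} with the definition of $R$ and the orthogonality of $\check P$ against $\check u$. The only thing I would be careful about is the justification that $\omega_1$ may be moved inside $\langle \check u, \cdot\rangle$, which requires only that matrix multiplication on the left commutes with the entrywise action of the functional on the semi-infinite matrix of polynomials $P(\x)(\chi(\x))^\top$; this is immediate from linearity of $\check u$ and the block structure of $\omega_1$.
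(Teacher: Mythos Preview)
Your proof is correct and follows essentially the same route as the paper: compute $\omega_1 R$ by pulling $\omega_1$ inside the functional, apply the connection formula \eqref{omega-P} to replace $\omega_1 P(\x)$ by $\check P(\x)$, and then read off the block entries from the orthogonality relations \eqref{orth} and \eqref{H} for $\check P$ with respect to $\check u$. Your added remark justifying the commutation of $\omega_1$ with the action of $\check u$ is a welcome clarification that the paper leaves implicit.
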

\begin{proof}
A direct computation leads to  the result. Indeed,
	\begin{align*}
	\omega_1 R=&\big \langle \check u,\omega_1 P(\x)\big(\chi(\x)\big)^\top\big\rangle\\
	=&\big\langle\check u, \check P(\x) \big(\chi(\x)\big)^\top\big\rangle &\text{ recall \eqref{omega-P}}
	\end{align*}
	and the orthogonality equations \eqref{orth} and \eqref{H} give the desired conclusion.
\end{proof}

\begin{pro}\label{pro:M}
\begin{enumerate}
	\item The truncations $R^{[k]}$ are nonsingular for all $k\in\Z_+$.
\item	The adjoint resolvent entries satisfy
	\begin{align}\label{eq:M-r0}
		 \big((\omega_1)_{[k],[0]},\dots,(\omega_1)_{[k],[k-1]}\big) =-(R_{[k],[0]},\dots,R_{[k],[k-1]})\big(R^{[k]}\big)^{-1}.
	\end{align}
		\item We can express each entry of the adjoint resolvent as
			\begin{align}\label{eq:Mk-r0}
		 (\omega_1)_{[k],[l]}&=
-(R_{[k],[0]},\dots,R_{[k],[k-1]})\big(R^{[k]}\big)^{-1}
\PARENS{		
	\begin{matrix}
	0_{[0],[l]}\\
	\vdots\\
	0_{[l-1],[l]}\\
	\I_{[l]}\\
	0_{[l+1],[l]}\\
	\vdots \\
	0_{[k],[l]}
		\end{matrix}}, & l&\in\{0,1,\dots,k-1\},.
 			\end{align}
\end{enumerate}
\end{pro}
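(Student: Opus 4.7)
My plan is to derive all three statements from a single identity: the product $\omega_1 R$ is block upper triangular with diagonal blocks $\check H_{[k]}$, a fact given already by Proposition~\ref{pro:r-alpha}.

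First, I would use that $\omega_1$ is block lower unitriangular (being a product of two such matrices, $\check S$ and $S^{-1}$) to see that truncation commutes with the product: $(\omega_1 R)^{[k]}=\omega_1^{[k]}R^{[k]}$, since entries $(\omega_1)_{[i],[j]}$ with $j>i$ vanish and so extending $j$ beyond $k-1$ contributes nothing for $i<k$. Combining this with Proposition~\ref{pro:r-alpha} shows that $\omega_1^{[k]}R^{[k]}$ is block upper triangular with diagonal blocks $\check H_{[0]},\dots,\check H_{[k-1]}$. Taking determinants and using $\det\omega_1^{[k]}=1$ gives
\begin{align*}
\det R^{[k]}=\prod_{l=0}^{k-1}\det\check H_{[l]}\neq 0,
\end{align*}
because the quasi-tau blocks $\check H_{[l]}$ are invertible under our standing regularity assumption on the perturbed moment matrix. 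This proves (1).

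For (2), I would read off the off-diagonal block rows of the identity $\omega_1 R=\text{(block upper triangular)}$. Fixing $k$ and $l<k$, Proposition~\ref{pro:r-alpha} gives
\begin{align*}
0=(\omega_1 R)_{[k],[l]}=\sum_{j=0}^{k}(\omega_1)_{[k],[j]}R_{[j],[l]}
=\sum_{j=0}^{k-1}(\omega_1)_{[k],[j]}R_{[j],[l]}+R_{[k],[l]},
\end{align*}
where in the last step I used that $(\omega_1)_{[k],[k]}=\I$ (unitriangularity). Assembling these identities for $l=0,\dots,k-1$ into a single block-row equation produces
\begin{align*}
\bigl((\omega_1)_{[k],[0]},\dots,(\omega_1)_{[k],[k-1]}\bigr)R^{[k]}=-\bigl(R_{[k],[0]},\dots,R_{[k],[k-1]}\bigr),
\end{align*}
and the nonsingularity of $R^{[k]}$ from (1) lets me right-multiply by $(R^{[k]})^{-1}$, yielding \eqref{eq:M-r0}.

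Finally, (3) is a direct consequence of (2): to extract $(\omega_1)_{[k],[l]}$ for a specific $l\in\{0,\dots,k-1\}$, I would post-multiply \eqref{eq:M-r0} by the block column vector of zeros with a single identity block in the $[l]$-th slot, which simply picks out the corresponding block of the row vector on the left-hand side, giving \eqref{eq:Mk-r0}. There is no real obstacle here; the only subtlety is the bookkeeping step of checking that $(\omega_1 R)^{[k]}=\omega_1^{[k]} R^{[k]}$, which is where the lower-triangular structure of $\omega_1$ is essential.
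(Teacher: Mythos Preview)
Your proof is correct and, for parts (2) and (3), identical to the paper's. For part (1) there is a slight cosmetic difference: the paper observes directly from Definition~\ref{def:r} that $R=S\check G$, hence $R^{[k]}=S^{[k]}\check G^{[k]}$ and $\det R^{[k]}=\det\check G^{[k]}=\prod_{l=0}^{k-1}\det\check H_{[l]}$, whereas you reach the same determinant formula via $(\omega_1 R)^{[k]}=\omega_1^{[k]}R^{[k]}$ and Proposition~\ref{pro:r-alpha}. Both arguments are equally short and rest on the same lower-triangularity observation, so this is not a genuinely different route.
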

\begin{proof}
\begin{enumerate}
	\item We can write
	\begin{align}
R^{[k+1]}	=S^{[k+1]} \check G^{[k+1]}
\label{eq:RSG}	\end{align}
	so that
	\begin{align*}
\det R^{[k+1]}=\prod_{l=0}^{k}\det \check H_{[l]}\neq 0.
	\end{align*}
	
	\item 	 From Propositions \ref{pro:superdiagonals}  and \ref{pro:r-alpha} we deduce
	 \begin{align*}
	 (\omega_1)_{[k],[0]}R_{[0],[l]}+\dots+ (\omega_1)_{[k],[k-1]}R_{[k-1],[l]}&=-R_{[k],[l]}, & l&\in\{0,1,\dots,k-1\}.
	 \end{align*}
Therefore,	  we get
	 \begin{align*}
	 \big((\omega_1)_{[k],[0]},\dots,(\omega_1)_{[k],[k-1]}\big) R^{[k]}=-(R_{[k],[0]},\dots,R_{[k],[k-1]}),
	 \end{align*}
	 from where \eqref{eq:M-r0} follows.
\end{enumerate}
\end{proof}

\begin{theorem}
We can express the new MVOPR, $\check P_{[k]}(\x)$,  and the quasi-tau matrices $\check H_{[k] }$ in terms of the non-perturbed ones as follows
		\begin{align}\label{eq:TPk}
\check P_{[k]}(\x)&=\Theta_*\PARENS{
\begin{matrix}
			R_{[0],[0]} & \dots &R_{[k],[k-1]} &P_{[0]}(\x)\\
		\vdots &      & \vdots &\vdots\\
		R_{[k],[0]} & \dots &R_{[k],[k-1]} &P_{[k]}(\x)		
\end{matrix}
		},	
\\
\label{eq:TH}
\check H_{[k]}&=\Theta_*(R^{[k+1]}).
		\end{align}
\end{theorem}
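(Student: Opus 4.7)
The plan is to derive both formulas directly from the earlier Propositions \ref{pro:r-alpha} and \ref{pro:M}, simply by rewriting the resulting linear algebra in quasi-determinantal form. The connection formula $\omega_1 P(\x)=\check P(\x)$ from \eqref{omega-P} and the lower-unitriangular structure of $\omega_1$ are the starting points; the matrix $R$ enters through Proposition \ref{pro:M}, which expresses the subdiagonal blocks of $\omega_1$ as $\big((\omega_1)_{[k],[0]},\dots,(\omega_1)_{[k],[k-1]}\big)=-(R_{[k],[0]},\dots,R_{[k],[k-1]})(R^{[k]})^{-1}$, a step that requires $R^{[k]}$ to be nonsingular (already established in the same proposition via \eqref{eq:RSG}).

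For the polynomial formula \eqref{eq:TPk} I would write
\begin{align*}
\check P_{[k]}(\x)=\sum_{l=0}^{k-1}(\omega_1)_{[k],[l]}P_{[l]}(\x)+P_{[k]}(\x),
\end{align*}
substitute the Proposition \ref{pro:M} expression for the off-diagonal blocks, and obtain
\begin{align*}
\check P_{[k]}(\x)=P_{[k]}(\x)-(R_{[k],[0]},\dots,R_{[k],[k-1]})\big(R^{[k]}\big)^{-1}\PARENS{\begin{matrix}P_{[0]}(\x)\\ \vdots \\ P_{[k-1]}(\x)\end{matrix}}.
\end{align*}
This is precisely the last quasi-determinant of the bordered matrix on the right-hand side of \eqref{eq:TPk} by definition of $\Theta_*$.

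For the quasi-tau matrix formula \eqref{eq:TH} I would start from Proposition \ref{pro:r-alpha}, namely $(\omega_1 R)_{[k],[k]}=\check H_{[k]}$. Expanding this and using $(\omega_1)_{[k],[k]}=\I$,
\begin{align*}
\check H_{[k]}=R_{[k],[k]}+\sum_{l=0}^{k-1}(\omega_1)_{[k],[l]}R_{[l],[k]},
\end{align*}
and again substituting \eqref{eq:M-r0} gives
\begin{align*}
\check H_{[k]}=R_{[k],[k]}-(R_{[k],[0]},\dots,R_{[k],[k-1]})\big(R^{[k]}\big)^{-1}\PARENS{\begin{matrix}R_{[0],[k]}\\ \vdots \\ R_{[k-1],[k]}\end{matrix}},
\end{align*}
which is the last quasi-determinant $\Theta_*(R^{[k+1]})$.

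The only genuine obstacle is ensuring the quasi-determinants are well defined, i.e.\ that $R^{[k]}$ is invertible so that the Schur-complement expressions above make sense. This is precisely what Proposition \ref{pro:M}(1) provides via the identity $R^{[k+1]}=S^{[k+1]}\check G^{[k+1]}$ combined with our standing regularity hypothesis $\det\check G^{[k]}\neq 0$; so the rest is a mechanical identification of Schur complements with last quasi-determinants. No further computation is needed.
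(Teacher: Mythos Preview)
Your proof is correct and follows essentially the same route as the paper: expand the connection formula \eqref{omega-P} blockwise, feed in the expression \eqref{eq:M-r0} from Proposition~\ref{pro:M} for the subdiagonal blocks of $\omega_1$, and recognize the resulting Schur complements as last quasi-determinants; the case of $\check H_{[k]}$ is handled identically via $(\omega_1R)_{[k],[k]}=\check H_{[k]}$ from Proposition~\ref{pro:r-alpha}. The paper also notes, as you do, that invertibility of $R^{[k]}$ (equivalently \eqref{eq:RSG}) underpins the argument and even gives \eqref{eq:TH} directly.
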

\begin{proof}
From \eqref{omega-P} we deduce
	\begin{align}\label{eq:TP-M}
\check P_{[k]}(\x)=
	(\omega_1)_{[k],[0]}P_{[0]}(\x)+\dots+(\omega_1)_{[k],[k-1]} P_{[k-1]}(\x)+P_{[k]}(\x)
	\end{align}
	and Proposition \ref{pro:M} implies
	\begin{align*}
	\check P_{[k]}(\x)=P_{[k]}(\x)-(R_{[k],[0]},\dots,R_{[k],[k-1]})\big(R^{[k]}\big)^{-1}
\PARENS{\begin{matrix}
P_{[0]}(\x)\\ \vdots \\P_{[k-1]}(\x)
	\end{matrix}}
\end{align*}
and, consequently, \eqref{eq:TPk} follows.

From Proposition \ref{pro:r-alpha} we get
\begin{align*}
	 (\omega_1)_{[k],[0]}R_{[0],[k]}+\dots+ (\omega_1)_{[k],[k-1]}R_{[k-1],[k]}+R_{[k],[k]}&=\check H_{[k]},
\end{align*}
now recall \eqref{eq:M-r0} to deduce
\begin{align*}
\check H_{[k]}
=R_{[k],[k]}-(R_{[k],[0]},\dots,R_{[k],[k-1]})\big(R^{[k]}\big)^{-1}\PARENS{
	\begin{matrix}
	R_{[0],[k]}\\\vdots\\R_{[k-1],[k]}
	\end{matrix}},
\end{align*}
so that \eqref{eq:TH} is proven. Let us mention that it also follows  from \eqref{eq:RSG}.
\end{proof}

The previous relations involve a growing number of terms as $k$ increases. However, for  $k\geq m_2$ this changes.
\begin{definition}
	\begin{enumerate}
		\item 	If $k> m_2$, take an ordered set of multi-indices
\begin{align*}
		\mathcal M_k\coloneq\big\{\boldsymbol{\beta}_i\in(\Z_+)^D: |\boldsymbol{\beta_i}|< k\big\}_{i=1}^{r_{k,m_2}}		
\end{align*}
		 with cardinal given by
		 \begin{align*}
		 r_{k,m_2}\coloneq |\mathcal M_k|=		 	N_{k-1}-N_{k-m_2-1}=|[k-m_2]|+\dots+|[k-1]|.
		 \end{align*}
		 \item Associated with this  set consider the truncations
		 \begin{align*}
		 R^{[\mathcal M_k]}&\coloneq
		\PARENS{	\begin{matrix}
			R_{[k-m_2],\boldsymbol{\beta}_1} & \dots & R_{[k-m_2],\boldsymbol{\beta}_{r_{k,m_2}}}\\
			\vdots &            &\vdots\\
			R_{[k-1],\boldsymbol{\beta}_1} & \dots & R_{[k-1],\boldsymbol{\beta}_{r_{k,m_2}}}
						\end{matrix}},\\
				R_{\mathcal M_k}&\coloneq (R_{[k],\boldsymbol{\beta}_1},\dots,R_{[k],\boldsymbol{\beta}_{r_{k,m_2}}})	
		 \end{align*}
		 \item Then, the set $\mathcal M_k$ is said to be  poised if the corresponding truncation is not singular
		\begin{align*}
	\begin{vmatrix}
	R_{[k-m_2],\boldsymbol{\beta}_1} & \dots & R_{[k-m_2],\boldsymbol{\beta}_{r_{k,m_2}}}\\
	\vdots &            &\vdots\\
	R_{[k-1],\boldsymbol{\beta}_1} & \dots & R_{[k-1],\boldsymbol{\beta}_{r_{k,m_2}}}
	\end{vmatrix}\neq 0.
		\end{align*}
	\end{enumerate}
\end{definition}
\begin{pro}
	Poised sets do exist.
\end{pro}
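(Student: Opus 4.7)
The plan is to derive the existence of a poised set from the nonsingularity of $R^{[k]}$ established in Proposition \ref{pro:M}, by an elementary rank argument.

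First I would record the key size counts. The prescribed row indices of $R^{[\mathcal M_k]}$ are the block rows $[k-m_2],\dots,[k-1]$, contributing a total of $r_{k,m_2}=|[k-m_2]|+\dots+|[k-1]|$ scalar rows. The admissible column indices are the multi-indices $\boldsymbol{\beta}$ with $|\boldsymbol{\beta}|<k$, and there are exactly $N_{k-1}=|[0]|+|[1]|+\dots+|[k-1]|$ of them. Thus, choosing the columns amounts to selecting $r_{k,m_2}$ columns out of $N_{k-1}$ in the rectangular matrix
\begin{align*}
\widetilde R\coloneq \PARENS{\begin{matrix} R_{[k-m_2],[0]} & \cdots & R_{[k-m_2],[k-1]}\\ \vdots & & \vdots\\ R_{[k-1],[0]} & \cdots & R_{[k-1],[k-1]}\end{matrix}}\in\C^{r_{k,m_2}\times N_{k-1}},
\end{align*}
so that the existence of a poised set is equivalent to $\operatorname{rank}\widetilde R=r_{k,m_2}$, i.e.\ to $\widetilde R$ having full row rank.

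Next I would invoke Proposition \ref{pro:M}, which shows $R^{[k]}\in\C^{N_{k-1}\times N_{k-1}}$ is nonsingular (this is the factorization $R^{[k]}=S^{[k]}\check G^{[k]}$, where the unitriangular factor and the assumed regularity of $\check G$ both contribute). In particular every subset of rows of $R^{[k]}$ is linearly independent, and the matrix $\widetilde R$ is precisely the submatrix of $R^{[k]}$ obtained by keeping the last $r_{k,m_2}$ block rows. Hence $\operatorname{rank}\widetilde R=r_{k,m_2}$. By the standard characterization of matrix rank via maximal nonvanishing minors, there exist column indices $\boldsymbol{\beta}_1,\dots,\boldsymbol{\beta}_{r_{k,m_2}}$ among those with $|\boldsymbol{\beta}_i|<k$ for which the corresponding square submatrix of $\widetilde R$ is invertible; this submatrix is $R^{[\mathcal M_k]}$, and the selected indices form the desired poised set.

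The argument is essentially free; the only mild subtlety is bookkeeping of row and column counts and checking that the columns indexed by $[0]\cup[1]\cup\dots\cup[k-1]$ do cover all admissible $\boldsymbol{\beta}$ with $|\boldsymbol{\beta}|<k$, which is immediate from the definition of $N_{k-1}$. No genuine obstacle arises because the existence statement reduces directly to the nonsingularity of $R^{[k]}$ already established. A more refined statement---say, a canonical or minimal choice of $\mathcal M_k$, or one with extra structure compatible with the graded lexicographic order---would require more work, but for mere existence the row-rank argument suffices.
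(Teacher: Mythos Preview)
Your argument is correct and essentially identical to the paper's: both reduce existence of a poised set to the full row rank of the rectangular block $\widetilde R$ formed by the last $r_{k,m_2}$ block rows of $R^{[k]}$, which follows from the nonsingularity of $R^{[k]}$ established in Proposition~\ref{pro:M}. The only cosmetic difference is that the paper phrases this by contradiction while you argue directly.
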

\begin{proof}
	We need to ensure that among all subsets $\mathcal M_k$ of multi-indices of length less than $k$ there is at least one such that $\det R^{[\mathcal M_k]}\neq 0$. We proceed by contradiction. If we assume that there is no such set the matrix
	\begin{align*}
	\PARENS{
		\begin{matrix}
		R_{[k-m_2],[0]} &\dots & R_{[k-m_2],[k-1]}\\
		\vdots & & \vdots \\
		R_{[k-1],[0]} &\dots & R_{[k-1],[k-1]}
		\end{matrix}
	}
	\end{align*}
	is not full rank and, consequently, $R^{[k]}$ will be singular, which is  in contradiction with our assumptions.
\end{proof}

\begin{pro}\label{pro:M2_v1}
	For $k\geq m_2$  and a poised set of multi-indices  $\mathcal M_k$, we have
	\begin{align*}
	\big((\omega_1)_{[k],[k-m_2]},\dots, (\omega_1)_{[k],[k-1]}\big)=-R_{\mathcal M_k}\big(R^{[\mathcal M_k]}\big)^{-1}.
	\end{align*}
\end{pro}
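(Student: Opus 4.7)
The plan is to combine the banded block structure of $\omega_1$ from Proposition \ref{pro:superdiagonals} with the orthogonality-type annihilation relations for $\omega_1 R$ from Proposition \ref{pro:r-alpha}, exactly in the spirit of the proof of Proposition \ref{pro:M}, but now restricting attention only to the columns labelled by the poised set $\mathcal M_k$ rather than to the full block truncation.

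First I would record the consequence of Proposition \ref{pro:superdiagonals}: for $k\geq m_2$, the row $[k]$ of $\omega_1$ has all its blocks to the left of column $[k-m_2]$ equal to zero, while the diagonal block $(\omega_1)_{[k],[k]}=\I$. Next, from Proposition \ref{pro:r-alpha} we have $(\omega_1 R)_{[k],\boldsymbol{\beta}}=0$ for every multi-index $\boldsymbol{\beta}$ with $|\boldsymbol{\beta}|<k$. Expanding this product using the banded structure just noted yields, for each such $\boldsymbol{\beta}$,
\begin{align*}
\sum_{j=k-m_2}^{k-1}(\omega_1)_{[k],[j]}R_{[j],\boldsymbol{\beta}}=-R_{[k],\boldsymbol{\beta}}.
\end{align*}

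Specialising $\boldsymbol{\beta}$ to run through the elements $\boldsymbol{\beta}_1,\dots,\boldsymbol{\beta}_{r_{k,m_2}}$ of the poised set $\mathcal M_k$, and collecting the resulting $r_{k,m_2}$ column identities in block matrix form, gives precisely
\begin{align*}
\big((\omega_1)_{[k],[k-m_2]},\dots,(\omega_1)_{[k],[k-1]}\big)\,R^{[\mathcal M_k]}=-R_{\mathcal M_k}.
\end{align*}
The total column width on the left-hand unknown, $|[k-m_2]|+\cdots+|[k-1]|$, matches $r_{k,m_2}$, so $R^{[\mathcal M_k]}$ is indeed a square matrix of the right size; the very definition of poisedness ensures it is invertible, and multiplying on the right by $\bigl(R^{[\mathcal M_k]}\bigr)^{-1}$ yields the claimed formula.

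There is no real obstacle here since all the analytic content is already packaged in Propositions \ref{pro:superdiagonals} and \ref{pro:r-alpha}; the only thing to check carefully is the bookkeeping of sizes, namely that the cardinality identity $r_{k,m_2}=|[k-m_2]|+\cdots+|[k-1]|$ makes the system square, so that the poised condition is exactly the right nondegeneracy hypothesis to invert it. In particular, the proof makes no use of any specific choice of the multi-indices $\boldsymbol{\beta}_i$ other than that they lie in $\bigcup_{j<k}[j]$ and that $R^{[\mathcal M_k]}$ is nonsingular, so the formula is valid for every poised set, consistent with the freedom built into the definition.
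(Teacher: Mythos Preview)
Your proof is correct and follows essentially the same approach as the paper: you combine the banded structure of $\omega_1$ from Proposition \ref{pro:superdiagonals} with the annihilation relations $(\omega_1 R)_{[k],[l]}=0$ from Proposition \ref{pro:r-alpha}, then restrict to the columns in $\mathcal M_k$ and invoke poisedness to invert $R^{[\mathcal M_k]}$. Your write-up is in fact a bit more careful about the size bookkeeping than the paper's own proof.
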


\begin{proof}
	Observe  that Propositions \ref{pro:superdiagonals}  and \ref{pro:r-alpha} imply
\begin{align*}
	(\omega_1)_{[k],[k-m_2]}R_{[k-m_2],[l]}+\dots+ (\omega_1)_{[k],[k-1]}R_{[k-1],[l]}=-R_{[k],[l]},
		\end{align*}
		for $l\in\{0,1,\dots,k-1\}$.
Hence, we deduce
	\begin{align*}
	\big((\omega_1)_{[k],[k-m_2+1]},\dots, (\omega_1)_{[k],[k]}\big)
R^{[\mathcal M_k]}
	&=-R_{\mathcal M_k},
	\end{align*}
	from where  the result follows.
\end{proof}

\begin{theorem}[Multivariate Gerominus formul{\ae}]
	For $k\geq m_2$ and a given a poised set of multi-indices $\mathcal M_k$ we can write
	\begin{align}\label{eq:TP>}
	\check P_{[k]}(\x)&=	\Theta_*\PARENS{
\begin{matrix}
R_{[k-m_2],\boldsymbol{\beta}_1} & \dots & R_{[k-m_2],\boldsymbol{\beta}_{r_{k,m_2}}}&	P_{[k-m_2]}(\x)\\
\vdots &            &\vdots&\vdots\\
R_{[k],\boldsymbol{\beta}_1} & \dots & R_{[k],\boldsymbol{\beta}_{r_{k,m_2}}} &P_{[k]}(\x)
		\end{matrix}
	}.
	\end{align}
In this case, for the quasi-tau matrices we have  the following two expressions
	\begin{align}\label{eq:THjm}
	\check H_{[k]}\Big(\big(\mathcal Q_2(\boldsymbol \Lambda)\big)_{[k-m_2],[k]}\Big)^\top&=\Theta_*\PARENS{
\begin{matrix}
R_{[k-m_2],\boldsymbol{\beta}_1} & \dots & R_{[k-m_2],\boldsymbol{\beta}_{r_{k,m_2}}}&H_{[k-m_2]}\\
R_{[k-m_2+1],\boldsymbol{\beta}_1} & \dots & R_{[k-m_2+1],\boldsymbol{\beta}_{r_{k,m_2}}}&0_{[k-m_2+1],[k-m_2]}\\
\vdots &            &\vdots&\vdots\\
R_{[k],\boldsymbol{\beta}_1} & \dots & R_{[k],\boldsymbol{\beta}_{r_{k,m_2}}} &0_{[k],[k-m_2]}
\end{matrix}
	},\\
	\label{eq:THjm2}
	\check H_{[k]}&=\Theta_*\PARENS{
		\begin{matrix}
		R_{[k-m_2],\boldsymbol{\beta}_1} & \dots & R_{[k-m_2],\boldsymbol{\beta}_{r_{k,m_2}}}&R_{[k-m_2],[k]}\\
		R_{[k-m_2+1],\boldsymbol{\beta}_1} & \dots & R_{[k-m_2+1],\boldsymbol{\beta}_{r_{k,m_2}}}&R_{[k-m_2+1],[k]}\\
		\vdots &            &\vdots&\vdots\\
		R_{[k],\boldsymbol{\beta}_1} & \dots & R_{[k],\boldsymbol{\beta}_{r_{k,m_2}}} &R_{[k],[k]}
		\end{matrix}
	},
	\end{align}			
\end{theorem}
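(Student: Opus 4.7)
The three identities in the theorem will all be obtained by combining three ingredients already at our disposal: the connection formula $\omega_1 P(\x)=\check P(\x)$ from \eqref{omega-P}, the block lower unitriangular and $m_2$-banded structure of $\omega_1$ established in Proposition \ref{pro:superdiagonals}, and the explicit row formula for its nontrivial entries provided by Proposition \ref{pro:M2_v1}. The driving mechanism of the argument is that, once one substitutes
\begin{align*}
\big((\omega_1)_{[k],[k-m_2]},\ldots,(\omega_1)_{[k],[k-1]}\big)=-R_{\mathcal M_k}\big(R^{[\mathcal M_k]}\big)^{-1}
\end{align*}
into any block identity whose last term involves these entries, the resulting right-hand side is a Schur complement, which is by definition the last quasi-determinant $\Theta_*$ of the bordered matrix in which $R^{[\mathcal M_k]}$ occupies the top-left corner and $R_{\mathcal M_k}$ the bottom-left row.

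For \eqref{eq:TP>}, the $m_2$-bandedness reduces the connection formula $\omega_1 P(\x)=\check P(\x)$ to
\begin{align*}
\check P_{[k]}(\x)=P_{[k]}(\x)+\sum_{l=k-m_2}^{k-1}(\omega_1)_{[k],[l]}P_{[l]}(\x),
\end{align*}
and inserting Proposition \ref{pro:M2_v1} immediately recognises the right-hand side as the claimed Schur complement. For \eqref{eq:THjm2} the same idea applies to the scalar identity $(\omega_1 R)_{[k],[k]}=\check H_{[k]}$ from Proposition \ref{pro:r-alpha}, which under the bandedness becomes
\begin{align*}
\check H_{[k]}=R_{[k],[k]}+\sum_{l=k-m_2}^{k-1}(\omega_1)_{[k],[l]}R_{[l],[k]},
\end{align*}
and Proposition \ref{pro:M2_v1} again converts the right-hand side into the desired bordered quasi-determinant.

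The formula \eqref{eq:THjm} requires one extra ingredient. From \eqref{eq:M-omega} one has $\omega_1 H=\check H\omega_2$, while from $(\omega_2)^\top=S\mathcal Q_2(\boldsymbol\Lambda)\check S^{-1}$, together with the lower unitriangularity of both $S$ and $\check S^{-1}$, the uppermost nonzero superdiagonal of $\mathcal Q_2(\boldsymbol\Lambda)$ is preserved under left multiplication by $S$ and right multiplication by $\check S^{-1}$; hence $(\omega_2)_{[k],[k-m_2]}=\big((\mathcal Q_2(\boldsymbol\Lambda))_{[k-m_2],[k]}\big)^\top$. Reading off the $([k],[k-m_2])$ block of $\omega_1 H=\check H\omega_2$ and using that $H$ is block diagonal, one obtains
\begin{align*}
(\omega_1)_{[k],[k-m_2]}H_{[k-m_2]}=\check H_{[k]}\big((\mathcal Q_2(\boldsymbol\Lambda))_{[k-m_2],[k]}\big)^\top,
\end{align*}
after which extracting $(\omega_1)_{[k],[k-m_2]}$ as the first block component of the row vector in Proposition \ref{pro:M2_v1} casts the left-hand side in the same Schur complement form as in \eqref{eq:THjm}. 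The single delicate point of the whole argument is precisely this identification of the leading subdiagonal of $\omega_2$ with that of $\mathcal Q_2(\boldsymbol\Lambda)^\top$; once this is pinned down, all three formulas are routine translations between bordered matrix identities and their last quasi-determinants.
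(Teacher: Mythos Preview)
Your proof is correct and follows essentially the same approach as the paper's: the connection formula plus the band structure of $\omega_1$ for \eqref{eq:TP>}, Proposition~\ref{pro:r-alpha} plus the band structure for \eqref{eq:THjm2}, and the identification of the $m_2$-th subdiagonal block of $\omega_1$ for \eqref{eq:THjm}, each time feeding in Proposition~\ref{pro:M2_v1} to produce the Schur complement. The only cosmetic difference is that for \eqref{eq:THjm} you re-derive the identity $(\omega_1)_{[k],[k-m_2]}H_{[k-m_2]}=\check H_{[k]}\big((\mathcal Q_2(\boldsymbol\Lambda))_{[k-m_2],[k]}\big)^\top$ from \eqref{eq:M-omega} and the structure of $\omega_2$, whereas the paper simply cites Proposition~\ref{pro:superdiagonals}, where this formula is already recorded.
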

\begin{proof}
	When $k\geq m_2$ we can use \eqref{omega-P}
	\begin{align*}
	\check P_{[k]}(\x)&=	(\omega_1)_{[k],[k-m_2]} P_{[k-m_2]}(\x)+\dots+ (\omega_1)_{[k],[k-1]}P_{[k-1]}(\x)+P_{[k]}(\x),
	\end{align*}
	and   Proposition \ref{pro:M2_v1} leads to \eqref{eq:TP>}.
From Proposition \ref{pro:superdiagonals} we get
\begin{align*}
(\omega_1)_{[k],[k-m_2]}=\check H_{[k]}\Big(\big(\mathcal Q_2(\boldsymbol \Lambda)\big)_{[k-m_2],[k]}\Big)^\top \big(H_{[k-m_2]}\big)^{-1},
\end{align*}
while  Proposition \ref{pro:M2_v1} tells us that
\begin{align*}
(\omega_1)_{[k],[k-m_2]}=-R_{\mathcal M_k}\big(R^{[\mathcal M_k]}\big)^{-1}\PARENS{
	\begin{matrix}
\I_{[k-m_2]}\\0_{[k-m_2+1],[k-m_2]} \\\vdots \\0_{[k],[k-m_2]}
\end{matrix}
},
\end{align*}
and, consequently, \eqref{eq:THjm} is proven. Then, to prove \eqref{eq:THjm2} just recall Proposition \ref{pro:r-alpha} and write
\begin{align*}
\check H_{[k]}=	\big((\omega_1)_{[k],[k-m_2]},\dots, (\omega_1)_{[k],[k-1]}\big)\PARENS{
	\begin{matrix}
	R_{[k-m_2],[k]}\\\vdots\\R_{[k-1],[k]}
	\end{matrix}}+R_{[k],[k]},
\end{align*}
and use Proposition \ref{pro:M2_v1} to conclude
\begin{align*}
\check H_{[k]}=R_{[k],[k]}	-R_{\mathcal M_k}\big(R^{[\mathcal M_k]}\big)^{-1}\PARENS{
	\begin{matrix}
	R_{[k-m_2],[k]}\\\vdots\\R_{[k-1],[k]}
	\end{matrix}}.
\end{align*}
\end{proof}
\subsection{Recovering the 1D Geronimus formula}
Let us assume that $D=1$, then $|[k]|=1$ and $N_{k-1}=k$ and for $k\geq  m_2$ we have $r_{k,m_2}=m_2$, so we can   choose the indices as $\{0,1,\dots,m_2-1\}$ (there are other possibilities but let us suppose that it is poised)  as  they all are less than $k$.
Let us assume that $Q_2(x)=(x-q_1)\cdots (x-q_{m_2})$, has $m_2$ simple zeroes $\{q_1,\dots,q_{m_2}\}$, and let us consider the Cauchy transforms $C_k(x)$ of the orthogonal polynomials $P_k(x)$ of the original measure $\d\mu(x)$ given by
\begin{align*}
C_k(x)\coloneq\int \frac{P_k(y)}{y-x}\d\mu(y).
\end{align*}
The point is that the two set of numbers $\{C_k(q_1),\dots, C_k(q_{m_2})\}$ and $\{\rho_{k,0},\rho_{k,1},\dots,\rho_{k,m_2-1}\}$
are linked by the Vandermonde matrix
\begin{align*}
\mathcal V=\PARENS{
	\begin{matrix}
	1&\dots &1\\
	q_1& \dots & q_{m_2}\\
	\vdots & & \vdots\\
	q_1^{m_2-1}& \dots & q_{m_2}^{m_2-1}
	\end{matrix}},
\end{align*}
and the diagonal matrix
\begin{align*}
\mathcal{D}\coloneq \diag\Big(\prod_{\substack{i\in\{1,\dots,m_2\}\\i\neq 1}}(q_1-q_i), \dots, \prod_{\substack{i\in\{1,\dots,m_2\}\\ i\neq m_2}}(q_{m_2}-q_i)\Big),
\end{align*}
by the formula
\begin{align}\label{eq:R_Cauchy}
\big(
\rho_{k,0},
\dots,
\rho_{k,m_2-1}
\big)=
\big(C_k(q_1),\dots,
C_k(q_{m_2})\big)\mathcal D^{-1}\mathcal V^\top.
\end{align}
This relation can be obtained from the identity
\begin{align*}
\frac{(x-q_1)\cdots\widehat{(x-q_i)}\cdots (x-q_{m_2})}{(x-q_1)\cdots(x-q_{m_2})}=\frac{1}{x-q_i},
\end{align*}
where by  $\widehat{(x-q_i)}$ we mean that this factor has been deleted from the product, by expanding the numerator ---according to Vieta's formul\ae{}--- in terms of elementary symmetric polynomials of the roots, $e_j(q_1,\dots,q_{m_2})$, $j\in\{0,1,\dots,m_2\}$.
Moreover, we have the following formul{\ae}
\begin{align*}
\PARENS{
	\begin{matrix}
	\rho_{k-m_2,0}&\dots & \rho_{k-m_2,m_2-1}\\
	\vdots & &\vdots\\
	\rho_{k-1,0}&\dots & \rho_{k-1,m_2-1}
	\end{matrix}}
&=
\PARENS{
	\begin{matrix}
	C_{k-m_2}(q_1)&\dots & C_{k-m_2}(q_{m_2})\\
	\vdots & &\vdots\\
	C_{k-1}(q_1)&\dots &C_{k-1}(q_{m_2})
	\end{matrix}}\mathcal D^{-1}
	\mathcal V^\top.
\end{align*}

Regarding the $\theta_{k,n}$ terms we must recall that a general form of $\d\nu$ in the 1D scenario is given in \eqref{eq:nu},
from where one concludes that
\begin{align*}
\PARENS{
	\begin{matrix}
	\theta_{k-m_2,0}&\dots & \theta_{k-m_2,m_2-1}\\
	\vdots & &\vdots\\
	\theta_{k-1,0}&\dots & \theta_{k-1,m_2-1}	
	\end{matrix}}
&=\PARENS{
	\begin{matrix}
	P_{k-m_2}(q_1)&\dots & P_{k-m_2}(q_{m_2})\\
	\vdots & &\vdots\\
	P_{k-1}(q_1)&\dots &P_{k-1}(q_{m_2})
	\end{matrix}}\zeta \mathcal V^\top
\end{align*}
where
\begin{align*}
\zeta=\diag(\zeta_1,\dots,\zeta_{m_2}).
\end{align*}
Hence, if
\begin{align*}
\xi_j&\coloneq\zeta_j\prod_{\substack{i\in\{1,\dots,m_2\}\\ i\neq j}}(q_j-q_i), &
\phi_l(x,\xi)&\coloneq C_l(x)+\xi P_l(x),
\end{align*}
we  get
\begin{align*}
R^{[\mathcal M_k]}
	&=
	\PARENS{
		\begin{matrix}
			\phi_{k-m_2}(q_1,\xi_1)&\dots & \phi_{k-m_2}(q_{m_2},\xi_{m_2})\\
			\vdots & &\vdots\\
			\phi_{k-1}(q_1,\xi_1)&\dots &\phi_{k-1}(q_{m_2},\xi_{m_2})
		\end{matrix}}\mathcal D^{-1}
		\mathcal V^\top,\\
		 R_{\mathcal M_k}&=\big(\phi_{k}(q_1,\xi_1),\dots ,\phi_{k}(q_{m_2},\xi_{m_2})\big)\mathcal D^{-1}
		 \mathcal V^\top.
		\end{align*}
Therefore,
\begin{align*}
R_{\mathcal M_k}\big(R^{[\mathcal R_k]}\big)^{-1}=\big(\phi_{k}(q_1,\xi_1),\dots ,\phi_{k}(q_{m_2},\xi_{m_2})\big)	\PARENS{
		\begin{matrix}
			\phi_{k-m_2}(q_1,\xi_1)&\dots & \phi_{k-m_2}(q_{m_2},\xi_{m_2})\\
			\vdots & &\vdots\\
			\phi_{k-1}(q_1,\xi_1)&\dots &\phi_{k-1}(q_{m_2},\xi_{m_2})
		\end{matrix}}^{-1}.
\end{align*}

We finally get for, $k\geq m_2$, the perturbed polynomials the Geronimus formula \cite{Geronimus1940polynomials}
\begin{align*}
\check P_k(x)& =\Theta_* \PARENS{
		\begin{matrix}
			\phi_{k-m_2}(q_1,\xi_1)&\dots & \phi_{k-m_2}(q_{m_2},\xi_{m_2}) &P_{k-m_2}(x)\\
			\vdots & &\vdots\\
			\phi_{k}(q_1,\xi_1)&\dots &\phi_k(q_{m_2},\xi_{m_2})&P_k(x)
		\end{matrix}}\\&=\frac{	\begin{vmatrix}
					\phi_{k-m_2}(q_1,\xi_1)&\dots & \phi_{k-m_2}(q_{m_2},\xi_{m_2}) &P_{k-m_2}(x)\\
					\vdots & &\vdots\\
					\phi_{k}(q_1,\xi_1)&\dots &\phi_k(q_{m_2},\xi_{m_2})&P_k(x)
				\end{vmatrix}}{	\begin{vmatrix}
							\phi_{k-m_2}(q_1,\xi_1)&\dots & \phi_{k-m_2}(q_{m_2},\xi_{m_2}) \\
							\vdots & &\vdots\\
							\phi_{k-1}(q_1,\xi_1)&\dots &\phi_{k-1}(q_{m_2},\xi_{m_2})
						\end{vmatrix}},
						\end{align*}
and the perturbed squared norms
\begin{align*}
\check H_k &=\Theta_*\PARENS{
		\begin{matrix}
			\phi_{k-m_2}(q_1,\xi_1)&\dots & \phi_{k-m_2}(q_{m_2},\xi_{m_2}) &H_{k-m_2}\\
			\phi_{k-m_2+1}(q_1,\xi_1)&\dots & \phi_{k-m_2+1}(q_{m_2},\xi_{m_2}) &0\\
			\vdots & &\vdots\\
			\phi_{k}(q_1,\xi_1)&\dots &\phi_k(q_{m_2},\xi_{m_2})&0
		\end{matrix}}\\&=
		(-1)^{m_2+1}\frac{\begin{vmatrix}
											\phi_{k-m_2+1}(q_1,\xi_1)&\dots & \phi_{k-m_2+1}(q_{m_2},\xi_{m_2}) \\
											\vdots & &\vdots\\
											\phi_{k}(q_1,\xi_1)&\dots &\phi_{k}(q_{m_2},\xi_{m_2})
										\end{vmatrix}}{\begin{vmatrix}
									\phi_{k-m_2}(q_1,\xi_1)&\dots & \phi_{k-m_2}(q_{m_2},\xi_{m_2}) \\
									\vdots & &\vdots\\
									\phi_{k-1}(q_1,\xi_1)&\dots &\phi_{k-1}(q_{m_2},\xi_{m_2})
								\end{vmatrix}}H_{k-m_2}.
\end{align*}

\section{Linear spectral  transformations}\label{S:3}

Once we have discussed the multivariate Geronimus  transformation we are ready to consider the more general  linear spectral transform, that might be thought as the multiplication by a rational function, plus an extra contribution living in the zeroes of the polynomial in the denominator. Uvarov perturbations are treated in the Appendix.

\begin{definition}\label{def:linear_spectral}
For a given generalized function $u\in\mathcal O_c'$, let us consider  two  coprime polynomials  $\mathcal Q_1(\x),\mathcal Q_2(\x)\in\C[\x]$,  i.e., with no common prime factors,  degrees
$\deg \mathcal Q_1=m_1$ and $\deg\mathcal Q_2=m_2,$ and such that $Z(\mathcal Q_2)\cap\operatorname{supp}(u)=\varnothing$. Then,  the set of linear functionals $\hat u$ such that
\begin{align}\label{eq:lf_spectral_linear}
\mathcal Q_2\hat u &=\mathcal Q_1u.
\end{align}
is called a linear spectral  transformation.
\end{definition}

Again, there is not a unique $\hat u$ satisfying this condition. In fact, assume we have found such  linear functional that we denote as $\frac{\mathcal Q_1}{\mathcal Q_2}u$, then all possible perturbations $\hat u$ verifying \eqref{eq:lf_geronimus} will have the form
\begin{align*}
\hat u=\frac{\mathcal Q_1}{\mathcal Q_2}u+v
\end{align*}
where, as for the Geronimus transformation, the linear functional $v\in(\C[\x])'$ is such that $(\mathcal Q_2)\subseteq \operatorname{Ker}(v)$; i.e.,
\begin{align*}
\mathcal Q_2 v=0.
\end{align*}
\begin{pro}
	A linear spectral transformation $u\mapsto \hat u$ can be obtained by performing first a Geronimus transformation and then and a Christoffel transformation:
\begin{align*}
u\mapsto \check u \mapsto \hat u,
\end{align*}
where
\begin{align*}
\mathcal Q_2 \check u &= u, & \hat u&= \mathcal Q_1 \check u.
\end{align*}
\end{pro}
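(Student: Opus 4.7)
The plan is to verify directly, using only the definitions and the fact that the action of $\mathbb C[\x]$ on $(\mathbb C[\x])'$ by left multiplication is associative and commutative with respect to polynomial factors. First I would observe that the hypothesis $Z(\mathcal Q_2)\cap\operatorname{supp}(u)=\varnothing$ built into Definition~\ref{def:linear_spectral} is exactly the compatibility condition required in the Geronimus setup, so picking any solution $\check u\in(\mathbb C[\x])'$ of
\begin{align*}
\mathcal Q_2\check u=u
\end{align*}
is legitimate; concretely one may take $\check u=u/\mathcal Q_2$ plus any $v$ with $(\mathcal Q_2)\subseteq\operatorname{Ker}(v)$, as in~\eqref{eq:Geronimus}.

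Next, I would define $\hat u\coloneq \mathcal Q_1\check u$, which is the canonical Christoffel perturbation of $\check u$ by the polynomial $\mathcal Q_1$. The verification of the linear spectral relation then reduces to a single line: for every $P(\x)\in\mathbb C[\x]$,
\begin{align*}
\langle\mathcal Q_2\hat u,P(\x)\rangle=\langle\mathcal Q_1\check u,\mathcal Q_2(\x)P(\x)\rangle=\langle\check u,\mathcal Q_1(\x)\mathcal Q_2(\x)P(\x)\rangle=\langle\mathcal Q_2\check u,\mathcal Q_1(\x)P(\x)\rangle=\langle\mathcal Q_1 u,P(\x)\rangle,
\end{align*}
where the third equality uses commutativity of the pointwise product $\mathcal Q_1\mathcal Q_2=\mathcal Q_2\mathcal Q_1$ in $\mathbb C[\x]$, and the last equality is the Geronimus relation $\mathcal Q_2\check u=u$. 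Hence $\mathcal Q_2\hat u=\mathcal Q_1 u$, confirming that $\hat u$ is a linear spectral transformation of $u$ in the sense of~\eqref{eq:lf_spectral_linear}.

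The only genuine point worth checking is consistency of the remaining freedom in the two descriptions. The Geronimus step introduces an ambiguity parametrized by linear functionals $v$ annihilated by $\mathcal Q_2$, and Christoffel multiplication is a well-defined single-valued operation; conversely, the linear spectral transformation is itself defined up to the same ambiguity (any $\tilde v\in(\mathbb C[\x])'$ with $\mathcal Q_2\tilde v=0$ can be added to $\hat u$). Since $\mathcal Q_1\cdot(\ker\,\mathcal Q_2)\subseteq\ker\,\mathcal Q_2$ (using that $\mathcal Q_1$ and $\mathcal Q_2$ are coprime, so multiplication by $\mathcal Q_1$ permutes the solutions of $\mathcal Q_2 w=0$ modulo the relevant algebraic structure on $Z(\mathcal Q_2)$), the two freedoms are compatible and the Geronimus--Christoffel composition realizes the full class of linear spectral transformations. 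This matching of ambiguities is the subtlest point of the argument; the computational identity itself is immediate from associativity of the polynomial action.
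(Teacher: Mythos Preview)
The paper states this proposition without proof, treating it as immediate from the definitions; your core verification $\mathcal Q_2(\mathcal Q_1\check u)=\mathcal Q_1(\mathcal Q_2\check u)=\mathcal Q_1 u$ is precisely the computation implicitly intended and is correct.

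One remark on your final paragraph: the inclusion $\mathcal Q_1\cdot\ker(\mathcal Q_2)\subseteq\ker(\mathcal Q_2)$ follows from commutativity alone and does not use coprimality. What coprimality is genuinely needed for---and what the paper tacitly invokes later when it writes $\check u=\dfrac{u}{\mathcal Q_2}+\dfrac{v}{\mathcal Q_1}$ to recover an arbitrary $\hat u=\dfrac{\mathcal Q_1}{\mathcal Q_2}u+v$---is the \emph{surjectivity} of multiplication by $\mathcal Q_1$ on $\ker(\mathcal Q_2)$, so that every member of the linear-spectral class is reached by some Geronimus $\check u$. Your phrase ``permutes the solutions of $\mathcal Q_2 w=0$'' asserts this bijectivity but does not justify it; the actual mechanism is that $\mathcal Q_1$ is invertible as a multiplier on distributions supported on $Z(\mathcal Q_2)$ away from the codimension-$\geq 2$ locus $Z(\mathcal Q_1)\cap Z(\mathcal Q_2)$. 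The paper does not spell this out either, so your level of rigor matches the source, but you have misattributed which step requires the coprime hypothesis.
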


For example, for a  given a positive Borel measure $\d\mu(\x)$ with associated zero order linear functional
\begin{align*}
\langle u, P(\x)\rangle=\int P(\x)\d\mu(\x),
\end{align*}
we can choose $\frac{\mathcal Q_1}{\mathcal Q_2}u\in(\C[\x])'$ as the following linear functional
\begin{align*}
\Big\langle\frac{\mathcal Q_1}{\mathcal Q_2}u,P(\x)\Big\rangle=\int P(\x)\frac{\mathcal Q_1(\x)}{\mathcal Q_2(\x)}\d\mu(\x),
\end{align*}
which makes sense if $Z(\mathcal Q_2)\cap \operatorname{supp}(\d\mu)=\varnothing$.

\begin{pro}\label{pro:fac_TG_2}
	If $\hat G$ is the moment matrix of the perturbed linear functional $\hat u$ we have
	\begin{align*}
	\mathcal Q_2(\boldsymbol \Lambda)	\hat G= \hat G \mathcal Q_2(\boldsymbol \Lambda^\top)=
		\mathcal Q_1(\boldsymbol \Lambda)	 G=  G \mathcal Q_1(\boldsymbol \Lambda^\top).
	\end{align*}
\end{pro}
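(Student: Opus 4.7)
The plan is to imitate the proof of Proposition \ref{pro:fac_TG} almost verbatim, since the only change is that the right-hand side of the functional relation \eqref{eq:lf_spectral_linear} is $\mathcal Q_1 u$ instead of $u$. First I would start from the definition $\hat G=\langle \hat u,\chi(\x)(\chi(\x))^\top\rangle$ and apply $\mathcal Q_2(\boldsymbol{\Lambda})$ on the left. Using the spectral property \eqref{eigen}, which gives $\mathcal Q_2(\boldsymbol{\Lambda})\chi(\x)=\mathcal Q_2(\x)\chi(\x)$, I can pull $\mathcal Q_2$ inside the pairing:
\begin{align*}
\mathcal Q_2(\boldsymbol{\Lambda})\hat G
=\bigl\langle\hat u,\mathcal Q_2(\x)\chi(\x)(\chi(\x))^\top\bigr\rangle
=\bigl\langle\mathcal Q_2\hat u,\chi(\x)(\chi(\x))^\top\bigr\rangle.
\end{align*}

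Next I would invoke the defining relation $\mathcal Q_2\hat u=\mathcal Q_1 u$ from \eqref{eq:lf_spectral_linear} and then reverse the previous manipulation, moving $\mathcal Q_1(\x)$ back out of the pairing as $\mathcal Q_1(\boldsymbol{\Lambda})$:
\begin{align*}
\bigl\langle\mathcal Q_2\hat u,\chi(\x)(\chi(\x))^\top\bigr\rangle
=\bigl\langle\mathcal Q_1 u,\chi(\x)(\chi(\x))^\top\bigr\rangle
=\mathcal Q_1(\boldsymbol{\Lambda})G,
\end{align*}
which establishes the central equality $\mathcal Q_2(\boldsymbol{\Lambda})\hat G=\mathcal Q_1(\boldsymbol{\Lambda})G$.

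For the two remaining equalities I would invoke the symmetry property of moment matrices from Proposition \ref{pro:Lambda}(3), namely $\Lambda_a M=M\Lambda_a^\top$ valid for any moment matrix $M$ associated to a linear functional. Applied to $\hat G$ this yields $\mathcal Q_2(\boldsymbol{\Lambda})\hat G=\hat G\,\mathcal Q_2(\boldsymbol{\Lambda}^\top)$ after iterating over monomials (the $\Lambda_a$ commute among themselves by Proposition \ref{pro:Lambda}(1), so polynomial expressions in $\boldsymbol{\Lambda}$ behave well), and applied to $G$ it yields $\mathcal Q_1(\boldsymbol{\Lambda})G=G\,\mathcal Q_1(\boldsymbol{\Lambda}^\top)$.

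There is no genuine obstacle here; the proof is entirely parallel to that of Proposition \ref{pro:fac_TG}, the only new ingredient being the replacement of $u$ by $\mathcal Q_1 u$ on the right-hand side of the defining functional identity. The mildly delicate point, if any, is simply that one must verify $\hat G$ is itself symmetric (it is, since $\hat G=\langle\hat u,\chi\chi^\top\rangle$ gives $\hat G=\hat G^\top$), so that the commutation identity from Proposition \ref{pro:Lambda}(3) applies to $\hat G$ as well as to $G$.
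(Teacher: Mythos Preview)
Your proof is correct and follows essentially the same route as the paper's: apply $\mathcal Q_2(\boldsymbol\Lambda)$ to $\hat G$, use the spectral property \eqref{eigen} to move $\mathcal Q_2(\x)$ inside the pairing, invoke \eqref{eq:lf_spectral_linear}, and reverse the argument to extract $\mathcal Q_1(\boldsymbol\Lambda)$. The paper's written proof actually omits the two transposed equalities, so your explicit justification via Proposition~\ref{pro:Lambda}(3) is if anything a slight improvement in completeness.
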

\begin{proof}
It is proven as follows
	\begin{align*}
	\mathcal Q_2(\boldsymbol \Lambda)\big\langle \hat u,\chi(\x)\big(\chi(\x)\big)^\top\big\rangle&=
	\big\langle \hat u, \mathcal Q_2(\x)\chi(\x)\big(\chi(\x)\big)^\top\big\rangle\\
	&=	\big\langle \mathcal Q_2\hat u, \chi(\x)\big(\chi(\x)\big)^\top\big\rangle \\
	&=\big\langle  \mathcal Q_1u,\chi(\x)\big(\chi(\x)\big)^\top\big\rangle &
	\text{  use \eqref{eq:lf_spectral_linear}}\\
	&=
	\big\langle u, \mathcal Q_1(\x)\chi(\x)\big(\chi(\x)\big)^\top\big\rangle \\
		&=\mathcal Q_1(\boldsymbol{\Lambda})\big\langle u, \chi(\x)\big(\chi(\x)\big)^\top\big\rangle
		\end{align*}
\end{proof}

\subsection{Resolvents and connection formul{\ae}}

\begin{definition}\label{def:resolvents_linear}
	The resolvent  matrices are
	\begin{align}\label{eq:resolvents}
	\omega_1\coloneq&\hat S \mathcal Q_1(\boldsymbol{\Lambda}) S^{-1}, &
		(\omega_2)^\top\coloneq S\mathcal Q_2(\boldsymbol\Lambda)\hat S^{-1},
	\end{align}
	given in terms of the lower unitriangular matrices $S$ and $\hat S$ of the Cholesky factorizations of the moment matrices $G=S^{-1}H(S^{-1})^\top$ and  $\hat G=(\hat S)^{-1}(\hat H)(\hat S^{-1})^\top$.
\end{definition}

\begin{pro}
	The resolvent matrices satisfy
	\begin{align}\label{eq:M-omega2}
	\hat H\omega_2=\omega_1H.
	\end{align}
\end{pro}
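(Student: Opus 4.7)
The plan is to combine Proposition \ref{pro:fac_TG_2} with the Cholesky factorizations of $G$ and $\hat G$ and then identify the resulting factors as the two resolvents.

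Concretely, I would start from the middle identity of Proposition \ref{pro:fac_TG_2},
\begin{align*}
\mathcal Q_1(\boldsymbol{\Lambda})\, G = \hat G\, \mathcal Q_2(\boldsymbol{\Lambda}^\top),
\end{align*}
and substitute the Cholesky factorizations $G = S^{-1} H (S^{-1})^\top$ and $\hat G = \hat S^{-1} \hat H (\hat S^{-1})^\top$ on both sides, obtaining
\begin{align*}
\mathcal Q_1(\boldsymbol{\Lambda})\, S^{-1} H (S^{-1})^\top = \hat S^{-1} \hat H (\hat S^{-1})^\top \mathcal Q_2(\boldsymbol{\Lambda}^\top).
\end{align*}

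Next I would multiply on the left by $\hat S$ and on the right by $S^\top$ to clear the outermost factors, which yields
\begin{align*}
\hat S\, \mathcal Q_1(\boldsymbol{\Lambda})\, S^{-1}\, H = \hat H\, (\hat S^{-1})^\top \mathcal Q_2(\boldsymbol{\Lambda}^\top)\, S^\top.
\end{align*}
By Definition \ref{def:resolvents_linear} the left-hand factor $\hat S\, \mathcal Q_1(\boldsymbol{\Lambda})\, S^{-1}$ is exactly $\omega_1$. For the right-hand side I would observe that
\begin{align*}
(\hat S^{-1})^\top \mathcal Q_2(\boldsymbol{\Lambda}^\top)\, S^\top = \bigl(S\, \mathcal Q_2(\boldsymbol{\Lambda})\, \hat S^{-1}\bigr)^\top = \bigl((\omega_2)^\top\bigr)^\top = \omega_2,
\end{align*}
so that the identity becomes $\omega_1 H = \hat H \omega_2$, which is \eqref{eq:M-omega2}.

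There is essentially no obstacle here; the argument is purely algebraic and mirrors the proof of Proposition in the Geronimus section (where only $\mathcal Q_2$ played a role). The only mild subtlety is to make sure that in the transposition step one really gets $\omega_2$ and not $(\omega_2)^\top$, which is why $\omega_2$ was defined through its transpose in \eqref{eq:resolvents} in the first place.
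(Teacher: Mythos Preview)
Your proof is correct and follows exactly the approach indicated in the paper, which simply states that the result follows from the Cholesky factorizations of $G$ and $\hat G$ together with Proposition \ref{pro:fac_TG_2}. You have merely spelled out the algebraic steps that the paper leaves implicit.
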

\begin{proof}
	It follows from the Cholesky factorization of $G$ and $\hat G$ and from Proposition \ref{pro:fac_TG_2}.
\end{proof}
\begin{pro}\label{pro:superdiagonals2}
The resolvent matrices $\omega_1$ and $\omega_2$ are block banded matrices. All their block superdiagonals above the $m_1$-th and all their subdiagonals below $m_2$-th are zero. In particular, the $m_1$-th block superdiagonal of $\omega_1$ is  $\mathcal Q_1^{(m_1)}(\boldsymbol \Lambda)$  while its $m_2$-th block subdiagonal  is $\hat H\big(\mathcal Q_2^{(m_2)}(\boldsymbol \Lambda^\top)\big)H^{-1}$.
\end{pro}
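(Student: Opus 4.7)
\medskip

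The plan is to exploit three facts in sequence: (i) the block structure of $\mathcal Q_i(\boldsymbol\Lambda)$, which follows directly from the fact that each $\Lambda_a$ has only a first block superdiagonal; (ii) the block lower‑unitriangularity of $S$, $S^{-1}$, $\hat S$, $\hat S^{-1}$ arising from the Cholesky factorizations; and (iii) the identity $\omega_1 H=\hat H\omega_2$ from \eqref{eq:M-omega2}, combined with the block‑diagonality of $H$ and $\hat H$, to swap band information between the two resolvents.

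First I would establish the band structure of $\mathcal Q_1(\boldsymbol\Lambda)$. Since $\Lambda_a\chi(\x)=x_a\chi(\x)$ and each $\Lambda_a$ has only a first block superdiagonal, any monomial $\boldsymbol\Lambda^{\q}$ of total degree $|\q|=n$ lives exclusively on the $n$-th block superdiagonal. Writing $\mathcal Q_1(\x)=\sum_{n=0}^{m_1}\mathcal Q_1^{(n)}(\x)$ in homogeneous pieces, $\mathcal Q_1(\boldsymbol\Lambda)=\sum_{n=0}^{m_1}\mathcal Q_1^{(n)}(\boldsymbol\Lambda)$ is therefore a block upper banded matrix of upper bandwidth $m_1$, whose $n$-th block superdiagonal equals $\mathcal Q_1^{(n)}(\boldsymbol\Lambda)$. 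The analogous statement holds for $\mathcal Q_2(\boldsymbol\Lambda)$ with bandwidth $m_2$.

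Next I would compute $\omega_1=\hat S\,\mathcal Q_1(\boldsymbol\Lambda)\,S^{-1}$ entry by entry. Because $\hat S_{[k],[i]}=0$ for $i>k$, $(S^{-1})_{[j],[l]}=0$ for $l<j$, and $\mathcal Q_1(\boldsymbol\Lambda)_{[i],[j]}=0$ for $j>i+m_1$, a nonvanishing contribution to $(\omega_1)_{[k],[l]}$ forces $l\le j\le i+m_1\le k+m_1$, so $\omega_1$ has upper block bandwidth at most $m_1$. For the extremal superdiagonal $l=k+m_1$ the chain of inequalities collapses to $i=k$, $j=k+m_1$, and the unit diagonals of $\hat S$ and $S^{-1}$ reduce the surviving summand to $\mathcal Q_1^{(m_1)}(\boldsymbol\Lambda)_{[k],[k+m_1]}$, yielding $(\omega_1)_{[k],[k+m_1]}=\mathcal Q_1^{(m_1)}(\boldsymbol\Lambda)_{[k],[k+m_1]}$. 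The same argument applied to $(\omega_2)^\top=S\,\mathcal Q_2(\boldsymbol\Lambda)\,\hat S^{-1}$ gives that $(\omega_2)^\top$ has upper block bandwidth $m_2$ with extremal block $\mathcal Q_2^{(m_2)}(\boldsymbol\Lambda)$, hence $\omega_2$ has lower block bandwidth $m_2$ with its $m_2$-th subdiagonal given by $\mathcal Q_2^{(m_2)}(\boldsymbol\Lambda^\top)$ (using commutativity of the $\Lambda_a$'s to identify $(\mathcal Q_2^{(m_2)}(\boldsymbol\Lambda))^\top=\mathcal Q_2^{(m_2)}(\boldsymbol\Lambda^\top)$).

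Finally, I would transport the lower band information from $\omega_2$ to $\omega_1$ (and symmetrically the upper band information from $\omega_1$ to $\omega_2$) via the relation $\omega_1=\hat H\,\omega_2\,H^{-1}$ of Proposition on \eqref{eq:M-omega2}. Since $H$ and $\hat H$ are block diagonal, conjugation of this kind preserves every block band: $\omega_1$ inherits the lower bandwidth $m_2$ of $\omega_2$, and its $m_2$-th block subdiagonal is $\hat H_{[k]}\,\mathcal Q_2^{(m_2)}(\boldsymbol\Lambda^\top)_{[k],[k-m_2]}\,(H_{[k-m_2]})^{-1}$, i.e.\ the matrix $\hat H\,\mathcal Q_2^{(m_2)}(\boldsymbol\Lambda^\top)\,H^{-1}$ restricted to that subdiagonal. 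The only subtlety worth flagging is the transposition bookkeeping in step (iii)—one must verify that $(\mathcal Q_2^{(m_2)}(\boldsymbol\Lambda))^\top$ indeed equals $\mathcal Q_2^{(m_2)}(\boldsymbol\Lambda^\top)$, which is immediate from Proposition \ref{pro:Lambda}(1) on the commutativity of $\Lambda_a$ and $\Lambda_b$; everything else is bookkeeping on band supports.
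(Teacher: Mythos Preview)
Your argument is correct and follows essentially the same approach as the paper: deduce the upper band of $\omega_1$ (and of $(\omega_2)^\top$) directly from the definitions using the lower unitriangularity of $S,\hat S$ and the band structure of $\mathcal Q_i(\boldsymbol\Lambda)$, then transfer the lower band to $\omega_1$ via the relation $\omega_1=\hat H\,\omega_2\,H^{-1}$ from \eqref{eq:M-omega2}. In fact your version is more carefully spelled out than the paper's brief sketch, and your explicit identification of the extremal diagonals is exactly what the paper asserts.
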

\begin{proof}
	From Definition \ref{def:resolvents_linear} we deduce that both $\omega_1$ or $(\omega_2)^\top$ are semi-infinite matrices with all its block superdiagonals  outside the block diagonal band going from the $m_1$-th superdiagonal to $m_2$-th subdiagonal being zero, and with the  $m_1 $ or $m_2$ superdiagonal equal to $\mathcal Q_1^{(m_1)}(\boldsymbol \Lambda)$ and $\mathcal Q_2^{(m_2)}(\boldsymbol \Lambda)$, respectively.
	Consequently, if \eqref{eq:M-omega2} is taken into account we deduce the band block structure.
\end{proof}
Incidentally, and as a byproduct let us notice
\begin{pro}
	The following factorizations hold
\begin{align}\label{eq:factorization_linear_jacobi}
	&\begin{aligned}
	\mathcal Q_1(\boldsymbol J)\mathcal Q_2(\boldsymbol J)=\mathcal Q_2(\boldsymbol J)\mathcal Q_1(\boldsymbol J)&=
	\big(\omega_2\big)^\top\omega_1,\\
	\mathcal Q_1(\boldsymbol{\hat {J}})
	\mathcal Q_2
	(\boldsymbol{\hat {J}})
	=
	\mathcal Q_2(\boldsymbol{\hat {J}})\mathcal Q_1(\boldsymbol{\hat {J}} )&=
	\omega_1\big(\omega_2\big)^\top.	\end{aligned}
\end{align}
The truncations satisfy
\begin{align*}
\det\Big[ \big(\mathcal Q_1(\boldsymbol J)\big)^{[k]}\Big]&=\det\Big(\big(\omega_1)^{[k]}\Big), &
\det\Big[ \big(\mathcal Q_2(\boldsymbol{ \hat {J}})\big)^{[k]}\Big]&= \det \Big(\big(\omega_2\big)^{[k]}\Big). &
\end{align*}
\end{pro}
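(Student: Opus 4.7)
My plan is to establish all parts by direct manipulation with the Cholesky factorization, exploiting commutativity of the shift matrices $\Lambda_a$ and a lower-triangularity argument for the determinantal truncations.

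First, for the factorizations \eqref{eq:factorization_linear_jacobi}, I would substitute the definitions \eqref{eq:resolvents} and simplify:
\begin{align*}
(\omega_2)^\top \omega_1 = \bigl(S\mathcal Q_2(\boldsymbol\Lambda)\hat S^{-1}\bigr)\bigl(\hat S\mathcal Q_1(\boldsymbol\Lambda) S^{-1}\bigr) = S\,\mathcal Q_2(\boldsymbol\Lambda)\mathcal Q_1(\boldsymbol\Lambda)\,S^{-1}.
\end{align*}
Since the $\Lambda_a$ commute by Proposition~\ref{pro:Lambda}(1), polynomial evaluations $\mathcal Q_i(\boldsymbol\Lambda)$ live in a commutative ring, so $\mathcal Q_2(\boldsymbol\Lambda)\mathcal Q_1(\boldsymbol\Lambda)=\mathcal Q_1(\boldsymbol\Lambda)\mathcal Q_2(\boldsymbol\Lambda)$. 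Conjugating by $S$ turns this into the corresponding identity for $\boldsymbol J$: from \eqref{eq:jacobi} and multiplicativity of conjugation, $\mathcal Q_i(\boldsymbol J)=S\,\mathcal Q_i(\boldsymbol\Lambda)\,S^{-1}$, and by the same token the $J_a$ mutually commute. Hence $(\omega_2)^\top\omega_1 = \mathcal Q_1(\boldsymbol J)\mathcal Q_2(\boldsymbol J) = \mathcal Q_2(\boldsymbol J)\mathcal Q_1(\boldsymbol J)$. The identity for $\omega_1(\omega_2)^\top$ is analogous after canceling $S^{-1}S$ and using $\mathcal Q_i(\boldsymbol{\hat J})=\hat S\mathcal Q_i(\boldsymbol\Lambda)\hat S^{-1}$.

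For the determinantal identities, the key step is the regrouping
\begin{align*}
\omega_1 = (\hat S\,S^{-1})\,\mathcal Q_1(\boldsymbol J), \qquad (\omega_2)^\top = (S\,\hat S^{-1})\,\mathcal Q_2(\boldsymbol{\hat J}),
\end{align*}
in which $U:=\hat S\,S^{-1}$ and $V:=S\,\hat S^{-1}$ are both lower unitriangular (as products of lower unitriangular matrices). For any lower triangular $U$ and any matrix $M$, truncation is multiplicative: indeed $(UM)_{[i],[j]}=\sum_{l\le i}U_{[i],[l]}M_{[l],[j]}$, so for $i,j<k$ all summation indices already satisfy $l<k$, giving $(UM)^{[k]}=U^{[k]}M^{[k]}$. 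Applying this to $\omega_1$ and to $(\omega_2)^\top$ and taking determinants, together with $\det U^{[k]}=\det V^{[k]}=1$ (unitriangularity) and $\det(\omega_2^{[k]})=\det((\omega_2)^\top)^{[k]}$, yields
\begin{align*}
\det\bigl(\omega_1^{[k]}\bigr)=\det\bigl(\mathcal Q_1(\boldsymbol J)^{[k]}\bigr), \qquad \det\bigl(\omega_2^{[k]}\bigr)=\det\bigl(\mathcal Q_2(\boldsymbol{\hat J})^{[k]}\bigr).
\end{align*}

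I do not foresee a genuine obstacle: the proof is purely algebraic. The only point that requires care is the truncation-of-product step, which works precisely because the left factor is lower triangular; had we attempted to truncate the product $(\omega_2)^\top\omega_1$ as a whole, this would fail because $(\omega_2)^\top$ has nonzero superdiagonals up to order $m_2$ that couple block rows $<k$ to block rows $\ge k$. Isolating the lower unitriangular factor is therefore essential, and it is precisely what makes the determinantal identity clean.
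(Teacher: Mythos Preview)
Your proof is correct and follows essentially the same route as the paper: the paper derives the intermediate relations $\mathcal Q_1(\boldsymbol J)=S\hat S^{-1}\omega_1$, $\mathcal Q_2(\boldsymbol J)=\omega_2^\top\hat S S^{-1}$, $\mathcal Q_1(\boldsymbol{\hat J})=\omega_1 S\hat S^{-1}$, $\mathcal Q_2(\boldsymbol{\hat J})=\hat S S^{-1}\omega_2^\top$, which are precisely your regroupings $\omega_1=(\hat S S^{-1})\mathcal Q_1(\boldsymbol J)$ and $(\omega_2)^\top=(S\hat S^{-1})\mathcal Q_2(\boldsymbol{\hat J})$ rearranged. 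Your write-up is in fact more explicit than the paper's on the determinantal part, since you spell out the lower-triangular truncation-of-product lemma $(UM)^{[k]}=U^{[k]}M^{[k]}$ that the paper leaves implicit.
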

\begin{proof}
	In the one hand, Definitions \ref{def: shift_jacobi} and \ref{def:resolvents_linear} imply
\begin{align*}
\mathcal Q_1(\boldsymbol J)&=S\hat S^{-1} \omega_1, & \mathcal Q_2(\boldsymbol J)&=\omega_2^\top \hat SS^{-1},\\
\mathcal Q_1(\boldsymbol{ \hat {J}})&= \omega_1S\hat S^{-1}, & \mathcal Q_2(\boldsymbol{ \hat {J}})&=\hat SS^{-1}\omega_2^\top,
\end{align*}
from where we conclude the factorizations \eqref{eq:factorization_linear_jacobi}.
\end{proof}

\begin{pro}[Connection formul\ae]
	The followings relations are fulfilled
	\begin{align}
	\notag	(\omega_2)^\top\hat P(\x)&=\mathcal Q_2(\x) P(\x),\\
	\label{omega-P_2}	\omega_1 P(\x)&=\mathcal Q_1(\x)\hat P(\x).
	\end{align}
\end{pro}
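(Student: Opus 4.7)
The plan is to carry out a short direct calculation based on three inputs: the definitions of the resolvents in \eqref{eq:resolvents}, the definitions $P(\x)=S\chi(\x)$ and $\hat P(\x)=\hat S\chi(\x)$ of the monic MVOPR from \eqref{eq:polynomials}, and the spectral property $\boldsymbol{\Lambda}\chi(\x)=\x\chi(\x)$ coming from \eqref{eigen}. The latter immediately yields the polynomial calculus identity $\mathcal Q_i(\boldsymbol\Lambda)\chi(\x)=\mathcal Q_i(\x)\chi(\x)$ for $i=1,2$, which is the main algebraic tool.

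For the first identity I would write
\begin{align*}
(\omega_2)^\top\hat P(\x) &= S\,\mathcal Q_2(\boldsymbol\Lambda)\,\hat S^{-1}\hat S\,\chi(\x) \\
&= S\,\mathcal Q_2(\boldsymbol\Lambda)\,\chi(\x) \\
&= \mathcal Q_2(\x)\,S\,\chi(\x) \\
&= \mathcal Q_2(\x)\,P(\x),
\end{align*}
where in the third step we pulled the scalar $\mathcal Q_2(\x)$ through the matrix $S$, which is legitimate since $\mathcal Q_2(\x)$ is a scalar-valued function of $\x$ not acted on by $S$.

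The second identity is analogous:
\begin{align*}
\omega_1 P(\x) &= \hat S\,\mathcal Q_1(\boldsymbol\Lambda)\,S^{-1}S\,\chi(\x) \\
&= \hat S\,\mathcal Q_1(\boldsymbol\Lambda)\,\chi(\x) \\
&= \mathcal Q_1(\x)\,\hat S\,\chi(\x) \\
&= \mathcal Q_1(\x)\,\hat P(\x).
\end{align*}

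There is essentially no obstacle: both computations are one-line telescopings once the spectral property is in hand. The only thing worth flagging is that the commutativity of the spectral matrices $\Lambda_a$ asserted in Proposition~\ref{pro:Lambda} is what makes $\mathcal Q_1(\boldsymbol\Lambda)$ and $\mathcal Q_2(\boldsymbol\Lambda)$ well-defined polynomial expressions, and it is also what guarantees the spectral identity $\mathcal Q_i(\boldsymbol\Lambda)\chi(\x)=\mathcal Q_i(\x)\chi(\x)$ by iteration of \eqref{eigen}. Hence the connection formul{\ae} follow.
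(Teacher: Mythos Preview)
Your proof is correct and follows exactly the approach the paper indicates: the paper's own proof is the single line ``It follows from \eqref{eq:polynomials} and Definition \ref{def:resolvents_linear},'' and you have simply unpacked that reference by writing out the telescoping computation using the spectral property \eqref{eigen}. There is nothing to add.
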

\begin{proof}
	It follows from \eqref{eq:polynomials} and Definition \ref{def:resolvents_linear}.
\end{proof}

\subsection{The multivariate Christoffel--Geronimus--Uvarov formula}

We are ready to deduce a multivariate extension of the Christoffel--Geronimus--Uvarov formula for linear spectral transformations,
\cite{Christoffel1858uber,Geronimus1940polynomials,Uvarov1969connection,Zhedanov1997Rational}.

\begin{definition}\label{def:r2}
	We introduce the semi-infinite block matrices
	\begin{align*}
	R&\coloneq \big\langle\check u, P(\x)\big(\chi(\x)\big)^\top\big\rangle.
	\end{align*}
\end{definition}

\begin{pro}
	The formula
	\begin{align*}
	R&=\rho+\theta, &
	\rho&\coloneq
	\left\langle u,\frac{P(\x)\big(\chi(\x)\big)^\top}{\mathcal Q_2(\x)} \right\rangle, &
	\theta&\coloneq \left\langle v, \frac{P(\x)\big(\chi(\x)\big)^\top}{\mathcal Q_1(\x)}\right\rangle,
	\end{align*}
	holds.
\end{pro}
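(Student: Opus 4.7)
The plan is to reduce the statement to the Geronimus-type calculation already carried out in Section \ref{S:2}, using the factorization $u\mapsto\check u\mapsto \hat u$ of the linear spectral transformation as a Geronimus followed by a Christoffel transformation. The only new ingredient is recognising how the discrete part $v$ of $\hat u$ sits inside the corresponding discrete part of the intermediate Geronimus functional $\check u$.

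First I would make precise the decomposition of $\check u$. Writing $\hat u = \tfrac{\mathcal Q_1}{\mathcal Q_2}u+v$ with $\mathcal Q_2 v=0$, and recalling $\mathcal Q_2\check u=u$ and $\hat u=\mathcal Q_1\check u$, I claim that
\begin{align*}
\check u=\frac{u}{\mathcal Q_2}+\frac{v}{\mathcal Q_1}.
\end{align*}
The quotient $v/\mathcal Q_1$ is well defined as a generalized function because $\mathcal Q_1,\mathcal Q_2$ are coprime; in particular $\mathcal Q_1$ does not vanish on $\operatorname{supp}(v)\subseteq Z(\mathcal Q_2)$ away from the lower-dimensional set $Z(\mathcal Q_1)\cap Z(\mathcal Q_2)$, so the representatives of $v$ of the form \eqref{eq:v_general} can be divided by $\mathcal Q_1$. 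A direct check confirms the two required identities: $\mathcal Q_2\check u=u+\mathcal Q_2v/\mathcal Q_1=u$ since $\mathcal Q_2 v=0$, and $\mathcal Q_1\check u=\mathcal Q_1 u/\mathcal Q_2+v=\hat u$.

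Second, I would substitute this decomposition into Definition \ref{def:r2} and use linearity together with the rule $\langle Q w,P\rangle=\langle w,QP\rangle$ for multiplication of a linear functional by a polynomial, applied formally to the quotients (which is legitimate once the above regularity of $v/\mathcal Q_1$ is granted):
\begin{align*}
R=\bigg\langle\frac{u}{\mathcal Q_2},P(\x)(\chi(\x))^\top\bigg\rangle+\bigg\langle\frac{v}{\mathcal Q_1},P(\x)(\chi(\x))^\top\bigg\rangle=\bigg\langle u,\frac{P(\x)(\chi(\x))^\top}{\mathcal Q_2(\x)}\bigg\rangle+\bigg\langle v,\frac{P(\x)(\chi(\x))^\top}{\mathcal Q_1(\x)}\bigg\rangle,
\end{align*}
which matches the stated $\rho+\theta$.

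The only real obstacle is justifying the splitting $\check u=u/\mathcal Q_2+v/\mathcal Q_1$ at the level of the chosen class of generalized functions ($\mathcal E'$, $\mathcal O_M'$ or $\mathcal O_c'$ as discussed in \S\ref{MV}); once one checks that $v/\mathcal Q_1$ lies in the same space (which is clear for the concrete representatives \eqref{eq:v_general} since $\mathcal Q_1$ is smooth and nonvanishing on the relevant support), everything else is linearity. Note the parallel with Definition \ref{def:r} and its associated proposition in \S\ref{S:2}: the present statement is the Geronimus computation applied to the intermediate $\check u$, the $\mathcal Q_1$ in the denominator of $\theta$ being the sole novelty coming from the Christoffel leg of the spectral transformation.
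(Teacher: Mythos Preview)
Your proof is correct and follows exactly the same route as the paper: the paper's entire proof is the one-line instruction ``Just write $\check u=\frac{u}{\mathcal Q_2}+\frac{v}{\mathcal Q_1}$, with $(\mathcal Q_2)\subseteq\operatorname{Ker} v$, and $\hat u=\frac{\mathcal Q_1}{\mathcal Q_2}u+v$,'' which is precisely the decomposition you identify and verify. Your additional care in checking that $v/\mathcal Q_1$ is well defined via coprimality and in confirming the two identities $\mathcal Q_2\check u=u$, $\mathcal Q_1\check u=\hat u$ simply fills in details the paper leaves implicit.
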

\begin{proof}
	Just write $\check u=\frac{u}{\mathcal Q_2}+\frac{v}{\mathcal Q_1}$, with $(\mathcal Q_2)\subseteq\operatorname{Ker} v$, and $\hat u=\frac{\mathcal Q_1}{\mathcal Q_2}u+v$.
\end{proof}

As in the Geronimus situation
\begin{pro}
	When the linear functional $u$ is of order zero with  associated  Borel measure $\d\mu(\x)$ we have
\begin{align*}
\rho=\int P(\x)(\chi(\x))^\top\frac{\d\mu(\x)}{\mathcal Q_2(\x)}
\end{align*}
and  for a given prime factorization  $\mathcal Q_2=(\mathcal Q_{2,1})^{d_1}\cdots (\mathcal Q_{2,N})^{d_N}$ and $v$  taken as in \eqref{eq:v_general}
we can write
\begin{align*}
\theta=\sum_{i=1}^N\sum_{\substack{\q\in\Z_+^D\\|\q|<d_i}}\int_{Z(\mathcal Q_{2,i})}
\frac{\partial^\q }{\partial \x^\q}
\Big(
\frac{P(\x)(\chi(\x))^\top}{\mathcal Q_1(\x)}
	\Big)
 \d\xi_{i,\q}(\x).
\end{align*}
\end{pro}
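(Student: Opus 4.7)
The plan is to view both identities as immediate specializations of the preceding proposition, which already gave $R=\rho+\theta$ with
\[
\rho=\left\langle u,\frac{P(\x)(\chi(\x))^\top}{\mathcal Q_2(\x)}\right\rangle,\qquad
\theta=\left\langle v,\frac{P(\x)(\chi(\x))^\top}{\mathcal Q_1(\x)}\right\rangle,
\]
and to translate each linear functional into a concrete integral under the present hypotheses. The argument runs entirely in parallel to the analogous specialization carried out in the Geronimus section, with the only novelty being the extra factor $1/\mathcal Q_1$ inside $\theta$.

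For the $\rho$ part, I would use that $u$ is of order zero with associated Borel measure $\d\mu(\x)$, so that $\langle u,f\rangle=\int f(\x)\,\d\mu(\x)$ for any continuous $f$ on $\operatorname{supp}(\d\mu)$. The standing hypothesis $Z(\mathcal Q_2)\cap\operatorname{supp}(u)=\varnothing$ from Definition \ref{def:linear_spectral} guarantees that $\mathcal Q_2(\x)$ is bounded away from zero on $\operatorname{supp}(\d\mu)$, so each entry of the rational matrix $P(\x)(\chi(\x))^\top/\mathcal Q_2(\x)$ is a continuous function there. Applying $u$ entrywise then yields the claimed integral representation of $\rho$.

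For the $\theta$ part, I would substitute the prescribed form \eqref{eq:v_general} of $v$ into the displayed formula for $\theta$ above and distribute the pairing entry by entry across the matrix-valued argument $P(\x)(\chi(\x))^\top/\mathcal Q_1(\x)$. Moving $\partial^\q$ under the integral on each $Z(\mathcal Q_{2,i})$ and testing against $\d\xi_{i,\q}$ then produces the stated double sum at once.

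The main obstacle, and the only step worth flagging, is ensuring that $P(\x)(\chi(\x))^\top/\mathcal Q_1(\x)$ is smooth enough on each support $\operatorname{supp}(\d\xi_{i,\q})\subseteq Z(\mathcal Q_{2,i})$ for the derivatives $\partial^\q$ of order $|\q|<d_i$ to make classical sense. Since $\mathcal Q_1$ and $\mathcal Q_2$ are coprime (Definition \ref{def:linear_spectral}), $\mathcal Q_1$ is not divisible by any prime factor $\mathcal Q_{2,i}$, so $Z(\mathcal Q_1)\cap Z(\mathcal Q_{2,i})$ is a strictly lower-dimensional subvariety of the irreducible hypersurface $Z(\mathcal Q_{2,i})$. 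Under the natural genericity assumption that $\operatorname{supp}(\d\xi_{i,\q})$ avoids this bad locus -- the companion, for $v$, of the condition $Z(\mathcal Q_2)\cap\operatorname{supp}(u)=\varnothing$ already imposed on $u$ -- the quotient is smooth on the relevant supports, Leibniz applies, and the displayed formula is both well-defined and proven.
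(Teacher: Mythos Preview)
Your proposal is correct and matches the paper's approach: the paper states this proposition without proof, treating it as an immediate specialization of the preceding decomposition $R=\rho+\theta$ once the concrete forms of $u$ (as a Borel measure) and $v$ (as in \eqref{eq:v_general}) are plugged in. Your discussion of the regularity issue for $P(\x)(\chi(\x))^\top/\mathcal Q_1(\x)$ on $Z(\mathcal Q_{2,i})$, resolved via the coprimality hypothesis, is a point the paper silently passes over but which is indeed needed for the formula to be well-defined.
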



\begin{pro}\label{pro:r-alpha_2}
	The following relations
	\begin{align}\label{eq:omega_R}
(\omega_1 R)_{[k],[l]}&=0, & l<k,\\
\label{eq:omega_RH}
(\omega_1 R)_{[k],[k]}&=\hat H_k,
	\end{align}
	hold for the linear spectral type transformation.
\end{pro}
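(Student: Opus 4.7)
The plan is to mimic the proof of Proposition \ref{pro:r-alpha} from the Geronimus setting, with the extra $\mathcal Q_1(\x)$ factor from the numerator absorbed via the defining relation \eqref{eq:lf_spectral_linear}. The key observation is that the new connection formula \eqref{omega-P_2}, $\omega_1 P(\x)=\mathcal Q_1(\x)\hat P(\x)$, carries through the same manipulation provided one recalls that the intermediate functional $\check u$ of Definition \ref{def:r2} is related to $\hat u$ by $\hat u=\mathcal Q_1\check u$.

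First, I would pull the constant matrix $\omega_1$ inside the duality pairing, using the linearity of $\langle\check u,\,\cdot\,\rangle$ entrywise, to write
\begin{align*}
\omega_1 R=\omega_1\bigl\langle\check u,P(\x)(\chi(\x))^\top\bigr\rangle=\bigl\langle\check u,\omega_1 P(\x)(\chi(\x))^\top\bigr\rangle.
\end{align*}
Then I would apply the connection formula \eqref{omega-P_2} to substitute $\omega_1 P(\x)=\mathcal Q_1(\x)\hat P(\x)$, and move the scalar polynomial $\mathcal Q_1(\x)$ onto the functional:
\begin{align*}
\omega_1 R=\bigl\langle\check u,\mathcal Q_1(\x)\hat P(\x)(\chi(\x))^\top\bigr\rangle=\bigl\langle\mathcal Q_1\check u,\hat P(\x)(\chi(\x))^\top\bigr\rangle=\bigl\langle\hat u,\hat P(\x)(\chi(\x))^\top\bigr\rangle,
\end{align*}
where in the last step I used the factorization $u\mapsto\check u\mapsto\hat u$ with $\hat u=\mathcal Q_1\check u$, equivalently the defining relation \eqref{eq:lf_spectral_linear}.

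Finally, since $\hat P(\x)$ is the family of MVOPR associated with the perturbed functional $\hat u$, the orthogonality relations (the exact analogues of \eqref{orth} and \eqref{H}, applied block by block) give $\langle\hat u,\hat P_{[k]}(\x)(\chi_{[l]}(\x))^\top\rangle=0$ for $l<k$ and $\langle\hat u,\hat P_{[k]}(\x)(\chi_{[k]}(\x))^\top\rangle=\hat H_{[k]}$, which is exactly \eqref{eq:omega_R} and \eqref{eq:omega_RH}.

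There is essentially no obstacle here beyond bookkeeping: the whole argument is a three-line chain of identities. The only point that needs mild care is to recognize that the decomposition $R=\rho+\theta$ stated just after Definition \ref{def:r2} is not needed at all for this result, since the orthogonality argument applies directly to $\check u$ and $\hat u=\mathcal Q_1\check u$. One should note as well that the matrix $\omega_1$ commutes past the functional entrywise (it is an operator on indices, not on $\x$), which is what legitimizes the first displayed step.
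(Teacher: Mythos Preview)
Your proof is correct and is precisely the argument the paper has in mind: the paper's own proof simply reads ``Just follow the proof of Proposition~\ref{pro:r-alpha}'', and your adaptation---inserting the connection formula \eqref{omega-P_2} and absorbing the extra $\mathcal Q_1$ factor via $\hat u=\mathcal Q_1\check u$---is exactly how that proof carries over to the linear spectral setting.
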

\begin{proof}
Just follow  the proof of Proposition \ref{pro:r-alpha}.
\end{proof}

\begin{definition}\label{def:linear_spectral_1}
	For $m_1>0$ we consider a set of different multi-indices $\mathcal M_k=\big\{\boldsymbol\beta_i: |\boldsymbol\beta_i|<k\big\}_{i=1}^{r_{2|k,m_2}}$, with cardinal given by
	\begin{align*}
	r_{2|k,m_2}\coloneq|\mathcal M_k|=\ccases{
		N_{k-1}=|[0]|+\dots+|[k-1]|,& k<m_2\\
		N_{k-1}-N_{k-m_2-1}= |[k-m_2]|+\cdots+|[k-1]|, & k\geq m_2.
	}
	\end{align*}
	We also  consider a set of different nodes  $\mathcal N_k=\big\{\boldsymbol p_i\big\}_{i=1}^{r_{1|k,m_1}}$, in the algebraic hypersurface $Z(\mathcal Q_1)$ of  zeroes of $\mathcal Q_1$,  where
\begin{align*}
r_{1|k,m_1}\coloneq |\mathcal N_k|=N_{k+m_1-1}-N_{k-1}=|[k]|+\cdots+|[k+m_1-1]|.
\end{align*}
Finally, we introduce the set $\mathcal S_k\coloneq\mathcal M_k\cup\mathcal N_k$,
the union of the sets of multi-indices and nodes with cardinal given by
\begin{align*}
r_{k,m}\coloneq |\mathcal S_k|=r_{1|k,m_1}+r_{2|k,m_2}=\ccases{
	N_{k+m_1-1}, & k<m_2,\\
	N_{k+m_1-1}-N_{k-m_2-1}, &k\geq m_2.
}
\end{align*}
\end{definition}

\begin{definition}\label{def:linear_spectral_2}
	When $k< m_2$ a set of nodes is poised if
	\begin{align*}
			\begin{vmatrix}
			R_{[0],[0] }&\dots & R_{[0],[k-1]}& P_{[0]}(\boldsymbol p_1) & \dots & P_{[0]}(\boldsymbol p_{r_{1|k,m_1}})\\
			\vdots & &\vdots&\vdots& &\vdots\\
			R_{[k+m_1-1],[k-1]} &\dots & R_{[k+m_1-1],[k-1]}&	P_{[k+m_1-1]}(\boldsymbol p_1) & \dots & P_{[k+m_1-1]}(\boldsymbol p_{r_{1|k,m_1}})
			\end{vmatrix}\neq 0.
	\end{align*}
For $k\geq m_2$, we say that the set $\mathcal S_k$ of nodes and mult-indices is poised if
\begin{align*}
		\begin{vmatrix}
		R_{[k-m_2],\boldsymbol \beta_1} &\dots & R_{[k-m_2],\boldsymbol \beta_{r_{2|k,m_2}}} &	P_{[k-m_2]}(\boldsymbol p_1) & \dots & P_{[k-m_2]}(\boldsymbol p_{r_{1|k,m_1}})
			\\
			\vdots & &\vdots&\vdots& &\vdots\\
		R_{[k+m_1-1],\boldsymbol \beta_1} &\dots & R_{[k+m_1-1],\boldsymbol \beta_{r_{2|k,m_2}}}& 	P_{[k+m_1-1]}(\boldsymbol p_1) & \dots & P_{[k+m_1-1]}(\boldsymbol p_{r_{1|k,m_1}})&
		\end{vmatrix}\neq 0.
\end{align*}
\end{definition}
\begin{theorem}[Christoffel--Geronimus--Uvarov formula for multivariate linear spectral transformations]\label{the:linear_spectral}
	Given a poised set $\mathcal S_k$, of multi-indices and nodes, the perturbed orthogonal polynomials,
	generated by the linear spectral transformation given in Definition \ref{def:linear_spectral},
	 can be expressed, for each $k\in\Z_+$, as
\begin{multline*}
	\hat P_{[k]}(\x)=\frac{\big(\mathcal Q_1(\boldsymbol \Lambda)\big)_{[k],[k+m_1]}}{\mathcal Q_1(\x)}\\\times \Theta_* \PARENS{
		\begin{matrix}
		R_{[0],[0] }&\dots & R_{[0],[k-1]}& P_{[0]}(\boldsymbol p_1) & \dots & P_{[0]}(\boldsymbol p_{r_{1|k,m_1}})&P_{[0]}(\x)\\
		\vdots & &\vdots&\vdots& &\vdots&\vdots\\
		R_{[k+m_1],[0]} &\dots & R_{[k+m_1],[k-1]}&	P_{[k+m_1]}(\boldsymbol p_1) & \dots & P_{[k+m_1]}(\boldsymbol p_{r_{1|k,m_1}})&
	P_{[k+m_1]}(\x)
		\end{matrix}},
 	\end{multline*}
 	and
 	\begin{multline*}
 	\hat H_{[k]}=\big(\mathcal Q_1(\boldsymbol \Lambda)\big)_{[k],[k+m_1]}\\\times \Theta_* \PARENS{
 		\begin{matrix}
 		R_{[0],[0] }&\dots & R_{[0],[k-1]}& P_{[0]}(\boldsymbol p_1) & \dots & P_{[0]}(\boldsymbol p_{r_{1|k,m_1}})&R_{[0],[k]}\\
 		\vdots & &\vdots&\vdots& &\vdots&\vdots\\
 		R_{[k+m_1],[0]} &\dots & R_{[k+m_1],[k-1]}&	P_{[k+m_1]}(\boldsymbol p_1) & \dots & P_{[k+m_1]}(\boldsymbol p_{r_{1|k,m_1}})&
 		R_{[k+m_1],[k]}
 		\end{matrix}}.
 	\end{multline*}
 	When $k\geq m_2$, we also have for the perturbed MVOPR
\begin{multline*}
\hat P_{[k]}(\x)=\frac{\big(\mathcal Q_1(\boldsymbol \Lambda)\big)_{[k],[k+m_1]}}{\mathcal Q_1(\x)}\\\times\Theta_* \PARENS{
	\begin{matrix}
R_{[k-m_2],\boldsymbol \beta_1} &\dots & R_{[k-m_2],\boldsymbol \beta_{r_{2|k,m_2}}}&	P_{[k-m_2]}(\boldsymbol p_1) & \dots & P_{[k-m_2]}(\boldsymbol p_{r_{1|k,m_1}})
	&P_{[k-m_2]}(\x)\\
	\vdots & &\vdots&\vdots& &\vdots&\vdots\\
	R_{[k+m_1],\boldsymbol \beta_1} &\dots & R_{[k+m_1],\boldsymbol \beta_{r_{2|k,m_2}}}&	P_{[k+m_1]}(\boldsymbol p_1) & \dots & P_{[k+m_1]}(\boldsymbol p_{r_{1|k,m_1}})&
P_{[k+m_1]}(\x)
	\end{matrix}}.
\end{multline*}
The quasi-tau matrices  are subject to
\begin{multline*}
\hat H_{[k]}\Big(\big(\mathcal Q_2(\boldsymbol \Lambda)\big)_{[k-m_2],[k]}\Big)^\top=\big(\mathcal Q_1(\boldsymbol \Lambda)\big)_{[k],[k+m_1]}\\\times\Theta_* \PARENS{
	\begin{matrix}
	R_{[k-m_2],\boldsymbol \beta_1} &\dots & R_{[k-m_2],\boldsymbol \beta_{r_{2|k,m_2}}}&	P_{[k-m_2]}(\boldsymbol p_1) & \dots & P_{[k-m_2]}(\boldsymbol p_{r_{1|k,m_1}})
	&H_{[k-m_2]}\\
		R_{[k-m_2+1],\boldsymbol \beta_1} &\dots & R_{[k-m_2+1],\boldsymbol \beta_{r_{2|k,m_2}}}&	P_{[k-m_2+1]}(\boldsymbol p_1) & \dots & P_{[k-m_2+1]}(\boldsymbol p_{r_{1|k,m_1}})
		&0_{[k-m_2+1],[k-m_2]}\\
	\vdots & &\vdots&\vdots& &\vdots&\vdots\\
	R_{[k+m_1],\boldsymbol \beta_1} &\dots & R_{[k+m_1],\boldsymbol \beta_{r_{2|k,m_2}}}&	P_{[k+m_1]}(\boldsymbol p_1) & \dots & P_{[k+m_1]}(\boldsymbol p_{r_{1|k,m_1}})&
	0_{[k+m_1],[k-m_2]}
	\end{matrix}}
\end{multline*}
or
\begin{multline*}
\hat H_{[k]}=\big(\mathcal Q_1(\boldsymbol \Lambda)\big)_{[k],[k+m_1]}\\\times\Theta_* \PARENS{
	\begin{matrix}
	R_{[k-m_2],\boldsymbol \beta_1} &\dots & R_{[k-m_2],\boldsymbol \beta_{r_{2|k,m_2}}}&	P_{[k-m_2]}(\boldsymbol p_1) & \dots & P_{[k-m_2]}(\boldsymbol p_{r_{1|k,m_1}})
	&R_{[k-m_2],[k]}\\
	R_{[k-m_2+1],\boldsymbol \beta_1} &\dots & R_{[k-m_2+1],\boldsymbol \beta_{r_{2|k,m_2}}}&	P_{[k-m_2+1]}(\boldsymbol p_1) & \dots & P_{[k-m_2+1]}(\boldsymbol p_{r_{1|k,m_1}})
	&R_{[k-m_2+1],[k]}\\
	\vdots & &\vdots&\vdots& &\vdots&\vdots\\
	R_{[k+m_1],\boldsymbol \beta_1} &\dots & R_{[k+m_1],\boldsymbol \beta_{r_{2|k,m_2}}}&	P_{[k+m_1]}(\boldsymbol p_1) & \dots & P_{[k+m_1]}(\boldsymbol p_{r_{1|k,m_1}})&
	R_{[k],[k]}
	\end{matrix}}.
\end{multline*}
\end{theorem}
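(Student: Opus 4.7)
The plan is to follow the same strategy as in the Geronimus case (\S\ref{S:2}), adapted to accommodate the numerator polynomial $\mathcal Q_1$. This contributes a new family of linear constraints, coming from evaluations at the zeros of $\mathcal Q_1$, and produces the rectangular prefactor in front of the quasi-determinant via the known leading coefficient of $\omega_1$. The first step is to use the connection formula $\omega_1 P(\x)=\mathcal Q_1(\x)\hat P(\x)$ at the $[k]$-block row together with the band structure from Proposition~\ref{pro:superdiagonals2} to write
\begin{align*}
\mathcal Q_1(\x)\,\hat P_{[k]}(\x)=\sum_{j=\max(0,k-m_2)}^{k+m_1}(\omega_1)_{[k],[j]}\,P_{[j]}(\x),
\end{align*}
and to identify $(\omega_1)_{[k],[k+m_1]}=\bigl(\mathcal Q_1(\boldsymbol\Lambda)\bigr)_{[k],[k+m_1]}$ as the (known) leading term.

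Closing the system on the remaining block entries requires two complementary families of linear relations. First, Proposition~\ref{pro:r-alpha_2} gives $(\omega_1 R)_{[k],\boldsymbol\beta_i}=0$ for each $\boldsymbol\beta_i\in\mathcal M_k$, which accounts for the $R$-columns of the bordered matrix. Second, because $\boldsymbol p_i\in Z(\mathcal Q_1)$, evaluating the connection formula at each $\boldsymbol p_i\in\mathcal N_k$ yields $\sum_j(\omega_1)_{[k],[j]}P_{[j]}(\boldsymbol p_i)=0$, producing the node-columns. A dimension count shows that the $r_{2|k,m_2}$ $R$-columns and the $r_{1|k,m_1}$ node-columns together match exactly the number of unknown block entries $(\omega_1)_{[k],[j]}$ for $j$ in the band with $j\neq k+m_1$, and the poised hypothesis of Definition~\ref{def:linear_spectral_2} is precisely the invertibility of the resulting coefficient matrix. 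The case $k<m_2$ is handled identically, with the band starting at $j=0$ and $\mathcal M_k$ reducing to the set of all multi-indices of length less than $k$.

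Solving the system by Schur complement and substituting back into the expansion of $\mathcal Q_1(\x)\hat P_{[k]}(\x)$ yields the stated $\Theta_*$ expression for $\hat P_{[k]}$, the rectangular block $\bigl(\mathcal Q_1(\boldsymbol\Lambda)\bigr)_{[k],[k+m_1]}$ appearing as a prefactor because the last block-row of the bordered matrix has $|[k+m_1]|$ rows rather than $|[k]|$. The two expressions for $\hat H_{[k]}$ are then obtained by the same Schur reduction after replacing the polynomial column either by $R_{[\cdot],[k]}$ (using the diagonal relation $(\omega_1 R)_{[k],[k]}=\hat H_{[k]}$ of Proposition~\ref{pro:r-alpha_2}) or by $\bigl(H_{[k-m_2]},0,\ldots,0\bigr)^\top$, exploiting the bottom-subdiagonal identity $(\omega_1)_{[k],[k-m_2]}=\hat H_{[k]}\bigl((\mathcal Q_2(\boldsymbol\Lambda))_{[k-m_2],[k]}\bigr)^\top H_{[k-m_2]}^{-1}$ furnished by Proposition~\ref{pro:superdiagonals2}. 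The main obstacle is the careful bookkeeping of the rectangular dimensions, since the target $\hat P_{[k]}(\x)$ has $|[k]|$ components while the quasi-determinant of the bordered matrix naturally outputs a $|[k+m_1]|$-vector; the normalization by $\bigl(\mathcal Q_1(\boldsymbol\Lambda)\bigr)_{[k],[k+m_1]}/\mathcal Q_1(\x)$ is exactly what reconciles them. Apart from this bookkeeping, the argument is a direct transcription of the Geronimus proof.
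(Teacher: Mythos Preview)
Your proposal is correct and follows essentially the same approach as the paper's own proof: identify the leading block $(\omega_1)_{[k],[k+m_1]}=(\mathcal Q_1(\boldsymbol\Lambda))_{[k],[k+m_1]}$, set up the linear system for the remaining resolvent blocks from the $R$-annihilation relations of Proposition~\ref{pro:r-alpha_2} together with the node evaluations of the connection formula~\eqref{omega-P_2} at points of $Z(\mathcal Q_1)$, invoke the poised hypothesis to invert, and then read off $\hat P_{[k]}$ and $\hat H_{[k]}$ as last quasi-determinants. Your remark on the rectangular bookkeeping (the mismatch between $|[k]|$ and $|[k+m_1]|$ resolved by the prefactor) is a helpful clarification that the paper leaves implicit.
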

\begin{proof}
First, we reckon that
	\begin{align*}
	(\omega_1)_{[k],[k+m_1]}&=\big(\mathcal Q_1(\boldsymbol \Lambda)\big)_{[k],[k+m_1]}.
	\end{align*}
	Second, we analyze the consequences of \eqref{eq:omega_R} and \eqref{omega-P_2}.
	In the one hand, from \eqref{eq:omega_R}  we have for $l<k$
	\begin{align*}
	(\omega_1)_{[k],[0]}R_{[0],[l]}+\dots+ (\omega_1)_{[k],[k+m_1]}R_{[k+m_1-1],[l]}=-\big(\mathcal Q_1(\boldsymbol \Lambda)\big)_{[k],[k+m_1]}(R)_{[k+m_1],[l]}.
	\end{align*}
	Moreover, when $k\geq m_2$ and $l<k$,  it is also true that
	\begin{align*}
	(\omega_1)_{[k],[k-m_2]}R_{[k-m_2],[l]}+\dots+ (\omega_1)_{[k],[k+m_1]}R_{[k+m_1-1],[l]}=-\big(\mathcal Q_1(\boldsymbol \Lambda)\big)_{[k],[k+m_1]}(R)_{[k+m_1],[l]}.
	\end{align*}
	On the other hand,  from \eqref{omega-P_2},  given  a zero $\boldsymbol p$ of $\mathcal Q_1(\x)$ we can write
	\begin{align*}
	(\omega_1)_{[k],[0]}P_{[0]}(\boldsymbol p)+\dots+ (\omega_1)_{[k],[k+m_1-1]}P_{[k+m_1-1]}(\boldsymbol p)=-\big(\mathcal Q_1(\boldsymbol \Lambda)\big)_{[k],[k+m_1]}P_{[k+m_1]}(\boldsymbol p),
	\end{align*}
	and when $k\geq m_2$ it can be written as follows
	\begin{align*}
	(\omega_1)_{[k],[k-m_2]}P_{[0]}(\boldsymbol p)+\dots+ (\omega_1)_{[k],[k+m_1]}P_{[k+m_1-1]}(\boldsymbol p)=-
	\big(\mathcal Q_1(\boldsymbol \Lambda)\big)_{[k],[k+m_1]}
	P_{[k+m_1]}(\boldsymbol p).
	\end{align*}
	Regarding the sizes of the resolvent matrices involved  let us remark
	\begin{align*}
	\big((\omega_1)_{[k],[0]},\dots, (\omega_1)_{[k],[k+m_1-1]}\big)&\in
	\R^{|[k]|\times (N_{k+m_1-1})},\\
	\big((\omega_1)_{[k],[k-m_2]},\dots, (\omega_1)_{[k],[k+m_1-1]}\big)&\in
	\R^{|[k]|\times (N_{k+m_1-1} -N_{k-m_2-1})}, &k\geq m_2.
	\end{align*}
	
	Thus, for $k<m_2$ we can write
		\begin{multline}\label{eq:ERL1}
		((\omega_1)_{[k],[0]}, \dots, (\omega_1)_{[k],[k+m_1-1]})=\\\begin{multlined}
		-\big(\mathcal Q_1(\boldsymbol \Lambda)\big)_{[k],[k+m_1]}\Big(R_{[k+m_1],[0]} ,\dots , R_{[k+m_1],[k-1]},P_{[k+m_1]}(\boldsymbol p_1),\dots , P_{[k+m_1]}(\boldsymbol p_{r_{1|k,m_1}})\Big)\\
		\times\PARENS{
			\begin{matrix}
			R_{[k],[0]} &\dots & R_{[0],[k-1]}&	P_{[k]}(\boldsymbol p_1) & \dots & P_{[k]}(\boldsymbol p_{r_{1|k,m_1}})\\
			\vdots & &\vdots&\vdots& &\vdots\\
			R_{[k+m_1-1],[0]} &\dots & R_{[k+m_1-1],[k-1]}&	P_{[k+m_1-1]}(\boldsymbol p_1) & \dots & P_{[k+m_1-1]}(\boldsymbol p_{r_{1|k,m_1}})
			\end{matrix}}^{-1},
		\end{multlined}
		\end{multline}
while	for $k\geq m_2$
	\begin{multline}\label{eq:ERL}
			((\omega_1)_{[k],[k-m_2]}, \dots, (\omega_1)_{[k],[k+m_1-1]})=\\\begin{multlined}
	-\big(\mathcal Q_1(\boldsymbol \Lambda)\big)_{[k],[k+m_1]}\Big(R_{[k+m_1],\boldsymbol \beta_1} ,\dots , R_{[k+m_1],\boldsymbol \beta_{r_{2|k,m_2}}},P_{[k+m_1]}(\boldsymbol p_1),\dots , P_{[k+m_1]}(\boldsymbol p_{r_{1|k,m_1}})\Big)\\
		\times\PARENS{
			\begin{matrix}
			R_{[k-m_2],\boldsymbol \beta_1} &\dots & R_{[k-m_2],\boldsymbol \beta_{r_{2|k,m_2}}}&	P_{[k-m_2]}(\boldsymbol p_1) & \dots & P_{[k-m_2]}(\boldsymbol p_{r_{1|k,m_1}})\\
			\vdots & &\vdots&\vdots& &\vdots\\
			R_{[k+m_1-1],\boldsymbol \beta_1} &\dots & R_{[k+m_1-1],\boldsymbol \beta_{r_{2|k,m_2}}}&	P_{[k+m_1-1]}(\boldsymbol p_1) & \dots & P_{[k+m_1-1]}(\boldsymbol p_{r_{1|k,m_1}})
			\end{matrix}}^{-1},
		\end{multlined}
		\end{multline}
	and similarly for $k<m_2$.
	Now, recalling the connection formula \eqref{omega-P_2} we derive the stated result.
	
	 Proposition \ref{pro:superdiagonals2} implies
	\begin{align*}
	(\omega_1)_{[k],[k-m_2]}=\hat H_{[k]}\Big(\big(\mathcal Q_2(\boldsymbol \Lambda)\big)_{[k-m_2],[k]}\Big)^\top \big(H_{[k-m_2]}\big)^{-1},
	\end{align*}
	i.e., the first quasi-determinantal expression for $\hat H_k$ is proven.

	Finally, from \eqref{eq:omega_RH} we get
		\begin{align*}
		(\omega_1)_{[k],[k-m_2]}R_{[k-m_2],[k]}+\dots+ (\omega_1)_{[k],[k+m_1]}R _{[k+m_1-1],[k]}+\big(\mathcal Q_1(\boldsymbol \Lambda)\big)_{[k],[k+m_1]}R_{[k+m_1],[k]}=\hat H_{[k]},
		\end{align*}
		and \eqref{eq:ERL} we get the second quasi-determinantal expression for $\hat H_k$.
\end{proof}

 For the finding of a multivariate Christoffel formula for Christoffel transformations we need the concourse of poised sets, and the existence of them deeply depends on the algebraic hypersurface of the zeros $Z(\mathcal Q_1(\x))$ of the perturbing polynomial $\mathcal Q_1(\x)$, see \cite{ariznabarreta2015darboux}. In fact, for a factorization in terms of irreducible polynomials, $\mathcal Q_1(\x)=\prod_{i=1}^N \big(\mathcal Q_{1,i}(\x)\big)^{d_i}$,  with $d_1=\cdots=d_N=1$ we require the poised set to  belong only to the mentioned algebraic hypersurface and not to any other of lower degree.  Moreover, if any of the multiplicities $d_1,\dots,d_N$ is bigger than 1 we need to introduce multi-Wronskians expressions. For the Geronimus case this is not necessary as we have  Wronskians already encoded  in the linear functional $v$ and, consequently, in $R$. However, the linear spectral transformations is a composition of  Geronimus and  Christoffel transformations. Therefore,  we have a similar situation as that described in \cite{ariznabarreta2015darboux}. In fact, to have poised sets the requirements discussed in that paper are necessary. Thus, the formul{\ae} given make sense only when
 all multiplicities of the irreducible factors of $\mathcal Q_1$ are 1. Otherwise, a multi-Wronskian generalization is needed  \cite{ariznabarreta2015darboux}.

\subsection{The 1D case: recovering the 1D Christoffel--Geronimus--Uvarov formula}
In the scalar case $D=1$ we take two polynomials with simple roots
\begin{align*}
\mathcal Q_1(x)&=(x-p_1)\cdots (x-p_{m_1}), &\mathcal Q_2(x)&=(x-q_1)\cdots (x-q_{m_2}).
\end{align*}
Then, we have $r_{1|k,m_1}=m_1$ and $r_{2|k,m_2}=m_2$ and we can take the $m_2$ indexes (not \emph{multi} as we have $D=1$) as $\beta=0,1,\dots,m_2-1$ (we have more possibilities).
Moreover, we have
\begin{align*}
\PARENS{
	\begin{matrix}
	\rho_{k-m_2,0}&\dots & \rho_{k-m_2,m_2-1}\\
	\vdots & &\vdots\\
	\rho_{k+m_1-1,0}&\dots & \rho_{k+m_1-1,m_2-1}	
	\end{matrix}}
&=
\PARENS{
	\begin{matrix}
	C_{k-m_2}(q_1)&\dots & C_{k-m_2}(q_{m_2})\\
	\vdots & &\vdots\\
	C_{k+m_1-1}(q_1)&\dots &C_{k+m_1-1}(q_{m_2})
	\end{matrix}}\mathcal D^{-1}
\mathcal V^\top,\\
\big(
	\rho_{k+m_1,0},\dots, \rho_{k+m_1,m_2-1}
\big)
&=\big(
	C_{k+m_1}(q_1),\dots ,C_{k+m_1}(q_{m_2})
\big)\mathcal D^{-1}
\mathcal V^\top.
\end{align*}
For the $\theta_{k,n}$ terms we must recall that the general form of $\d\nu$ in the 1D scenario is given in \eqref{eq:nu},
and obtain
\begin{align*}
\PARENS{
	\begin{matrix}
	\theta_{k-m_2,0}&\dots & \theta_{k-m_2,m_2}\\
	\vdots & &\vdots\\
	\theta_{k,0}&\dots & \theta_{k,m_2}	
	\end{matrix}}
&=\PARENS{
	\begin{matrix}
	P_{k-m_2}(q_1)&\dots & P_{k-m_2}(q_{m_2})\\
	\vdots & &\vdots\\
	P_{k}(q_1)&\dots &P_{k}(q_{m_2})
	\end{matrix}}\xi\mathcal D^{-1}
\mathcal V^\top,\\
\big(
\theta_{k+m_1,0},\dots, \theta_{k+m_1,m_2-1}
\big)
&=\big(
P_{k+m_1}(q_1),\dots ,P_{k+m_1}(q_{m_2})
\big)\xi\mathcal D^{-1}
\mathcal V^\top,
\end{align*}
where
\begin{align*}
\xi_j\coloneq\frac{\zeta_j}{\mathcal Q_1(q_j)}\prod\limits_{\substack{i\in\{1,\dots,m_2\}\\ i\neq j}}(q_j-q_i),
\end{align*}
and consider
\begin{align*}
\phi_l(x,\xi)\coloneq C_l(x)+\xi P_l(x).
\end{align*}
Consequently, we have the perturbed polynomials determinantal  expressions
	\begin{align*}
	\hat P_{k}(x)&=\frac{1}{\mathcal Q_1(x)}\Theta_* \PARENS{
		\begin{matrix}
		\phi_{k-m_2}(q_1,\xi_1) &\dots & \phi_{k-m_2}(q_{m_2},\xi_{m_2})&	P_{k-m_2}( p_1) & \dots & P_{k-m_2}( p_{m_1})
		&P_{k-m_2}(x)\\
		\vdots & &\vdots&\vdots& &\vdots&\vdots\\
			\phi_{k+m_1}(q_1\xi_1) &\dots & \phi_{k+m_1}(q_{m_2},\xi_{m_2})&	P_{k+m_1}( p_1) & \dots & P_{k+m_1}( p_{m_1})
			&P_{k+m_1}(x)
		\end{matrix}}
		\\
&=	\frac{1}{\mathcal Q_1(x)}
\frac{
				\begin{vmatrix}
				\phi_{k-m_2}(q_1,\xi_1) &\dots & \phi_{k-m_2}(q_{m_2},\xi_{m_2})&	P_{k-m_2}( p_1) & \dots & P_{k-m_2}( p_{m_1})
				&P_{k-m_2}(x)\\
				\vdots & &\vdots&\vdots& &\vdots&\vdots\\
					\phi_{k+m_1}(q_1\xi_1) &\dots & \phi_{k+m_1}(q_{m_2},\xi_{m_2})&	P_{k+m_1}( p_1) & \dots & P_{k+m_1}( p_{m_1})
					&P_{k+m_1}(x)
				\end{vmatrix}
}{
\begin{vmatrix}
								\phi_{k-m_2}(q_1,\xi_1) &\dots & \phi_{k-m_2}(q_{m_2},\xi_{m_2})&	P_{k-m_2}( p_1) & \dots & P_{k-m_2}( p_{m_1})
							\\
								\vdots & &\vdots&\vdots& &\vdots\\
									\phi_{k+m_1-1}(q_1\xi_1) &\dots & \phi_{k+m_1-1}(q_{m_2},\xi_{m_2})&	P_{k+m_1-1}( p_1) & \dots & P_{k+m_1-1}( p_{m_1})
								\end{vmatrix}},
	\end{align*}
which coincides  with formul{\ae} (3.19) and (3.20) in \cite{Zhedanov1997Rational}. Notice that, it also coincide with the Uvarov's  formul\ae{} in \cite{Uvarov1969connection} when $\xi_i=\zeta_i=0$.
Moreover, for the perturbed  squared norms we have
	\begin{align*}
	\hat H_{k}&=\Theta_* \PARENS{
		\begin{matrix}
		\phi_{k-m_2}(q_1,\xi_1) &\dots & \phi_{k-m_2}(q_{m_2},\xi_{m_2})&	P_{k-m_2}( p_1) & \dots & P_{k-m_2}( p_{m_1})
		&H_{k-m_2}\\
		\phi_{k-m_2+1}(q_1,\xi_1) &\dots & \phi_{k-m_2+1}(q_{m_2},\xi_{m_2})&	P_{k-m_2+1}( p_1) & \dots & P_{k-m_2+1}( p_{m_1})
				&0\\
		\vdots & &\vdots&\vdots& &\vdots&\vdots\\
			\phi_{k+m_1}(q_1\xi_1) &\dots & \phi_{k+m_1}(q_{m_2},\xi_{m_2})&	P_{k+m_1}( p_1) & \dots & P_{k+m_1}( p_{m_1})
			&0
		\end{matrix}}
		\\&=(-1)^{k+m_2}
		\frac{
				\begin{vmatrix}
				\phi_{k-m_2+1}(q_1,\xi_1) &\dots & \phi_{k-m_2+1}(q_{m_2},\xi_{m_2})&	P_{k-m_2+1}( p_1) & \dots & P_{k-m_2+1}( p_{m_1})\\
				\vdots & &\vdots&\vdots& &\vdots\\
					\phi_{k+m_1}(q_1\xi_1) &\dots & \phi_{k+m_1}(q_{m_2},\xi_{m_2})&	P_{k+m_1}( p_1) & \dots & P_{k+m_1}( p_{m_1})
				\end{vmatrix}}{\begin{vmatrix}
								\phi_{k-m_2}(q_1,\xi_1) &\dots & \phi_{k-m_2}(q_{m_2},\xi_{m_2})&	P_{k-m_2}( p_1) & \dots & P_{k-m_2}( p_{m_1})\\
								\vdots & &\vdots&\vdots& &\vdots\\
									\phi_{k+m_1-1}(q_1\xi_1) &\dots & \phi_{k+m_1-1}(q_{m_2},\xi_{m_2})&	P_{k+m_1-1}( p_1) & \dots & P_{k+m_1-1}( p_{m_1})
								\end{vmatrix}}H_{k-m_2}.
	\end{align*}

\section{Extension to a multispectral 2D  Toda lattice}\label{S:4}

We explore the situation described in \S \ref{MV} but not specifically with multivariate polynomials in mind. The block structure of the semi-infinite matrices has been  described there. In \cite{ariznabarreta2014multivariate} we considered a semi-infinite matrix $G$ such that $\Lambda_aG=G(\Lambda_a)^\top$, $a\in\{1,\dots,D\}$,  a Cholesky factorization
\begin{align*}
G=S^{-1}H (S)^{-\top},
\end{align*}
and flows preserving this structure. In that manner we obtained nonlinear equations for which the MVOPR provided solutions.
Then, in \cite{ariznabarreta2015darboux} we derived a quasi-determinantal  Christoffel formula for the multivariate Christoffel transformations for MVOPR. A similar development could be performed here with the more general linear spectral transformations, but we will follow an even more general approach.

The Toda type flows discussed in \cite{ariznabarreta2014multivariate} for multivariate  moment matrices can be extended further. The integrable hierarchy has the MVOPR as solutions, but this is only a part of its space of solutions, as the MVOPR sector corresponds to a  particular choice of $G$. In this paper we will analyze this Toda hierarchy, that we name as multispectral 2D Toda hierarchy, in its own, associated as we will see to non standard orthogonality.
Therefore, we now consider  any possible  block Gaussian factorizable semi-infinite matrix
\begin{align*}
G=(S_1)^{-1} H (S_2)^{-\top}
\end{align*}
where, $S_1,S_2$ are lower unitriangular block semi-infinite matrices, and $H$ is a diagonal block semi-infinite matrix.

\subsection{Non-standard multivariate biorthogonality. Bilinear forms}

\begin{definition}
	In the linear space of multivariate polynomials $\R[\x]$ we consider a bilinear form $\langle\cdot,\cdot\rangle$  whose Gramm semi-infinite matrix is $G$, i.e.
\begin{align}\label{eq:bilinear}
\langle P(\x),Q(\x)\rangle&=\sum _{\substack{|\q|\leq\deg P\\|\boldsymbol{\beta}\leq|\deg Q}} P_{\q}G_{\q,\boldsymbol{\beta}}  Q_{\boldsymbol \beta}, & G_{\q,\boldsymbol{\beta}}&=\big\langle \x^\q,\x^{\boldsymbol{\beta}}\big\rangle.
\end{align}
 Whenever the sum $\sum_{\substack{\q,\boldsymbol{\beta}\in\Z_+^D}} P_{\q}G_{\q,\boldsymbol{\beta}}  Q_{\boldsymbol \beta}$ converges in some sense, the corresponding extension of this bilinear form  to the linear space of  power series $\C[\![\x]\!]$ can be considered.
\end{definition}
In general, the semi-infinite matrix $G$ has no further structure and, consequently, we do not expect it to be symmetric or to be related to a linear functional, for example. We say that weare dealing with a non standard bilinear form. The  bilinear form \eqref{eq:bilinear} induces another bilinear form which is a bilinear map  from  semi-infinite vectors of polynomials (or power series when possible) into the  semi-infinite matrices.

\begin{definition}
	 Given to semi-infinite vectors of polynomials $v(\x)=(v_\q(\x))_{\q\in\Z_+^D}$ and $w(\x)=(w_\q(\x))_{\q\in\Z_+^D}$, with $v_\q,w_\q\in\C[\x]$ (or $\C[\![\x]\!]$ when possible) we consider the following semi-infinite matrix
\begin{align*}
\big\langle v(\x),\big(w(\x)\big)^\top\big\rangle&=\Big(\big\langle v(\x), \big(w(\x)\big)^\top \big\rangle_{\q,\boldsymbol{\beta}}\Big), &\big\langle v(\x),\big(w(\x)\big)^\top\big\rangle_{\q,\boldsymbol{\beta}}&\coloneq\big\langle v_\q(\x),w_{\boldsymbol{\beta}}(\x)\big\rangle, & \q,\boldsymbol{\beta}\in\Z_+^D.
\end{align*}
A similar definition holds for a polynomial $p(\x)\in\C[\x]$, i.e.,
\begin{align*}
\langle v(\x),p(\x)\rangle&\coloneq\Big(\big\langle v_\q(\x), p(\x)\big\rangle\Big)_{\q\in \Z_+^D}, &
\langle p(\x),(v(\x))^\top\rangle&\coloneq\bigg(\Big(\big\langle p(\x),v_\q(\x)\big\rangle\Big)_{\q\in \Z_+^D}\bigg)^\top.
\end{align*}
\end{definition}
\begin{pro}\label{pro:bilinear_action}
	Given three semi-infinite vectors  $v^{(i)}(\x)=\big(v^{(i)}_\q(\x)\big)_{\q\in\Z_+^D}$, $i\in\{1,2,3\}$, the formul{\ae}
	\begin{align}\label{eq:bilinear_action}
	\big\langle v^{(1)}(\x),\big(v^{(2)}(\x)\big)^\top\big\rangle v^{(3)}(\z)
	&=
	\big\langle v^{(1)}(\x),\big(v^{(2)}(\x)\big)^\top v^{(3)}(\z)\big\rangle,\\\notag
	\big( v^{(3)}(\z)\big)^\top\big\langle v^{(1)}(\z),\big(v^{(2)}(\x)\big)^\top\big\rangle
	&=
	\big\langle \big(v^{(3)}(\z)\big)^\top v^{(1)}(\x),v^{(2)}(\x)\big\rangle
	\end{align}
	hold.
\end{pro}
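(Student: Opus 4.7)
The plan is to verify both identities entry by entry, by unpacking the definition of the matrix--valued bilinear pairing introduced just above the statement and then invoking the bilinearity of $\langle\cdot,\cdot\rangle$ on $\C[\x]$. Since the proposition is essentially a ``Fubini-type'' interchange between matrix multiplication and the pairing, the substance of the argument is bookkeeping with indices plus a single application of linearity in the appropriate slot, together with a mild justification when the vectors involve power series rather than polynomials.

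For the first identity I would fix a multi-index $\q\in\Z_+^D$ and compute the $\q$-th component of the left hand side. By definition of the matrix $\big\langle v^{(1)}(\x),(v^{(2)}(\x))^\top\big\rangle$ and by the usual row-by-column product against the column vector $v^{(3)}(\z)$, this component equals
\begin{align*}
\sum_{\boldsymbol\beta\in\Z_+^D}\big\langle v^{(1)}_{\q}(\x),v^{(2)}_{\boldsymbol\beta}(\x)\big\rangle\,v^{(3)}_{\boldsymbol\beta}(\z).
\end{align*}
Because $v^{(3)}_{\boldsymbol\beta}(\z)$ is a scalar from the viewpoint of the variable $\x$ in which the pairing acts, linearity of $\langle\cdot,\cdot\rangle$ in its second slot lets one pull the sum and the factor $v^{(3)}_{\boldsymbol\beta}(\z)$ inside, yielding
\begin{align*}
\Big\langle v^{(1)}_{\q}(\x),\sum_{\boldsymbol\beta\in\Z_+^D}v^{(2)}_{\boldsymbol\beta}(\x)\,v^{(3)}_{\boldsymbol\beta}(\z)\Big\rangle
=\big\langle v^{(1)}_{\q}(\x),(v^{(2)}(\x))^\top v^{(3)}(\z)\big\rangle,
\end{align*}
which is exactly the $\q$-th component of the right hand side by the definition of the pairing of a semi-infinite vector against a single polynomial. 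The second identity is treated in the same spirit: one fixes $\boldsymbol\beta$, writes out the $\boldsymbol\beta$-th component of $(v^{(3)}(\z))^\top\langle v^{(1)}(\x),(v^{(2)}(\x))^\top\rangle$ as $\sum_{\q}v^{(3)}_{\q}(\z)\langle v^{(1)}_{\q}(\x),v^{(2)}_{\boldsymbol\beta}(\x)\rangle$, and uses linearity of $\langle\cdot,\cdot\rangle$ in the first slot to identify the result with $\big\langle (v^{(3)}(\z))^\top v^{(1)}(\x),v^{(2)}_{\boldsymbol\beta}(\x)\big\rangle$.

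The only genuine obstacle is the legitimacy of the interchange of the infinite sum in $\boldsymbol\beta$ (resp.\ $\q$) with the bilinear pairing: when the $v^{(i)}$ are truly polynomial-valued semi-infinite vectors each row or column involves only finitely many nonzero contributions and the swap is immediate, but in the power series setting one has to appeal to the convergence hypothesis stipulated in the preceding definition (the one that allows the bilinear form to be extended to $\C[\![\x]\!]$). Under that hypothesis the exchange is justified coordinate by coordinate, and the two desired equalities follow.
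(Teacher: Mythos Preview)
Your argument is correct: the proposition is a direct consequence of the componentwise definition of the matrix-valued pairing together with linearity of $\langle\cdot,\cdot\rangle$ in each slot, and your index bookkeeping is accurate. The paper itself states this proposition without proof, so there is nothing to compare against; your explicit entrywise verification is exactly the intended justification, and your remark about the finiteness of the sums in the polynomial case versus the need for the convergence hypothesis in the power-series case is the right caveat to record.
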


Using this non standard bilinear form we can write
\begin{align}\label{eq:G_bilinear}
G=\big\langle \chi(\x), \big(\chi(\x)\big)^\top\big\rangle.
\end{align}
When there is a linear form $u\in\big(\C[\x]\big)'$ such that $\langle P(\x), Q(\x)\rangle=\langle u, P(\x) Q(\x)\rangle$ we find that $G=\big\langle u,\chi(\x)\big(\chi(\x)\big)^\top\big\rangle$ is the corresponding moment matrix.

\begin{pro}\label{pro:G-bilinear form}
	For any polynomial $\mathcal Q(\x)\in\C[\x]$ we have
\begin{align*}
\mathcal Q(\boldsymbol{\Lambda}) G=&\big\langle \mathcal Q(\x)\chi(\x), \big(\chi(\x)\big)^\top\big\rangle, &
G\big(\mathcal Q(\boldsymbol{\Lambda}) \big)^\top=&\big\langle \chi(\x), \big(\chi(\x)\big)^\top\mathcal Q(\x)\big\rangle.
\end{align*}
\end{pro}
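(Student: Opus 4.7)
The plan is to combine the spectral property from \eqref{eigen} with the action identities of Proposition \ref{pro:bilinear_action}. First I would observe that commutativity of the spectral matrices (Proposition \ref{pro:Lambda}) together with the eigenvalue relation $\Lambda_a\chi(\x)=x_a\chi(\x)$ yields, by iteration and linearity in the polynomial coefficients, the key identity
\begin{align*}
\mathcal Q(\boldsymbol\Lambda)\chi(\x)=\mathcal Q(\x)\chi(\x)
\end{align*}
for any $\mathcal Q\in\C[\x]$. This is the one algebraic fact that makes everything else fall into place.

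Starting from the bilinear representation \eqref{eq:G_bilinear}, namely $G=\langle\chi(\x),(\chi(\x))^\top\rangle$, I would then compute $\mathcal Q(\boldsymbol\Lambda)G$ by pushing the matrix inside the pairing. Viewing left multiplication by $\mathcal Q(\boldsymbol\Lambda)$ as acting row-by-row on the first slot of the bilinear form, the first identity in \eqref{eq:bilinear_action} converts this left action into the substitution $\chi(\x)\mapsto \mathcal Q(\boldsymbol\Lambda)\chi(\x)=\mathcal Q(\x)\chi(\x)$ in the left argument, which gives the first equality. For the second equality the argument is symmetric: right multiplication by $(\mathcal Q(\boldsymbol\Lambda))^\top$ acts on the second slot, so I would invoke the transposed version (second line of \eqref{eq:bilinear_action}) with $\mathcal Q(\boldsymbol\Lambda)\chi(\x)=\mathcal Q(\x)\chi(\x)$ applied to the right entry.

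There is no real obstacle here; the only point worth a brief remark is that Proposition \ref{pro:bilinear_action} is stated for the action on a single semi-infinite vector, whereas here it must be applied column-by-column (respectively row-by-row) in order to pass an entire semi-infinite matrix through the bilinear pairing. This extension is automatic from the entrywise definition of matrix multiplication and of $\langle v(\x),(w(\x))^\top\rangle$, so the two displayed formul\ae{} follow directly.
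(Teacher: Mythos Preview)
Your argument is correct and matches the paper's approach: the paper's proof is the single sentence ``Use \eqref{eigen}'', and the content of that sentence is exactly the identity $\mathcal Q(\boldsymbol\Lambda)\chi(\x)=\mathcal Q(\x)\chi(\x)$ combined with bilinearity, which is what you spell out. One small remark: Proposition \ref{pro:bilinear_action} as stated concerns multiplication by a \emph{vector of polynomials} $v^{(3)}(\z)$, not by a constant semi-infinite matrix, so strictly speaking it is not what you need here; the fact that a constant matrix $A$ passes through the pairing, $A\langle v,(w)^\top\rangle=\langle Av,(w)^\top\rangle$, follows directly from the entrywise definition and linearity of $\langle\cdot,\cdot\rangle$, as you correctly note in your closing paragraph.
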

\begin{proof}
	Use \eqref{eigen}.
\end{proof}

\subsection{A multispectral 2D Toda hierarchy}

In terms of the  continuous time parameters sequences $t=\{t_1,t_2\}\subset \R$ given by
\begin{align*}
t_i&\coloneq \{t_{i,\q}\}_{\q\in\Z_+^D}, & i\in\{1,2\},
\end{align*}
we consider the time power series
\begin{align*}
t_i(\x)&\coloneq\sum_{\q\in\Z_+^D}t_{i,\q} \x^\q, & i\in\{1,2\},
\end{align*}
the following vacuum wave semi-infinite matrices
\begin{align*}
W^{(0)}_i(t_i)&=
\exp\Big(\sum_{\q\in\Z_+^D}t_{i,\q}
{\boldsymbol\Lambda}^\q\Big),
 & i&\in\{1,2\},
\end{align*}
and the perturbed semi-infinite matrix
\begin{align}\label{eq:flows}
G(t)=W_1^{(0)}(t_1)G\Big(W_2^{(0)}(t_2)\Big)^{-\top}.
\end{align}
Notice that these flows do respect the multi-Hankel condition, if initially we have  $\Lambda_aG=G(\Lambda_a)^\top$, $a\in\{1,\dots,D\}$, then, for any further time, we will  have  $\Lambda_aG(t)=G(t)\big(\Lambda_a\big)^\top$,  $a\in\{1,\dots,D\}$.

We will assume that the block Gaussian factorization do exist, at least for an open subset of times containing $t=0$
\begin{align}\label{eq:general_LU}
G(t)=\big(S_1(t)\big)^{-1} H(t) \big(S_2(t)\big)^{-\top}.
\end{align}
Then, we consider the semi-infinite vectors of polynomials
\begin{align}\label{eq:def_poly}
P_1(t,\x)&\coloneq S_1(t)\chi(\x), &P_2(t,\x)&\coloneq S_2(t)\chi(\x),
\end{align}
being its component $P_{i,\q}(t,\x)$, $i\in\{1,2\},\q\in\Z_+^D$, a $t$-dependent  monic multivariate polynomial in $\x$ of degree $|\q|$.

Then, the Gaussian factorization  \eqref{eq:general_LU} implies  the bi-orthogonality condition
\begin{align*}
\big\langle P_{1,[k]}(t,\x),P_{2,[l]}(t,\x)\big\rangle=\delta_{k,l}H_{[k]}(t).
\end{align*}
Here we used  the bilinear form $\langle\cdot,\cdot\rangle$ with Gramm matrix $G(t)$. We also consider the wave matrices
 \begin{align}\label{eq:def_wave}
 W_1(t)\coloneq&S_1(t)W_1^{(0)}(t_1), &
 W_2(t)\coloneq & \tilde S_2(t)  \big(W^{(0)}_2(t_2)\big)^{\top},
 \end{align}
 where $\tilde S_2\coloneq H(t)\big(S_2(t)\big)^{-\top} $.

 \begin{pro}
 	The wave matrices satisfy
 	\begin{align}\label{eq:central}
 	\big(W_1(t)\big)^{-1} W_2(t)=G.
 	\end{align}
 \end{pro}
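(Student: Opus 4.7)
The plan is to simply chain together the two defining identities for the wave matrices and the initial matrix $G$. First, I would substitute the definitions \eqref{eq:def_wave} to rewrite
\[
\big(W_1(t)\big)^{-1} W_2(t) = \big(W_1^{(0)}(t_1)\big)^{-1}\big(S_1(t)\big)^{-1}\,\tilde S_2(t)\,\big(W_2^{(0)}(t_2)\big)^{\top}.
\]
Next, using the definition $\tilde S_2(t) = H(t)\big(S_2(t)\big)^{-\top}$, the middle cluster becomes $\big(S_1(t)\big)^{-1} H(t)\big(S_2(t)\big)^{-\top}$, which by the Gauss--Borel factorization \eqref{eq:general_LU} is exactly $G(t)$.

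Then I would invoke the flow equation \eqref{eq:flows}, namely $G(t) = W_1^{(0)}(t_1)\, G\,\big(W_2^{(0)}(t_2)\big)^{-\top}$, to replace $G(t)$ in the expression. The result is
\[
\big(W_1(t)\big)^{-1} W_2(t) = \big(W_1^{(0)}(t_1)\big)^{-1}\,W_1^{(0)}(t_1)\, G\,\big(W_2^{(0)}(t_2)\big)^{-\top}\big(W_2^{(0)}(t_2)\big)^{\top} = G,
\]
where the outer pairs cancel trivially.

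There is really no obstacle here: the whole statement is a one-line algebraic manipulation that simply reassembles the definitions in the right order. The only thing worth flagging is that the cancellations require $W_1^{(0)}(t_1)$ and $W_2^{(0)}(t_2)$ to be invertible, which is automatic since they are exponentials of the block-nilpotent-in-each-degree shift matrices $\boldsymbol\Lambda^\q$, so the series defining them and their inverses act well on $\chi(\x)$ and on the block-banded matrices involved. Thus the proof is essentially a substitution chain, and no deeper structural fact beyond those already established is needed.
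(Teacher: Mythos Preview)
Your argument is correct and is exactly the approach the paper takes: the paper's proof is simply the one-line remark ``It follows from the Gauss--Borel factorization \eqref{eq:general_LU},'' and you have spelled out the substitutions (using \eqref{eq:def_wave}, \eqref{eq:general_LU}, and \eqref{eq:flows}) that make this work.
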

 \begin{proof}
 	It follows from the Gauss--Borel factorization \eqref{eq:general_LU}.
 \end{proof}
 Given a semi-infinite matrix $A$ we have unique splitting $A=A_++A_-$ where $A_+$ is an upper  triangular block matrix while is $A_-$ a  strictly lower triangular block matrix.
 The Gaussian factorization \eqref{eq:central} has the following differential consequences
 \begin{pro}\label{pro:evolution S}
 	The following equations hold
 	\begin{align*}
 	\frac{\partial S_1}{\partial t_{1,\q}}(S_1)^{-1}&=-\Big(S_1\boldsymbol{\Lambda}^{\q} (S_1)^{-1}\Big)_-, &
 	\frac{\partial S_1}{\partial t_{2,\q}}(S_1)^{-1}&=\Big(\tilde S_2\big(\boldsymbol{\Lambda}^{\top}\big)^{\q} (\tilde S_2)^{-1}\Big)_-,\\
 	\frac{\partial \tilde S_2}{\partial t_{1,\q}}(\tilde S_2)^{-1}&=\Big(S_1\boldsymbol{\Lambda}^{\q} (S_1)^{-1}\Big)_+, &
 	\frac{\partial \tilde S_2}{\partial t_{2,\q}}(\tilde S_2)^{-1}&=-\Big(\tilde S_2\big(\boldsymbol{\Lambda}^{\top}\big)^{\q} (\tilde S_2)^{-1}\Big)_+.
 	\end{align*}
 \end{pro}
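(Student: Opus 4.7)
\smallskip

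The plan is to deduce the four equations simultaneously from the single fact that the factor $G = W_1(t)^{-1} W_2(t)$ in \eqref{eq:central} is $t$-independent, combined with the triangular splitting of semi-infinite matrices. The key structural remark is that $\tilde S_2 = H S_2^{-\top}$ is upper block triangular (since $H$ is block diagonal and $S_2^{-\top}$ is upper block unitriangular), while $S_1$ is lower block unitriangular, so $\partial_{t_{i,\q}} S_1 \cdot S_1^{-1}$ is strictly lower block triangular and $\partial_{t_{i,\q}} \tilde S_2 \cdot \tilde S_2^{-1}$ is upper block triangular; these two halves have no overlap.

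First I would differentiate \eqref{eq:central} in the form $W_2 = W_1 G$ with respect to a time variable $t_{i,\q}$. Since $G$ does not depend on $t$, this yields the consistency equation
\begin{align*}
\frac{\partial W_1}{\partial t_{i,\q}} W_1^{-1} = \frac{\partial W_2}{\partial t_{i,\q}} W_2^{-1}.
\end{align*}
Using \eqref{eq:def_wave} together with $\partial_{t_{1,\q}} W_1^{(0)} = \boldsymbol\Lambda^\q W_1^{(0)}$ and $\partial_{t_{2,\q}} (W_2^{(0)})^\top = (\boldsymbol\Lambda^\top)^\q (W_2^{(0)})^\top$, the left-hand side for the $t_1$-flow becomes $\partial_{t_{1,\q}} S_1 \cdot S_1^{-1} + S_1 \boldsymbol\Lambda^\q S_1^{-1}$ and the right-hand side becomes $\partial_{t_{1,\q}} \tilde S_2 \cdot \tilde S_2^{-1}$; an analogous computation handles the $t_2$-flow, producing a cross-term $\tilde S_2 (\boldsymbol\Lambda^\top)^\q \tilde S_2^{-1}$ on the right.

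Second I would apply the block splitting $A = A_+ + A_-$ to each identity. For the $t_1$-flow we have
\begin{align*}
\frac{\partial S_1}{\partial t_{1,\q}} S_1^{-1} + S_1 \boldsymbol\Lambda^\q S_1^{-1} = \frac{\partial \tilde S_2}{\partial t_{1,\q}} \tilde S_2^{-1}.
\end{align*}
Since the left summand $\partial_{t_{1,\q}} S_1 \cdot S_1^{-1}$ is strictly lower, and the right-hand side is upper, projecting onto the strictly lower part isolates $\partial_{t_{1,\q}} S_1 \cdot S_1^{-1} = -(S_1 \boldsymbol\Lambda^\q S_1^{-1})_-$, while projecting onto the upper part gives $\partial_{t_{1,\q}} \tilde S_2 \cdot \tilde S_2^{-1} = (S_1 \boldsymbol\Lambda^\q S_1^{-1})_+$. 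The same argument applied to the analogous equation for $t_{2,\q}$, after moving the cross-term to the left, yields the remaining two formulas.

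There is really no hard step here; the only point requiring mild care is verifying that $\tilde S_2$ is upper block triangular so that the $\pm$ splitting actually separates the two sides cleanly, and keeping track of the sign and the transpose in the $t_2$-flow when $\partial_{t_{2,\q}}$ is applied to $(W_2^{(0)}(t_2))^\top$. Everything else is bookkeeping with \eqref{eq:def_wave} and the block triangular decomposition.
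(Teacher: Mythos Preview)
Your proposal is correct and follows essentially the same approach as the paper: differentiate the factorization identity \eqref{eq:central} to obtain $\partial_{t_{i,\q}} W_1\cdot W_1^{-1}=\partial_{t_{i,\q}} W_2\cdot W_2^{-1}$, expand both sides via \eqref{eq:def_wave}, and then separate into strictly lower and upper block-triangular parts. The paper's proof is terser (it lists the four expansions and says ``the result follows immediately''), whereas you spell out explicitly why $\partial S_1\cdot S_1^{-1}$ is strictly lower and $\partial\tilde S_2\cdot\tilde S_2^{-1}$ is upper, but the substance is identical.
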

 \begin{proof}
 	Taking right derivatives  of \eqref{eq:central} yields
 	\begin{align*}
 	\frac{\partial W_1}{\partial t_{i,\q}}(W_1)^{-1}&=\frac{\partial  W_2}{\partial t_{i,\q}}( W_2)^{-1}, & i&\in\{1,2\}, & j&\in\Z_+,
 	\end{align*}
 	where
 	\begin{align*}
 	\frac{\partial W_1}{\partial t_{1,\q}}(W_1)^{-1}&=\frac{\partial S_1}{\partial t_{1,\q}}(S_1)^{-1}+S_1\boldsymbol{\Lambda}^{\q} (S_1)^{-1}, &
 	\frac{\partial W_1}{\partial t_{2,\q}}(W_1)^{-1}&=\frac{\partial S_1}{\partial t_{2,\q}}(S_1)^{-1},\\
 	\frac{\partial  W_2}{\partial t_{1,\q}}( W_2)^{-1}&=\frac{\partial \tilde S_2}{\partial t_{1,\q}}(\tilde S_2)^{-1}, &
 	\frac{\partial  W_2}{\partial t_{2,\q}}( W_2)^{-1}&=\frac{\partial \tilde S_2}{\partial t_{2,\q}}(\tilde S_2)^{-1}+\tilde S_2\big(\boldsymbol{\Lambda}^{\top}\big)^\q (\tilde S_2)^{-1},
 	\end{align*}
 	and the result follows immediately.
 \end{proof}
 As a consequence, we deduce
 \begin{pro}
 	The multicomponent 2D Toda lattice equations
 	\begin{multline*}
 	\frac{\partial}{\partial t_{2,\ee_b
 			}}\Big(\frac{\partial H_{[k]}}{\partial t_{1,\ee_a}}(H_{[k]})^{-1}\Big)
 			+({\Lambda_a})_{[k],[k+1]}H_{[k+1]}\Big((\Lambda_b)_{[k],[k+1]}\Big)^\top(H_{[k]})^{-1}
 			\\-H_{[k]}\Big((\Lambda_b)_{[k-1],[k]}\Big)^\top(H_{[k-1]})^{-1}({\Lambda_a})_{[k-1],[k]}=0
 	\end{multline*}
 	hold.
 \end{pro}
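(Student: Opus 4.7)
The plan is to read off the Toda equation from the two evolution identities of Proposition \ref{pro:evolution S} (for the parameters $\q=\ee_a$ and $\q=\ee_b$) by carefully unpacking how the block-diagonal and block-subdiagonal pieces of $S_1$ and $\tilde S_2$ enter.

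First, I would compute the diagonal block of $S_1\Lambda_a S_1^{-1}$.  Writing $\beta_{1,[k]}\coloneq (S_1)_{[k],[k-1]}$ and using that $S_1$ is lower unitriangular (so $(S_1^{-1})_{[k+1],[k]}=-\beta_{1,[k+1]}$) and that $(\Lambda_a)_{[l],[m]}$ is nonzero only when $m=l+1$, a direct block calculation gives
\begin{align*}
(S_1\Lambda_a S_1^{-1})_{[k],[k]}=\beta_{1,[k]}(\Lambda_a)_{[k-1],[k]}-(\Lambda_a)_{[k],[k+1]}\beta_{1,[k+1]}.
\end{align*}
The key subtlety is that two values of the summation index contribute, coming from the two lower-unitriangular structures of $S_1$ and $S_1^{-1}$ meeting the first block-superdiagonal of $\Lambda_a$. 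Comparing this with the $[k],[k]$-block of $\tfrac{\partial \tilde S_2}{\partial t_{1,\ee_a}}\tilde S_2^{-1}=(S_1\Lambda_a S_1^{-1})_+$, together with the elementary observation that $\tilde S_2=HS_2^{-\top}$ is block upper triangular with diagonal $H$, so that $\big(\tfrac{\partial\tilde S_2}{\partial t_{1,\ee_a}}\tilde S_2^{-1}\big)_{[k],[k]}=\tfrac{\partial H_{[k]}}{\partial t_{1,\ee_a}}H_{[k]}^{-1}$, yields the key identity
\begin{align*}
\frac{\partial H_{[k]}}{\partial t_{1,\ee_a}}H_{[k]}^{-1}=\beta_{1,[k]}(\Lambda_a)_{[k-1],[k]}-(\Lambda_a)_{[k],[k+1]}\beta_{1,[k+1]}.
\end{align*}

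Second, I would extract a formula for $\tfrac{\partial \beta_{1,[k]}}{\partial t_{2,\ee_b}}$ from $\tfrac{\partial S_1}{\partial t_{2,\ee_b}}S_1^{-1}=(\tilde S_2\Lambda_b^\top\tilde S_2^{-1})_-$. The $[k],[k-1]$-block of the LHS is simply $\tfrac{\partial \beta_{1,[k]}}{\partial t_{2,\ee_b}}$ since $S_1^{-1}$ is lower unitriangular. On the RHS, $\tilde S_2$ and $\tilde S_2^{-1}$ are block upper triangular with diagonals $H$ and $H^{-1}$ respectively, while $(\Lambda_b^\top)_{[l],[m]}$ is nonzero only for $m=l-1$; thus only one term survives in the sum, giving
\begin{align*}
\frac{\partial \beta_{1,[k]}}{\partial t_{2,\ee_b}}=H_{[k]}\big((\Lambda_b)_{[k-1],[k]}\big)^\top H_{[k-1]}^{-1},
\end{align*}
and similarly with $k$ replaced by $k+1$.

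Finally, I would differentiate the first boxed identity with respect to $t_{2,\ee_b}$ using the product rule (the blocks $(\Lambda_a)_{[\cdot],[\cdot]}$ are constant in time) and substitute the formula for $\tfrac{\partial \beta_{1,[\cdot]}}{\partial t_{2,\ee_b}}$. The two resulting terms are precisely minus the last and plus the second term of the proposed Toda equation, so the sum of the three terms in the statement collapses to zero. The only genuine obstacle is the combinatorial bookkeeping in the first step, where missing the second contribution to $(S_1\Lambda_a S_1^{-1})_{[k],[k]}$ would produce an incorrect identity and prevent the cancellation; the rest is routine block matrix algebra based on Proposition \ref{pro:evolution S}.
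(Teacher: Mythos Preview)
Your proof is correct and follows essentially the same approach as the paper: the paper's proof simply records the two intermediate identities
\[
\frac{\partial H_{[k]}}{\partial t_{1,\ee_a}}(H_{[k]})^{-1}=\beta_{[k]}(\Lambda_a)_{[k-1],[k]}-(\Lambda_a)_{[k],[k+1]}\beta_{[k+1]},\qquad
\frac{\partial \beta_{[k]}}{\partial t_{2,\ee_b}}=H_{[k]}\big((\Lambda_b)_{[k-1],[k]}\big)^\top(H_{[k-1]})^{-1},
\]
as consequences of Proposition~\ref{pro:evolution S}, leaving the block extraction and the final differentiation-and-substitution implicit. You have supplied exactly those details.
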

 \begin{proof}
 	From Proposition \ref{pro:evolution S} we get
 	\begin{align*}
 	\frac{\partial H_{[k]}}{\partial t_{1,\ee_a}}(H_{[k]})^{-1}&=\beta_{[k]}({\Lambda_a})_{[k-1],[k]}-({\Lambda_a})_{[k],[k+1]}\beta_{[k+1]},&
 	\frac{\partial \beta_{[k]}}{\partial t_{2,\ee_b}}&=H_{[k]}\Big((\Lambda_b)_{[k-1],[k]}\Big)^\top(H_{[k-1]})^{-1},
 	\end{align*}
 	where $\beta_{[k]}\in\R^{|[k]|\times |[k-1]|}$, $k=1,2,\dots$,  are the first subdiagonal  coefficients in $S_1$.
 \end{proof}

 These equations are just the first members  of an infinite set of nonlinear partial differential equations, an integrable hierarchy. Its elements are given by
 \begin{definition}\label{def:integrable}
 	The Lax and  Zakharov--Shabat matrices are given by
 	\begin{align*}
 	L_{1,a}&:=S_1\Lambda_a(S_1)^{-1}, &
 	L_{2,a}&:=\tilde S_2(\Lambda_a)^{\top} (\tilde S_2)^{-1},\\
 	B_{1,\q}&:=\big(({\boldsymbol L}_{1})^\q\big)_+, & B_{2,\q}&:=\big(({\boldsymbol L}_{2})^\q\big)_-.
 	\end{align*}
 	The Baker functions are defined as
 	\begin{align*}
 	\Psi_1(t,\z)&\coloneq W_1(t)\chi(\z), &	\Psi_2(t,\z)&\coloneq  W_2(t)\chi^*(\z),
 	\end{align*}
 	and the adjoint Baker functions by
 	 	\begin{align*}
 	 	\Psi^*_1(t,\z)&\coloneq (W_1(t))^{-\top}\chi^*(\z), &	\Psi^*_2(t,\z)&\coloneq (W_2(t))^{-\top}\chi(\z),
 	 	\end{align*}
 	 	here we switch for $\x\in\R^D$ to $\z\in\C$.
 	 	We also consider the multivariate Cauchy kernel
 	 	\begin{align*}
 	 	\mathcal{C}(\z,\x)\coloneq\frac{1}{\prod_{i=1}^D(z_i-x_i)}.
 	 	\end{align*}
 \end{definition}
 \begin{pro}
 	The Lax matrices can be written as
 	\begin{align}
 	\label{eq:lax_wave}
 	L_{1,a}(t)&=W_1(t)\Lambda_a(W_1(t))^{-1}, &
 		L_{2,a}(t)&=W_2(t)(\Lambda_a)^\top(W_2(t))^{-1},
 	\end{align}
 	and satisfy commutativity properties
 	\begin{align*}
[L_{1,a}(t),L_{1,b}(t)]&=0, & [L_{2,a}(t),L_{2,b}(t)]&=0, & a,b&\in\{1,\dots,D\},
 	\end{align*}
 	and the spectral properties
 	  \begin{align*}
 	  L_{1,a}(t)\Psi_1(t,\x)&=x_a\Psi_1(t,\x),& (L_{2,a}(t))^\top\Psi_2^*(t,\x)&=x_a\Psi_2^*(t,\x), & a&\in\{1,\dots,D\}.
 	  \end{align*}
 	  The Cauchy kernel satisfies
 	  	\begin{align}\label{eq:Cauchy}
 	  	\big(\chi(\x)\big)^\top\chi^*(\z)&=\mathcal C(\z,\x), & |z_i|&>|x_i|, &i&\in\{1,\dots,D\}.
 	  	\end{align}
 \end{pro}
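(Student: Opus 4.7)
My approach is to treat the four assertions separately, since each reduces to a short formal identity built out of pieces already developed.

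For the conjugation formula \eqref{eq:lax_wave}, I would use $W_1 = S_1 W_1^{(0)}(t_1)$ from \eqref{eq:def_wave} together with the fact that $W_1^{(0)} = \exp\bigl(\sum_\q t_{1,\q}\boldsymbol\Lambda^\q\bigr)$ is a formal power series in the mutually commuting shifts $\{\Lambda_b\}_{b=1}^D$ (Proposition \ref{pro:Lambda}). Thus $W_1^{(0)}$ commutes with every $\Lambda_a$, so conjugation by $W_1$ collapses to conjugation by $S_1$ and delivers $L_{1,a} = S_1\Lambda_a S_1^{-1}$. The second identity goes the same way: the factor $(W_2^{(0)})^\top$ in $W_2 = \tilde S_2(W_2^{(0)})^\top$ is a power series in $\{\Lambda_b^\top\}_{b=1}^D$, which commute with $\Lambda_a^\top$ by transposing Proposition \ref{pro:Lambda}.

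The commutativity $[L_{1,a},L_{1,b}] = 0$ then drops out: from the formula just established, $L_{1,a}L_{1,b} = W_1\Lambda_a\Lambda_b W_1^{-1}$, which equals $L_{1,b}L_{1,a}$ by Proposition \ref{pro:Lambda}. The argument for $L_{2,a}$ is identical using the transposes. For the spectral properties I would invoke \eqref{eigen}: since $\Psi_1(t,\x) = W_1\chi(\x)$, one has $L_{1,a}\Psi_1 = W_1\Lambda_a W_1^{-1}W_1\chi(\x) = W_1\Lambda_a\chi(\x) = x_a\Psi_1$. For the adjoint side I would transpose \eqref{eq:lax_wave} to get $(L_{2,a})^\top = W_2^{-\top}\Lambda_a W_2^\top$, and then apply this to $\Psi_2^* = W_2^{-\top}\chi(\x)$: the middle factor $W_2^\top W_2^{-\top}$ cancels and \eqref{eigen} produces the eigenvalue $x_a$.

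Finally, for the Cauchy kernel identity \eqref{eq:Cauchy} I would substitute the definition $\chi^*(\z) = \bigl(\prod_i z_i^{-1}\bigr)\chi(z_1^{-1},\dots,z_D^{-1})$ into the bilinear pairing, giving
\[
\chi(\x)^\top\chi^*(\z) = \Big(\prod_{i=1}^D z_i^{-1}\Big)\sum_{\q\in\Z_+^D}\prod_{i=1}^D \Big(\frac{x_i}{z_i}\Big)^{\alpha_i}.
\]
The multi-index sum factorizes as a product of geometric series $\sum_{\alpha_i\geq 0}(x_i/z_i)^{\alpha_i} = z_i/(z_i - x_i)$, convergent precisely in the polydisk $|x_i|<|z_i|$; multiplying by the prefactor $\prod_i z_i^{-1}$ yields $\prod_i(z_i - x_i)^{-1} = \mathcal C(\z,\x)$. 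There is no genuine obstacle anywhere in the proposition; the most delicate bookkeeping is simply tracking transposes and inverses on the $W_2$ side so that the cancellation producing the $x_a$ eigenvalue is correctly placed.
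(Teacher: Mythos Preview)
Your proposal is correct and handles each of the four assertions by the natural direct computation. The paper itself states this proposition without proof, so your argument is exactly the kind of routine verification the authors are implicitly leaving to the reader; there is nothing to compare against.
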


\begin{theorem}
The Baker functions can be expressed  in terms of the orthogonal polynomials, the multivariate Cauchy kernel and the bilinear form as follows
 \begin{align}
 \Psi_1(t,\z)&=\Exp{t_1(\x)}P_1(t,\z), \label{eq:Baker1} \\
 \Psi_2^*(t,\z)&=\Exp{- t_2(\z)}(H(t))^{-\top}P_2(t,\z),\label{eq:Baker2*}\\
 \Psi_2(t,\z) & =\langle\Psi_1(t,\x),\mathcal C(\z,\x)\rangle,& |z_i|&>|x_i|, &i&\in\{1,\dots,D\},\label{eq:Baker2}
 \\
  \big(\Psi_1^*(t,\z) \big)^\top& =\big\langle\mathcal C(\z,\x),\big(\Psi_2^*(t,\x)\big)^\top\big\rangle,& |z_i|&>|x_i|, &i&\in\{1,\dots,D\},\label{eq:Baker1*}
 \end{align}
\end{theorem}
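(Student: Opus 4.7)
All four identities will follow from three ingredients: the definitions of $W_1(t)$ and $W_2(t)$ in \eqref{eq:def_wave}, the spectral property $\boldsymbol{\Lambda}^{\q}\chi(\z)=\z^{\q}\chi(\z)$ from \eqref{eigen}, and the central factorization identity \eqref{eq:central}, $W_1(t)^{-1}W_2(t)=G$.

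The first two identities are purely algebraic. Since $\Lambda_a\chi(\z)=z_a\chi(\z)$, the exponentials act diagonally on $\chi$: $W_1^{(0)}(t_1)\chi(\z)=\Exp{t_1(\z)}\chi(\z)$ and $\bigl(W_2^{(0)}(t_2)\bigr)^{-1}\chi(\z)=\Exp{-t_2(\z)}\chi(\z)$. Feeding the first into $\Psi_1(t,\z)=W_1(t)\chi(\z)=S_1(t)W_1^{(0)}(t_1)\chi(\z)$ and pulling the scalar factor through $S_1(t)$ yields \eqref{eq:Baker1} once \eqref{eq:def_poly} is invoked. For \eqref{eq:Baker2*}, I would compute $W_2(t)^{-\top}=(\tilde S_2(t))^{-\top}\bigl(W_2^{(0)}(t_2)\bigr)^{-1}$, then use $\tilde S_2=H S_2^{-\top}$ to rewrite $(\tilde S_2)^{-\top}=H^{-\top}S_2$, so that $W_2^{-\top}\chi(\z)=\Exp{-t_2(\z)}H^{-\top}S_2\chi(\z)=\Exp{-t_2(\z)}H(t)^{-\top}P_2(t,\z)$.

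For \eqref{eq:Baker2} and \eqref{eq:Baker1*}, the bilinear form on the right-hand sides is the one with time-independent Gramm $G$ of \eqref{eq:G_bilinear}, so that $\langle\chi(\x),(\chi(\x))^\top\rangle=G$. I would plug the Cauchy expansion \eqref{eq:Cauchy}, $\mathcal C(\z,\x)=(\chi(\x))^\top\chi^*(\z)$, valid in the prescribed region $|z_i|>|x_i|$, into Proposition \ref{pro:bilinear_action} to pull the $\z$-dependent factor out of the pairing:
\begin{align*}
\bigl\langle\Psi_1(t,\x),\mathcal C(\z,\x)\bigr\rangle &=\bigl\langle W_1(t)\chi(\x),(\chi(\x))^\top\bigr\rangle\chi^*(\z)=W_1(t)\,G\,\chi^*(\z),\\
\bigl\langle\mathcal C(\z,\x),(\Psi_2^*(t,\x))^\top\bigr\rangle &=(\chi^*(\z))^\top\bigl\langle\chi(\x),(\Psi_2^*(t,\x))^\top\bigr\rangle=(\chi^*(\z))^\top G\,W_2(t)^{-1}.
\end{align*}
Equation \eqref{eq:central} then gives $W_1G=W_2$ and equivalently $GW_2^{-1}=W_1^{-1}$, turning the right-hand sides into $W_2(t)\chi^*(\z)=\Psi_2(t,\z)$ and $(\chi^*(\z))^\top W_1(t)^{-1}=\bigl(W_1(t)^{-\top}\chi^*(\z)\bigr)^\top=(\Psi_1^*(t,\z))^\top$, as required.

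The hardest part is interpretive rather than computational: one must recognise that, although the biorthogonality condition for $P_1(t,\x),P_2(t,\x)$ in \S\ref{S:4} is stated with Gramm $G(t)$, the bilinear pairing entering \eqref{eq:Baker2} and \eqref{eq:Baker1*} is the time-independent one with Gramm $G$, since it is \eqref{eq:central} and not the factorisation \eqref{eq:general_LU} that must be invoked. Along the way one also has to handle $\chi^*(\z)$ as a formal Laurent series in $\z^{-1}$ on the region $|z_i|>|x_i|$ imposed by \eqref{eq:Cauchy}; this is routine under the formal-series conventions adopted in Section \ref{S:4}.
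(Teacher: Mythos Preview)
Your proof is correct and follows essentially the same route as the paper: the first two identities come from the spectral action of $W_i^{(0)}$ on $\chi$ together with \eqref{eq:def_wave} and \eqref{eq:def_poly}, while the last two come from expanding the Cauchy kernel via \eqref{eq:Cauchy}, invoking Proposition \ref{pro:bilinear_action}, and using $W_1G=W_2$ from \eqref{eq:central}. The only cosmetic difference is that you argue \eqref{eq:Baker2} and \eqref{eq:Baker1*} from right to left whereas the paper goes from left to right; your remark that the relevant pairing is the time-independent one with Gramm $G$ (rather than $G(t)$) is exactly the point that makes \eqref{eq:central} applicable.
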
\begin{proof}
Equation \eqref{eq:Baker1} follows easily
\begin{align*}
	\Psi_1(t,\x)=&W_1(t)\chi_1(\x),& &\text{from Definition \ref{def:integrable}}\\
	=&S_1(t)W_1^{(0)}(t_1)\chi(\x) & &\text{see \eqref{eq:def_wave}}\\
	=&\Exp{t_1(x)}S_1(t)\chi_1(\x)& &\text{consequence  of \eqref{eigen}}	\\
	=&\Exp{t_1(x)}P_1(t,\x) & & \text{directly from \eqref{eq:def_poly}}.
	\end{align*}
 To get  \eqref{eq:Baker2*} we argue similarly
\begin{align*}
\Psi_2^*(t,\z)=&\big(W_2(t)\big)^{-\top}\chi(\z), & &\text{from Definition \ref{def:integrable}},\\
=&H^{-\dagger}S_2(t)\big(W^{(0)}_2( t_2)\big)^{-1}\chi(\z)&& \text{see \eqref{eq:def_wave}}\\
=&\Exp{-\bar t_2(\z)}H^{-\top}S_2(t)\chi(\z)& &\text{consequence  of \eqref{eigen}}\\
=&\Exp{-\bar t_2(\z)}H^{-\top}P_2(t,\z)& &\text{follows  from \eqref{eq:def_poly}.}\\
\end{align*}
To show \eqref{eq:Baker2} we proceed as follows, assume that $|z_i|>|x_i|$, $i\in\{1,\dots,D\}$.
\begin{align*}
\Psi_2(t,\z)=&W_2(t)\chi^*(\z) & &\text{from Definition \ref{def:integrable}} \\
=&W_1(t) G\chi^*(\z) & & \text{use the factorization \eqref{eq:central}}\\
=& W_1(t) \big\langle\chi(\x),\big(\chi(\x)\big)^\top\big\rangle\chi^*(\z)&& \text{introduce the bilinear form expresion  \eqref{eq:G_bilinear}}\\
=& \big\langle W_1(t) \chi(\x),\big(\chi(\x)\big)^\top\chi^*(\z)\big\rangle&&\text{use porperties  \eqref{eq:bilinear_action}}\\
=& \langle\Psi_1(t,\x),\mathcal C(\z,\x)\rangle&&\text{consequence of \eqref{eq:Cauchy} and Definition  \ref{def:integrable}} .
\end{align*}

We now prove \eqref{eq:Baker1*}, for $|z_i|>|x_i|$, $i\in\{1,\dots,D\}$,
\begin{align*}
\Psi_1^*(t,\z)=&\big(W_1(t)\big)^{-\top}\chi^*(\z)& &\text{from Definition \ref{def:integrable}} \\
=&\big(W_2(t)\big)^{-\top}G^\dagger\chi^*(\z)&& \text{follows from factorization  \eqref{eq:central}}\\
=&\big(W_2(t)\big)^{-\top}\big(\big(\chi^*(\z)\big)^\top G\big)^\top\\
=&\big(W_2(t)\big)^{-\top}\Big(\big\langle\big(\chi^*(\z)\big)^\top \chi(\x),\big(\chi(\x)\big)^\top\big\rangle\Big)^\top&& \text{use the bilinear expression \eqref{eq:G_bilinear}}\\
=&\big(W_2(t)\big)^{-\top}\Big(\big\langle\mathcal C(\z,\x),\big(\chi(\x)\big)^\top\big\rangle\Big)^\top&& \text{see \eqref{eq:Cauchy}}\\
=&\Big(\Big\langle\mathcal C(\z,\x),\Big(\big(W_2(t)\big)^{-\top}\chi(\x)\Big)^\top\Big\rangle\Big)^\top\\
=&\Big(\Big\langle\mathcal C(\z,\x),\big(\Psi_2^*(t,\x)\big)^\top\Big\rangle\Big)^\top&& \text{from Definition \ref{def:integrable}, again.}
\end{align*}

\end{proof}

 \begin{pro}[The integrable hierarchy]\label{pro:integrable}
 	The wave matrices obey the evolutionary linear systems
 	\begin{align*}
 	\frac{\partial W_1}{\partial t_{1,\q}}&= B_{1,\q}W_1,&\frac{\partial W_1}{\partial t_{2,\q}}&= B_{2,\q}W_1,&
 	\frac{\partial  W_2}{\partial t_{1,\q}}&=B_{1,\q} W_2, &\frac{\partial  W_2}{\partial t_{2,\q}}&= B_{2,\q} W_2,
 	\end{align*}
 	the Baker and adjoint Baker functions solve the following linear equations
 	\begin{align*}
	\frac{\partial \Psi_1}{\partial t_{1,\q}}&= B_{1,\q}\Psi_1,&\frac{\partial \Psi_1}{\partial t_{2,\q}}&= B_{2,\q}\Psi_1,&
	\frac{\partial  \Psi_2}{\partial t_{1,\q}}&=B_{1,\q} \Psi_2, &\frac{\partial  \Psi_2}{\partial t_{2,\q}}&= B_{2,\q} \Psi_2,\\
		\frac{\partial \Psi_1^*}{\partial t_{1,\q}}&= -(B_{1,\q})^\top\Psi_1,&\frac{\partial \Psi_1}{\partial t_{2,\q}}&= -(B_{2,\q})^\top\Psi^*_1,&
		\frac{\partial  \Psi_2^*}{\partial t_{1,\q}}&=-(B_{1,\q})^\top \Psi^*_2, &\frac{\partial  \Psi^*_2}{\partial t_{2,\q}}&= -(B_{2,\q})^\top \Psi^*_2,
 	\end{align*}
 	the  Lax matrices are subject to the following  \emph{Lax equations}
 	\begin{align*}
 	\frac{\partial L_{i,a}}{\partial t_{j,\q}}&=\big[B_{j,\q}, L_{i,a}\big],
 	\end{align*}
 	and Zakharov--Sabat matrices fulfill the following  \emph{Zakharov--Shabat equations}
 	\begin{align*}
 	\frac{\partial B_{i',\q'}}{\partial t_{i,\q}}- \frac{\partial B_{i,\q}}{\partial t_{i',\q'}}+\big[B_{i,\q},B_{i',\q'}\big]=0.
 	\end{align*}
 \end{pro}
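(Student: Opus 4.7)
The strategy is to derive the evolution laws for $W_1$ and $W_2$ first, and then read off the Baker, adjoint Baker, Lax and Zakharov--Shabat identities as immediate corollaries. The starting point is Proposition \ref{pro:evolution S}, which I will rewrite using the identifications $L_1^\q = S_1\boldsymbol\Lambda^\q S_1^{-1}$ and $L_2^\q = \tilde S_2(\boldsymbol\Lambda^\top)^\q \tilde S_2^{-1}$ together with the splitting $(L_j^\q)_+ + (L_j^\q)_- = L_j^\q$. Since $B_{1,\q}=(L_1^\q)_+$ and $B_{2,\q}=(L_2^\q)_-$, Proposition \ref{pro:evolution S} becomes
\begin{align*}
\partial_{t_{1,\q}} S_1 &= (B_{1,\q}-L_1^\q)S_1, & \partial_{t_{2,\q}} S_1 &= B_{2,\q} S_1, \\
\partial_{t_{1,\q}} \tilde S_2 &= B_{1,\q}\tilde S_2, & \partial_{t_{2,\q}} \tilde S_2 &= (B_{2,\q}-L_2^\q)\tilde S_2.
\end{align*}

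\textbf{Wave and Baker equations.} Differentiating $W_1 = S_1 W_1^{(0)}(t_1)$, using $\partial_{t_{1,\q}} W_1^{(0)} = \boldsymbol\Lambda^\q W_1^{(0)}$ and $\partial_{t_{2,\q}} W_1^{(0)} = 0$, the $L_1^\q$-term is cancelled by $S_1\boldsymbol\Lambda^\q W_1^{(0)} = L_1^\q W_1$, producing $\partial_{t_{1,\q}} W_1 = B_{1,\q} W_1$; the $t_2$-derivative is immediate. A symmetric computation for $W_2 = \tilde S_2(W_2^{(0)}(t_2))^\top$, in which the commutativity of the $\Lambda_a$ is used to move $(\boldsymbol\Lambda^\top)^\q$ through $(W_2^{(0)})^\top$, yields the remaining two identities. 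The Baker function equations follow by differentiating $\Psi_1 = W_1\chi(\z)$ and $\Psi_2 = W_2\chi^*(\z)$, since $\chi$ and $\chi^*$ are time-independent. For the adjoint Bakers $\Psi_j^* = W_j^{-\top}\chi^{(*)}(\z)$ the identity $\partial_{t_{i,\q}}(W_j^{-1}) = -W_j^{-1}(\partial_{t_{i,\q}} W_j)W_j^{-1} = -W_j^{-1} B_{i,\q}$, transposed, yields $\partial_{t_{i,\q}}(W_j^{-\top}) = -B_{i,\q}^\top W_j^{-\top}$ and hence the stated evolution $\partial_{t_{i,\q}}\Psi_j^* = -B_{i,\q}^\top\Psi_j^*$.

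\textbf{Lax and Zakharov--Shabat.} Differentiating the expressions $L_{1,a}=W_1\Lambda_a W_1^{-1}$ and $L_{2,a}=W_2\Lambda_a^\top W_2^{-1}$ from \eqref{eq:lax_wave}, using the product rule together with $\partial_{t_{i,\q}}(W_j^{-1}) = -W_j^{-1} B_{i,\q}$ and the fact that $\Lambda_a$ is time-independent, gives $\partial_{t_{i,\q}} L_{j,a} = B_{i,\q} L_{j,a} - L_{j,a} B_{i,\q} = [B_{i,\q}, L_{j,a}]$. Finally, the Zakharov--Shabat equations arise as the compatibility conditions of the overdetermined linear system $\partial_{t_{i,\q}} W_j = B_{i,\q} W_j$: demanding equality of mixed partials $\partial_{t_{i,\q}}\partial_{t_{i',\q'}} W_j = \partial_{t_{i',\q'}}\partial_{t_{i,\q}} W_j$ and expanding both sides by the product rule gives $\bigl(\partial_{t_{i,\q}} B_{i',\q'} - \partial_{t_{i',\q'}} B_{i,\q} + [B_{i',\q'},B_{i,\q}]\bigr)W_j = 0$, from which the stated identity follows by invertibility of $W_j$.

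The only delicate point is the consistent bookkeeping of the $\pm$ splitting when rewriting Proposition \ref{pro:evolution S} in terms of the Lax matrices, because the asymmetric roles of $t_1$ (producing $(\cdot)_+$ pieces on $\tilde S_2$ and $(\cdot)_-$ pieces on $S_1$) and $t_2$ (the reverse) must be tracked carefully so that the $L_j^\q$ correction terms cancel precisely against the derivative of the vacuum factor. Once this reduction to evolutions driven by $B_{1,\q}$ and $B_{2,\q}$ is in place, every remaining step is a routine application of the product rule.
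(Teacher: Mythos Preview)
Your proof is correct and follows exactly the route the paper intends: the paper's own proof is the single line ``Follows from Proposition \ref{pro:evolution S}'', and what you have written is a careful unpacking of that sentence, rewriting the four identities of Proposition \ref{pro:evolution S} in terms of $B_{1,\q},B_{2,\q}$ and then differentiating $W_j$, $\Psi_j$, $\Psi_j^*$, $L_{j,a}$ and checking compatibility. One small remark: your compatibility computation yields $\partial_{t_{i,\q}}B_{i',\q'}-\partial_{t_{i',\q'}}B_{i,\q}+[B_{i',\q'},B_{i,\q}]=0$, which is the correct Zakharov--Shabat relation; the sign on the commutator printed in the statement appears to be a typographical slip in the paper rather than a flaw in your argument.
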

 \begin{proof}
 	Follows from Proposition \ref{pro:evolution S}.
 \end{proof}
 In this Proposition, as expected, given two semi-infinite block matrices $A,B$ the notation $[A,B]=AB-BA$ stands for the usual commutator of matrices.

 \subsection{KP type hierarchies}

 In \cite{ariznabarreta2014multivariate} it is shown that the KP type construction appears also in the MVOPR context. Here we show that they admit an extension to this broader scenario not linked to MVOPR of multispectral Toda hierarchies.

 \begin{definition}\label{def:asymptotic-module}
 	Given two semi-infinite matrices $Z_1(t)$ and $Z_2(t)$ we say that
 	\begin{itemize}
 		\item  $Z_1(t)\in\mathfrak{l}W^{(0)}_1$ if $Z_1(t)\big(W^{(0)}_1(t_1)\big)^{-1}$ is a block strictly lower triangular matrix.
 		\item  $Z_2(t)\in\mathfrak{u}W^{(0)}_2$ if $Z_2(t)\big(W^{(0)}_2(t_2)\big)^{-\top}$ is a block upper triangular matrix.
 	\end{itemize}
 \end{definition}
 Then, we can state the following \emph{congruences}
 \begin{pro}\label{pro:asymptotic-module}
 	Given two semi-infinite matrices $Z_1(t)$ and $Z_2(t)$ such that
 	\begin{itemize}
 		\item  $Z_1(t)\in\mathfrak{l}W^{(0)}_1$,
 		\item  $Z_2(t)\in\mathfrak{u}W^{(0)}_2$,
 		\item $Z_1(t)G=Z_2(t)$.
 	\end{itemize}
 	then
 	\begin{align*}
 	Z_1(t)&=0, & Z_2(t)&=0.
 	\end{align*}
 \end{pro}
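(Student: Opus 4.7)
The plan is to use both the dressing relation \eqref{eq:flows} and the Gauss--Borel factorization \eqref{eq:general_LU} simultaneously, translating the hypothesis $Z_1 G = Z_2$ into a block triangular identity that forces both sides to vanish.

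First I would extract the ``dressed'' parts of $Z_1$ and $Z_2$ by setting
\begin{align*}
A_1(t) &\coloneq Z_1(t)\bigl(W_1^{(0)}(t_1)\bigr)^{-1}, & A_2(t) &\coloneq Z_2(t)\bigl(W_2^{(0)}(t_2)\bigr)^{-\top}.
\end{align*}
By Definition \ref{def:asymptotic-module} the matrix $A_1$ is block strictly lower triangular while $A_2$ is block upper triangular, and we have $Z_1 = A_1 W_1^{(0)}$, $Z_2 = A_2 (W_2^{(0)})^{\top}$.

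Next I would rewrite the hypothesis using \eqref{eq:flows}, which gives $W_1^{(0)}(t_1)\,G = G(t)\,(W_2^{(0)}(t_2))^{\top}$. Substituting into $Z_1 G = Z_2$ and cancelling the common factor $(W_2^{(0)})^{\top}$ (which is upper unitriangular and hence invertible) yields the cleaner identity
\begin{align*}
A_1(t)\, G(t) = A_2(t).
\end{align*}
Now I plug in the Gauss--Borel factorization $G(t) = S_1(t)^{-1} H(t) S_2(t)^{-\top}$ from \eqref{eq:general_LU} and multiply both sides by $S_2(t)^{\top}$ to get
\begin{align*}
A_1(t)\, S_1(t)^{-1}\, H(t) \;=\; A_2(t)\, S_2(t)^{\top}.
\end{align*}

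The decisive observation is that the two sides live in complementary block triangular subspaces: $S_1^{-1}$ is block lower unitriangular, so $A_1 S_1^{-1}$ inherits the strictly lower block triangularity of $A_1$; multiplying by the block diagonal $H$ preserves this, so the left hand side is block strictly lower triangular. On the other hand, $S_2^{\top}$ is block upper unitriangular, so $A_2 S_2^{\top}$ is block upper triangular. A matrix that is simultaneously block strictly lower and block upper triangular is zero. Since $S_1^{-1} H$ and $S_2^{\top}$ are invertible, this forces $A_1 = 0$ and $A_2 = 0$, whence $Z_1 = 0$ and $Z_2 = 0$.

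There is no real obstacle here; the only thing to watch is bookkeeping of the triangularities (strict versus non-strict) on each side, ensuring that the diagonal block contribution on the left hand side truly vanishes because $A_1$ is \emph{strictly} lower, so that the clash with the upper triangular right hand side produces zero rather than merely a block diagonal matrix.
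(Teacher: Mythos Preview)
Your argument is correct and is essentially the same as the paper's: the paper also multiplies $Z_1G=Z_2$ through the factorization \eqref{eq:central} to obtain $Z_1(W_1^{(0)})^{-1}(S_1)^{-1}=Z_2(W_2^{(0)})^{-\top}(\tilde S_2)^{-1}$, which is exactly your identity $A_1S_1^{-1}H=A_2S_2^{\top}$ after unpacking $\tilde S_2=HS_2^{-\top}$, and then concludes by the same strictly-lower-versus-upper triangular clash.
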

 \begin{proof}
 	Observe that
 	\begin{align*}
 	Z_2(t)=Z_1(t)G=Z_1(t)\big(W_1(t)\big)^{-1}W_2(t),
 	\end{align*}
 	where we have used \eqref{eq:central}. From here we get
 	\begin{align*}
 	Z_1(t)\big(W_1^{(0)}(t_1)\big)^{-1}\big(S_1(t)\big)^{-1}=Z_2(t)\big(W_2^{(0)}(t_2)\big)^{-\top}\big(\tilde S _2(t)\big)^{-1},
 	\end{align*}
 	and, as in the LHS we have a strictly lower triangular block semi-infinite matrix while in the RHS we have an upper triangular block semi-infinite  matrix, both sides must vanish and the result follows.
 \end{proof}

 \begin{definition}
 	When $A-B\in\mathfrak{l}W^{(0)}_1$  we write $A=B+\mathfrak{l}W^{(0)}_1$ and if $A-B\in\mathfrak{u}W^{(0)}_2$ we write $A=B+\mathfrak{u}W^{(0)}_2$.
 \end{definition}

 Within this subsection we will write $t_{i,(a_1,a_2,\dots,a_p)}$ to denote $t_{i,\q}$ with $\q=\ee_{a_1}+\dots+\ee_{a_p}$.
  We introduce the diagonal block matrices $V_{a,b}=\diag((V_{a,b})_{[0]},(V_{a,b})_{[1]},(V_{a,b})_{[2]},\dots)$
   \begin{align}\label{eta}
   V_{a,b}\coloneq & \frac{\partial \beta_1}{\partial t_{1,a}}\Lambda_b,&
   (V_{a,b})_{[k]}= &\frac{\partial \beta_{1,[k]}}{\partial t_{1,a}}(\Lambda_b)_{[k-1],[k]}, &
   U_{a,b}\coloneq & -V_{a,b}-V_{b,a},
   \end{align}
in terms of the first block subdiagonal $\beta_1$ of $S_1$.

   \begin{pro}
   	The  Baker function $\Psi_1$ satisfies
   	\begin{align}\label{eq: linear.wave}
   	\frac{\partial \Psi_1}{\partial t_{1,(a,b)}}&=\frac{\partial^2\Psi_1}{\partial t_{1,a}\partial t_{2,b}}+U_{a,b}\Psi_1.
   	\end{align}
   \end{pro}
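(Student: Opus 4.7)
The plan is to combine the linear equations of Proposition \ref{pro:integrable} with the Leibniz rule and the spectral action of the Lax matrices on the Baker function. First I apply Proposition \ref{pro:integrable} to express both sides in terms of Lax projections: $\frac{\partial \Psi_1}{\partial t_{1,(a,b)}} = B_{1,(a,b)}\Psi_1 = (L_{1,a}L_{1,b})_+\Psi_1$, and $\frac{\partial \Psi_1}{\partial t_{2,b}} = B_{2,b}\Psi_1 = M_b\Psi_1$, where I write $M_b:=(L_{2,b})_-$ for the strictly block-subdiagonal part. Differentiating the latter relation in $t_{1,a}$ via Leibniz, and using $\frac{\partial\Psi_1}{\partial t_{1,a}}=(L_{1,a})_+\Psi_1$, gives
\begin{equation*}
\frac{\partial^2 \Psi_1}{\partial t_{1,a}\partial t_{2,b}}=\Big(\frac{\partial M_b}{\partial t_{1,a}}+M_b(L_{1,a})_+\Big)\Psi_1.
\end{equation*}
Thus the claim reduces to checking the operator identity $(L_{1,a}L_{1,b})_+-\frac{\partial M_b}{\partial t_{1,a}}-M_b(L_{1,a})_+=U_{a,b}$ in its action on $\Psi_1$.

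To reach this, I would use the spectral identities $L_{1,c}\Psi_1=x_c\Psi_1$ for $c\in\{a,b\}$ (stated after Definition \ref{def:integrable}) to rewrite $(L_{1,a}L_{1,b})_+\Psi_1=x_ax_b\Psi_1-(L_{1,a}L_{1,b})_-\Psi_1$ and $M_b(L_{1,a})_+\Psi_1=x_aM_b\Psi_1-M_b(L_{1,a})_-\Psi_1$, thereby trading the shift contributions for strictly lower-triangular pieces whose block entries can be tracked explicitly. The residual block-diagonal piece to be matched with $U_{a,b}=-V_{a,b}-V_{b,a}$ is then extracted using the identity $(L_{1,a})_{[k],[k]}=\beta_{1,[k]}(\Lambda_a)_{[k-1],[k]}-(\Lambda_a)_{[k],[k+1]}\beta_{1,[k+1]}$, which follows from $L_{1,a}S_1=S_1\Lambda_a$, together with Proposition \ref{pro:evolution S}: its first block subdiagonal provides the evolution of $\beta_{1,[k]}$ with respect to $t_{1,a}$ in terms of the $([k],[k-1])$ block of $(L_{1,a})_-$. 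Matching the diagonal contribution then yields $(V_{a,b}+V_{b,a})_{[k]}=(\partial_{t_{1,a}}\beta_{1,[k]})(\Lambda_b)_{[k-1],[k]}+(\partial_{t_{1,b}}\beta_{1,[k]})(\Lambda_a)_{[k-1],[k]}$, which is precisely $-U_{a,b}$ at block $[k]$, using the symmetric role of $a$ and $b$ on the left-hand side $(L_{1,a}L_{1,b})_+=(L_{1,b}L_{1,a})_+$.

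The main obstacle will be the bookkeeping of the strictly lower-triangular cancellations. I would compute $\frac{\partial M_b}{\partial t_{1,a}}$ via the Lax equation $\frac{\partial L_{2,b}}{\partial t_{1,a}}=[(L_{1,a})_+,L_{2,b}]$ of Proposition \ref{pro:integrable} and the band structure of $M_b$ (only a first block subdiagonal), and then verify that its combination with the spectral-derived $(L_{1,a}L_{1,b})_-\Psi_1$ and $M_b(L_{1,a})_-\Psi_1$ pieces leaves only the block-diagonal residue identified above. This step crucially uses the commutativity $\Lambda_a\Lambda_b=\Lambda_b\Lambda_a$ of item (1) of Proposition \ref{pro:Lambda} and the fact that the first block superdiagonal of $(L_{1,a})_+$ is exactly $\Lambda_a$. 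Once those cancellations are verified, the diagonal matching established above finishes the verification of the operator identity on $\Psi_1$, and hence of \eqref{eq: linear.wave}.
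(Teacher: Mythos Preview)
There is a genuine gap, and it originates in a typo in the stated proposition: the paper's own proof establishes
\[
\frac{\partial \Psi_1}{\partial t_{1,(a,b)}}=\frac{\partial^2\Psi_1}{\partial t_{1,a}\partial t_{1,b}}+U_{a,b}\Psi_1,
\]
with both derivatives of type~1. You have taken the displayed $t_{2,b}$ literally and organised the whole argument around $M_b=(L_{2,b})_-$, which controls $\partial_{t_{2,b}}\Psi_1$. In the general (non multi-Hankel) setting of \S\ref{S:4} the Lax matrices $L_{1,b}$ and $L_{2,b}$ are unrelated, so the operator identity you aim at, $(L_{1,a}L_{1,b})_+-\partial_{t_{1,a}}M_b-M_b(L_{1,a})_+=U_{a,b}$, cannot hold: the left-hand side involves $L_{2,b}$ through $M_b$, while $U_{a,b}=-\partial_{t_{1,a}}\beta_1\,\Lambda_b-\partial_{t_{1,b}}\beta_1\,\Lambda_a$ depends only on $\beta_1$ and type-1 time derivatives. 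The promised ``strictly lower-triangular cancellations'' therefore will not close.

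Even once the target is corrected to $t_{1,b}$, the paper's argument is quite different from yours and considerably shorter. It does not go through Lax projections or spectral identities at all; instead it works directly with the wave matrices. One computes
\[
\Big(\partial_{t_{1,(a,b)}}-\partial^2_{t_{1,a}t_{1,b}}\Big)(W_1)=-\Big(\partial_{t_{1,a}}\beta_1\,\Lambda_b+\partial_{t_{1,b}}\beta_1\,\Lambda_a\Big)W_1^{(0)}+\mathfrak{l}W_1^{(0)},
\]
so that $Z_1:=\big(\partial_{t_{1,(a,b)}}-\partial^2_{t_{1,a}t_{1,b}}-U_{a,b}\big)(W_1)\in\mathfrak{l}W_1^{(0)}$; the analogous $Z_2$ for $W_2$ lies in $\mathfrak{u}W_2^{(0)}$ because $W_2=\tilde S_2(W_2^{(0)})^\top$ carries no $t_1$-dependence in its vacuum factor. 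Since $Z_1G=Z_2$, Proposition~\ref{pro:asymptotic-module} forces $Z_1=Z_2=0$, which is the claim. The identification of $U_{a,b}$ is thus immediate from the block-diagonal part of a single derivative computation, with no need for Lax-equation bookkeeping or the spectral action of $L_{1,c}$ on $\Psi_1$. Your route, redirected to the correct $t_{1,b}$, could in principle be pushed through, but it replaces a one-line application of the congruence lemma by a lengthy block-by-block cancellation.
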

   \begin{proof}
   	In the one hand,
   	\begin{align*}
  	\frac{\partial W_1}{\partial t_{1,(a,b)}}&=\Big(\frac{\partial S_1}{\partial t_{1,(a,b)}}+S_1\Lambda_a\Lambda_b\Big)W^{(0)}_1(t_1)\\
  \frac{\partial^2 W_1}{\partial t_{1,a}\partial t_{1,b}}&=
  	\Big(  \frac{\partial^2 S_1}{\partial t_{1,a}\partial t_{1,b}}+\frac{\partial S_1}{\partial t_{1,a} }\Lambda_b+\frac{\partial S_1}{\partial t_{1,b} }\Lambda_a
   	+S_1\Lambda_a\Lambda_b\Big)W^{(0)}_1(t_1)
   	\end{align*}
   so that
   	\begin{align*}
  \bigg( \frac{\partial }{\partial t_{1,(a,b)}}-	 \frac{\partial^2 }{\partial t_{1,a}\partial t_{1,b}}\bigg)(W_1)&=-\bigg(\frac{\partial \beta_1}{\partial t_{1,a} }\Lambda_b+\frac{\partial \beta_1}{\partial t_{1,b} }\Lambda_a\bigg)W_1^{(0)}(t_1)+\mathfrak lW_1^{(0)}
   	\end{align*}
   	and, consequently,
   	   	\begin{align*}
   	   	\bigg( \frac{\partial }{\partial t_{1,(a,b)}}-	 \frac{\partial^2 }{\partial t_{1,a}\partial t_{1,b}}+\frac{\partial \beta_1}{\partial t_{1,a} }\Lambda_b+\frac{\partial \beta_1}{\partial t_{1,b} }\Lambda_a\bigg)(W_1)&=\mathfrak lW_1^{(0)}.
   	   	\end{align*}

   	On the other hand,
      	\begin{align*}
      	\frac{\partial W_2}{\partial t_{1,(a,b)}}&=\frac{\partial \tilde S_2}{\partial t_{1,(a,b)}}W^{(0)}_2(t_2),&
      	\frac{\partial^2 W_2}{\partial t_{1,a}\partial t_{1,b}}&=
  \frac{\partial^2 \tilde S_2}{\partial t_{1,a}\partial t_{1,b}}W^{(0)}_2(t_2)
      	\end{align*}
   	Now, we apply Proposition \ref{pro:asymptotic-module} with
   	\begin{align*}
   	Z_i&=  \bigg( \frac{\partial }{\partial t_{1,(a,b)}}-	 \frac{\partial^2 }{\partial t_{1,a}\partial t_{1,b}}-U_{a,b}\bigg)\big(W_i\big), &i &=1,2,
   	\end{align*}
   	to get the result.
   \end{proof}

Proceeding similarly we can reproduce the results of \cite{ariznabarreta2014multivariate} for this more general case. The proofs are essentially as are there with slight modifications as just shown in the above developments.
Associated with the third order times $t_{1,(a,b,c)}$  we introduce the following block diagonal matrices
\begin{align*}
V_{a,b,c}=\diag((V_{a,b,c})_{[0]},(V_{a,b,c})_{[1]},(V_{a,b,c})_{[2]},\dots)
\end{align*}
with
\begin{align*}
V_{a,b,c}\coloneq & \frac{\partial\beta_1^{(2)}}{\partial t_a}\Lambda_b\Lambda_c
-\frac{\partial\beta_1}{\partial t_{1,a}}\Lambda_b\beta_1\Lambda_c,\\ ( V_{a,b,c})_{[k]} =&\bigg(\frac{\partial\beta^{(2)}_{1,[k]}}{\partial t_{1,a}}
\big(\Lambda_b\big)_{[k-2],[k-1]}-\frac{\partial\beta_{1,[k]}}{\partial t_a}\big(\Lambda_b\big)_{[k-1],[k]}\beta_{1,[k]}\bigg)\big(\Lambda_c\big)_{[k-1],[k]},
\end{align*}
The Baker functions $\Psi_1$ satisfies the third order linear differential equations
	\begin{multline*}
	\frac{\partial\Psi_1}{\partial t_{1,(a,b,c)}}=\frac{\partial^3\Psi_1}{\partial t_{1,a}\partial t_{1,b}\partial t_{1,c}}
	-V_{a,b}\frac{\partial\Psi}{\partial t_{c}} -V_{c,a}\frac{\partial\Psi}{\partial t_{1,b}} -V_{b,c}\frac{\partial\Psi}{\partial t_{1,a}}
\\	-\Big(\frac{\partial V_{a,b}}{\partial t_{1c}}+\frac{\partial V_{b,c}}{\partial t_{1,a}}+\frac{\partial V_{c,a}}{\partial t_{1,b}}+ V_{a,b,c}+ V_{b,c,a}+ V_{c,b,a}\Big)\Psi_1,
	\end{multline*}
 and a matrix type KP system of equations for $\beta_{1,[k]}$ and $\beta_{1,{[k]}}^{(2)}$ emerges \cite{ariznabarreta2014multivariate}. For example, if we denote  $t_{1,a}^{(3)}=t_{3,(a,a,a)}$ and $t_{1,a}^{(2)}=t_{1,{(a,a)}}$ we get the nonlinear partial differential system
	\begin{align*}
	0=&
	\frac{\partial}{\partial t_{1,a}}\left[\frac{\partial \beta_1}{\partial t_{1,a}} \Lambda_a \beta_1-	\frac{\partial \beta_1^{(2)}}{\partial t_{1,a}}\Lambda_a-
	\frac{1}{2}\frac{\partial^2 \beta_1}{\partial t_{1,a}^2}+\frac{1}{4}\frac{\partial \beta_1}{\partial t_{1,a}^{(2)}} \right],\\
	0=&3\frac{\partial^2}{\partial t_{1,a}^2}\left[\frac{1}{2}\frac{\partial \beta_1 }{\partial t_{1,a}^{(2)}}-\frac{\partial^2 \beta_1 }{\partial t_{1,a}^{2}}+2\frac{\partial \beta_1 }{\partial t_{1,a}}\Lambda_a \beta_1 \right]\Lambda_a
	\\&+\frac{\partial}{\partial t_{1,a}}\left[2\frac{\partial^3 \beta_1 }{\partial t_{1,a}^{3}}-\frac{\partial \beta_1 }{\partial t_{1,a}^{(3)}}
	+\left(\frac{\partial \beta }{\partial t_{1,a}}\Lambda_a \beta_1-\frac{\partial \beta_1^{(2)} }{\partial t_{1,a}} \Lambda_a\right)\Lambda_a \beta_1\right]\Lambda_a\\
	&+3\frac{\partial}{\partial t_{1,a}}\left[\left(2\frac{\partial \beta_1 }{\partial t_{1,a}}\Lambda_a \beta_1^{(2)}+
	\frac{1}{2}\frac{\partial \beta_1^{(2)}}{\partial t_{1,a}^{(2)}}-\frac{\partial^2 \beta_1^{(2)}}{\partial t_{1,a}^2}\right)\Lambda_a^2
	-2\frac{\partial \beta_1}{\partial t_{1,a}}\Lambda_a \frac{\partial \beta_1}{\partial t_{1,a}}\Lambda_a \right]\\
	&+3 \frac{\partial \beta_1}{\partial t_{1,a}}\Lambda_a \left[\frac{\partial^2 \beta_1}{\partial t_{1,a}^2}-
	2\frac{\partial \beta_1}{\partial t_{1,a}}\Lambda_a \beta_1 -\frac{1}{2}\frac{\partial \beta_1}{\partial t_{1,a}}\right]\Lambda_a
	-6\frac{\partial^2 \beta_1}{\partial t_{1,a}^2} \Lambda_a \beta_1^{(2)}(\Lambda_a) ^2.
	\end{align*}

	\subsection{Reductions}
	We explore superficially some possibilities for reductions
\begin{definition}
Given two polynomials $\mathcal Q_1(\x),\mathcal Q_2(\x)\in\C[\x]$ a semi-infinite matrix $G$  is said $(\mathcal Q_1,\mathcal Q_2)$-invariant if
\begin{align}\label{eq:invariance_polynomial}
\mathcal Q_1(\boldsymbol \Lambda)G=G \mathcal Q_2(\boldsymbol \Lambda^\top)
\end{align}	
We will use the notation
\begin{align*}
\boldsymbol L_1&\coloneq(L_{1,1},\dots, L_{1,D})^\top, & \boldsymbol L_2&\coloneq(L_{2,1},\dots, L_{2,D})^\top.
\end{align*}
\end{definition}

	Observe that according to Proposition \ref{pro:G-bilinear form} this reduction implies for the associated bilinear forms
	\begin{align*}
	\big\langle \mathcal Q_1(\x)\chi(\x),\big(\chi(\x)\big)^\top\big\rangle	=\big\langle \chi(\x),\big(\chi(\x)\big)^\top\mathcal Q_2(\x)\big\rangle.
	\end{align*}

\begin{pro}
Given two polynomials $\mathcal Q_1(\x),\mathcal Q_2(\x)\in\C[\x]$, with powers written as
\begin{align*}
(\mathcal Q_1(\x))^n&=\sum_{\q\in\Z_+^D}\mathcal Q^n_{1,\q}\x^\q,&(\mathcal Q_2(\x))^n&=\sum_{\q\in\Z_+^D}\mathcal Q^n_{2,\q}\x^\q
\end{align*} and a  $(\mathcal Q_1,\mathcal Q_2)$-invariant initial condition $G$ we find that
\begin{enumerate}
	\item The Lax semi-infinite matrices satisfy
		\begin{align}\label{eq:q1_q2_invariance_Lax}
		\mathcal Q_1(\boldsymbol L_1)=\mathcal Q_2(\boldsymbol L_2).
		\end{align}
	\item For  $n\in\{1,2,\dots\}$  the 	wave matrices satisfy
	\begin{align}\label{eq:reduction_wave}
\begin{aligned}
	\sum_{\q\in\Z_+^D}\mathcal Q^n_{1,\q}\frac{\partial W_1}{\partial t_{1,\q}}+\sum_{\q\in\Z_+^D}\mathcal Q^n_{2,\q}\frac{\partial W_1}{\partial t_{2,\q}}&=W_1\big(\mathcal Q_1(\boldsymbol \Lambda )\big)^n,\\
		\sum_{\q\in\Z_+^D}\mathcal Q^n_{1,\q}\frac{\partial W_2}{\partial t_{1,\q}}+\sum_{\q\in\Z_+^D}\mathcal Q^n_{2,\q}\frac{\partial W_2}{\partial t_{2,\q}}&=W_2\big(\mathcal Q_2(\boldsymbol \Lambda^\top)\big)^n,		
\end{aligned}
	\end{align}
	and  the Lax matrices fulfill the invariance conditions
		\begin{align}\label{eq:reduction_lax}
	\begin{aligned}
	\sum_{\q\in\Z_+^D}\mathcal Q^n_{1,\q}\frac{\partial \boldsymbol L_1}{\partial t_{1,\q}}+\sum_{\q\in\Z_+^D}\mathcal Q^n_{2,\q}\frac{\partial \boldsymbol L_1}{\partial t_{2,\q}}&=0,\\
				\sum_{\q\in\Z_+^D}\mathcal Q^n_{1,\q}\frac{\partial \boldsymbol L_2}{\partial t_{1,\q}}+\sum_{\q\in\Z_+^D}\mathcal Q^n_{2,\q}\frac{\partial \boldsymbol L_2}{\partial t_{2,\q}}&=0.		
	\end{aligned}
						\end{align}	
\end{enumerate}
\end{pro}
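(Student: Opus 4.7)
My plan is to treat the three claims in order, reducing \eqref{eq:reduction_wave} to an application of the congruence Proposition \ref{pro:asymptotic-module} and then deducing the other two essentially for free. A preliminary observation is that the invariance is preserved by the flows: since $W_1^{(0)}(t_1)$ and $W_2^{(0)}(t_2)$ are entire functions of $\boldsymbol\Lambda$ they commute with $\mathcal Q_1(\boldsymbol\Lambda)$ and with $\mathcal Q_2(\boldsymbol\Lambda^\top)$, so from \eqref{eq:flows} and $\mathcal Q_1(\boldsymbol\Lambda)G=G\,\mathcal Q_2(\boldsymbol\Lambda^\top)$ one immediately obtains $\mathcal Q_1(\boldsymbol\Lambda)G(t)=G(t)\,\mathcal Q_2(\boldsymbol\Lambda^\top)$ for all $t$.

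For \eqref{eq:q1_q2_invariance_Lax}, I combine this preserved invariance with the expression \eqref{eq:lax_wave} of the Lax matrices via the wave matrices: one has $\mathcal Q_1(\boldsymbol L_1)W_1=W_1\,\mathcal Q_1(\boldsymbol\Lambda)$ and $\mathcal Q_2(\boldsymbol L_2)W_2=W_2\,\mathcal Q_2(\boldsymbol\Lambda^\top)$. Substituting $W_2=W_1 G$ (cf.\ \eqref{eq:central}) into the second gives
\begin{align*}
\mathcal Q_2(\boldsymbol L_2)W_2=W_1 G\,\mathcal Q_2(\boldsymbol\Lambda^\top)=W_1\mathcal Q_1(\boldsymbol\Lambda)\,G=\mathcal Q_1(\boldsymbol L_1)\,W_2,
\end{align*}
and invertibility of $W_2$ yields the claim. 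For \eqref{eq:reduction_wave} I apply the standard \emph{Sato trick}: set
\begin{align*}
Z_1&\coloneq\sum_{\q}\mathcal Q^n_{1,\q}\frac{\partial W_1}{\partial t_{1,\q}}+\sum_{\q}\mathcal Q^n_{2,\q}\frac{\partial W_1}{\partial t_{2,\q}}-W_1\bigl(\mathcal Q_1(\boldsymbol\Lambda)\bigr)^n,\\
Z_2&\coloneq\sum_{\q}\mathcal Q^n_{1,\q}\frac{\partial W_2}{\partial t_{1,\q}}+\sum_{\q}\mathcal Q^n_{2,\q}\frac{\partial W_2}{\partial t_{2,\q}}-W_2\bigl(\mathcal Q_2(\boldsymbol\Lambda^\top)\bigr)^n.
\end{align*}
Differentiating $W_1 G=W_2$ in each time and using the preserved invariance gives $Z_1 G=Z_2$; multiplying $Z_1$ on the right by $(W_1^{(0)})^{-1}$ cancels the two explicit $\mathcal Q_1(\boldsymbol\Lambda)^n$ contributions (because $W_1^{(0)}$ commutes with polynomials in $\boldsymbol\Lambda$), leaving a linear combination of derivatives of the lower unitriangular $S_1$, hence strictly lower triangular, so $Z_1\in\mathfrak{l}W_1^{(0)}$; a mirror computation with $W_2=\tilde S_2(W_2^{(0)})^{\top}$ places $Z_2\in\mathfrak{u}W_2^{(0)}$. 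Proposition \ref{pro:asymptotic-module} then forces $Z_1=Z_2=0$.

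Finally, \eqref{eq:reduction_lax} follows from \eqref{eq:reduction_wave} by a one-line commutator computation: from \eqref{eq:lax_wave} the derivative of $L_{i,a}$ in any direction equals $\bigl[\tfrac{\partial W_i}{\partial t}W_i^{-1},\,L_{i,a}\bigr]$, so taking the appropriate linear combination and using \eqref{eq:reduction_wave} to sum the inner factor to $W_i\bigl(\mathcal Q_i(\boldsymbol\Lambda^{(\top)})\bigr)^n W_i^{-1}=\bigl(\mathcal Q_i(\boldsymbol L_i)\bigr)^n$ gives $\bigl[(\mathcal Q_i(\boldsymbol L_i))^n,L_{i,a}\bigr]=0$ by the commutativity of the components of $\boldsymbol L_i$ (stated just after \eqref{eq:lax_wave}). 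The main obstacle is the bookkeeping in Step 3: one has to verify cleanly that the two polynomial correction terms in $Z_1$ and $Z_2$ have exactly the right weight to cancel the non-triangular parts produced by the $t_{1,\q}$- and $t_{2,\q}$-flows, respectively, a cancellation that hinges entirely on the commutation of $W_i^{(0)}$ with polynomials in $\boldsymbol\Lambda$.
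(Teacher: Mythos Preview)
Your argument is correct. Part (1) coincides with the paper's proof; parts (2)--(3) take a different route.

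For \eqref{eq:reduction_wave} and \eqref{eq:reduction_lax} the paper does \emph{not} go back to the congruence Proposition~\ref{pro:asymptotic-module}. Instead it works downstream of the already-proven linear evolution in Proposition~\ref{pro:integrable}: one observes that
\[
\sum_{\q}\mathcal Q^n_{1,\q}B_{1,\q}=\Big(\big(\mathcal Q_1(\boldsymbol L_1)\big)^n\Big)_+,\qquad
\sum_{\q}\mathcal Q^n_{2,\q}B_{2,\q}=\Big(\big(\mathcal Q_2(\boldsymbol L_2)\big)^n\Big)_-,
\]
and then uses part (1), namely $\mathcal Q_1(\boldsymbol L_1)=\mathcal Q_2(\boldsymbol L_2)$, to add the $+$ and $-$ pieces back to the full matrix $\big(\mathcal Q_1(\boldsymbol L_1)\big)^n=\big(\mathcal Q_2(\boldsymbol L_2)\big)^n$. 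Feeding this summed Zakharov--Shabat operator into the linear systems $\partial W_i/\partial t_{j,\q}=B_{j,\q}W_i$ and the Lax equations $\partial L_{i,a}/\partial t_{j,\q}=[B_{j,\q},L_{i,a}]$ gives \eqref{eq:reduction_wave} and \eqref{eq:reduction_lax} in one line each. Your approach is a direct ``Sato trick'' re-derivation that bypasses Proposition~\ref{pro:integrable} and relies only on the invariance of $G$ and the triangular structure of $S_1,\tilde S_2$; it is perfectly valid and self-contained, but it redoes work already packaged in Proposition~\ref{pro:integrable}. The paper's route is shorter and makes transparent \emph{why} the reduction works: the combined flow generator is precisely the full $\big(\mathcal Q_1(\boldsymbol L_1)\big)^n$, which commutes with each $L_{i,a}$ and conjugates through $W_i$ to $\big(\mathcal Q_i(\boldsymbol\Lambda^{(\top)})\big)^n$.
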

\begin{proof}
\begin{enumerate}
	\item Use \eqref{eq:central}, \eqref{eq:lax_wave} and \eqref{eq:invariance_polynomial} for \eqref{eq:q1_q2_invariance_Lax}.
	\item Observe that
	\begin{align*}
\sum_{\q\in\Z_+^D}	\mathcal Q^n_{1,\q}B_{1,\q}&=\Big(\big(\mathcal Q_1(\boldsymbol L_1 )\big)^n\Big)_+,&
\sum_{\q\in\Z_+^D}	\mathcal Q^n_{2,\q}B_{2,\q}&= \Big(\big(\mathcal Q_1(\boldsymbol L_2 )\big)^n\Big)_-
	\end{align*}
	and, consequently,
	\begin{align*}
\sum_{\q\in\Z_+^D}	\mathcal Q^n_{1,\q}B_{1,\q}+\sum_{\q\in\Z_+^D}	\mathcal Q^n_{2,\q}B_{2,\q}=\big(\mathcal Q_1(\boldsymbol L_1)\big)^n=\big(\mathcal Q_2(\boldsymbol L_2)\big)^n,
	\end{align*}
	and systems \eqref{eq:reduction_wave} and \eqref{eq:reduction_lax} follow from Proposition \eqref{pro:integrable}.
\end{enumerate}\end{proof}

An illustration of these type of the reductions is the case studied in previous sections involving multivariate orthogonal polynomials to a given generalized function $u\in(\C[x])'$ with $G=\langle u,\chi\chi^\top\rangle$. As we know this implies
 $\Lambda_a G=G(\Lambda_a)^\top$, $a\in\{1,\dots,D\}$, so that $   L_{1,a}=S_1\Lambda S_1^{-1}= \tilde S_2\Lambda^\top \tilde S_2^{-1}=L_{2,a}$, $a\in\{1,\dots,D\}$.   The Lax matrices $L_{1,a}$ and $L_{2,a}$ are lower and upper Hessenberg  block matrices, respectively.    Consequently, we have a  tridiagonal block matrix form; i.e., a Jacobi block matrix
 \begin{align*}
 \boldsymbol L_1=\boldsymbol L_2=\boldsymbol J.
 \end{align*}
   Moreover, these conditions imply an invariance property under the flows introduced, as we have that $G(t)= W_1^{(0)}(t_1-t_2)G$, i.e., there are only one type of flows, or in differential form
	\begin{align*}
	(\partial_{1,\q}+ \partial_{2,\q})W_1 &=W_1 \boldsymbol \Lambda^\q, &(\partial_{1,\q}+ \partial_{2,\q})W_2 &=W_2 (\boldsymbol\Lambda^\top)^\q,\\
	(\partial_{1,\q}+ \partial_{2,\q})L_{1,a} &=0, & (\partial_{1,\q}+ \partial_{2,\q})L_{2,a }&=0.
	\end{align*}
	
	\subsection{The linear spectral transformation for the multispectral 2D Toda hierarchy}
We extend the linear spectral transform for MVOPR to the more general framework of the multispectral Toda lattice just discussed. 	As a main result in Theorem \ref{the:linear_spectral_toda}  we get quasi-determinantal expressions for the transformed Baker function $(\hat \Psi_1)_{[k]}(t)$ and the quasi-tau matrices $\hat H_{[k]}(t)$.	
	
\begin{definition}
	Given two coprime polynomials $\mathcal Q_1(\x)$ and $\mathcal Q_2(\x)$, $\deg \mathcal Q_i=m_i$, we consider an initial condition $G$
and a perturbed one $\hat G$ such that
	\begin{align}\label{eq:GhatG}
	\hat G\mathcal Q_2(\boldsymbol\Lambda^\top)=\mathcal Q_1(\boldsymbol\Lambda)G.
	\end{align}
		We can achieve the perturbed semi-infinite matrix $\hat G$ in two steps, using an intermediate matrix $\check G$. First, we perform a Geronimus type transformation
		\begin{align}\label{eq:GhatG1}
		\check G \mathcal Q_2(\boldsymbol \Lambda^\top)=G
		\end{align}
	and second, a Christoffel type transformation
		\begin{align}\label{eq:GhatG2}
		\hat G=\mathcal Q_1(\boldsymbol{\Lambda})\check G.
		\end{align}
\end{definition}
\begin{pro}\label{pro:transformation_t}
Under the evolution prescribed in \eqref{eq:flows} if \eqref{eq:GhatG}, \eqref{eq:GhatG1} and \eqref{eq:GhatG2} we have
		\begin{align*}
		\hat G(t)\mathcal Q_2(\boldsymbol\Lambda^\top)&=\mathcal Q_1(\boldsymbol\Lambda)G(t), &
		\check G(t) \mathcal Q_2(\boldsymbol \Lambda^\top)&=G(t),&
		\hat G(t)&=\mathcal Q_1(\boldsymbol{\Lambda})\check G(t).
		\end{align*}
\end{pro}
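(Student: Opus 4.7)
The plan is to reduce everything to the observation that both dressing factors $W_1^{(0)}(t_1)$ and $\bigl(W_2^{(0)}(t_2)\bigr)^{-\top}$ are defined as exponentials of polynomials in the commuting shift matrices $\Lambda_1,\dots,\Lambda_D$ (respectively their transposes). Consequently, for any polynomial $\mathcal Q\in\C[\x]$ one has the commutation identities
\begin{align*}
\mathcal Q(\boldsymbol\Lambda)\,W_1^{(0)}(t_1)&=W_1^{(0)}(t_1)\,\mathcal Q(\boldsymbol\Lambda),&
\mathcal Q(\boldsymbol\Lambda^\top)\,\bigl(W_2^{(0)}(t_2)\bigr)^{-\top}&=\bigl(W_2^{(0)}(t_2)\bigr)^{-\top}\,\mathcal Q(\boldsymbol\Lambda^\top),
\end{align*}
which follow from Proposition~\ref{pro:Lambda}(1) (the $\Lambda_a$ mutually commute, and therefore so do $\Lambda_a^\top$), so that every polynomial expression in $\boldsymbol\Lambda$ commutes with every exponential series in $\boldsymbol\Lambda$.

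Next I would spell out the time evolution for the three matrices. By the very definition \eqref{eq:flows} applied to each initial datum we have
\begin{align*}
G(t)&=W_1^{(0)}(t_1)\,G\,\bigl(W_2^{(0)}(t_2)\bigr)^{-\top},\\
\check G(t)&=W_1^{(0)}(t_1)\,\check G\,\bigl(W_2^{(0)}(t_2)\bigr)^{-\top},\\
\hat G(t)&=W_1^{(0)}(t_1)\,\hat G\,\bigl(W_2^{(0)}(t_2)\bigr)^{-\top}.
\end{align*}
The proof is then a one‑line sandwich. For the first identity, multiply $\hat G(t)$ on the right by $\mathcal Q_2(\boldsymbol\Lambda^\top)$, push $\mathcal Q_2(\boldsymbol\Lambda^\top)$ across $\bigl(W_2^{(0)}(t_2)\bigr)^{-\top}$ using the commutation above, apply the initial identity \eqref{eq:GhatG} to replace $\hat G\,\mathcal Q_2(\boldsymbol\Lambda^\top)$ by $\mathcal Q_1(\boldsymbol\Lambda)\,G$, and finally push $\mathcal Q_1(\boldsymbol\Lambda)$ back across $W_1^{(0)}(t_1)$ to land on $\mathcal Q_1(\boldsymbol\Lambda)G(t)$. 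The second identity is proved in exactly the same way, using \eqref{eq:GhatG1} in the middle step. For the third identity one multiplies $\check G(t)$ on the left by $\mathcal Q_1(\boldsymbol\Lambda)$, commutes it past $W_1^{(0)}(t_1)$, invokes \eqref{eq:GhatG2} to replace $\mathcal Q_1(\boldsymbol\Lambda)\check G$ by $\hat G$, and recognizes the result as $\hat G(t)$.

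There is no real obstacle here; the only point to emphasise is that the construction of the flows \eqref{eq:flows} was tailored precisely so that the dressing is effected by a function of $\boldsymbol\Lambda$ on the left and a function of $\boldsymbol\Lambda^\top$ on the right, so that any left action by $\mathcal Q_1(\boldsymbol\Lambda)$ and any right action by $\mathcal Q_2(\boldsymbol\Lambda^\top)$ commute through the evolution. This is in the same spirit as the observation, already used in §\ref{S:4}, that the multi‑Hankel condition $\Lambda_a G=G\Lambda_a^\top$ is preserved by the flows; the present statement is simply the two‑polynomial analogue of that invariance principle, and it guarantees that the Geronimus, Christoffel and linear spectral transformations intertwine with the multispectral Toda dynamics.
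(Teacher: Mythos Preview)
Your proof is correct and is essentially the same as the paper's own argument: the paper also writes out $\hat G(t)\mathcal Q_2(\boldsymbol\Lambda^\top)$, commutes $\mathcal Q_2(\boldsymbol\Lambda^\top)$ past $\bigl(W_2^{(0)}(t_2)\bigr)^{-\top}$, invokes \eqref{eq:GhatG}, and then commutes $\mathcal Q_1(\boldsymbol\Lambda)$ back past $W_1^{(0)}(t_1)$, remarking that the other two identities are analogous. Your explicit emphasis on the commutation principle coming from Proposition~\ref{pro:Lambda}(1) is a welcome clarification, but there is no real difference in strategy.
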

\begin{proof}
	We just check the first as the others follow in an analogous manner:
	\begin{align*}
	\hat G(t)\mathcal Q_2(\boldsymbol\Lambda^\top)
	=& W_1^{(0)}(t_1)\hat G\big(W_2^{(0)}(t_2)\big)^{-\top}\mathcal Q_2(\boldsymbol\Lambda^\top)\\
		=& W_1^{(0)}(t_1)\hat G\mathcal Q_2(\boldsymbol\Lambda^\top)\big(W_2^{(0)}(t_2)\big)^{-\top}\\
				=& W_1^{(0)}(t_1)\mathcal Q_1(\boldsymbol\Lambda) G\big(W_2^{(0)}(t_2)\big)^{-\top}\\
	=&\mathcal Q_1(\boldsymbol\Lambda) G(t).
	\end{align*}
\end{proof}

In terms of bilinear forms \eqref{eq:GhatG1} reads
\begin{align*}
\big\langle\chi(\x),\big(\chi(\x)\big)^\top\mathcal Q_2(\x)\big\rangle\,\check{}=\big\langle\chi(\x),\big(\chi(\x)\big)^\top\big\rangle
\end{align*}
so that  assuming we can divide by polynomials inside these bilinear forms a  solution to \eqref{eq:GhatG1} \begin{align}\label{eq:R_toda}
\check G=\Big\langle\chi(\x),\frac{\big(\chi(\x)\big)^\top}{\mathcal Q_2(\x)}\Big\rangle+\big\langle v,\chi(\x)\big(\chi(\x)\big)^\top\big\rangle
\end{align}
where $v\in\big(\C[x]\big)'$ and $(\mathcal Q_2(\x))\subset\operatorname{Ker}(v)$.
In fact, a more general case will be
\begin{align*}
\check G=\Big\langle\chi(\x),\frac{\big(\chi(\x)\big)^\top}{\mathcal Q_2(\x)}\Big\rangle+\big\langle v,A\chi(\x)\big(\chi(\x)\big)^\top\big\rangle
\end{align*}
	where $A$ is a semi-infinite matrix with  rows having only a finite number of  non vanishing coefficients.
	
\begin{definition}\label{def:resolvents_linear_t}
		We   introduce the resolvents
	\begin{align*}
	\omega_1(t)&:=\hat S_1(t)\mathcal Q_1(\boldsymbol\Lambda)  \big(S_1(t)\big)^{-1}, & \omega_2(t)&:=\Big(S_2(t)\mathcal Q_2(\boldsymbol\Lambda) \big(\hat S_2(t)\big)^{-1}\Big)^\top.
	\end{align*}
\end{definition}
	\begin{pro}
		The resolvent matrices satisfy
		\begin{align}\label{eq:M-omega2_t}
		\hat H(t)\omega_2(t)=\omega_1(t)H(t).
		\end{align}
		The resolvents $\omega_1(t),\omega_2(t)$ are block banded matrices, having different from zero only the first $m_1$ block superdiagonals and the first $m_2$ block subdiagonals.
	\end{pro}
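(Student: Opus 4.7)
The plan is a direct analogue of the proof of Proposition~\ref{pro:superdiagonals2} in the stationary setting, adapted to the $t$-dependent framework. Everything will flow from Proposition~\ref{pro:transformation_t} together with the Cholesky factorizations $G(t)=(S_1(t))^{-1}H(t)(S_2(t))^{-\top}$ and $\hat G(t)=(\hat S_1(t))^{-1}\hat H(t)(\hat S_2(t))^{-\top}$.

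First, to establish \eqref{eq:M-omega2_t}, I would substitute the two Cholesky factorizations into the identity $\hat G(t)\mathcal Q_2(\boldsymbol\Lambda^\top)=\mathcal Q_1(\boldsymbol\Lambda)G(t)$ provided by Proposition~\ref{pro:transformation_t}, obtaining
\begin{align*}
(\hat S_1(t))^{-1}\hat H(t)(\hat S_2(t))^{-\top}\mathcal Q_2(\boldsymbol\Lambda^\top) = \mathcal Q_1(\boldsymbol\Lambda)(S_1(t))^{-1}H(t)(S_2(t))^{-\top}.
\end{align*}
Multiplying on the left by $\hat S_1(t)$ and on the right by $(S_2(t))^{\top}$ rearranges this to
\begin{align*}
\hat H(t)\bigl[(\hat S_2(t))^{-\top}\mathcal Q_2(\boldsymbol\Lambda^\top)(S_2(t))^{\top}\bigr] = \bigl[\hat S_1(t)\mathcal Q_1(\boldsymbol\Lambda)(S_1(t))^{-1}\bigr]H(t),
\end{align*}
which is exactly $\hat H(t)\omega_2(t)=\omega_1(t)H(t)$ by Definition~\ref{def:resolvents_linear_t} (noting that $\omega_2(t)=(\hat S_2(t))^{-\top}\mathcal Q_2(\boldsymbol\Lambda^\top)(S_2(t))^{\top}$ by taking the transpose of the defining formula for $\omega_2^\top$).

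Next, to prove the banded structure, I read the definitions directly. Since $\boldsymbol\Lambda$ has only the first block superdiagonal nonzero, the polynomial $\mathcal Q_1(\boldsymbol\Lambda)$ is block upper triangular with at most $m_1$ nonzero superdiagonals. Conjugating by the lower block unitriangular matrices $\hat S_1(t)$ and $(S_1(t))^{-1}$ cannot produce nonzero entries above the $m_1$-th block superdiagonal: indeed, if $A$ vanishes strictly above the $m_1$-th superdiagonal and $L,M$ are block lower unitriangular, then $LAM$ still vanishes strictly above the $m_1$-th superdiagonal. Hence $\omega_1(t)$ has at most $m_1$ block superdiagonals. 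By exactly the same argument applied to $(\omega_2(t))^\top=S_2(t)\mathcal Q_2(\boldsymbol\Lambda)(\hat S_2(t))^{-1}$, the matrix $\omega_2(t)$ has at most $m_2$ block subdiagonals.

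To obtain the complementary bounds, I would invoke the identity \eqref{eq:M-omega2_t} just established: since $H(t)$ and $\hat H(t)$ are block diagonal and invertible, the relation $\omega_1(t)=\hat H(t)\omega_2(t)H(t)^{-1}$ preserves the block-banded profile. Therefore $\omega_1(t)$ inherits the subdiagonal structure of $\omega_2(t)$, giving at most $m_2$ subdiagonals, while $\omega_2(t)$ inherits the superdiagonal structure of $\omega_1(t)$, giving at most $m_1$ superdiagonals. Combining the two bounds yields the asserted block-banded structure for both resolvents. No substantial obstacle arises here; the only point requiring care is the consistent book-keeping of upper versus lower block structure when taking transposes in Definition~\ref{def:resolvents_linear_t}, and the observation that conjugation by block diagonal matrices does not alter the band profile.
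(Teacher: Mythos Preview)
Your proof is correct and follows essentially the same approach as the paper: substitute the two Cholesky factorizations into the transformation identity from Proposition~\ref{pro:transformation_t} and clean up to obtain \eqref{eq:M-omega2_t}, then derive the band structure exactly as in Proposition~\ref{pro:superdiagonals2}. The paper's own proof in fact only writes out the first part and then remarks that Proposition~\ref{pro:superdiagonals2} carries over verbatim, so your explicit treatment of the banded structure is more detailed but not different in substance.
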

	\begin{proof}
	From the $LU$ factorization we get
	\begin{align*}
	\big(\hat S_1(t)\big)^{-1}\hat H(t)\big(\hat S_2(t)\big)^{-\top}\mathcal Q_2(\boldsymbol\Lambda^\top) =\mathcal Q_1(\boldsymbol\Lambda)  \big(S_1(t)\big)^{-1} H(t) \big(S_2(t)\big)^{-\top},
	\end{align*}
	so that
	\begin{align*}
	\hat H(t)\Big( S_2(t)\mathcal Q_2(\boldsymbol\Lambda) \big(\hat S_2(t)\big)^{-1}\Big)^\top=\hat S_1(t)\mathcal Q_1(\boldsymbol\Lambda)  \big(S_1(t)\big)^{-1} H(t).
	\end{align*}
		\end{proof}
		
		In this more general scenario Proposition \ref{pro:superdiagonals2} still holds for these new resolvents, not connected in principle with any linear functional.
		We have
		\begin{pro}[Connection formulas]
			We have
			\begin{align*}
			\omega_1(t) P_1(t,\x)&=\mathcal Q_1(\x) \hat P_1(t,\x),\\
			\big(\omega_2(t)\big)^\top \hat P_2(t,\x) &=\mathcal Q_2(\x) P_2(t,\x).
			\end{align*}
		\end{pro}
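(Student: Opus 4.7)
The plan is to reduce both identities to the multivariate spectral property of $\boldsymbol\Lambda$, i.e. the fact that for any polynomial $\mathcal Q(\x)\in\C[\x]$ one has $\mathcal Q(\boldsymbol\Lambda)\chi(\x)=\mathcal Q(\x)\chi(\x)$. This extension of \eqref{eigen} follows at once from $\Lambda_a\chi(\x)=x_a\chi(\x)$ and the mutual commutativity of the $\Lambda_a$ asserted in Proposition \ref{pro:Lambda}, so we may apply it to the perturbing polynomials $\mathcal Q_1$ and $\mathcal Q_2$ without further justification.

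For the first identity, I would substitute directly the Definition \ref{def:resolvents_linear_t} of $\omega_1(t)=\hat S_1(t)\,\mathcal Q_1(\boldsymbol\Lambda)\,(S_1(t))^{-1}$ together with the definition \eqref{eq:def_poly} of $P_1(t,\x)=S_1(t)\chi(\x)$, so that the factor $(S_1(t))^{-1}S_1(t)$ collapses to the identity and one is left with
\begin{align*}
\omega_1(t)P_1(t,\x)=\hat S_1(t)\mathcal Q_1(\boldsymbol\Lambda)\chi(\x)=\mathcal Q_1(\x)\hat S_1(t)\chi(\x)=\mathcal Q_1(\x)\hat P_1(t,\x),
\end{align*}
where the middle step uses the spectral property (and the fact that $\mathcal Q_1(\x)$ is a scalar, hence commutes with $\hat S_1(t)$).

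For the second identity the argument is symmetric. Starting from $(\omega_2(t))^\top=S_2(t)\,\mathcal Q_2(\boldsymbol\Lambda)\,(\hat S_2(t))^{-1}$ and $\hat P_2(t,\x)=\hat S_2(t)\chi(\x)$, the cancellation $(\hat S_2(t))^{-1}\hat S_2(t)=\I$ and the same spectral property give
\begin{align*}
\bigl(\omega_2(t)\bigr)^\top\hat P_2(t,\x)=S_2(t)\mathcal Q_2(\boldsymbol\Lambda)\chi(\x)=\mathcal Q_2(\x)S_2(t)\chi(\x)=\mathcal Q_2(\x)P_2(t,\x).
\end{align*}

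There is no real obstacle: once the spectral property is invoked, both identities are just the algebraic telescoping built into the definition of the resolvents. The only thing to be careful about is the placement of the transpose on $\omega_2$, which is why the second line is written with $(\omega_2(t))^\top$ rather than $\omega_2(t)$; this is precisely the form in which the cancellation of $\hat S_2(t)$ with its inverse takes place.
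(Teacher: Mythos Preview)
Your proof is correct and follows exactly the approach the paper uses for the analogous statement in \S\ref{S:3} (see the proof of the connection formul\ae{} there, which simply cites \eqref{eq:polynomials} and Definition \ref{def:resolvents_linear}). In the multispectral setting the paper does not spell out a proof at all, but your expansion---substituting Definition \ref{def:resolvents_linear_t} and \eqref{eq:def_poly}, cancelling the factorization matrices, and invoking the spectral property $\mathcal Q(\boldsymbol\Lambda)\chi(\x)=\mathcal Q(\x)\chi(\x)$---is precisely what is intended.
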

		\begin{definition}
			We introduce the semi-infinite matrix
			\begin{align}\label{eq:def_R}
			R(t)\coloneq S_1(t)\check G(t)
			\end{align}
		\end{definition}
\begin{pro}
			The matrix $R(t)$ can be expressed as follows
		\begin{align}\label{eq:R_explicit}
		R(t)=\Big\langle P_1(t,\x),\frac{\big(\chi(\x)\big)^\top}{\mathcal Q_2(\x)}\Big\rangle+\big\langle v,P_1(t,\x)\big(\chi(\x)\big)^\top\big\rangle.
		\end{align}
\end{pro}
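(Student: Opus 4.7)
The plan is to combine three ingredients: the definition $R(t)\coloneq S_1(t)\check G(t)$ from \eqref{eq:def_R}, a time-evolved version of the bilinear-form representation \eqref{eq:R_toda} for $\check G$, and the left-multiplication rule for the non-standard bilinear form from Proposition \ref{pro:bilinear_action}.

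First I would derive an evolved analogue of \eqref{eq:R_toda}. Proposition \ref{pro:transformation_t} guarantees $\check G(t)\mathcal Q_2(\boldsymbol\Lambda^\top)=G(t)$, so starting from the initial decomposition \eqref{eq:R_toda} and applying the flow $\check G(t)=W_1^{(0)}(t_1)\check G\,(W_2^{(0)}(t_2))^{-\top}$, the spectral property $\Lambda_a\chi(\x)=x_a\chi(\x)$ lets me absorb the exponentials $\exp(t_1(\x))$ and $\exp(-t_2(\x))$ into the arguments of each bracket. Interpreting the bilinear form $\langle\cdot,\cdot\rangle$ as the one with Gramm matrix $G(t)$ and transporting $v$ accordingly (which is consistent since $(W_2^{(0)}(t_2))^{-\top}$ commutes with $\mathcal Q_2(\boldsymbol\Lambda^\top)$, so $\mathcal Q_2 v=0$ is preserved) yields
\begin{align*}
\check G(t)=\Big\langle \chi(\x),\frac{(\chi(\x))^\top}{\mathcal Q_2(\x)}\Big\rangle+\langle v,\chi(\x)(\chi(\x))^\top\rangle.
\end{align*}

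Second, I would multiply this identity on the left by $S_1(t)$ and push it inside each bracket using the matrix extension of Proposition \ref{pro:bilinear_action}: the stated rule is for a single row vector acting from the left, and it extends row by row to any semi-infinite matrix. Substituting $P_1(t,\x)=S_1(t)\chi(\x)$ from \eqref{eq:def_poly} then reshapes both terms into exactly the desired form \eqref{eq:R_explicit}.

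The only real subtlety is the first step. Once one has committed to the convention that the bilinear form in \eqref{eq:R_explicit} is the $t$-dependent one (with Gramm $G(t)$) and that $v$ is likewise transported along the flow, all that remains is bookkeeping: commuting $(W_2^{(0)}(t_2))^{-\top}$ past $\mathcal Q_2(\boldsymbol\Lambda^\top)$ and verifying that $\mathcal Q_2 v=0$ is preserved under that transport. After this is cleanly settled, the closing manipulation is a direct application of the congruence rules already recorded in Proposition \ref{pro:bilinear_action} together with the definitions of $W_1(t)$ and $P_1(t,\x)$.
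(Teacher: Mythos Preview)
Your approach is essentially the same as the paper's: combine the definition $R(t)=S_1(t)\check G(t)$ with the bilinear-form decomposition \eqref{eq:R_toda} of $\check G$, and push $S_1(t)$ inside both brackets using $P_1(t,\x)=S_1(t)\chi(\x)$. The paper's proof is in fact just the single line ``Recall \eqref{eq:R_toda} and \eqref{eq:def_R},'' with no further elaboration.

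Where you differ is in trying to account rigorously for the time dependence --- passing from $\check G$ to $\check G(t)$ via the flow, reinterpreting the bilinear form with Gramm matrix $G(t)$, transporting $v$, and invoking Proposition~\ref{pro:transformation_t}. None of this appears in the paper's argument, which treats \eqref{eq:R_toda} as if it held verbatim for the evolved quantities. Your extra care is a legitimate attempt to make precise something the paper leaves implicit (and indeed the paper is loose about whether the bracket in \eqref{eq:R_explicit} uses $G$ or $G(t)$); but the underlying mechanism --- definition plus decomposition plus left multiplication by $S_1(t)$ --- is identical.
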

		\begin{proof}
				Recall \eqref{eq:R_toda} and \eqref{eq:def_R}.
		\end{proof}

		\begin{pro}
			We have the following relations
			\begin{align*}
			(\omega_1(t)R(t))_{[k],[l]}&=0, & l=&0,1,\dots,k-1\\
			(\omega_1(t)R(t))_{[k],[k]}&=\hat H_{[k]}(t)
			\end{align*}
		\end{pro}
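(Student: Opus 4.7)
The plan is to reduce this to a direct algebraic manipulation using the definitions of $\omega_1(t)$, $R(t)$ and the Gauss--Borel factorization of $\hat G(t)$, exactly as in the proof of Proposition \ref{pro:r-alpha_2} (or the earlier Proposition \ref{pro:r-alpha}), but now in the more general multispectral Toda setting where there is no underlying linear functional available. The key point is that everything we need is already encoded in matrix identities rather than in an orthogonality relation.

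First, I would substitute the definitions. From Definition \ref{def:resolvents_linear_t}, $\omega_1(t) = \hat S_1(t)\mathcal Q_1(\boldsymbol\Lambda)(S_1(t))^{-1}$, and from \eqref{eq:def_R}, $R(t) = S_1(t)\check G(t)$. Multiplying gives
\begin{align*}
\omega_1(t)R(t) = \hat S_1(t)\,\mathcal Q_1(\boldsymbol\Lambda)\,(S_1(t))^{-1}S_1(t)\,\check G(t) = \hat S_1(t)\,\mathcal Q_1(\boldsymbol\Lambda)\check G(t).
\end{align*}
Next, I invoke the $t$-dependent Christoffel step $\hat G(t) = \mathcal Q_1(\boldsymbol\Lambda)\check G(t)$ from Proposition \ref{pro:transformation_t}, to obtain $\omega_1(t)R(t) = \hat S_1(t)\hat G(t)$. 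Now apply the Gauss--Borel factorization $\hat G(t) = (\hat S_1(t))^{-1}\hat H(t)(\hat S_2(t))^{-\top}$ to conclude
\begin{align*}
\omega_1(t)R(t) = \hat H(t)(\hat S_2(t))^{-\top}.
\end{align*}

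The final step is structural: since $\hat H(t)$ is block diagonal and $(\hat S_2(t))^{-\top}$ is block upper unitriangular, the product $\hat H(t)(\hat S_2(t))^{-\top}$ is block upper triangular with block diagonal exactly $\hat H(t)$. Reading off the $(k,l)$ blocks therefore gives $(\omega_1(t)R(t))_{[k],[l]} = 0$ for $l < k$ and $(\omega_1(t)R(t))_{[k],[k]} = \hat H_{[k]}(t)$, as required.

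There is essentially no obstacle here: the only mildly delicate point is making sure to use the correct intermediate identity $\hat G(t) = \mathcal Q_1(\boldsymbol\Lambda)\check G(t)$ from Proposition \ref{pro:transformation_t} rather than the Geronimus step \eqref{eq:GhatG1}, and to remember that $\hat S_2(t)$ is lower unitriangular so that $(\hat S_2(t))^{-\top}$ is upper unitriangular (giving the identity block on the diagonal, which is what makes the diagonal piece $\hat H_{[k]}(t)$ instead of some other combination).
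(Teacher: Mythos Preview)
Your proof is correct and follows essentially the same approach as the paper: substitute the definitions of $\omega_1(t)$ and $R(t)$, cancel $(S_1(t))^{-1}S_1(t)$, use $\hat G(t)=\mathcal Q_1(\boldsymbol\Lambda)\check G(t)$ from Proposition \ref{pro:transformation_t}, and then the Gauss--Borel factorization of $\hat G(t)$ to obtain $\omega_1(t)R(t)=\hat H(t)(\hat S_2(t))^{-\top}$, from which the block triangular structure is read off. The paper's proof is the same chain of equalities, only more tersely stated.
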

		\begin{proof}
			Just follow the next chain of equalities
			\begin{align}
			\omega_1(t) R(t)&= \hat S_1(t)\mathcal Q_1(\boldsymbol{\Lambda})(S_1(t))^{-1} S_1(t)\check G(t)\notag\\
			&=\hat S_1(t)\mathcal Q_1(\boldsymbol \Lambda) \check G(t)\notag\\
			&=\hat S_1(t)\hat G(t) & \text{ from \eqref{pro:transformation_t}}\notag\\
			&=\hat H(t) (\hat S_2(t))^{-\top}\label{eq:important_relation}
			\end{align}
			and the matrix $\omega_1R$ is an upper triangular block matrix with $\hat H$ as its block diagonal.
		\end{proof}
	Proceeding as we did for \eqref{eq:ERL1} and \eqref{eq:ERL}	we can  deduce analogous equations in this new context.
	For $k<m_2$ we can write
		\begin{multline*}
		\big((\omega_1)_{[k],[0]}(t), \dots, (\omega_1)_{[k],[k+m_1-1]}(t)\big)=\\\begin{multlined}
		-\big(\mathcal Q_1(\boldsymbol \Lambda)\big)_{[k],[k+m_1]}\big(R_{[k+m_1],[0]}(t) ,\dots , R_{[k+m_1],[k-1]}(t),\Psi_{1,[k+m_1]}(t,\boldsymbol p_1),\dots , \Psi_{[k+m_1]}(t,\boldsymbol p_{r_{1|k,m_1}})\big)\\
		\times\PARENS{
			\begin{matrix}
			R_{[k],[0]}(t) &\dots & R_{[0],[k-1]}(t)&	\Psi_{1,[k]}(t,\boldsymbol p_1) & \dots & \Psi_{1,[k]}(t,\boldsymbol p_{r_{1|k,m_1}})\\
			\vdots & &\vdots&\vdots& &\vdots\\
			R_{[k+m_1-1],[0]}(t) &\dots & R_{[k+m_1-1],[k-1]}(t)&	\Psi_{1,[k+m_1-1]}(t,\boldsymbol p_1) & \dots & \Psi_{1,[k+m_1-1]}(t,\boldsymbol p_{r_{1|k,m_1}})
			\end{matrix}}^{-1},
		\end{multlined}
		\end{multline*}
		while	for $k\geq m_2$
		\begin{multline*}
		\big((\omega_1)_{[k],[k-m_2]}(t), \dots, (\omega_1)_{[k],[k+m_1-1]}(t)\big)=\\\begin{multlined}
		-\big(\mathcal Q_1(\boldsymbol \Lambda)\big)_{[k],[k+m_1]}\big(R_{[k+m_1],\boldsymbol \beta_1} (t),\dots , R_{[k+m_1],\boldsymbol \beta_{r_{2|k,m_2}}}(t),\Psi_{1,[k+m_1]}(t,\boldsymbol p_1),\dots , \Psi_{1,[k+m_1]}(t,\boldsymbol p_{r_{1|k,m_1}})\big)\\
		\times\PARENS{
			\begin{matrix}
			R_{[k-m_2],\boldsymbol \beta_1} (t)&\dots & R_{[k-m_2],\boldsymbol \beta_{r_{2|k,m_2}}}(t)&	\Psi_{1,[k-m_2]}(t,\boldsymbol p_1) & \dots & \Psi_{1,[k-m_2]}(t,\boldsymbol p_{r_{1|k,m_1}})\\
			\vdots & &\vdots&\vdots& &\vdots\\
			R_{[k+m_1-1],\boldsymbol \beta_1} (t)&\dots & R_{[k+m_1-1],\boldsymbol \beta_{r_{2|k,m_2}}}(t)&	\Psi_{1,[k+m_1-1]}(t,\boldsymbol p_1) & \dots & \Psi_{1,[k+m_1-1]}(t,\boldsymbol p_{r_{1|k,m_1}})
			\end{matrix}}^{-1},
		\end{multlined}
		\end{multline*}
		We also have
			\begin{align*}
		(\omega_1(t))_{[k],[k+m_1]}&=\big(\mathcal Q_1(\boldsymbol \Lambda)\big)_{[k],[k+m_1]}.
		\end{align*}
		
		Then, we extend  Definitions \ref{def:linear_spectral_1} and \ref{def:linear_spectral_2} to this new scenario,  and find a version of Theorem \ref{the:linear_spectral}  in terms of  the Baker functions
\begin{theorem}[Christoffel--Geronimus--Uvarov formula for multispectral Toda hierarchy]\label{the:linear_spectral_toda}
	A linear spectral transformation, as in  \eqref{eq:GhatG}, for the multispectral Toda hierarchy has the following effects on the Baker function $\Psi_{1,[k]}(t)$ and the quasi-tau matrices $H_{[k]}(t)$.
	Given a poised set $\mathcal S_k$, of multi-indices and nodes, we have a perturbed Baker function
	\begin{multline*}
	\hat \Psi_{1,[k]}(t,\x)=\frac{\big(\mathcal Q_1(\boldsymbol \Lambda)\big)_{[k],[k+m_1]}}{\mathcal Q_1(\x)}\\\times \Theta_* \PARENS{
		\begin{matrix}
		R_{[0],[0] }(t)&\dots & R_{[0],[k-1]}(t)& \Psi_{1,[0]}(t,\boldsymbol p_1) & \dots & \Psi_{1,[0]}(t,\boldsymbol p_{r_{1|k,m_1}})&\Psi_{1,[0]}(t,\x)\\
		\vdots & &\vdots&\vdots& &\vdots&\vdots\\
		R_{[k+m_1],[k-1]}(t) &\dots & R_{[k+m_1],[k-1]}(t)&	\Psi_{1,[k+m_1]}(t,\boldsymbol p_1) & \dots & \Psi_{1,[k+m_1]}(t,\boldsymbol p_{r_{1|k,m_1}})&
		\Psi_{1,[k+m_1]}(t,\x)
		\end{matrix}},
	\end{multline*}
	and a perturbed quasi-tau matrix
	\begin{multline*}
	\hat H_{[k]}(t)=\big(\mathcal Q_1(\boldsymbol \Lambda)\big)_{[k],[k+m_1]}\\\times \Theta_* \PARENS{
		\begin{matrix}
		R_{[0],[0] }(t)&\dots & R_{[0],[k-1]}(t)& \Psi_{1,[0]}(t,\boldsymbol p_1) & \dots & \Psi_{1,[0]}(t,\boldsymbol p_{r_{1|k,m_1}})&R_{[0],[k]}(t)\\
		\vdots & &\vdots&\vdots& &\vdots&\vdots\\
		R_{[k+m_1],[k-1]}(t) &\dots & R_{[k+m_1],[k-1]}(t)&	\Psi_{1,[k+m_1]}(t,\boldsymbol p_1) & \dots & \Psi_{1,[k+m_1]}(t,\boldsymbol p_{r_{1|k,m_1}})&
		R_{[k+m_1],[k]}(t)
		\end{matrix}}.
	\end{multline*}
	When $k\geq m_2$ we have the shorter alternative expressions
	\begin{multline*}
	\hat \Psi_{1,[k]}(t,\x)=\frac{\big(\mathcal Q_1(\boldsymbol \Lambda)\big)_{[k],[k+m_1]}}{\mathcal Q_1(\x)}\\\times\Theta_* \PARENS{
		\begin{matrix}
		R_{[k-m_2],\boldsymbol \beta_1} (t)&\dots & R_{[k-m_2],\boldsymbol \beta_{r_{2|k,m_2}}}(t)&	\Psi_{1,[k-m_2]}(t,\boldsymbol p_1) & \dots & \Psi_{1,[k-m_2]}(t,\boldsymbol p_{r_{1|k,m_1}})
		&\Psi_{1,[k-m_2]}(t,\x)\\
		\vdots & &\vdots&\vdots& &\vdots&\vdots\\
		R_{[k+m_1],\boldsymbol \beta_1} (t)&\dots & R_{[k+m_1],\boldsymbol \beta_{r_{2|k,m_2}}}(t)&	\Psi_{1,[k+m_1]}(t,\boldsymbol p_1) & \dots & \Psi_{1,[k+m_1]}(t,\boldsymbol p_{r_{1|k,m_1}})&
		\Psi_{1,[k+m_1]}(t,\x)
		\end{matrix}},
	\end{multline*}
		\begin{multline*}
		\hat H_{[k]}(t)=\big(\mathcal Q_1(\boldsymbol \Lambda)\big)_{[k],[k+m_1]}\\\times \Theta_* \PARENS{
			\begin{matrix}
			R_{[k-m_2],\boldsymbol \beta_1} (t)&\dots & R_{[k-m_2],\boldsymbol \beta_{r_{2|k,m_2}}}(t)& \Psi_{1,[k-m_2]}(t,\boldsymbol p_1) & \dots & \Psi_{1,[k-m_2]}(t,\boldsymbol p_{r_{1|k,m_1}})&R_{[k-m_2],[k]}(t)\\
			\vdots & &\vdots&\vdots& &\vdots&\vdots\\
					R_{[k+m_1],\boldsymbol \beta_1} (t)&\dots & R_{[k+m_1],\boldsymbol \beta_{r_{2|k,m_2}}}(t)&	\Psi_{1,[k+m_1]}(t,\boldsymbol p_1) & \dots & \Psi_{1,[k+m_1]}(t,\boldsymbol p_{r_{1|k,m_1}})&
			R_{[k+m_1],[k]}(t)
			\end{matrix}}.
		\end{multline*}
	and
	\begin{multline*}
	\hat H_{[k]}(t)\Big(\big(\mathcal Q_2(\boldsymbol \Lambda)\big)_{[k-m_2],[k]}\Big)^\top=\big(\mathcal Q_1(\boldsymbol \Lambda)\big)_{[k],[k+m_1]}\\\times\Theta_* \PARENS{
		\begin{matrix}
		R_{[k-m_2],\boldsymbol \beta_1} (t)&\dots & R_{[k-m_2],\boldsymbol \beta_{r_{2|k,m_2}}}(t)&	\Psi_{1,[k-m_2]}(t,\boldsymbol p_1) & \dots & \Psi_{1,[k-m_2]}(t,\boldsymbol p_{r_{1|k,m_1}})
		&H_{[k-m_2]}(t)\\
		R_{[k-m_2],\boldsymbol \beta_1} (t)&\dots & R_{[k-m_2],\boldsymbol \beta_{r_{2|k,m_2}}}(t)&	\Psi_{1,[k-m_2]}(t,\boldsymbol p_1) & \dots & \Psi_{1,[k-m_2]}(t,\boldsymbol p_{r_{1|k,m_1}})
		&0\\
		\vdots & &\vdots&\vdots& &\vdots&\vdots\\
		R_{[k+m_1],\boldsymbol \beta_1} (t)&\dots & R_{[k+m_1],\boldsymbol \beta_{r_{2|k,m_2}}}(t)&	\Psi_{1,[k+m_1]}(t,\boldsymbol p_1) & \dots & \Psi_{1,[k+m_1]}(t,\boldsymbol p_{r_{1|k,m_1}})&
		0
		\end{matrix}}.
	\end{multline*}
\end{theorem}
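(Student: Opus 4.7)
The plan is to mimic almost verbatim the proof of Theorem \ref{the:linear_spectral}, making the substitutions $P \to \Psi_1$ and (static) $R \to R(t)$, while verifying that the two ingredients used there still hold in the Toda setting: a connection formula in Baker form, and the banded structure of the resolvent $\omega_1(t)$. The connection formula for Baker functions is immediate: multiplying the identity $\omega_1(t)P_1(t,\x)=\mathcal Q_1(\x)\hat P_1(t,\x)$ by $\Exp{t_1(\x)}$ (which commutes with the $\x$-independent matrix $\omega_1(t)$) and invoking \eqref{eq:Baker1} yields
\begin{align*}
\omega_1(t)\Psi_1(t,\x)=\mathcal Q_1(\x)\hat\Psi_1(t,\x).
\end{align*}
The banded structure of $\omega_1(t),\omega_2(t)$ in the Toda context has just been established before the theorem statement, so the $[k]$-th block row of the connection formula involves only the finite range $j\in\{k-m_2,\dots,k+m_1\}$ (truncated at zero when $k<m_2$), with the top entry explicitly known as $(\omega_1(t))_{[k],[k+m_1]}=(\mathcal Q_1(\boldsymbol\Lambda))_{[k],[k+m_1]}$.

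The next step is to determine the intermediate entries $(\omega_1(t))_{[k],[l]}$ for $l\in\{k-m_2,\dots,k+m_1-1\}$ (or $\{0,\dots,k+m_1-1\}$ when $k<m_2$). Two sets of linear relations do this: from Proposition (the one stating $(\omega_1R)_{[k],[l]}=0$ for $l<k$) one gets the columns indexed by $R_{[\,\cdot\,],\boldsymbol\beta_i}(t)$; and by evaluating the Baker connection formula at a zero $\boldsymbol p\in Z(\mathcal Q_1)$, so that the right-hand side vanishes, one gets the columns indexed by $\Psi_{1,[\,\cdot\,]}(t,\boldsymbol p)$. Solving this linear system is precisely what is done in the two displayed equations for $\big((\omega_1)_{[k],[k-m_2]}(t),\dots,(\omega_1)_{[k],[k+m_1-1]}(t)\big)$ preceding the theorem; poisedness of $\mathcal S_k$ is exactly what makes the coefficient matrix invertible.

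Substituting these expressions back into the $[k]$-th block row of the connection formula and isolating $\hat\Psi_{1,[k]}(t,\x)$ by dividing by $\mathcal Q_1(\x)$ rewrites the result as the last quasi-determinant $\Theta_*$ of a bordered block matrix whose final column is $\big(\Psi_{1,[\,\cdot\,]}(t,\x)\big)^\top$; this yields the two quasi-determinantal formulas for $\hat\Psi_{1,[k]}(t,\x)$. For the quasi-tau matrices one combines two facts. First, the identity $(\omega_1(t)R(t))_{[k],[k]}=\hat H_{[k]}(t)$, written out row-by-row and using the same expressions for the intermediate entries of $\omega_1(t)$, produces the formulas for $\hat H_{[k]}(t)$ whose bordering column is $\big(R_{[\,\cdot\,],[k]}(t)\big)^\top$. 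Second, from the Toda analog of Proposition \ref{pro:superdiagonals2} one reads off
\begin{align*}
(\omega_1(t))_{[k],[k-m_2]}=\hat H_{[k]}(t)\Big((\mathcal Q_2(\boldsymbol\Lambda))_{[k-m_2],[k]}\Big)^\top(H_{[k-m_2]}(t))^{-1},
\end{align*}
and equating this with the previously determined expression for $(\omega_1(t))_{[k],[k-m_2]}$ produces the alternative version with the bordering column $(H_{[k-m_2]}(t),0,\dots,0)^\top$.

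The conceptual content is therefore identical to that of Theorem \ref{the:linear_spectral}; the main obstacle is simply organizational, namely carefully handling the two regimes $k<m_2$ and $k\geq m_2$, keeping track of the block sizes in each column of the bordered matrix so that the last quasi-determinants are well-posed, and verifying that the poisedness hypothesis from Definition \ref{def:linear_spectral_2} (in its Toda adaptation) is exactly what guarantees invertibility of the coefficient matrices that appear when solving for the resolvent entries. Once these bookkeeping points are checked, each of the four displayed identities is obtained by a direct substitution of the resolvent entries into either the connection formula (for $\hat\Psi_{1,[k]}$) or into $(\omega_1R)_{[k],[k]}=\hat H_{[k]}$ and the subdiagonal identity from Proposition \ref{pro:superdiagonals2} (for $\hat H_{[k]}$).
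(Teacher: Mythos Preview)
Your proposal is correct and follows essentially the same approach as the paper. The paper does not provide an explicit proof environment for this theorem; instead it assembles all the ingredients beforehand (the banded structure of $\omega_1(t)$, the connection formula $\omega_1(t)P_1=\mathcal Q_1\hat P_1$, the relations $(\omega_1R)_{[k],[l]}=0$ for $l<k$ and $(\omega_1R)_{[k],[k]}=\hat H_{[k]}$, and the explicit displayed expressions for the resolvent block row) and then simply states that one ``find[s] a version of Theorem \ref{the:linear_spectral} in terms of the Baker functions,'' which is exactly the parallel you carry out, including the passage from $P_1$ to $\Psi_1$ via multiplication by $\Exp{t_1(\x)}$.
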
		
Regarding the Baker function $\Psi_2$ and its behavior under a general linear spectral transformation, using \eqref{eq:Baker2}, we have for each component
\begin{align*}
\hat  \Psi_{2,[k]}(t,\z) & =\langle\hat \Psi_{1,[k]}(t,\x),\mathcal C(\z,\x)\rangle,
\end{align*}
and consequently Theorem \ref{the:linear_spectral_toda} provides quasi-determinantal expression for $\hat\Psi_{2,[k]}$ performing the following replacements
\begin{align*}
\Psi_{1,[l]}(t,\x)&\rightarrow \Big\langle\frac{\Psi_{1,[l]}(t,\x)}{\mathcal Q_1(\x)} ,\mathcal C(\z,\x)\Big\rangle, &
l&\in\{k-m_2,\dots,k+m_1\}.
\end{align*}
Alternative expressions are achieved if  the  relation \eqref{eq:important_relation} is recalled. Indeed,  it implies
\begin{align}\label{eq:hat_Psi_2}
\hat \Psi_2(t,\z)=\omega_1R\big(W^{(0)}_2(t_2)\big)^\top\chi^*(\z).
\end{align}
Then, using \eqref{eq:R_explicit} we conclude that
 the replacements to perform in
Theorem \ref{the:linear_spectral_toda} to find a quasi-determinantal expression for $\hat\Psi_{2,[k]}$ are
\begin{align*}
\Psi_{1,[l]}(t,\x)&\rightarrow \Big\langle P_{1,[l]}(t,\x),\Exp{t_2(\x)}\frac{\mathcal C(\z,\x)}{\mathcal Q_2(\x)}\Big\rangle+\big\langle v,\Exp{t_2(\x)}P_{1,[l]}(t,\x)\mathcal C(\z,\x)\big\rangle, &
l&\in\{k-m_2,\dots,k+m_1\}.
\end{align*}

In this general setting $G$ is not restricted by a Hankel type constraint, thus given a polynomial $\mathcal Q(\x)\in\R[\x]$ we have
\begin{align*}
G\mathcal Q(\boldsymbol{\Lambda}^\top)\not =\mathcal Q(\boldsymbol{\Lambda})G.
\end{align*}
For example, instead of \eqref{eq:GhatG} we may have considered
		\begin{align*}
	\mathcal Q_2(\boldsymbol\Lambda)	\hat G=G\mathcal Q_1(\boldsymbol\Lambda^\top).
		\end{align*}
In this case a  transposition formally gives
		\begin{align*}
			\hat G^\top\mathcal Q_2(\boldsymbol\Lambda^\top)=\mathcal Q_1(\boldsymbol\Lambda)G^\top,
		\end{align*}
which can be gotten from \eqref{eq:GhatG} by the replacement $G\mapsto G^\top$ and $\hat G\mapsto \hat G^\top$; i.e., at the level of the Gauss--Borel factorization \eqref{eq:general_LU}
\begin{align*}
S_1&\mapsto S_2,& H&\mapsto H^\top, & S_2&\mapsto S_1,\\
\hat S_1&\mapsto \hat S_2,& \hat H&\mapsto \hat H^\top, & \hat S_2&\mapsto \hat S_1.
\end{align*} 		
Thus, previous formul{\ae} hold by replacing $P_1$ by $P_2$ and transposing the matrices $H_{[k]}$ and $\hat H_{[k]}$.	

A quite  general transformation, which we will not explore in this paper, corresponds to
		\begin{align*}
		\mathcal Q_2^L(\boldsymbol\Lambda)	\hat G	\mathcal Q_2^R(\boldsymbol\Lambda^\top)	=	\mathcal Q_1^R(\boldsymbol\Lambda)G\mathcal Q_1^L(\boldsymbol\Lambda^\top),
		\end{align*}
for polynomials $\mathcal Q_1^L(\x),\mathcal Q^R_1(\x),\mathcal Q_2^L(\x),\mathcal Q_2^R(\x)\in\R[\x]$.
This transformation is preserved by the integrable flows introduced above; i.e.,
	\begin{align*}
	\mathcal Q_2^L(\boldsymbol\Lambda)	\hat G(t)	\mathcal Q_2^R(\boldsymbol\Lambda^\top)	=	\mathcal Q_1^R(\boldsymbol\Lambda)G(t)\mathcal Q_1^L(\boldsymbol\Lambda^\top).
	\end{align*}
	Notice that this transformation for a multi-Hankel reduction $\Lambda_aG=G(\Lambda_a)^\top$, $a\in\{1,\dots,D\}$, is just the one  considered in previous sections.

 \subsection{Generalized bilinear equations and linear spectral transformations}

 We are ready to show that the Baker functions at different times and their linear spectral transforms satisfy a bilinear equation as in the KP theory, see \cite{Date1981operator,Date1982transformation,Date1983transformation}. In the standard formulation \cite{Date1981operator,Date1982transformation,Date1983transformation} discrete times appeared in the bilinear equation, which in this case are identified, see for example \cite{doliwa2000transformations}, with the linear spectral transformations. To deduce the bilinear equations we use a similar method as  in \cite{Adler1999vertex,manas2009multicomponent,manas2009multicomponent1}.

 We begin with the following observation
 \begin{pro}\label{proposition:bilinear_wave}
 	Wave matrices $W_i(t)$, $i\in\{1,2\}$  and their linear spectral transformed wave  matrices  $\hat W_i(t')$,  $i\in\{1,2\}$, according to the coprime  polynomials $\mathcal Q_1(\x),\mathcal Q_2(\x)\in\C[\x]$,   fulfill
 	\begin{align*}
 	\hat W_1(t')\mathcal Q_1(\boldsymbol \Lambda)\big(W_1(t)\big)^{-1}=\hat W_2(t')\mathcal Q_2(\boldsymbol \Lambda^\top)\big(W_2(t)\big)^{-1}.
\\
 	\end{align*}
 \end{pro}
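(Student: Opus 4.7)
The plan is to combine the central factorization identity \eqref{eq:central}, applied in parallel to the unperturbed and the linear-spectral-transformed systems at the two independent times $t$ and $t'$, with the algebraic definition of the transformation at the level of the initial data, namely \eqref{eq:GhatG}. First I would recall that \eqref{eq:central} asserts $(W_1(t))^{-1}W_2(t)=G$, and this identity is valid at every $t$ because the flows \eqref{eq:flows} are arranged precisely so that this combination reproduces the (time-independent) initial datum: writing $W_1(t)=S_1(t)W_1^{(0)}(t_1)$ and $W_2(t)=\tilde S_2(t)(W_2^{(0)}(t_2))^\top$ and using $(S_1(t))^{-1}\tilde S_2(t)=G(t)$ together with $G(t)=W_1^{(0)}(t_1)G(W_2^{(0)}(t_2))^{-\top}$ reduces the product to $G$. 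Transcribing the same statement for the hatted objects one gets $(\hat W_1(t'))^{-1}\hat W_2(t')=\hat G$ at every $t'$.

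Next I would insert the Geronimus--Christoffel identity \eqref{eq:GhatG}, namely $\hat G\mathcal Q_2(\boldsymbol\Lambda^\top)=\mathcal Q_1(\boldsymbol\Lambda)G$, into these two factorizations to obtain
\[(\hat W_1(t'))^{-1}\hat W_2(t')\mathcal Q_2(\boldsymbol\Lambda^\top)=\hat G\mathcal Q_2(\boldsymbol\Lambda^\top)=\mathcal Q_1(\boldsymbol\Lambda)G=\mathcal Q_1(\boldsymbol\Lambda)(W_1(t))^{-1}W_2(t).\]
Multiplying on the left by $\hat W_1(t')$ and on the right by $(W_2(t))^{-1}$ then immediately isolates the two sides of the claimed identity.

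The argument is entirely algebraic, with no genuine obstacle; the only point requiring care is conceptual rather than technical, namely the realization that the same fixed semi-infinite matrix $G$ (respectively $\hat G$) encodes the full family of wave matrices $W_i(t)$ (respectively $\hat W_i(t')$) at arbitrary times, which is precisely what enables one to equate an expression depending only on $t'$ with one depending only on $t$. This is the standard mechanism by which Gauss--Borel factorization produces KP--Toda-type bilinear equations, as in \cite{Adler1999vertex,manas2009multicomponent,manas2009multicomponent1}, and once this invariance is acknowledged the statement reduces to a one-line substitution.
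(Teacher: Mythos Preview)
Your proof is correct and follows essentially the same approach as the paper: both use the central factorization identity \eqref{eq:central} for the unperturbed and hatted systems at the two independent times, substitute into the defining relation \eqref{eq:GhatG}, and then clear the outer factors. Your additional remarks justifying why \eqref{eq:central} holds at every time are accurate but not strictly needed, since this is already the content of the proposition cited.
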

 \begin{proof}
 	We have
 	\begin{align*}
 	G=&\big(W_1(t)\big)^{-1}W_2(t), &   \hat G=&\big( \hat W_1(t')\big)^{-1}\hat W_2(t').
 	\end{align*}
 	Hence, using \eqref{eq:GhatG} we deduce
 	\begin{align*}
 	\mathcal Q_1(\boldsymbol \Lambda)\big(W_1(t)\big)^{-1}W_2(t)=\big( \hat W_1(t')\big)^{-1}\hat W_2(t')\mathcal Q_2(\boldsymbol \Lambda^\top).
 	\end{align*}
 \end{proof}
 Now, we need
 \begin{lemma}\label{lemma:U_V_integrals}
 	Given two semi-infinite matrices $U$ and $V$ we have
 	\begin{align*}
 	UV=&\frac{1}{(2\pi \operatorname{i})^D}\oint_{\mathbb T^D(\boldsymbol r)} U\chi(\boldsymbol z) \big(V^T\chi^*(\boldsymbol z)\big)^\top\d z_1\cdots\d z_D
 	=\frac{1}{(2\pi \operatorname{i})^D}\oint_{\mathbb T^D(\boldsymbol r)} U\chi^*(\boldsymbol z) \big(V^T\chi(\boldsymbol z)\big)^\top\d z_1\cdots\d z_D.
 	\end{align*}
 \end{lemma}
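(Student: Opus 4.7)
The plan is to reduce both identities to the resolution of the identity
\[
\frac{1}{(2\pi\ii)^D}\oint_{\mathbb T^D(\boldsymbol r)}\chi(\boldsymbol z)\big(\chi^*(\boldsymbol z)\big)^\top\d z_1\cdots \d z_D=\I,
\]
which will then be sandwiched between $U$ and $V$.

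First, I would unpack the definitions componentwise. By the construction of $\chi$ and $\chi^*$ in \S\ref{MV}, the $\q$-th entry of $\chi(\boldsymbol z)$ is $\boldsymbol z^{\q}=z_1^{\alpha_1}\cdots z_D^{\alpha_D}$, while the $\boldsymbol\beta$-th entry of $\chi^*(\boldsymbol z)$ is $z_1^{-\beta_1-1}\cdots z_D^{-\beta_D-1}$. Hence the $(\q,\boldsymbol\beta)$-entry of $\chi(\boldsymbol z)\big(\chi^*(\boldsymbol z)\big)^\top$ is $\prod_{a=1}^D z_a^{\alpha_a-\beta_a-1}$. Applying the elementary one-dimensional residue $\frac{1}{2\pi\ii}\oint_{|z|=r} z^{k}\d z=\delta_{k,-1}$ in each variable $z_a$ independently on the product torus $\mathbb T^D(\boldsymbol r)$, I obtain $\delta_{\q,\boldsymbol\beta}$, establishing the resolution of the identity above. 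The same computation with the roles of $\chi$ and $\chi^*$ swapped (entries now $\prod_a z_a^{\beta_a-\alpha_a-1}$) gives
\[
\frac{1}{(2\pi\ii)^D}\oint_{\mathbb T^D(\boldsymbol r)}\chi^*(\boldsymbol z)\big(\chi(\boldsymbol z)\big)^\top\d z_1\cdots \d z_D=\I.
\]

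Second, I would insert either of these identities between $U$ and $V$. Writing $(UV)_{\q,\boldsymbol\beta}=\sum_{\boldsymbol\gamma,\boldsymbol\delta}U_{\q,\boldsymbol\gamma}\delta_{\boldsymbol\gamma,\boldsymbol\delta}V_{\boldsymbol\delta,\boldsymbol\beta}$ and substituting the integral representation of $\delta_{\boldsymbol\gamma,\boldsymbol\delta}$, the sum over $\boldsymbol\gamma$ absorbs into $U\chi(\boldsymbol z)$ and the sum over $\boldsymbol\delta$ into $(\chi^*(\boldsymbol z))^\top V=\big(V^\top\chi^*(\boldsymbol z)\big)^\top$. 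Pushing the integral outside the finite sums (row-by-row in $U$ and column-by-column in $V$ the sums are finite, so there is no convergence issue) produces the first stated formula. The second formula follows by the same manipulation starting from the alternative resolution of the identity.

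The only conceptual point to check is that the formal manipulations above are legitimate in the semi-infinite setting: for a single row of $U$ and a single column of $V$ only finitely many entries of $\chi(\boldsymbol z),\chi^*(\boldsymbol z)$ are paired nontrivially in a fixed matrix entry, so the exchange of sum and integral is automatic. For this reason the radii $\boldsymbol r=(r_1,\dots,r_D)$ play no role, and the integral can be understood in a purely formal sense as the extraction of the $z_1^{-1}\cdots z_D^{-1}$ coefficient. I do not anticipate a genuine obstacle; the whole content of the lemma is the product-residue identity applied entrywise.
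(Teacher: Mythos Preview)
Your proof is correct and follows essentially the same route as the paper: establish the resolution of the identity $\frac{1}{(2\pi\ii)^D}\oint_{\mathbb T^D(\boldsymbol r)}\chi(\boldsymbol z)(\chi^*(\boldsymbol z))^\top\d z_1\cdots\d z_D=\I$ via the one-variable residue in each factor, and then sandwich it between $U$ and $V$. The paper carries out exactly this computation in block form and then simply writes ``and the result follows''; your version makes the sandwiching step slightly more explicit. One small caveat: your remark that ``only finitely many entries are paired nontrivially'' is not literally true for arbitrary semi-infinite $U,V$ (a row of $U$ need not be finitely supported), but the paper does not address this either and treats the identity formally, so your level of rigor matches theirs.
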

 \begin{proof}
 		Observe that
 		\begin{align*}
 		\chi(\chi^*)^\top&=\PARENS{\begin{matrix}
 			Z_{[0],[0]} &Z_{[0],[1]}&\dots\\
 			Z_{[1],[0]} &Z_{[1],[1]}&\dots\\
 			\vdots & \vdots&
 			\end{matrix}}, & Z_{[k],[\ell]}&\coloneq \frac{1}{z_1\cdots z_D}\PARENS{\begin{matrix}
 			\z^{\kk_1-\ele_1} &  \z^{\kk_1-\ele_2}&\dots &   \z^{\kk_1-\ele_{|[\ell]|}}\\
 			\z^{\kk_2-\ele_1} &  \z^{\kk_2-\ele_2}&\dots &   \z^{\kk_2-\ele_{|[\ell]|}}\\
 			\vdots & \vdots &&\vdots\\
 			\z^{\kk_{|[k]|}-\ele_1} &  \z^{\kk_{|[k]|}-\ele_2}&\dots &   \z^{\kk_{|[k]|}-\ele_{|[\ell]|}}
 			\end{matrix}}.
 		\end{align*}
 		If we now integrate in the polydisk distinguished border  $\T^{D}(\boldsymbol r)$ using the Fubini theorem we factor each integral
 		in a product of $D$ factors, where the $i$-th factor is an integral over $z_i$ on the circle centered at origin of radius $r_i$. This is zero unless the integrand is $z_i^{-1}$ which occurs only in the principal diagonal. Consequently, we have
 			\begin{align*}
 			\oint_{\mathbb T^D(\boldsymbol r)} \chi(\boldsymbol z)\chi^*(\boldsymbol z)^\top\d z_1\cdots\d z_D=
 			\oint_{\T^D(\boldsymbol r)} \chi^*(\boldsymbol z)\chi(\boldsymbol z)^\top\d z_1\cdots\d z_D=(2\pi \operatorname{i})^D\mathbb I,
 			\end{align*}
 			and the result follows.
 \end{proof}
 We notice that $\Psi_1$ and $\Psi_2^*$ lead to the computation of finite sums, i.e., polynomials, but $\Psi_1^*$ and $\Psi_2$ involve Laurent series. We will denote by $\Ds_{2,\q}(t)$ and $\Ds_{1,\q}^*(t)$ the domains of convergence of $\Psi_{2,\q}(t,\z) $ and  $\Psi_{1,\q}^*(t,\z)$, respectively.
 Recall that  these domains are  Reinhardt domains; i.e., if $\mathscr D\subset\C^D$ is the domain of convergence  then for any $\boldsymbol c=(c_1,\dots,c_D)^\top\in\mathscr D$ we have that $\T^D(|c_1|,\dots,|c_D|)\subset\Ds$.

 \begin{theorem}[Generalized bilinear equations]
 	\label{proposition:Baker_bilinear_equation}
 	For any pair of times $t$ and $t'$, points  $\boldsymbol r_1\in{\Ds}^*_{1,\q}(t)$ and $\boldsymbol r_2\in\hat{\Ds}_{2,\q}(t')$ in the respective Reinhardt domains  and $D$-dimensional tori  $\T^D(\boldsymbol r_1)$ and $\T^D(\boldsymbol r_2)$, and multi-indices $\q,\q'\in\Z_+^D$,   the
 	Baker and adjoint Baker functions and their  linear spectral transformations satisfy the following bilinear identity
 	\begin{align*}
 	\oint_{\T^D(\boldsymbol r_1)}\hat \Psi_{1,\q'}(t',\boldsymbol{z})\Psi_{1,\q}^*(t,\boldsymbol{z}) \mathcal Q_1(\z)\d z_1\cdots\d z_D=
 	\oint_{\T^D(\boldsymbol r_2)}\hat \Psi_{2,\q'}(t',\boldsymbol{z})\Psi_{2,\q}^*(t,\boldsymbol{z}) \mathcal Q_2(\z) \d z_1\cdots\d z_D.
 	\end{align*}
 \end{theorem}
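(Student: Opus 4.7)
The plan is to apply Lemma \ref{lemma:U_V_integrals} in two complementary ways to the single semi-infinite matrix
\begin{align*}
M(t,t')\coloneq \hat W_1(t')\mathcal Q_1(\boldsymbol\Lambda)\big(W_1(t)\big)^{-1}=\hat W_2(t')\mathcal Q_2(\boldsymbol\Lambda^\top)\big(W_2(t)\big)^{-1},
\end{align*}
whose two descriptions coincide by Proposition \ref{proposition:bilinear_wave}. Writing the $(\q',\q)$ block entry of $M$ as a contour integral via each factorization and equating the two outcomes will yield the bilinear identity.

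For the first representation of $M$ I would set $U=\hat W_1(t')\mathcal Q_1(\boldsymbol\Lambda)$ and $V=\big(W_1(t)\big)^{-1}$. The spectral property $\Lambda_a\chi(\z)=z_a\chi(\z)$ collapses $U\chi(\z)$ to $\mathcal Q_1(\z)\hat\Psi_1(t',\z)$, while $V^\top\chi^*(\z)=\Psi_1^*(t,\z)$ by the very definition of the adjoint Baker function. The first form of Lemma \ref{lemma:U_V_integrals}, applied on a torus $\T^D(\boldsymbol r_1)$ with $\boldsymbol r_1\in\Ds_{1,\q}^*(t)$ so that $\Psi_{1,\q}^*$ converges on it, thus expresses
\begin{align*}
M(t,t')=\frac{1}{(2\pi\ii)^D}\oint_{\T^D(\boldsymbol r_1)}\mathcal Q_1(\z)\,\hat\Psi_1(t',\z)\big(\Psi_1^*(t,\z)\big)^\top\d z_1\cdots\d z_D.
\end{align*}

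For the second representation the crucial move is to place $\mathcal Q_2(\boldsymbol\Lambda^\top)$ on the $V$ side: take $U=\hat W_2(t')$ and $V=\mathcal Q_2(\boldsymbol\Lambda^\top)\big(W_2(t)\big)^{-1}$. Then $U\chi^*(\z)=\hat\Psi_2(t',\z)$, and using $\big(\mathcal Q_2(\boldsymbol\Lambda^\top)\big)^\top=\mathcal Q_2(\boldsymbol\Lambda)$ together with Proposition \ref{pro:Lambda} (the $\Lambda_a$ commute) one finds $V^\top\chi(\z)=\big(W_2(t)\big)^{-\top}\mathcal Q_2(\z)\chi(\z)=\mathcal Q_2(\z)\Psi_2^*(t,\z)$. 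The second form of Lemma \ref{lemma:U_V_integrals}, on a torus $\T^D(\boldsymbol r_2)$ with $\boldsymbol r_2\in\hat\Ds_{2,\q'}(t')$, therefore gives
\begin{align*}
M(t,t')=\frac{1}{(2\pi\ii)^D}\oint_{\T^D(\boldsymbol r_2)}\mathcal Q_2(\z)\,\hat\Psi_2(t',\z)\big(\Psi_2^*(t,\z)\big)^\top\d z_1\cdots\d z_D.
\end{align*}
Reading off the scalar $(\q',\q)$ entry of these two expressions for $M$ and equating them yields the claimed identity.

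The main technical point to monitor is the handling of the contours: although both integrals compute the same entry of $M(t,t')$, each must be taken on a torus lying inside the appropriate Reinhardt convergence domain of the relevant Laurent-series factor---$\Ds_{1,\q}^*(t)$ for the first integral and $\hat\Ds_{2,\q'}(t')$ for the second. The remaining factors $\hat\Psi_1,\Psi_2^*,\mathcal Q_1,\mathcal Q_2$ are entire in $\z$ and impose no restriction, so the two radii $\boldsymbol r_1,\boldsymbol r_2$ in the statement are genuinely independent. A brief verification that the hypotheses of Lemma \ref{lemma:U_V_integrals} (absolute convergence of the Laurent expansion of the integrand on each chosen torus, so that the formal identity underlying the lemma may be interchanged with the action of the semi-infinite matrices $U,V$) are met in each regime completes the argument.
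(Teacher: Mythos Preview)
Your proof is correct and follows essentially the same approach as the paper: apply Lemma \ref{lemma:U_V_integrals} with $U=\hat W_1(t')\mathcal Q_1(\boldsymbol\Lambda)$, $V=(W_1(t))^{-1}$ for one side and $U=\hat W_2(t')$, $V=\mathcal Q_2(\boldsymbol\Lambda^\top)(W_2(t))^{-1}$ for the other, then invoke Proposition \ref{proposition:bilinear_wave} to equate them. Your version is in fact more detailed, spelling out the spectral computations and the role of the convergence domains that the paper leaves implicit.
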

 \begin{proof}
 	From Definition \ref{def:integrable} and Lemma  \ref{lemma:U_V_integrals},  choosing $U=\hat W_1(t')\mathcal Q_1\boldsymbol\Lambda)$ and $V=\big(W_1(t)\big)^{-1}$ we get
 	\begin{align*}
 	\hat W_1(t')\mathcal Q_1(\boldsymbol\Lambda)\big(W_1(t)\big)^{-1}&=	\frac{1}{(2\pi \operatorname{i})^D}\oint_{\T^D(\boldsymbol r_1)}\hat \Psi_{1}(t',\boldsymbol{z})\Psi_{1}^*(t,\boldsymbol{z}) \mathcal Q_1(\z)\d z_1\cdots\d z_D,
 	\end{align*}
 	and choosing $U=\hat W_2(t')$ and $V=\mathcal Q_2(\boldsymbol\Lambda^\top )\big(W_2(t)\big)^{-1}$ we get
 	\begin{align*}
 	\hat W_2(t')\mathcal Q_2(\boldsymbol\Lambda^\top)\big(W_2(t)\big)^{-1}&=	\frac{1}{(2\pi \operatorname{i})^D}\oint_{\T^D(\boldsymbol r_2)}\hat \Psi_{2}(t',\boldsymbol{z})\Psi_{2}^*(t,\boldsymbol{z}) \mathcal Q_2(\z)\d z_1\cdots\d z_D.
 	\end{align*}
 	Then,  Proposition \ref{proposition:bilinear_wave} implies the result.
 \end{proof}
		
		\appendix
		\section{Uvarov perturbations}

 Uvarov considered in \S 2 of 	 \cite{Uvarov1969connection} the addition of a finite number of masses, Dirac deltas, to a given measure in the OPRL situation.
	In this appendix we discuss some elements of the multivariate extension of this construction. There is an immediate extension when one considers masses, see \cite{Teresa0}. A bit more involved case is to consider higher multipoles, i.e., derivatives of the Dirac distributions. 
	In \cite{Teresa0} a Sobolev type modification was considered, see for example equation (2.16) in that paper, but this can not be modeled by a perturbation $\hat u=u+v$ of a  linear functional $u$ (or measure in that case, $u=\d\mu$).
	 All this can be considered as a 0-dimensional additive perturbation. However, more interesting and less trivial extensions are to consider higher dimensional additive perturbations. For example, 1D-Uvarov perturbations, i.e., additive perturbations supported over curves. For the 0D-Uvarov perturbations, as was found in \cite{Uvarov1969connection}, one needs to solve a linear system constructed in terms of the non-perturbed Christoffel--Darboux kernel evaluated at the 0D discrete support of the perturbation. We will se that for the 1D scenario the linear system of the 0D case is replaced by a Fredholm integral equation evaluated at the 1D support of the perturbation.. 
	
	Our approach to the problem is based on a simple relation among perturbed and non perturbed MVOPR which involves the non perturbed Christoffel--Darboux kernel.
	Let us consider a generalized function $u\in\big(\C[\x]\big)'$ such that is quasidefinite and consider an additive perturbation of it given by another generalized function $v\in\big(\C[\x]\big)'$
				\begin{align*}
		\hat u=u+v.
		\end{align*}
		\begin{pro}
			For an additive perturbation we have
			\begin{align}\label{eq:additive}
			\hat P_{[n]}(\x)
			&=P_{[n]}(\x)	-\left\langle v, \hat P_{[n]}(\y)K_{n-1}(\y,\x)\right\rangle,\\
				\hat H_{[n]}				&=	H_{[n]}+\left\langle v, \hat P_{[n]}(\x)\big(P_{[n]}(\x)\big)^\top\right\rangle. \label{eq:additive2}
				\end{align}

		\end{pro}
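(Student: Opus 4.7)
The plan is to expand $\hat P_{[n]}(\x)$ in the basis provided by the unperturbed orthogonal polynomials $\{P_{[k]}(\x)\}_{k=0}^{n}$, to fix the expansion coefficients by using biorthogonality with respect to $\hat u=u+v$, and finally to recognize the emerging sum as a truncated Christoffel--Darboux kernel.

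Since both $P_{[n]}(\x)$ and $\hat P_{[n]}(\x)$ are monic and share the same leading vector $\chi_{[n]}(\x)$ (with the usual triangular structure), I would write
\begin{align*}
\hat P_{[n]}(\x)=P_{[n]}(\x)+\sum_{k=0}^{n-1}C_{[n],[k]}\,P_{[k]}(\x),
\end{align*}
with unknown coefficient blocks $C_{[n],[k]}\in\C^{|[n]|\times|[k]|}$. The orthogonality relations for $\hat P_{[n]}$ give $\langle \hat u,\hat P_{[n]}(\x)(P_{[k]}(\x))^{\top}\rangle=0$ for $k<n$. Splitting $\langle \hat u,\cdot\rangle=\langle u,\cdot\rangle+\langle v,\cdot\rangle$ and using the orthogonality of the unperturbed family $\langle u, P_{[j]}(\x)(P_{[k]}(\x))^{\top}\rangle=\delta_{j,k}H_{[k]}$, the $u$-piece reduces to $C_{[n],[k]}H_{[k]}$, so one obtains
\begin{align*}
C_{[n],[k]}=-\big\langle v,\hat P_{[n]}(\y)(P_{[k]}(\y))^{\top}\big\rangle (H_{[k]})^{-1},\qquad k=0,1,\dots,n-1.
\end{align*}
Substituting back and pulling the bilinear pairing outside the sum in $k$ produces
\begin{align*}
\hat P_{[n]}(\x)=P_{[n]}(\x)-\Big\langle v,\hat P_{[n]}(\y)\sum_{k=0}^{n-1}(P_{[k]}(\y))^{\top}(H_{[k]})^{-1}P_{[k]}(\x)\Big\rangle,
\end{align*}
which is exactly \eqref{eq:additive} once the inner sum is identified with $K_{n-1}(\y,\x)$ from Definition~\ref{definition:CD}.

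For the quasi-tau identity \eqref{eq:additive2} the plan is to start from $\hat H_{[n]}=\langle\hat u,\hat P_{[n]}(\x)(\hat P_{[n]}(\x))^{\top}\rangle$ and replace the second factor by $P_{[n]}(\x)$: the difference $\hat P_{[n]}-P_{[n]}$ lies in the span of $\{P_{[j]}\}_{j<n}$, against which $\hat P_{[n]}$ is $\hat u$-orthogonal, so $\hat H_{[n]}=\langle\hat u,\hat P_{[n]}(\x)(P_{[n]}(\x))^{\top}\rangle$. Splitting again $\hat u=u+v$, the $u$-term equals $H_{[n]}$ because $\hat P_{[n]}-P_{[n]}$ has degree less than $n$ and $P_{[n]}$ is $u$-orthogonal to all such polynomials by \eqref{orth}--\eqref{H}. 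The $v$-term gives the stated correction $\langle v,\hat P_{[n]}(\x)(P_{[n]}(\x))^{\top}\rangle$.

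There is no genuine obstacle; the argument is a direct perturbative use of orthogonality combined with the Christoffel--Darboux reproducing structure. The only detail worth care is the matrix placement (the $C_{[n],[k]}$ multiplies $P_{[k]}$ from the left, and the pairing with $(P_{[k]})^{\top}$ produces $C_{[n],[k]}H_{[k]}$ rather than $H_{[k]}C_{[n],[k]}$), which dictates the precise form of the formul\ae{} and justifies the ordering of the arguments $(\y,\x)$ inside the kernel.
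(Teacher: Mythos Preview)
Your proof is correct and follows essentially the same approach as the paper's own argument: both split $\hat u=u+v$, use the $\hat u$-orthogonality of $\hat P_{[n]}$ against lower-degree polynomials to transfer the $u$-pairing to a $v$-pairing, and then identify the resulting sum with the Christoffel--Darboux kernel $K_{n-1}$. The only cosmetic difference is that the paper invokes the reproducing property \eqref{eq:projection} of $K_{n-1}$ directly to recover $\hat P_{[n]}-P_{[n]}$, whereas you expand $\hat P_{[n]}$ explicitly in the unperturbed basis and solve for the coefficients $C_{[n],[k]}$; the two are equivalent.
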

\begin{proof}
			From \eqref{orth} and \eqref{H} we deduce
		\begin{align*}
		\left\langle \hat u, \hat P_{[n]}(\x)\Big(P_{[m]}(\x)\Big)^\top\right\rangle&=0, & m\in\{0,1,\dots,n-1\},\\
			\left\langle \hat u, \hat P_{[n]}(\x)\Big(P_{[n]}(\x)\Big)^\top\right\rangle&=\hat H_{[n]}, 
		\end{align*}
	and, consequently,	
		\begin{align*}
			\left\langle u, \hat P_{[n]}(\x)\big(P_{[m]}(\x)\big)^\top\right\rangle&=	-\left\langle v, \hat P_{[n]}(\x)\big(P_{[m]}(\x)\big)^\top\right\rangle,  & m\in\{0,1,\dots,n-1\}.
		\end{align*}
Thus, in terms of the Christoffel--Darboux kernel, see Definition \ref{definition:CD}
\begin{align*}
\left\langle u, \hat P_{[n]}(\x) K_{n-1}(\x,\y)\right\rangle=-\left\langle v, \hat P_{[n]}(\x) K_{n-1}(\x,\y)\right\rangle.
\end{align*}
Observe that $\hat P_{[n]}(\x)-P_{[n]}(\x)$ is a multivariate polynomial of degree $n-1$ and, according to \eqref{eq:projection}, we conclude
\begin{align*}
\hat P_{[n]}(\x)-P_{[n]}(\x)&=\left\langle u, \big (\hat P_{[n]}(\y)-P_{[n]}(\y)\big)K_{n-1}(\y,\x)\right\rangle\\
&=\left\langle u, \hat P_{[n]}(\y)K_{n-1}(\y,\x)\right\rangle\\
&=	-\left\langle v, \hat P_{[n]}(\y)K_{n-1}(\y,\x)\right\rangle.
\end{align*}
Finally, we have
		\begin{align*}
		\hat H_{[n]}	&=	\left\langle u, \hat P_{[n]}(\x)\big(P_{[n]}(\x)\big)^\top\right\rangle+\left\langle v, \hat P_{[n]}(\x)\big(P_{[n]}(\x)\big)^\top\right\rangle\\
	&=	H_{[n]}+\left\langle v, \hat P_{[n]}(\x)\big(P_{[n]}(\x)\big)^\top\right\rangle.
		\end{align*}

\end{proof}
\subsection{0D-Uvarov multipolar perturbations. Masses (or charges) and dipoles}
Here we discuss the more general additive perturbation with finite discrete support. As we have a finite number of points for the support  we say that is a 0 dimensional perturbation.
Let us proceed and  consider a set of couples $S=\big\{\x_i,\boldsymbol \beta_i\big\}_{i=1}^q\subset \R^D\times \mathbb Z_+^D$ and define the associated generalized function
		\begin{align*}
		v_S:=\sum_{i=1}^q \sum_{\q \leq \boldsymbol\beta_i}
		\frac{(-1)^{|\q|}}{\q!}
		\xi_{i,\q}\delta^{(\q)}(\x-\x_i).
		\end{align*}
Here the sum over multi-indices extend to all those  multi-indices below a given one.  The Dirac delta distribution and its derivatives are given by 	
\begin{align*}
\langle\delta^{(\q)}(\x-\x_j),P(\x)\rangle&:=(-1)^{|\q|}\frac{\partial^{|\q|}P}{\partial \x^\q}\Big|_{\x=\x_i}, &\forall P(\x)\in\C[\x],
\end{align*}
and we have used the lexicographic order for the set of integer multi-indices.  Observe  that this is the more general distribution with support on 
$\{\x_j\}_{j=1}^q=\operatorname{supp}(v_S)$. From a physical point of view, the delta functions can be understood as  point masses. For higher order derivatives, we have an electromagnetic interpretation, for zero order derivatives we have point charges, and first order derivatives could be understood as dipoles, and in general for $j$-th order derivatives we are dealing with  $2^j$-multipoles (for $j=2$ we have  quadropoles,  for  $j=3$ we have  octopoles, and so on and so forth).\footnote{Given a charge density $\rho(\x)$ we get the multipolar expansion by writing $\rho(\x)=\int \delta(\x-\x')\rho(\x') \d\x'$ and recalling $\delta(\x-\x')=\sum\limits_{\q\in\Z_+^D}\frac{(-1)^{|\q|}}{\q!}(\x')^\q\delta^{(\q)}(\x)$. Then, the charge density is expressed as $\rho(\x)=\sum\limits_{\q\in\Z_+^D} \frac{(-1)^{|\q|}}{\q!}\xi_\q \delta^{(\q)}(\x)$ with multipole moments the \emph{tensors} $\xi_\q=\int \rho(\x)\x^\q\d\x$.}

\begin{definition}
\begin{enumerate}
	\item 	Given a multi-index $\boldsymbol\beta\in\mathbb Z_+^D$ with   $|\boldsymbol\beta|=k\in\mathbb Z_+$ we have a corresponding  lexicographic ordered set of multi-indices
\begin{align*}
\big\{\q^{(0)}_1,\q^{(1)}_1,\dots,\q^{(1)}_D,\q^{(2)}_1,\dots,\q^{(k)}_1,\dots,\q^{(k)}_{m}=\boldsymbol \beta\big\},
\end{align*} 
where $m$ denotes the position of the multi-index $\boldsymbol\beta$ among those of length $k$, see \S \ref{MV}. 
\item Given a couple $(\x,\boldsymbol{\beta})\in\mathbb R^D\times \mathbb Z_+^D$  and a   polynomial $P\in\mathbb C[\x]$ we define the jet
\begin{align*}
\mathcal J_P^{\boldsymbol\beta}(\x)=\begin{bmatrix}
\dfrac{1}{(\q^{(0)}_1)!}\dfrac{\partial ^{|\q^{(0)}_1|}P}{\partial \x^{\q^{(0)}_1}}(\x),
\dfrac{1}{(\q^{(1)}_1)!}\dfrac{\partial ^{|\q^{(1)}_1|}P}{\partial \x^{\q^{(1)}_1}}(\x),\dots,\dfrac{1}{(\q^{(k)}_1)!}\dfrac{\partial ^{|\q^{(k)}_1|}P}{\partial \x^{\q^{(k)}_1}}(\x),\dots,\dfrac{1}{\boldsymbol \beta!}\dfrac{\partial ^{|\boldsymbol \beta|}P}{\partial \x^{\boldsymbol \beta}}(\x)
	\end{bmatrix}.
\end{align*}
This a row vector with $N_{k-1}+m$ components. Recall that the  dimension of the linear space of multivariate polynomials of degree less or equal  to $k$ is
$N_{k}=\binom{D+k}{D}$.
\item Given the set, we define a matrix collecting the corresponding jets at each puncture $\x_i$
\begin{align*}
\mathcal J_P(S)=\begin{bmatrix}
\mathcal J_P^{\boldsymbol\beta_1}(\x_1),\dots,\mathcal J_P^{\boldsymbol\beta_q}(\x_q)
\end{bmatrix}.
\end{align*}
This a row vector with $N_S:=\sum\limits_{i=1}^q(N_{k_i-1}+m_i)$ components.
\item We consider the block antidiagonal matrices $\xi^{(i)}\in\C^{(N_{k_i-1}+m_i)\times (N_{k_i-1}+m_i)}$, $i\in\{1,\dots,q\}$,  with coefficients
\begin{align*}
(\xi^{(i)})_
{\q,\boldsymbol\beta}:=\xi_{i,\q+\boldsymbol{\beta}}
\end{align*}
and the matrix
\begin{align*}
\varXi:=\diag(\xi^{(1)},\dots,\xi^{(q)})\in\mathbb C^{N_S\times N_S}
\end{align*}
\item The Christoffel--Darboux jet is given in terms of product of truncations
\begin{align*}
\mathcal K_{n-1}(S)= \big(\mathcal J_{P^{[n]}}(S)\big)^\top (H^{[n]})^{-1} \mathcal J_{P^{[n]}}(S)\in\C^{N_S\times N_S}.
\end{align*}
Notice that the truncation $P^{[n]}(\x)$ is a vector of polynomials and, therefore,  $\mathcal J_{P^{[n]}}(S)$ is a $N_{n-1}\times N_S$ complex  matrix.
\end{enumerate}
\end{definition}
\begin{theorem}[0D-Uvarov multipolar perturbation]\label{teo:discrete uvarov}
Given a discrete additive perturbation of the form
	\begin{align*}
	\hat u=u+\sum_{i=1}^q \sum_{\q \leq \boldsymbol\beta_i}
	\frac{(-1)^{|\q|}}{\q!}
	\xi_{i,\q}\delta^{(\q)}(\x-\x_i),
	\end{align*}
the new MVOPR and quasi-tau matrices are given by the following quasi-determinantal expressions
	\begin{align*}
	\hat P_{[n]}(\x)&=\Theta_*\left(
	\begin{array}{c|c}
	I_{N_S}+\varXi \mathcal K_{n-1}(S) & \varXi\big( \mathcal J_{K_{n-1}(\cdot,\x)}(S)\big)^\top\\
	\hline \mathcal J_{ P_{[n]}}(S) &P_{[n]}(\x)
	\end{array}
	\right),\\
		\hat H_{[n]}&=\Theta_*\left(
		\begin{array}{c|c}
		I_{N_S}+\varXi \mathcal K_{n-1}(S) & -\varXi\big( \mathcal J_{ P_{[n]}}(S)\big)^\top\\
		\hline \mathcal J_{ P_{[n]}}(S) &H_{[n]}
		\end{array}
		\right).
	\end{align*}
\end{theorem}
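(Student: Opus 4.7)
The plan is to reduce Theorem \ref{teo:discrete uvarov} to the already established connection formulas \eqref{eq:additive} and \eqref{eq:additive2} by substituting the explicit form $v=v_S$. For any smooth expression $F(\y)$, pairing with $v_S$ produces a finite linear combination of derivatives evaluated at the punctures,
\begin{align*}
\langle v_S, F(\y)\rangle = \sum_{i=1}^q \sum_{\q\leq\boldsymbol\beta_i} \frac{\xi_{i,\q}}{\q!}\frac{\partial^{|\q|}F}{\partial\y^\q}(\x_i),
\end{align*}
so the right-hand sides of \eqref{eq:additive} and \eqref{eq:additive2} collapse into finite sums of jets, at the punctures, of $\hat P_{[n]}(\y)K_{n-1}(\y,\x)$ and of $\hat P_{[n]}(\y)(P_{[n]}(\y))^\top$ respectively.

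Next, I would apply the Leibniz rule to each mixed derivative, split $\q=\q'+\q''$, and use $\binom{\q}{\q'}/\q!=1/(\q'!\,\q''!)$ to separate the jets of $\hat P_{[n]}$ from the jets of $K_{n-1}$ (resp.\ $P_{[n]}$). Reordering the summation by the multi-indices that index the jet of $\hat P_{[n]}$ reveals the antidiagonal block structure of $\varXi$ exactly as defined through $(\xi^{(i)})_{\q,\boldsymbol\beta}=\xi_{i,\q+\boldsymbol\beta}$. Formula \eqref{eq:additive} then takes the compact form
\begin{align*}
\hat P_{[n]}(\x) = P_{[n]}(\x) - \mathcal J_{\hat P_{[n]}}(S)\,\varXi\,\bigl(\mathcal J_{K_{n-1}(\cdot,\x)}(S)\bigr)^\top,
\end{align*}
a linear relation in which the unknown $\hat P_{[n]}$ enters only through its jet $\mathcal J_{\hat P_{[n]}}(S)$.

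To close the system I would take the jet of both sides at each $(\x_j,\boldsymbol\beta_j)$; using the Christoffel--Darboux representation $K_{n-1}(\y,\x)=(P^{[n]}(\x))^\top(H^{[n]})^{-1}P^{[n]}(\y)$ identifies the resulting coefficient matrix as $\mathcal K_{n-1}(S)=(\mathcal J_{P^{[n]}}(S))^\top(H^{[n]})^{-1}\mathcal J_{P^{[n]}}(S)$, and the jet version of \eqref{eq:additive} reads
\begin{align*}
\mathcal J_{\hat P_{[n]}}(S)\bigl(I_{N_S}+\varXi\,\mathcal K_{n-1}(S)\bigr) = \mathcal J_{P_{[n]}}(S).
\end{align*}
Substituting the unique solution of this square linear system back into the expression for $\hat P_{[n]}(\x)$ produces a Schur complement, which the identity $\Theta_*\bigl(\begin{smallmatrix}A & B\\ C & D\end{smallmatrix}\bigr)=D-CA^{-1}B$ rewrites as the claimed quasi-determinant. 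The same procedure applied to \eqref{eq:additive2} yields
\begin{align*}
\hat H_{[n]} = H_{[n]} + \mathcal J_{\hat P_{[n]}}(S)\,\varXi\,\bigl(\mathcal J_{P_{[n]}}(S)\bigr)^\top,
\end{align*}
and, after the same substitution and recognition, the quasi-determinantal formula for $\hat H_{[n]}$, with the sign absorbed by placing $-\varXi(\mathcal J_{P_{[n]}}(S))^\top$ in the upper-right block.

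The main obstacle is the combinatorial bookkeeping linking the Leibniz expansion to the antidiagonal form of $\varXi$: one must verify that summing over splittings $\q=\q'+\q''$ with $\q\leq\boldsymbol\beta_i$ reproduces exactly $(\xi^{(i)})_{\q',\q''}=\xi_{i,\q'+\q''}$ under the lex ordering employed in the jet definition, and that the factorial weights conspire correctly. A secondary point is the invertibility of $I_{N_S}+\varXi\,\mathcal K_{n-1}(S)$, which amounts to quasi-definiteness of $\hat u$ and is therefore covered by the standing hypothesis that $\hat u$ admits a family of MVOPR rather than something to establish separately.
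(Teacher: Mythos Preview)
Your approach is essentially the same as the paper's: start from \eqref{eq:additive}--\eqref{eq:additive2}, expand $\langle v_S,\cdot\rangle$ via Leibniz to get the jet expression, take jets to obtain the linear system $\mathcal J_{\hat P_{[n]}}(S)\bigl(I_{N_S}+\varXi\,\mathcal K_{n-1}(S)\bigr)=\mathcal J_{P_{[n]}}(S)$, solve, substitute back, and recognize the Schur complement as a last quasi-determinant.

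The one place where you diverge from the paper is the invertibility of $I_{N_S}+\varXi\,\mathcal K_{n-1}(S)$. You say this ``amounts to quasi-definiteness of $\hat u$'' and is therefore already assumed. That is not quite right: knowing that $\hat P_{[n]}$ exists tells you the linear system is \emph{consistent}, but consistency of a square system does not by itself force the coefficient matrix to be invertible, and invertibility is exactly what you need to form the quasi-determinant. The paper instead proves invertibility from quasidefiniteness of the \emph{original} functional $u$ alone, via a short contradiction argument: if $\boldsymbol C\neq 0$ lies in the kernel, one shows inductively (using $\mathcal K_n=\mathcal K_{n-1}+(\mathcal J_{P_{[n]}})^\top H_{[n]}^{-1}\mathcal J_{P_{[n]}}$) that $\mathcal J_{P_{[l]}}(S)\boldsymbol C=0$ for all $l\geq n$, hence the compactly supported distribution $\sum_i\sum_\q(-1)^{|\q|}(\q!)^{-1}C_{i,\q}\delta^{(\q)}(\x-\x_i)$ annihilates all $P_{[l]}$ with $l\geq n$, so it lies in $\mathbb C\{\x^\q u\}_{|\q|<n}$, and multiplying by $\prod_i(\x-\x_i)^{\boldsymbol\beta_i}$ then contradicts quasidefiniteness of $u$. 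This is a genuine (if brief) argument that your proposal skips; everything else matches.
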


\begin{proof}
	From \eqref{eq:additive} we conclude that
\begin{align*}
\hat P_{[n]}(\x)=P_{[n]}(\x)-\mathcal J_{\hat P_{[n]}}(S)\varXi \big( \mathcal J_{K_{n-1}(\cdot,\x)}(S)\big)^\top
\end{align*}
and, therefore, we deduce
\begin{align*}
\mathcal J_{\hat P_{[n]}}(S)=\mathcal J_{P_{[n]}}(S)-\mathcal J_{\hat P_{[n]}}(S)\varXi \mathcal K_{n-1}(S)
\end{align*}
that is, the unknowns $\mathcal J_{\hat P_{[n]}}(S)$ satisfy  the following linear system 
\begin{align*}
\mathcal J_{\hat P_{[n]}}(S)\big(I_{N_S}+\varXi \mathcal K_{n-1}(S)\big)=\mathcal J_{P_{[n]}}(S).
\end{align*}
Let us prove that the quasidefiniteness of $u$ implies that $I_{N_S}+\varXi \mathcal K_{n-1}(S)$ is not singular, we follow \cite{Teresa2}.
If we assume that $I_{N_s}+\varXi \mathcal K_{n-1}(S)$ is singular there must exist a non-zero vector $\boldsymbol C\in \C^{N_S}$ such that
$\big(I_{N_S}+\varXi \mathcal K_{n-1}(S)\big)\boldsymbol C=0$ and,  consequently,  such that $\mathcal J_{P_{[n]}}(S)\boldsymbol C=0$. 
Now, let us observe that from
\begin{align*}
I_{N_s}+\varXi \mathcal K_{n}(S)=I_{N_s}+\varXi \mathcal K_{n-1}(S)+\varXi\big(\mathcal J_{P_{[n]}}(S)\big)^\top (H_{[n]})^{-1} \mathcal J_{P_{[n]}}(S).
\end{align*}
we get $\big(I_{N_s}+\varXi \mathcal K_{n}(S)\big)\boldsymbol C=0$ and, consequently, $\mathcal J_{P_{[n+1]}}(S)\boldsymbol C=0$. By induction, we deduce that
$\mathcal J_{P_{[l]}}(S)\boldsymbol C=0$, for $l\in\{n,n+1,\dots\}$
; i.e., the generalized function $\mathcal J(S)\boldsymbol C:=\sum\limits_{i=1}^q \sum\limits_{\q \leq \boldsymbol\beta_i}\dfrac{(-1)^{|\q|}}{\q!} C_{i,\q}\delta^{\q}(\x-\x_i)$
is such that $\big\langle \mathcal J(S)\boldsymbol C,P_{[l]}(\x)\big\rangle=0$, for $l\in\{n,n+1,\dots\}$.  
Equivalently, that is to say
\begin{align*}
\mathcal J(S)\boldsymbol C&\in\Big(\{P_{\q}\}_{|\q|\geq n}\Big)^\perp=\big\{\tilde u \in\big(\C[\x]\big)^*: \langle\tilde u, P_\q\rangle =0, \forall\q\in\mathbb Z_+^D : |\q|\geq n\big\}.
\end{align*}
In the one hand, the orthogonality relations for the MVOPR  leads us to conclude that  $\Big(\{P_{\q}\}_{|\q|\geq n}\Big)^\perp=\C\big\{P_\q^*\}_{|\q|<n}$, with the covectors defined by  $\langle P^*_\q,P_{\boldsymbol\beta }\rangle= \delta_{\q,\boldsymbol{\beta}}$; thus, $\dim \Big(\{P_{\q}\}_{|\q|\geq n}\Big)^\perp=N_{n-1}$.
In the other hand, we notice that $\C\big\{\x^\q u\big\}_{|\q|<n}\subset \Big(\{P_{\q}\}_{|\q|\geq n}\Big)^\perp$, but as
$ \dim\C\big\{\x^\q u\big\}_{|\q|<n}=N_{n-1}$  we deduce
$ \Big(\{P_{\q}\}_{|\q|\geq n}\Big)^\perp=\C\big\{\x^\q u\big\}_{|\q|<n}$.
 Consequently, there exists a non-zero polynomial $Q(\x)\in\C[\x]$, $\deg Q\leq n-1$, such that $\mathcal J(S)\boldsymbol C=Qu$. 
 Finally, let us notice that $\prod\limits_{i=1}^q(\x-\x_i)^{\boldsymbol\beta_i}\in\operatorname{Ker}(\mathcal J(S)\boldsymbol C)$, so that
 $\prod\limits_{i=1}^q(\x-\x_i)^{\boldsymbol\beta_i}Q(\x) u=0$,
 and $u$ can not be quasidefinite, in contradiction with the initial assumptions.
 
Now, as $I_{N_s}+\varXi \mathcal K_{n-1}(S)$ is not singular we deduce
\begin{align*}
\mathcal J_{\hat P_{[n]}}(S)=\mathcal J_{P_{[n]}}(S)\big(I_{N_s}+\varXi \mathcal K_{n-1}(S)\big)^{-1}.
\end{align*}
Thus,
\begin{align*}
\hat P_{[n]}(\x)=P_{[n]}(\x)-\mathcal J_{P_{[n]}}(S)\big(I_{N_s}+\varXi \mathcal K_{n-1}(S)\big)^{-1}
\varXi \big( \mathcal J_{K_{n-1}(\cdot,\x)}(S)\big)^\top
\end{align*}
and the result follows.
From \eqref{eq:additive2} we have
\begin{align*}
\hat H_{[n]}&=H_{[n]}+\mathcal J_{\hat P_{[n]}}(S)\varXi \big( \mathcal J_{P_{[n]}}(S)\big)^\top\\
&=H_{[n]}+\mathcal J_{P_{[n]}}(S)\big(I_{N_s}+\varXi \mathcal K_{n-1}(S)\big)^{-1}\varXi \big( \mathcal J_{P_{[n]}}(S)\big)^\top.
\end{align*}

\end{proof}
\begin{cor}[0D-Uvarov mass perturbation]
	Given the set of pairs $\{\x_i,\xi_i\}_{I=1}^q$, positions and masses,  and a discrete additive mass perturbation of the form
	\begin{align*}
	\hat u=u+\sum_{i=1}^q 
	\xi_i\delta(\x-\x_i),
	\end{align*}
	the new MVOPR and quasi-tau matrices are given by the following quasi-determinantal expressions
	\begin{align*}
\hat P_{[n]}(\x)=\Theta_*\begin{bmatrix}
1+\xi_1 K_{n-1}(\x_1,\x_1) & K_{n-1}(\x_1,\x_2) &\dots &K_{n-1}(\x_1,\x_q) &\xi_1K_{n-1}(\x_1,\x)\\
K_{n-1}(\x_2,\x_1) &1+\xi_2 K_{n-1}(\x_2,\x_2) &\dots &K_{n-1}(\x_2,\x_q) &\xi_2K_{n-1}(\x_2,\x)\\ 
\vdots &&\ddots & &\vdots\\  
K_{n-1}(\x_q,\x_1) &K_{n-1}(\x_q\x_2) &\dots &1+\xi_q K_{n-1}(\x_q,\x_q) &\xi_qK_{n-1}(\x_q,\x)\\ 
P_{[n]}(\x_1)&P_{[n]}(\x_2)&\dots & P_{[n]}(\x_q) & P_{[n]}(\x)
\end{bmatrix},\\
\hat H_{[n]}=\Theta_*\begin{bmatrix}
1+\xi_1 K_{n-1}(\x_1,\x_1) & K_{n-1}(\x_1,\x_2) &\dots &K_{n-1}(\x_1,\x_q) &-\xi_1 \big(P_{[n]}(\x_1) \big)^\top\\
K_{n-1}(\x_2,\x_1) &1+\xi_2 K_{n-1}(\x_2,\x_2) &\dots &K_{n-1}(\x_2,\x_q) &-\xi_2 \big(P_{[n]}(\x_2) \big)^\top\\ 
\vdots &&\ddots & &\vdots\\  
K_{n-1}(\x_q,\x_1) &K_{n-1}(\x_q,\x_2) &\dots &1+\xi_q K_{n-1}(\x_q,\x_q) &-\xi_q \big(P_{[n]}(\x_q) \big)^\top\\ 
P_{[n]}(\x_1)&P_{[n]}(\x_2)&\dots & P_{[n]}(\x_q) & H_{[n]}
\end{bmatrix}.
\end{align*}
\end{cor}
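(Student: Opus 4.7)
The plan is to deduce this corollary as the special case of Theorem \ref{teo:discrete uvarov} in which every multi-index $\boldsymbol{\beta}_i$ equals $\boldsymbol{0}=(0,\dots,0)^\top$, so that the only surviving derivative order in the perturbation $v_S$ is zero and we are adding pure Dirac masses $\xi_i\,\delta(\x-\x_i)$ rather than general multipoles. Thus the strategy is to take the general multivariate quasi-determinantal expressions and simplify each constituent block under this choice.

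First I would record how each ingredient in Theorem \ref{teo:discrete uvarov} collapses under $\boldsymbol{\beta}_i=\boldsymbol{0}$. Since $|\boldsymbol{\beta}_i|=0$, the only multi-index below $\boldsymbol{\beta}_i$ in the lexicographic order is $\boldsymbol{0}$ itself, so $k_i=0$, $m_i=1$, and $N_{k_i-1}+m_i=1$, which gives $N_S=q$. The jet $\mathcal J_P^{\boldsymbol{\beta}_i}(\x_i)$ then reduces to the scalar evaluation $P(\x_i)$, whence $\mathcal J_P(S)=(P(\x_1),\dots,P(\x_q))$ is simply a row of point values. Each antidiagonal block $\xi^{(i)}$ becomes a $1\times 1$ matrix containing $\xi_i$, so $\varXi=\diag(\xi_1,\dots,\xi_q)\in\C^{q\times q}$. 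Finally, $\mathcal K_{n-1}(S)=\bigl(\mathcal J_{P^{[n]}}(S)\bigr)^\top (H^{[n]})^{-1}\mathcal J_{P^{[n]}}(S)$ becomes, by the reproducing property encoded in Definition \ref{definition:CD}, the ordinary $q\times q$ matrix $\bigl(K_{n-1}(\x_i,\x_j)\bigr)_{i,j=1}^q$.

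Next I would substitute these simplifications into the two quasi-determinantal formulas of Theorem \ref{teo:discrete uvarov}. The upper-left block $I_{N_S}+\varXi\,\mathcal K_{n-1}(S)$ becomes the $q\times q$ matrix with $(i,j)$-entry $\delta_{ij}+\xi_i K_{n-1}(\x_i,\x_j)$, exactly as in the stated corollary. The right column $\varXi\bigl(\mathcal J_{K_{n-1}(\cdot,\x)}(S)\bigr)^\top$ reduces to the column with entries $\xi_i K_{n-1}(\x_i,\x)$, and in the same way $-\varXi\bigl(\mathcal J_{P_{[n]}}(S)\bigr)^\top$ produces the column with entries $-\xi_i\bigl(P_{[n]}(\x_i)\bigr)^\top$. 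The bottom row $\mathcal J_{P_{[n]}}(S)$ is the row $\bigl(P_{[n]}(\x_1),\dots,P_{[n]}(\x_q)\bigr)$, and the bottom-right corner is $P_{[n]}(\x)$ or $H_{[n]}$ respectively. Collating these pieces yields the two quasi-determinants asserted in the corollary.

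There is no real obstacle beyond bookkeeping, since this is a direct specialization; the only step that deserves a brief remark is that the invertibility hypothesis of Theorem \ref{teo:discrete uvarov} (non-singularity of $I_{N_S}+\varXi\,\mathcal K_{n-1}(S)$) descends automatically: the proof of that theorem showed this invertibility follows purely from quasi-definiteness of $u$, which we still assume here, so the last quasi-determinants in the corollary are well defined and the derivation is complete.
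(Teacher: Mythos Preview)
Your proposal is correct and follows essentially the same route as the paper: specialize Theorem~\ref{teo:discrete uvarov} to $\boldsymbol{\beta}_i=\boldsymbol{0}$, so that $N_S=q$, each jet becomes a point evaluation, $\varXi=\diag(\xi_1,\dots,\xi_q)$, and $\mathcal K_{n-1}(S)=\bigl(K_{n-1}(\x_i,\x_j)\bigr)_{i,j}$, after which the two quasi-determinants collapse to the stated ones. Your explicit remark on invertibility is a welcome addition; note incidentally that the $(i,j)$-entry you compute, $\delta_{ij}+\xi_i K_{n-1}(\x_i,\x_j)$, is the correct outcome of $I+\varXi\mathcal K_{n-1}(S)$, so the off-diagonal entries in the displayed corollary should carry a factor $\xi_i$ as well.
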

\begin{proof}
 We take $S=S_0=\big\{\x_i,\boldsymbol \beta_i\big\}_{i=1}^q $ with $\boldsymbol \beta_i=\q^{(0)}_1$, for $i\in\{1,\dots, q\}$, i.e., we have no derivatives in the delta functions, we have $N_S=q$ and
	\begin{align*}
	\mathcal J_{P_{[n]}}(S_0)&=\begin{bmatrix}
	P_{[n]}(\x_1),\dots,P_{[n]}(\x_q)
	\end{bmatrix},\\
	\xi^{(i)}&=\xi_i\in\mathbb C, \\ \varXi&=\diag(\xi_1,\dots,\xi_q)\\
	\mathcal K_{n-1}(S_0)&=\begin{bmatrix}
	P^{[n]}(\x_1),\dots,P^{[n]}(\x_q)
	\end{bmatrix}^\top\big(H^{[n]}\big)^{-1}\begin{bmatrix}
	P^{[n]}(\x_1),\dots,P^{[n]}(\x_q)
	\end{bmatrix}=\begin{bmatrix}K_{n-1}(\x_i,\x_j)\end{bmatrix},
	\\
	\mathcal J_{K_{n-1}(\cdot,\x)}(S_0)&=\begin{bmatrix}
	K_{n-1}(\x_1,\x),\dots,K_{n-1}(\x_q,\x)
	\end{bmatrix}
	\end{align*}
	and the result follows.
\end{proof}
This result was discussed by Uvarov in \S 2 of \cite{Uvarov1969connection} and its  multivariate extension was presented in \cite{Teresa0}.

We now illustrate the general  0D-Uvarov transformations formul\ae{} by considering the addition of first order derivatives, in \emph{physical language} the addition of  dipoles instead of masses.
We consider   $D$-dimensional  gradient operator $\nabla =\begin{bmatrix}
\dfrac{\partial}{\partial x_1},\dots,\dfrac{\partial}{\partial x_D}
\end{bmatrix}$. In terns of it, for each vector $\boldsymbol\xi=(\xi_1,\dots,\xi_D)^\top\in\C^D$ we have the normal derivative
$\nabla_{\boldsymbol{\xi}}=\sum\limits_{a=1}^D\xi_a\dfrac{\partial }{\partial x_a}$. Finally, given a function  $K(\x,\y):\R^D\times\R^D\to\C$ we denote by
\begin{align*}
K^{(\nabla,0)}(\x,\y)&:=\begin{bmatrix}
\dfrac{\partial K}{\partial x_1}(\x,\y),\dots,\dfrac{\partial K}{\partial x_D}(\x,\y)
\end{bmatrix}^\top,\\
K^{(0,\nabla)}(\x,\y)&:=\begin{bmatrix}
\dfrac{\partial K}{\partial y_1}(\x,\y),\dots,\dfrac{\partial K}{\partial y_D}(\x,\y)
\end{bmatrix},\end{align*}
\begin{align*}
K^{(\nabla,\nabla)}(\x,\y)&:=\begin{bmatrix}
\dfrac{\partial^2 K}{\partial x_1\partial y_1}(\x,\y)&\dots &\dfrac{\partial^2 K}{\partial x_1\partial y_D}(\x,\y)\\
\vdots & &\vdots\\
\dfrac{\partial^2 K}{\partial x_D\partial y_1}(\x,\y)&\dots &\dfrac{\partial^2 K}{\partial x_D\partial y_D}(\x,\y)
\end{bmatrix},\\
K^{(\boldsymbol{\xi},0)}(\x,\y)&:=
\sum_{a=1}^D\xi_a\frac{\partial K}{\partial x_a}(\x,\y),\\
K^{(\boldsymbol{\xi},\nabla)}(\x,\y)&:=\begin{bmatrix}
\sum\limits_{a=1}^D\xi_a\dfrac{\partial^2 K}{\partial x_a\partial y_1}(\x,\y),\dots,\sum\limits_{a=1}^D\xi_a\dfrac{\partial^2 K}{\partial x_a\partial y_D}(\x,\y)
\end{bmatrix}.
\end{align*}

\begin{cor}[0D-Uvarov dipole perturbation]
		Given couples  of vectors (positions and strength of the dipoles) $S_1=\{\x_i,\boldsymbol\xi_i\}_{i=1}^q$ and a corresponding discrete additive dipolar  perturbation of the form
		\begin{align*}
		\hat u=u+\sum_{i=1}^q 
		\nabla_{\boldsymbol \xi_i}\delta(\x-\x_i),
		\end{align*}
		the new MVOPR and quasi-tau matrices are given by the following quasi-determinantal expressions
		\small	\begin{align*}
	\hspace*{-10cm}	\hat P_{[n]}(\x)&=\Theta_*\left[
		\begin{array}{cc|c|cc|c}
		1+K^{(\boldsymbol{\xi}_1,0)}_{n-1}(\x_1,\x_1)&K^{(\boldsymbol{\xi}_1,\nabla)}_{n-1}(\x_1,\x_1)& \multirow{2}[4]*{ \dots} & K^{(\boldsymbol{\xi}_1,0)}_{n-1}(\x_1,\x_q)&K^{(\boldsymbol{\xi}_1,\nabla)}_{n-1}(\x_1,\x_q)&K^{(\boldsymbol{\xi}_1,0)}_{n-1}(\x_1,\x)\\
		\boldsymbol \xi_1 	K_{n-1}(\x_1,\x_1)&I_D+\boldsymbol \xi_1 K^{(0,\nabla)}_{n-1}(\x_1,\x_1)&& 	\boldsymbol \xi_1 	K_{n-1}(\x_1,\x_q)&\boldsymbol \xi_1 K^{(0,\nabla)}_{n-1}(\x_1,\x_q)&	\boldsymbol \xi_1 	K_{n-1}(\x_1,\x)\\\hline
		\multicolumn{2}{c|}{\vdots }&&\multicolumn{2}{c|}{\vdots }&\vdots\\\hline
		K^{(\boldsymbol{\xi}_q,0)}_{n-1}(\x_q,\x_1)&K^{(\boldsymbol{\xi}_q,\nabla)}_{n-1}(\x_q,\x_1)& \multirow{2}*{ \dots} & 1+K^{(\boldsymbol{\xi}_q,0)}_{n-1}(\x_q,\x_q)&K^{(\boldsymbol{\xi}_q,\nabla)}_{n-1}(\x_q,\x_q)&K^{(\boldsymbol{\xi}_q,0)}_{n-1}(\x_q,\x)\\
		\boldsymbol \xi_q 	K_{n-1}(\x_q,\x_1)&\boldsymbol \xi_q K^{(0,\nabla)}_{n-1}(\x_q,\x_1)&& 	\boldsymbol \xi_q 	K_{n-1}(\x_q,\x_q)&I_D+\boldsymbol \xi_q K^{(0,\nabla)}_{n-1}(\x_q,\x_q)&	\boldsymbol \xi_q 	K_{n-1}(\x_q,\x)\\\hline	
		P_{[n]}(\x_1) &	\nabla P_{[n]}(\x_1)&\dots& P_{[n]}(\x_q)&\nabla P_{[n]}(\x_q)&P_{[n]}(\x)
		\end{array}
		\right],\\
		\hat H_{[n]}&=\Theta_*\left[
		\begin{array}{cc|c|cc|c}
		1+K^{(\boldsymbol{\xi}_1,0)}_{n-1}(\x_1,\x_1)&K^{(\boldsymbol{\xi}_1,\nabla)}_{n-1}(\x_1,\x_1)& \multirow{2}[4]*{ \dots} & K^{(\boldsymbol{\xi}_1,0)}_{n-1}(\x_1,\x_q)&K^{(\boldsymbol{\xi}_1,\nabla)}_{n-1}(\x_1,\x_q)&-\nabla_{\boldsymbol{\xi_1}} P_{[n]}(\x)\\
		\boldsymbol \xi_1 	K_{n-1}(\x_1,\x_1)&I_D+\boldsymbol \xi_1 K^{(0,\nabla)}_{n-1}(\x_1,\x_1)&& 	\boldsymbol \xi_1 	K_{n-1}(\x_1,\x_q)&\boldsymbol \xi_1 K^{(0,\nabla)}_{n-1}(\x_1,\x_q)&-\boldsymbol{\xi}_1 P_{[n]}(\x_1)\\\hline
		\multicolumn{2}{c|}{\vdots }&&\multicolumn{2}{c|}{\vdots }&\vdots\\\hline
		K^{(\boldsymbol{\xi}_q,0)}_{n-1}(\x_q,\x_1)&K^{(\boldsymbol{\xi}_q,\nabla)}_{n-1}(\x_q,\x_1)& \multirow{2}*{ \dots} & 1+K^{(\boldsymbol{\xi}_q,0)}_{n-1}(\x_q,\x_q)&K^{(\boldsymbol{\xi}_q,\nabla)}_{n-1}(\x_q,\x_q)&-\nabla_{\boldsymbol{\xi_q}} P_{[n]}(\x),\\
		\boldsymbol \xi_q 	K_{n-1}(\x_q,\x_1)&\boldsymbol \xi_q K^{(0,\nabla)}_{n-1}(\x_q,\x_1)&& 	\boldsymbol \xi_q 	K_{n-1}(\x_q,\x_q)&I_D+\boldsymbol \xi_1 K^{(0,\nabla)}_{n-1}(\x_q,\x_q)&-\boldsymbol{\xi}_q P_{[n]}(\x_q)\\\hline	
		P_{[n]}(\x_1) &	\nabla P_{[n]}(\x_1)&\dots& P_{[n]}(\x_q)&\nabla P_{[n]}(\x_q)&H_{[n]}		\end{array}
		\right].
		\end{align*}
\end{cor}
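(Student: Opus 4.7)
The plan is to derive this corollary as a direct specialization of Theorem~\ref{teo:discrete uvarov} to the pure dipole case. For each node $\x_i$ one chooses the multi-index $\boldsymbol\beta_i$ so that its associated lexicographic jet collects $P(\x_i)$ together with all $D$ first partial derivatives $\partial_{x_a}P(\x_i)$, giving jet length $N_{k_i-1}+m_i=1+D$ and hence $N_S=(1+D)q$. The multipole moments are selected by $\xi_{i,\boldsymbol 0}=0$, $\xi_{i,\ee_a}=(\boldsymbol\xi_i)_a$, and $\xi_{i,\q}=0$ for $|\q|\ge 2$, so that $v_S=\sum_i\nabla_{\boldsymbol\xi_i}\delta(\x-\x_i)$ matches the corollary's perturbation and the block anti-diagonal matrix $\xi^{(i)}\in\mathbb{C}^{(1+D)\times(1+D)}$ reduces to the simple form with $\boldsymbol\xi_i^\top$ as the first block row outside the $(0,0)$ entry, $\boldsymbol\xi_i$ as the first block column outside the $(0,0)$ entry, and zeros elsewhere.

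Under these choices the three ingredients entering Theorem~\ref{teo:discrete uvarov} can be read off explicitly. The jet $\mathcal J_{P_{[n]}}(S)$ becomes the block row whose $i$-th block is $\big(P_{[n]}(\x_i),\nabla P_{[n]}(\x_i)\big)$, which is precisely the bottom block row of the displayed quasi-determinants; the jet $\mathcal J_{K_{n-1}(\cdot,\x)}(S)$ produces at each $i$ the column block $\big(K_{n-1}(\x_i,\x),K^{(0,\nabla)}_{n-1}(\x_i,\x)\big)^\top$. The Christoffel--Darboux jet $\mathcal K_{n-1}(S)=\big(\mathcal J_{P^{[n]}}(S)\big)^\top(H^{[n]})^{-1}\mathcal J_{P^{[n]}}(S)$ has $(i,j)$ block equal to the $(1+D)\times(1+D)$ matrix assembled from the four kernels $K_{n-1}$, $K^{(\nabla,0)}_{n-1}$, $K^{(0,\nabla)}_{n-1}$, $K^{(\nabla,\nabla)}_{n-1}$ evaluated at $(\x_i,\x_j)$; left multiplication by the above $\xi^{(i)}$ then collapses these into the block displayed in the corollary, with entries $K^{(\boldsymbol\xi_i,0)}_{n-1}(\x_i,\x_j)$, $K^{(\boldsymbol\xi_i,\nabla)}_{n-1}(\x_i,\x_j)$, $\boldsymbol\xi_i K_{n-1}(\x_i,\x_j)$ and $\boldsymbol\xi_i K^{(0,\nabla)}_{n-1}(\x_i,\x_j)$, while the contribution of $I_{N_S}$ in $I_{N_S}+\varXi\mathcal K_{n-1}(S)$ supplies the $1$ and the $I_D$ on the diagonal blocks.

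Substituting these ingredients into the two formulas of Theorem~\ref{teo:discrete uvarov} produces exactly the displayed quasi-determinants. For $\hat P_{[n]}(\x)$ the bordering last column $\varXi\big(\mathcal J_{K_{n-1}(\cdot,\x)}(S)\big)^\top$ becomes $\big(K^{(\boldsymbol\xi_i,0)}_{n-1}(\x_i,\x),\boldsymbol\xi_i K_{n-1}(\x_i,\x)\big)^\top$ and the corner entry is $P_{[n]}(\x)$; for $\hat H_{[n]}$ the bordering column $-\varXi\big(\mathcal J_{P_{[n]}}(S)\big)^\top$ becomes $\big(-\nabla_{\boldsymbol\xi_i}P_{[n]}(\x_i),-\boldsymbol\xi_i P_{[n]}(\x_i)\big)^\top$ and the corner is $H_{[n]}$. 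I expect the main obstacle to be purely bookkeeping: carefully aligning the lexicographic ordering of multi-indices used in the definition of the jet with the explicit scalar / $D$-row / $D$-column / $D\times D$ partition visible in the corollary, and verifying that all would-be higher-order entries $\xi_{i,\ee_a+\ee_b}$ of $\xi^{(i)}$ vanish so that the reduction to the anti-diagonal form is legitimate.
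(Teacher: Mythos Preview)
Your proposal is correct and follows essentially the same approach as the paper: both specialize Theorem~\ref{teo:discrete uvarov} by taking each $\boldsymbol\beta_i=\q^{(1)}_D$ so that the jet at $\x_i$ has length $1+D$, setting $\xi_{i,\boldsymbol 0}=0$ and $\xi_{i,\ee_a}=(\boldsymbol\xi_i)_a$, writing out $\mathcal J_{P_{[n]}}(S_1)$, $\xi^{(i)}$, $\mathcal K_{n-1}(S_1)$ and $\mathcal J_{K_{n-1}(\cdot,\x)}(S_1)$ explicitly, and then computing $I_{N_S}+\varXi\mathcal K_{n-1}(S_1)$ block by block. One small slip: the jet $\mathcal J_{K_{n-1}(\cdot,\x)}(S)$ differentiates in the \emph{first} argument, so at each $i$ you get $\big(K_{n-1}(\x_i,\x),K^{(\nabla,0)}_{n-1}(\x_i,\x)\big)$ rather than $K^{(0,\nabla)}_{n-1}$; after left multiplication by $\xi^{(i)}$ this correctly produces the column $\big(K^{(\boldsymbol\xi_i,0)}_{n-1}(\x_i,\x),\boldsymbol\xi_i K_{n-1}(\x_i,\x)\big)^\top$ as you state.
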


\begin{proof}
	In this case we take $S=S_1=\big\{\x_i,\boldsymbol \beta_i\big\}_{i=1}^q $ with $\boldsymbol \beta_i=\q^{(1)}_D$, for $i\in\{1,\dots, q\}$,
	and choose
	\begin{align*}
	\xi_{i,\q^{(0)}_1}&=0, & \boldsymbol{\xi}_i&=\begin{bmatrix}
	\xi_{i,\q^{(1)}_1}\\	\vdots\\\xi_{i,\q^{(1)}_D}
	\end{bmatrix}\in\C^D.
	\end{align*}
we have $N_S=q(D+1)$ and
	\begin{align*}
	\mathcal J_{P_{[n]}}(S_1)&=\begin{bmatrix}
	P_{[n]}(\x_1),	\nabla P_{[n]}(\x_1),\dots,P_{[n]}(\x_q),\nabla P_{[n]}(\x_q)
	\end{bmatrix},\\
	\xi^{(i)}&=\begin{bmatrix}
	0 &(\boldsymbol{\xi}_i)^\top\\
	\boldsymbol \xi_i & 0_D
	\end{bmatrix}\in\mathbb C^{(D+1)\times (D+1)}, \\ \varXi&=\diag(\xi^{(1)},\dots,\xi^{(q)})\in \mathbb C^{q(D+1)\times q(D+1)},	\\
\mathcal J_{K_{n-1}(\cdot,\x)}(S_1)&=\begin{bmatrix}
K_{n-1}(\x_1,\x), \big(K^{(\nabla,0)}_{n-1}(\x_1,\x)\big)^\top, \dots, K_{n-1}(\x_q,\x),\big(K^{(\nabla,0)}_{n-1}(\x_q,\x)\big)^\top
\end{bmatrix},	\end{align*}
	\begin{align*}
	\mathcal K_{n-1}(S_1)&=\begin{multlined}[t][0.7\textwidth]
	\begin{bmatrix}
	P^{[n]}(\x_1),	\nabla P^{[n]}(\x_1),\dots,P^{[n]}(\x_q),\nabla P^{[n]}(\x_q)
	\end{bmatrix}^\top\big(H^{[n]}\big)^{-1}\\	\begin{bmatrix}
	P^{[n]}(\x_1),\nabla P^{[n]}(\x_1),\dots,P^{[n]}(\x_q),\nabla P^{[n]}(\x_q)
	\end{bmatrix}
	\end{multlined}\\&=
	\left[
	\begin{array}{cc|c|cc}
	K_{n-1}(\x_1,\x_1)&K^{(0,\nabla)}_{n-1}(\x_1,\x_1)& \multirow{2}{*}{ \dots} & K_{n-1}(\x_1,\x_q)&K^{(0,\nabla)}_{n-1}(\x_1,\x_q)\\
	K^{(\nabla,0)}_{n-1}(\x_1,\x_1)&K^{(\nabla,\nabla)}_{n-1}(\x_1,\x_1)&& 	K^{(\nabla,0)}_{n-1}(\x_1,\x_q)&K^{(\nabla,\nabla)}_{n-1}(\x_1,\x_q)\\\hline
\multicolumn{2}{c|}{\vdots }&&\multicolumn{2}{c}{\vdots }\\\hline
K_{n-1}(\x_q,\x_1)&K^{(0,\nabla)}_{n-1}(\x_q,\x_1)&  \multirow{2}{*}{ \dots} & K_{n-1}(\x_q,\x_q)&K^{(0,\nabla)}_{n-1}(\x_q,\x_q)\\
K^{(\nabla,0)}_{n-1}(\x_q,\x_1)&K^{(\nabla,\nabla)}_{n-1}(\x_q,\x_1)&& 	K^{(\nabla,0)}_{n-1}(\x_q,\x_q)&K^{(\nabla,\nabla)}_{n-1}(\x_q,\x_q)
	\end{array}
	\right].
	\end{align*}
Therefore, we compute	
	\begin{multline*}
	I_{N_{S_1}}+\varXi\mathcal K_{n-1}(S_1)\\=
		\left[
		\begin{array}{cc|c|cc}
1+K^{(\boldsymbol{\xi}_1,0)}_{n-1}(\x_1,\x_1)&K^{(\boldsymbol{\xi}_1,\nabla)}_{n-1}(\x_1,\x_1)& \multirow{2}[4]*{ \dots} & K^{(\boldsymbol{\xi}_1,0)}_{n-1}(\x_1,\x_q)&K^{(\boldsymbol{\xi}_1,\nabla)}_{n-1}(\x_1,\x_q)\\
	\boldsymbol \xi_1 	K_{n-1}(\x_1,\x_1)&I_D+\boldsymbol \xi_1 K^{(0,\nabla)}_{n-1}(\x_1,\x_1)&& 	\boldsymbol \xi_1 	K_{n-1}(\x_1,\x_q)&\boldsymbol \xi_1 K^{(0,\nabla)}_{n-1}(\x_1,\x_q)\\\hline
		\multicolumn{2}{c|}{\vdots }&&\multicolumn{2}{c}{\vdots }\\\hline
			K^{(\boldsymbol{\xi}_q,0)}_{n-1}(\x_q,\x_1)&K^{(\boldsymbol{\xi}_q,\nabla)}_{n-1}(\x_q,\x_1)& \multirow{2}*{ \dots} & 1+K^{(\boldsymbol{\xi}_q,0)}_{n-1}(\x_q,\x_q)&K^{(\boldsymbol{\xi}_q,\nabla)}_{n-1}(\x_q,\x_q)\\
			\boldsymbol \xi_q 	K_{n-1}(\x_q,\x_1)&\boldsymbol \xi_q K^{(0,\nabla)}_{n-1}(\x_q,\x_1)&& 	\boldsymbol \xi_q 	K_{n-1}(\x_q,\x_q)&I_D+\boldsymbol \xi_1 K^{(0,\nabla)}_{n-1}(\x_q,\x_q)
		\end{array}
		\right],
	\end{multline*}
	and the result follows.

	
%
\end{proof}

%

\subsection{1D-Uvarov perturbations and Fredholm integral equations}

We have discussed 0-dimensional  additive perturbations of $D$-dimensional generalized functions in full generality. 
However, we reckon that this is a very limited analysis, as in this multivariate context much more general perturbations do exist, as is illustrated by   \eqref{eq:v_general}.
We now discuss a very particular example, adding a 1D massive string. For this aim we assume that we have a parametrized curve, i.e. a smooth map from the interval $I\subset\R$ to $\R^D$:
\begin{align*}
\boldsymbol\gamma:I\to\R^D,
\end{align*}
as well as  a weight function $w:I\to\C$. Then, the linear functional $v$ is
\begin{align*}
\langle v, P\rangle =\int_I P(\boldsymbol\gamma (t)) w(t)\d t.
\end{align*}
Recalling 	\eqref{eq:additive} we can write
\begin{align}\label{eq:Pnadditive}
\hat P_{[n]}(\x)=P_{[n]}(\x)-\int_I \hat P_{[n]}\big(\boldsymbol\gamma (s)\big) K_{n-1}(\boldsymbol{\gamma}(s),\x)w(s)\d s.
\end{align}
Now, let us remark one of the basic ideas in the proof of Theorem \ref{teo:discrete uvarov}. First, one uses \eqref{eq:additive} and then evaluates on the support of the distribution. In that case, we evaluated again at the points where the delta functions and its derivatives where supported. In this case, we should evaluate it again at the curve $\boldsymbol \gamma$.  
\begin{definition}
	We introduce some notation
\begin{align*}
\hat \pi_{[n]}(t)&:=\hat P_{[n]}(\boldsymbol\gamma (t)), & \pi_{[n]}(t)&:= P_{[n]}(\boldsymbol\gamma (t)), &
 \kappa_{n-1}(t,s)&:= K_{n-1}\big(\boldsymbol\gamma (t),\boldsymbol\gamma(s)\big)w(s).
\end{align*}
\end{definition}
Then, \eqref{eq:Pnadditive} implies the following integral Fredholm equation
\begin{align}\label{eq:fredholm}
\hat \pi_{[n]}(t)=\pi_{[n]}(t)-\int_I \hat \pi_{[n]}(s)\kappa_{n-1}(s,t)\d s.
\end{align}
This integral equation, having as integral kernel $\kappa_{n-1}(t,s)$ a separable one, can be solved explicitly. In fact, 
\begin{pro}
	The solution of the separable Fredholm equation \eqref{eq:fredholm} can be expressed as a last quasi-determinant as follows
	\begin{align*}
\hat{\pi}_{[n]}(t)=\Theta_*\left[\begin{array}{ccc|c}
H_{[0]}+\int_I  \pi_{[0]}(s)\big(\pi_{[0]}(s)\big)^\top w(s)\d s & \dots & \int_I  \pi_{[0]}(s)\big(\pi_{[n-1]}(s)\big)^\top w(s)\d s&\pi_{[0]}(t)\\
\int_I  \pi_{[1]}(s)\big(\pi_{[0]}(s)\big)^\top w(s)\d s & \dots & \int_I  \pi_{[1]}(s)\big(\pi_{[n-1]}(s)\big)^\top w(s)\d s&\pi_{[1]}(t)\\
\vdots & &\vdots&\vdots\\
\int_I  \pi_{[n-1]}(s)\big(\pi_{[0]}(s)\big)^\top w(s)\d s & \dots & H_{[n-1]}+\int_I  \pi_{[n-1]}(s)\big(\pi_{[n-1]}(s)\big)^\top w(s)\d s&\pi_{[n-1]}(t)\\[10pt]
\hline
\int_I  \pi_{[n]}(s)\big(\pi_{[0]}(s)\big)^\top w(t)\d s & \dots & \int_I  \pi_{[n]}(s)\big(\pi_{[n-1]}(s)\big)^\top w(s)\d s&\pi_{[n]}(t)
\end{array}\right].
\end{align*}
\end{pro}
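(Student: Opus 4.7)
The plan is to exploit the fact that the Christoffel--Darboux kernel $K_{n-1}(\x,\y) = \sum_{m=0}^{n-1}(P_{[m]}(\x))^\top (H_{[m]})^{-1} P_{[m]}(\y)$ is finite-rank, so that $\kappa_{n-1}(s,t)$ is a \emph{separable} (degenerate) kernel. This collapses the Fredholm equation \eqref{eq:fredholm} into a finite-dimensional linear system, whose solution is then recognized as the stated last quasi-determinant.

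First, I will substitute the expansion
\begin{align*}
\kappa_{n-1}(s,t)=\sum_{m=0}^{n-1}\bigl(\pi_{[m]}(s)\bigr)^\top (H_{[m]})^{-1}\pi_{[m]}(t)\,w(s)
\end{align*}
into \eqref{eq:fredholm}. This shows that any solution must have the ansatz
\begin{align*}
\hat\pi_{[n]}(t)=\pi_{[n]}(t)-\sum_{m=0}^{n-1}\tilde B_m\,\pi_{[m]}(t),\qquad \tilde B_m\coloneq C_m(H_{[m]})^{-1},
\end{align*}
with $C_m\coloneq\int_I\hat\pi_{[n]}(s)\bigl(\pi_{[m]}(s)\bigr)^\top w(s)\,\d s$, so that the problem is reduced to determining the block row $(\tilde B_0,\ldots,\tilde B_{n-1})$.

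Next, multiplying \eqref{eq:fredholm} on the right by $(\pi_{[l]}(t))^\top w(t)$ for $l\in\{0,1,\dots,n-1\}$ and integrating over $t\in I$ yields the linear system
\begin{align*}
\tilde B_l\,H_{[l]}+\sum_{m=0}^{n-1}\tilde B_m A_{m,l}=A_{n,l},\qquad A_{k,l}\coloneq\int_I\pi_{[k]}(s)\bigl(\pi_{[l]}(s)\bigr)^\top w(s)\,\d s.
\end{align*}
Equivalently, with $\tilde A_{m,l}\coloneq A_{m,l}+\delta_{m,l}H_{[l]}$, the coefficients solve the block linear system
$(\tilde B_0,\ldots,\tilde B_{n-1})\,\tilde A=(A_{n,0},\ldots,A_{n,n-1})$, where $\tilde A$ is precisely the block matrix appearing in the top-left of the stated quasi-determinant.

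Finally, substituting back into the ansatz gives
\begin{align*}
\hat\pi_{[n]}(t)=\pi_{[n]}(t)-(A_{n,0},\ldots,A_{n,n-1})\,\tilde A^{-1}\bigl(\pi_{[0]}(t)^\top,\ldots,\pi_{[n-1]}(t)^\top\bigr)^\top,
\end{align*}
which is, by the very definition of the last quasi-determinant $\Theta_*$, the right-hand side of the claimed formula. The main (and only non-routine) obstacle is justifying the invertibility of $\tilde A$, i.e.\ solvability of the Fredholm equation; this follows once one observes that $\tilde A$ is the Gramm matrix of $\pi_{[0]},\dots,\pi_{[n-1]}$ in the bilinear form $\langle u,\cdot\rangle+\langle v,\cdot\rangle=\langle\hat u,\cdot\rangle$ restricted to polynomials of degree $<n$, which is nondegenerate precisely under the standing quasi-definiteness assumption on $\hat u$ already invoked in Theorem \ref{teo:discrete uvarov}.
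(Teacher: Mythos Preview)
Your proof is correct and follows essentially the same route as the paper: expand the separable kernel, reduce to a finite block linear system for the coefficients $C_m=\int_I\hat\pi_{[n]}(s)(\pi_{[m]}(s))^\top w(s)\,\d s$ (your $\tilde B_m H_{[m]}$), and recognize the resulting expression as the stated last quasi-determinant. Your version is in fact slightly more complete than the paper's, which stops at the linear system and says ``the result follows'': you spell out the identification with $\Theta_*$ and, more importantly, you supply the invertibility of $\tilde A$ by noting that $\tilde A_{[l],[m]}=\langle \hat u, P_{[l]}(P_{[m]})^\top\rangle=(S^{[n]}\hat G^{[n]}(S^{[n]})^\top)_{[l],[m]}$, so $\det\tilde A=\det\hat G^{[n]}\neq 0$ under the standing quasi-definiteness assumption on $\hat u$---a point the paper's proof omits.
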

\begin{proof}
The separability of the kernel means that the kernel $\kappa_{n-1}(t,s)$ can be written 
\begin{align*}
\kappa_{n-1}(s,t)=\sum_{m=0}^{n-1}\big(\pi_{[m]}(s)\big)^\top \big(H_{[m]}\big)^{-1} \pi_{[m]}(t).
\end{align*}
Now,  with the notation
\begin{align}\label{eq:C}
C_{[n],[m]}:=\int_I \hat\pi_{[n]}(s)\big(\pi_{[m]}(s)\big)^\top w(s)\d s
\end{align}
we can write the Fredholm equation 
\begin{align*}
\hat\pi_{[n]}(t)=\pi_{[n]}(t)-\sum_{m=0}^{n-1}C_{[n],[m]}\big(H_{[m]}\big)^{-1}\pi_{[m]}(t),
\end{align*}
that can be introduced in \eqref{eq:C} to get 
\begin{align*}
C_{[n,[m]}&=A_{[n],[m]}-\sum_{l=0}^{n-1}C_{[n],[l]}\big(H_{[l]}\big)^{-1} A_{[l],[m]}, & A_{[n],[m]}&:=\int_I\pi_{[n]}(s)\big(\pi_{[m]}(s)\big)^\top w(s)\d s.
\end{align*}
This linear system for the $C$'s can be written
\begin{align*}
\begin{bmatrix}
C_{[n],[0]},\dots,C_{[n],[n-1]}
\end{bmatrix}=\begin{bmatrix}
A_{[n],[0]},\dots,_{[n],[n-1]}
\end{bmatrix}-\begin{bmatrix}
C_{[n],[0]},\dots,C_{[n],[n-1]}\
\end{bmatrix}
\big(H^{[n]}\big)^{-1}
\begin{bmatrix}
A_{[0],[0]} &\dots& A_{[0],[n-1]}\\
\vdots & & \vdots\\
A_{[n-1],[0]} &\dots& A_{[n-1],[n-1]}
\end{bmatrix},
\end{align*}
and the result follows.
\end{proof}

\begin{pro}
	Given the solution $\hat{\pi}_{[n]}(t)$ to the Fredholm equation \eqref{eq:fredholm} we find the perturbed MVOPR and squared norms can be expressed
\begin{align*}
\hat P_{[n]}(\x)&=
P_{[n]}(\x)-\int_I \hat \pi_{[n]} (t)K_{n-1}(\boldsymbol{\gamma}(t),\x)w(t)\d t,\\
\hat H_{[n]}&=H_{[n]}+\int_I \hat\pi_{[n]}(t)\big(\pi_{[n]}(t)\big)^\top w(t)\d t.
\end{align*}

\end{pro}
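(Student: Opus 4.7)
The plan is short because the proposition is essentially a bookkeeping consequence of two identities established earlier, namely \eqref{eq:additive} and \eqref{eq:additive2}, once one has access to the restriction of $\hat P_{[n]}$ to the curve $\boldsymbol\gamma(I)$ via the solution $\hat\pi_{[n]}(t)$ of the Fredholm equation.

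For the first formula, I would start directly from \eqref{eq:Pnadditive}, which reads
\begin{align*}
\hat P_{[n]}(\x)=P_{[n]}(\x)-\int_I \hat P_{[n]}\big(\boldsymbol\gamma (s)\big) K_{n-1}(\boldsymbol{\gamma}(s),\x)w(s)\d s,
\end{align*}
and simply substitute the definition $\hat\pi_{[n]}(s)\coloneq \hat P_{[n]}(\boldsymbol\gamma(s))$ inside the integrand. The point of the Fredholm equation \eqref{eq:fredholm} is precisely to compute $\hat\pi_{[n]}(s)$ without yet knowing $\hat P_{[n]}$ at generic $\x\notin\boldsymbol\gamma(I)$; the preceding proposition gives a closed quasi-determinantal expression for that restriction, and plugging it in produces the desired formula for $\hat P_{[n]}(\x)$ on all of $\R^D$.

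For the second formula, I would invoke \eqref{eq:additive2}, which gives
\begin{align*}
\hat H_{[n]}=H_{[n]}+\big\langle v, \hat P_{[n]}(\x)\big(P_{[n]}(\x)\big)^{\top}\big\rangle,
\end{align*}
and then use the definition of the 1D-Uvarov perturbing functional $\langle v,P\rangle=\int_I P(\boldsymbol\gamma(t))w(t)\d t$ to rewrite the pairing as $\int_I \hat P_{[n]}(\boldsymbol\gamma(t))\big(P_{[n]}(\boldsymbol\gamma(t))\big)^\top w(t)\d t$. The notational conventions $\hat\pi_{[n]}(t)=\hat P_{[n]}(\boldsymbol\gamma(t))$ and $\pi_{[n]}(t)=P_{[n]}(\boldsymbol\gamma(t))$ finish the job.

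There is no serious obstacle here; the only subtlety worth a sentence is the consistency check that $\hat P_{[n]}(\boldsymbol\gamma(\cdot))$ really equals the function $\hat\pi_{[n]}$ produced by solving \eqref{eq:fredholm}. This holds because evaluating \eqref{eq:Pnadditive} along the curve $\x=\boldsymbol\gamma(t)$ shows that $t\mapsto \hat P_{[n]}(\boldsymbol\gamma(t))$ is a solution of the Fredholm equation, and the separability of the kernel together with the quasi-definiteness assumption guarantees uniqueness (the Gram-type matrix $H^{[n]}+\int_I\pi\,\pi^\top w$ appearing in the previous proposition is invertible under the standing positivity/quasi-definiteness hypotheses), so the two coincide.
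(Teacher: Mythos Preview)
Your proof is correct and is precisely the intended argument: the first identity is \eqref{eq:Pnadditive} with the definition $\hat\pi_{[n]}(s)=\hat P_{[n]}(\boldsymbol\gamma(s))$ substituted, and the second is \eqref{eq:additive2} specialized to $\langle v,P\rangle=\int_I P(\boldsymbol\gamma(t))w(t)\d t$. The paper in fact states this proposition without proof, treating it as an immediate corollary of the preceding material, so there is nothing further to compare; your added remark on uniqueness of the Fredholm solution is a welcome clarification that the paper omits.
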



	

\begin{thebibliography}{10}
		
		\bibitem{Adler1997group}
		{\sc M.~Adler and P.~van Moerbeke}, {\em Group factorization, moment matrices
			and Toda lattices}, International Mathematics Research Notices, 1997 (1997),
		pp.~556--572.
		
		\bibitem{Adler1998Toda}
		\leavevmode\vrule height 2pt depth -1.6pt width 23pt, {\em Toda--Darboux maps
			and vertex operators}, International Mathematics Research Notices, 1998
		(1998), pp.~489--511.
		
		\bibitem{Adler1999generalized}
		\leavevmode\vrule height 2pt depth -1.6pt width 23pt, {\em Generalized
			orthogonal polynomials, discrete KP and Riemann--Hilbert problems},
		Communications in Mathematical Physics, 207 (1999), pp.~589--620.
		
		\bibitem{Adler1999vertex}
		\leavevmode\vrule height 2pt depth -1.6pt width 23pt, {\em Vertex operator
			solutions to the discrete KP hierarchy}, Communications in Mathematical
		Physics, 203 (1999), pp.~185--210.
		
		\bibitem{Adler2001Integrals}
		\leavevmode\vrule height 2pt depth -1.6pt width 23pt, {\em Integrals over
			classical groups, random permutations, Toda and Toeplitz lattices},
		Communications in Pure and Applied Mathematics, 54 (2001), pp.~153--205.
		
		\bibitem{Adler2008Moment}
		{\sc M.~Adler, P.~van Moerbeke, and P.~Vanhaecke}, {\em Moment matrices and
			multi-component KP, with applications to random matrix theory},
		Communications in Mathematical Physics, 286 (2008), pp.~1--38.
		
		\bibitem{Alfaro2014linearly}
		{\sc M.~Alfaro, A.~Peña, T.~E. Pérez, and M.~L. Rezola}, {\em On linearly
			related orthogonal polynomials in several variables}, Numerical Algorithms,
		66 (2014), pp.~525--553.
		
		\bibitem{alvarez2015Christoffel}
		{\sc C.~Álvarez Fernández, G.~Ariznabarreta, J.~C. García-Ardila, M.~Mañas,
			and F.~Marcellán}, {\em Christoffel transformations for matrix orthogonal
			polynomials in the real line and the non-Abelian 2D Toda lattice hierarchy}, \texttt{arXiv:1511.04771}. To appear in IMRN.
		
		\bibitem{alvarez2011multiple}
		{\sc C.~Álvarez Fernández, U.~Fidalgo~Prieto, and M.~Mañas}, {\em Multiple
			orthogonal polynomials of mixed type: Gauss--Borel factorization and the
			multi-component 2d Toda hierarchy}, Advances in Mathematics, 227 (2011),
		pp.~1451--1525.
		
		\bibitem{alvarez2013orthogonal}
		{\sc C.~Álvarez Fernández and M.~Mañas}, {\em Orthogonal Laurent polynomials
			on the unit circle, extended CMV ordering and 2D Toda type integrable
			hierarchies}, Advances in Mathematics, 240 (2013), pp.~132--193.
		
		\bibitem{alvarez2010multicomponent}
		{\sc C.~Ávarez Fernández, U.~Fidalgo~Prieto, and M.~Mañas}, {\em The
			multicomponent 2D Toda hierarchy: generalized matrix orthogonal polynomials,
			multiple orthogonal polynomials and Riemann--Hilbert problems}, Inverse
		Problems, 26 (2010), p.~055009.
		
		\bibitem{ariznabarreta2014matrix}
		{\sc G.~Ariznabarreta and M.~Mañas}, {\em Matrix orthogonal Laurent
			polynomials on the unit circle and Toda type integrable systems}, Advances in
		Mathematics, 264 (2014), pp.~396--463.
		
		\bibitem{ariznabarreta2014multivariate}
		\leavevmode\vrule height 2pt depth -1.6pt width 23pt, {\em Multivariate
			orthogonal polynomial and integrable systems}, \texttt{arXiv:1409.0570}.
		
		\bibitem{ariznabarreta2015darboux}
		\leavevmode\vrule height 2pt depth -1.6pt width 23pt, {\em Darboux
			transformations for multivariate orthogonal polynomials}, \texttt{arXiv:1503.04786}.
		
		\bibitem{ariznabarreta2015multivariate}
		\leavevmode\vrule height 2pt depth -1.6pt width 23pt, {\em Multivariate
			orthogonal Laurent polynomials and integrable systems},  \texttt{arXiv:1506.08708}.
		
		\bibitem{Bergvelt1988tau}
		{\sc M.~J. Bergvelt and A.~P.~E. ten Kroode}, {\em Tau-functions and
			zero-curvature equations of Toda--AKNS type}, Journal of Mathematical Physics,
		29 (1988), pp.~1308--1320.
		
		\bibitem{Bergvelt1995partitions}
		\leavevmode\vrule height 2pt depth -1.6pt width 23pt, {\em Partitions, vertex
			operators constructions and multi-component KP equations}, Pacific Journal of
		Mathematics, 171 (1995), pp.~23--88.
		
		\bibitem{Brezinskin1980Pade}
		{\sc C.~Brezinski}, {\em Padé-type approximation and general orthogonal
			polynomials}, vol.~50 of International Series of Numerical Mathematics,
		Birkhäuser Verlag, 1980.
		
		\bibitem{Bueno2004darboux}
		{\sc M.~I. Bueno and F.~Marcellán}, {\em Darboux transformation and
			perturbation of linear functionals}, Linear Algebra and its Applications, 384
		(2004), pp.~215--242.
		
		\bibitem{Chihara1978introduction}
		{\sc T.~S. Chihara}, {\em An Introduction to Orthogonal Polynomials}, vol.~13
		of Mathematics and its Applications Series, Gordon and Breach Science
		Publishers, 1978.
		
		\bibitem{Christoffel1858uber}
		{\sc E.~B. Christoffel}, {\em Über die gaußische quadratur und eine
			verallgemeinerung derselben}, Journal für die reine und angewandte
		Mathematike (Crelle's Journal), 55 (1858), pp.~61--82.
		
		\bibitem{Comon2008Symmetric}
		{\sc P.~Comon, G.~Golub, L.-H. Lim, and B.~Mourrain}, {\em Symmetric tensors
			and symmetric tensor rank}, SIAM Journal of Matrix Analysis and Applications,
		30 (2008), pp.~1254--1279.
		
		\bibitem{Darboux1882proposition}
		{\sc G.~Darboux}, {\em Sur une proposition relative aux équations linéaires},
		Comptes rendus hebdomadaires des séances de l'Académie des Sciences, 94
		(1882), pp.~1456--1459.
		
		\bibitem{Date1981operator}
		{\sc E.~Date, M.~Jimbo, M.~Kashiwara, and T.~Miwa}, {\em Operator approach to
			the Kadomtsev--Petviashvili equation. transformation groups for soliton
			equations. iii}, Journal of the Physical Society of Japan, 50 (1981),
		pp.~3806--3812.
		
		\bibitem{Date1983transformation}
		\leavevmode\vrule height 2pt depth -1.6pt width 23pt, {\em Transformation
			groups for soliton equations}, in Nonlinear Integrable Systems-Classical
		Theory and Quantum Theory, M.~Jimbo and T.~Miwa, eds., vol.~18 of Proceedings
		of RIMS Symposium, 1982, pp.~39--120.
		
		\bibitem{Date1982transformation}
		\leavevmode\vrule height 2pt depth -1.6pt width 23pt, {\em Transformation
			groups for soliton equations. euclidean Lie algebras and reduction of the KP
			hierarchy}, Publications of the Research Institute for Mathematical Sciences,
		18 (1982), pp.~1077--1110.
		
		
	\bibitem{Teresa0}	\textsc{A. M. Delgado, L. Fernández, T. E. Pérez, M. A. Piñar, and Y. Xu,} \emph{Orthogonal polynomials in several variables for measures with mass points,} Numer. Algorithms \textbf{55} (2010), 245–264.
		
%

	\bibitem{Teresa2} \textsc{A. M. Delgado, L. Fernández. T. E. Pérez, and M. A. Piñar},	\emph{Multivariate orthogonal polynomials and modified moment functionals}, \texttt{arXiv:1601.07194}.
	
		\bibitem{Derevyagin2014Multiple}
		{\sc M.~Derevyagin, J.~C. García-Ardila, and F.~Marcellán}, {\em Multiple
			Geronimus transformations}, Linear Algebra and Applications, 454 (2014),
		pp.~158--183.
		
		\bibitem{doliwa2000transformations}
		{\sc A.~Doliwa, P.~M. Santini, and M.~Mañas}, {\em Transformations of
			quadrilateral lattices}, Journal of Mathematical Physics, 41 (2000),
		pp.~944--990.
		
		\bibitem{Dunkl2014Orthogonal}
		{\sc C.~F. Dunkl and Y.~Xu}, {\em Orthogonal Polynomiasl of Several Variables},
		vol.~81 of Encyclopedia of Mathematics and its Applications, Camridge
		University Press, second edition~ed., 2014.
		
		\bibitem{Eisenhart1943Transformations}
		{\sc L.~P. Eisenhart}, {\em Transformations of Surfaces}, Princeton University
		Press, 1943.
		
		\bibitem{Federer1969Geometric}
		{\sc H.~Federer}, {\em Geometric Measure Theory}, Springer, 1969.
		
		\bibitem{Felipe2001Algebraic}
		{\sc R.~Felipe and F.~Ongay}, {\em Algebraic aspects of the discrete KP
			hierarchy}, Linear Algebra and its Applications, 338 (2001), pp.~1--17.
		
		\bibitem{Fernandez2005Weak}
		{\sc L.~Fernández, T.~E. Pérez, and M.~A. Piñar}, {\em Weak classical
			orthogonal polynomials in two variables}, Journal of Computational and
		Applied Mathematics, 178 (2005), pp.~191--203.
		
		\bibitem{Fernandez2010Krall}
		{\sc L.~Fernández, T.~E. Pérez, M.~A. Piñar, and Y.~Xu}, {\em Krall-type
			orthogonal polynomials in several variables}, Journal of Computational and
		Applied Mathematics, 233 (2010), pp.~1519--1524.
		
		\bibitem{Gautschi2004Orthogonal}
		{\sc W.~Gautschi}, {\em Orthogonal Polynomials:computation and approximation},
		Oxford University Press, 2004.
		
		\bibitem{Gelfand2005}
		{\sc I.~M. Gel'fand, S.~Gel'fand, V.~S. Retakh, and R.~Wilson}, {\em
			Quasideterminants}, Advances in Mathematics, 193 (2005), pp.~56--141.
		
		\bibitem{Gelfand1995Noncommutative}
		{\sc I.~M. Gel'fand, D.~Krob, A.~Lascoux, B.~Leclerc, V.~S. Retakh, and J.-Y.
			Thibon}, {\em Noncommutative symmetric functions}, Advances in Mathematics,
		112 (1995), pp.~218--348.
		
		\bibitem{Gelfand1991Determinants}
		{\sc I.~M. Gel'fand and V.~S. Retakh}, {\em Determinants of matrices over
			noncommutative rings}, Functional Analysis and its Applications, 25 (1991),
		pp.~91--102.
		
			
			\bibitem{gelfand-distribu1} {\sc I. M. Gel'fand and G. E. Shilov}, \emph{Generalized Functions. Volume I: Properties and Operations}, Academic Press, New York, 1964.
			
			\bibitem{gelfand-distribu2} 	\leavevmode\vrule height 2pt depth -1.6pt width 23pt, \emph{Generalized Functions. Volume II: Spaces of Fundamental Solutions and Generalized Functions}, Academic Press, New York, 1968.
		
		\bibitem{Geronimo2006Factorization}
		{\sc J.~S. Geronimo and M.-J. Laib}, {\em Factorization of multivariate
			positive Laurent polynomials}, Journal of Approximation Theory, 139 (2006),
		pp.~327--345.
		
		\bibitem{Geronimus1940polynomials}
		{\sc Y.~L. Geronimus}, {\em On polynomials orthogonal with regard to a given sequence of numbers and a theorem by W. Hahn}, Izvestiya Akademii Nauk SSSR 4 (1940), pp.~215--228.

		\bibitem{Grunbaum2011Darboux}
		{\sc F.~A. Grünbaum}, {\em The Darboux process and a noncommutative bispectral
			problem: Some explorations and challenges}, vol.~292 of Geometric Aspects of
		Analysis and Mechanics Progress in Mathematics, Birkhäuser-Springer, 2011,
		pp.~161--177.
		
		\bibitem{Grunbaum1996Orthogonal}
		{\sc F.~A. Grünbaum and L.~Haine}, {\em Orthogonal polynomials satisfying
			differential equations: The role of the Darboux transformation}, in
		Symmetries and Integrability of Difference Equations, vol.~139 of CRM
		Proceedings Lecture Notes, AMS, 1996, pp.~143 --154.
		
		\bibitem{Kac2003KP}
		{\sc V.~G. Kac and J.~W. van~de Leur}, {\em The n-component KP hierarchy and
			representation theory}, Journal of Mathematical Physics, 44 (2003),
		pp.~3245--3293.
		
		\bibitem{Karlberg1990Pade}
		{\sc L.~Karlberg and H.~Wallin}, {\em Padé type approximants and orthogonal
			polynomials for Markov--Stieltjes functions}, Journal of Computational and
		Applied Mathematics, 32 (1990), pp.~153--157.
		
		\bibitem{Karlin1975growth}
		{\sc S.~Karlin and J.~McGregor}, {\em Linear growth models with many types and
			multidimensional Hahn polynomials}, in Theory and application of special
		functions, R.~A. Askey, ed., vol.~35 of Publication of the Mathematics
		Research Center (University of Wisconsin), Academic Press, 1975,
		pp.~261--288.
		
		\bibitem{Knese2008Polynomials}
		{\sc G.~Knese}, {\em Polynomials on the two dimensional torus}, Indiana
		University Mathematics Journal, 57 (2008), pp.~1353--1376.
		
		\bibitem{Maroni1985espaces}
		{\sc P.~Maroni}, {\em Sur quelques espaces de distributions qui sont des formes
			linéaires sur l'espace vectoriel des polynômes}, in Orthogonal polynomials
		and their applications, vol.~1117 of Lecture Notes in Mathematics,
		Springer-Verlag, 1985, pp.~184--194.
		
		\bibitem{Maroni1988calcul}
		\leavevmode\vrule height 2pt depth -1.6pt width 23pt, {\em Le calcul des formes
			linéaires et les polynômes orthogonaux semiclassiques}, in Orthogonal
		polynomials and their applications, M.~A. et~al, ed., vol.~1329 of Lecture
		Notes in Mathematics, Springer-Verlag, 1988, pp.~279--290.
		
		\bibitem{Maroni1990Surlasuite}
		\leavevmode\vrule height 2pt depth -1.6pt width 23pt, {\em Sur la suite de
			polynômes orthogonaux associée á la forme $u = \delta_c
			+\lambda(x-c)^{-1}$}, Periodica Mathematica Hungarica, 21 (1990),
		pp.~223--248.
		
		\bibitem{Maroni1991theorie}
		\leavevmode\vrule height 2pt depth -1.6pt width 23pt, {\em Une théorie
			algébrique des polynômes orthogonaux. applications aux polynômes
			orthogonaux semiclassiques}, in Orthogonal polynomials and their
		applications, vol.~9 of IMACS Ann. Comput. Appl. Math, Baltzer, 1991,
		pp.~95--130.
		
		\bibitem{Matveev1978Differential}
		{\sc V.~B. Matveev and M.~A. Salle}, {\em Differential-difference evolution
			equations. II: Darboux transformation for the Toda lattice}, Letters in
		Mathematical Physics, 3 (1979), pp.~425--429.
		
		\bibitem{manas2009multicomponent}
		{\sc M.~Mañas and L.~Martínez~Alonso}, {\em The multicomponent 2D Toda
			hierarchy: dispersionless limit}, Inverse Problems, 25 (2009), p.~115020.
		
		\bibitem{manas2000dressing}
		{\sc M.~Mañas, L.~Martínez~Alonso, and E.~Medina}, {\em Dressing methods for
			geometric nets: I. conjugate nets}, Journal of Physics A: Mathematical \&
		General, 33 (2000), pp.~2871--2894.
		
		\bibitem{manas2000dressing2}
		\leavevmode\vrule height 2pt depth -1.6pt width 23pt, {\em Dressing methods for
			geometric nets: II. orthogonal and egoov nets}, Journal of Physics A:
		Mathematical \& General, 33 (2000), pp.~7181--7206.
		
		\bibitem{manas2009multicomponent1}
		{\sc M.~Mañas, L.~Martínez~Alonso, and C.~Álvarez Fernández}, {\em The
			multicomponent 2d Toda hierarchy: discrete flows and string equations},
		Inverse Problems, 25 (2009), p.~065007.
		
		\bibitem{Milch1968growth}
		{\sc P.~R. Milch}, {\em A multi-dimensional linear growth birth and death
			process}, The Annals of Mathematical Statistic, 39 (1968), pp.~727--754.
		
		\bibitem{Miranian2005Matrix}
		{\sc L.~Miranian}, {\em Matrix-valued orthogonal polynomials on the real line:
			some extensions of the classical theory}, Journal of Physics A: Mathematical
		\& General, 38 (2005), pp.~5731--5749.
		
		\bibitem{Miranian2009Matrix}
		\leavevmode\vrule height 2pt depth -1.6pt width 23pt, {\em Matrix valued
			orthogonal polynomials on the unit circle: Some extensions of the classical
			theory}, Canadian Mathematical Bulletin, 52 (2009), pp.~95--104.
		
		\bibitem{Moutard1878reciprocal}
		{\sc T.~F. Moutard}, {\em Sur la construction des équations de la forme
			$(1/z)\partial^2 z/\partial x\partial y=\lambda(x,y)$ qui admettenent une
			intégrale générale explicite}, Journal de l’École polytechnique --
		Mathématiques, 45 (1878), p.~1–11.
		
		\bibitem{Mulase1984Complete}
		{\sc M.~Mulase}, {\em Complete integrability of the Kadomtsev--Petviashvili
			equation}, Advances in Mathematics, 54 (1984), pp.~57--66.
		
		\bibitem{Olver2006Multivariate}
		{\sc P.~J. Olver}, {\em On multivariate interpolation}, Studies in Applied
		Mathematics, 116 (2006), pp.~201--240.
		
		\bibitem{Petronilho2004topological}
		{\sc J.~C. Petronilho}, {\em Topological aspects in the theory of orthogonal
			polynomials and an inverse problem}, vol.~34 of Textos de Matemática, 2004,
		pp.~91--107.
		
		\bibitem{Petronilho2006linear}
		\leavevmode\vrule height 2pt depth -1.6pt width 23pt, {\em On the linear
			functionals associated to linearly related sequences of orthogonal
			polynomials}, Journal of Mathematical Analysis and Apllications, 315 (2006),
		pp.~379--393.
		
		\bibitem{Sato1981Soliton}
		{\sc M.~Sato}, {\em Soliton equations as dynamical systems on infinite
			dimensional Grassmann manifolds (random systems and dynamical systems)},
		Research Institute for Mathematical Sciences Kokyuroku, 439 (1981),
		pp.~30--46.
		
		\bibitem{Sato1983Soliton}
		{\sc M.~Sato and Y.~Sato}, {\em Soliton equations as dynamical systems on
			infinite-dimensional grassmann manifold}, in Nonlinear partial differential
		equations in applied science, vol.~439, North-Holland, 1983, pp.~259--271.
		
			\bibitem{Schwartz} 	{\sc L. Schwartz}, \emph{Théorie des distributions}, Hermann, Paris, 1978.
		
		\bibitem{Szego1939Orthogonal}
		{\sc G.~Szegő}, {\em Orthogonal Polynomials}, vol.~XXIII of American
		Mathematical Society Colloquium Publications, American Mathematical Society,
		1939.
		
		\bibitem{Ueno1984TodaIII}
		{\sc K.~Ueno and K.~Takasaki}, {\em Toda lattice hierarchy}, in Group
		Representations and Systems of Differential Equations, vol.~4 of Advanced
		Studies in Pure Mathematics, 1984, pp.~1--95.
		
		\bibitem{Ueno1984TodaI}
		\leavevmode\vrule height 2pt depth -1.6pt width 23pt, {\em Toda lattice
			hierarchy. I}, Proceedings of the Japan Academy, Series A, Mathematical
		Sciences, 59 (1984), pp.~167--170.
		
		\bibitem{Ueno1984TodaII}
		\leavevmode\vrule height 2pt depth -1.6pt width 23pt, {\em Toda lattice
			hierarchy. II}, Proceedings of the Japan Academy, Series A, Mathematical
		Sciences, 59 (1984), pp.~215--218.
		
		\bibitem{Uvarov1969connection}
		{\sc V.~B. Uvarov}, {\em The connection between systems of polynomials that are
			orthogonal with respect to different distribution function}, USSR
		Computational Mathematics and Mathematical Physics, 9 (1969), pp.~25--36.
		
		\bibitem{Vinberg2003algebra}
		{\sc E.~B. Vinberg}, {\em A course in algebra}, vol.~56 of Graduate texts in
		Mathematics, American Mathematical Society, 2003.
		
		\bibitem{Xu1994Common}
		{\sc Y.~Xu}, {\em Common zeros of polynomials in several variables and
			higher-dimensional quadrature}, vol.~312 of Pitman Research Notes in
		Mathematics Series, Longman Scientific \& Technical, 1994, pp.~687--702.
		
		\bibitem{Xu1994multivariate}
		\leavevmode\vrule height 2pt depth -1.6pt width 23pt, {\em Multivariate
			orthogonal polynomials and operator theory}, Transactions of the American
		Mathematical Society, 343 (1994), pp.~193--202.
		
		\bibitem{Xu2001Orthogonal}
		\leavevmode\vrule height 2pt depth -1.6pt width 23pt, {\em Orthogonal
			polynomials and cubature formul{\ae} on balls, simplices, and spheres},
		Journal of Computational and Applied Mathematics, 127 (2001), pp.~349--368.
		
		\bibitem{Yoon2002Darboux}
		{\sc G.~J. Yoon}, {\em Darboux transforms and orthogonal polynomials}, Bulletin
		of the Korean Mathematical Society, 39 (2002), pp.~359--376.
		
		\bibitem{Zhedanov1997Rational}
		{\sc A.~Zhedanov}, {\em Rational spectral transformations and orthogonal
			polynomials}, Journal of Computational and Applied Mathematics, 85 (1997),
		pp.~67--86.
	\end{thebibliography}
\end{document}